\documentclass[a4paper,10pt]{amsart}
\usepackage{graphicx}
\usepackage[UKenglish]{babel}
\usepackage[margin=3cm]{geometry}
\usepackage[utf8]{inputenc}
\usepackage{amsthm}
\usepackage{amsmath}
\usepackage{amsfonts}
\usepackage{amssymb}
\usepackage{bbm}
\usepackage{float}
\usepackage{xcolor}
\usepackage{diagbox}
\usepackage[colorlinks,
urlcolor=blue!50!black,
citecolor=red!50!black,
linkcolor=blue!50!black]{hyperref}
\usepackage{caption}
\usepackage{subcaption}
\usepackage{csquotes}
\usepackage{multicol}
\usepackage{multirow}
\usepackage{enumerate}
\usepackage{mathrsfs}
\setlength{\marginparwidth}{3cm}

\numberwithin{equation}{section}

\theoremstyle{plain}
\newtheorem{thm}{Theorem}[section]

\newtheorem{lemma}[thm]{Lemma}
\newtheorem{prop}[thm]{Proposition}

\newtheorem{remark}[thm]{Remark}

\theoremstyle{definition}
\newtheorem{defn}[thm]{Definition}

\newcommand{\fd}{\mathfrak{d}}
\newcommand{\ve}{\varepsilon}
\newcommand{\SLE}{\mathrm{SLE}}
\newcommand{\h}{\mathbb{H}}
\newcommand{\p}{\mathbb{P}}
\newcommand{\R}{\mathbb{R}}

\title[Metric gluing and conformal welding in $\gamma$-Liouville quantum gravity]{Equivalence of metric gluing and conformal welding in $\gamma$-Liouville quantum gravity for $\gamma \in (0,2)$}
\author{Liam Hughes and Jason Miller}
\date{\today}

\begin{document}
\maketitle
\begin{abstract}
	We consider the $\gamma$-Liouville quantum gravity (LQG) model for $\gamma \in (0,2)$, formally described by $e^{\gamma h}$ where $h$ is a Gaussian free field on a planar domain $D$. Sheffield showed that when a certain type of LQG surface, called a quantum wedge, is decorated by an appropriate independent SLE curve, the wedge is cut into two independent surfaces which are themselves quantum wedges, and that the original surface can be recovered as a unique conformal welding. We prove that the original surface can also be obtained as a metric space quotient of the two wedges, extending results of Gwynne and Miller in the special case $\gamma = \sqrt{8/3}$ to the whole subcritical regime $\gamma \in (0,2)$.
	
	Since the proof for $\gamma = \sqrt{8/3}$ used estimates for Brownian surfaces, which are equivalent to $\gamma$-LQG surfaces only when $\gamma=\sqrt{8/3}$, we instead use GFF techniques to establish estimates relating distances, areas and boundary lengths, as well as bi-H\"older continuity of the LQG metric w.r.t.\ the Euclidean metric at the boundary, which may be of independent interest. 
\end{abstract}

	\setcounter{tocdepth}{1}
	\tableofcontents
	\section{Introduction}
	\subsection{Liouville quantum gravity surfaces}
	A \emph{$\gamma$-Liouville quantum gravity} (LQG) surface is a random surface parametrized by a domain $D \subseteq \mathbb{C}$ given, in a formal sense, by the random metric tensor 
	\begin{equation}\label{eq:lqg}
		e^{\gamma h(z)} \, (dx^2+dy^2)
	\end{equation}
	where $\gamma$ is a parameter in $(0,2)$, $h$ is some form of the Gaussian free field (GFF) on the domain $D$ and $dx^2+dy^2$ is the Euclidean metric. These surfaces have been shown to arise as scaling limits of several random planar map models (\cite{inventory,kmsw,schnyder,gm_saw,perc_uq,gkmw}; see also~\cite{ghs} and the references therein).
	
	Since $h$ is not sufficiently regular to be a random function on $D$ (it is only a distribution on $D$, in the sense of Schwartz), the expression (\ref{eq:lqg}) for the LQG metric tensor does not make literal sense; in order to rigorously define an LQG surface one must take a limit of regularized versions of $e^{\gamma h(z)} dz$. This was done for the volume form in~\cite{ds}, resulting in the \emph{$\gamma$-LQG measure} $\mu_h$, a random measure on $D$, and the \emph{$\gamma$-LQG boundary length} $\nu_h$, a random measure on $\partial D$, each of which fall under the general framework of Kahane's \emph{Gaussian multiplicative chaos}, as introduced in \cite{kahane}. These measures are conformally covariant in the following sense: given a conformal map $\psi\colon\widetilde{D}\to D$, if we set
	\begin{equation}\label{eq:reparam}
		Q = \frac{2}{\gamma} + \frac{\gamma}{2}, \quad \widetilde{h} = h \circ \psi + Q\log{|\psi'|},
	\end{equation}
	then by~\cite[Prop.~2.1]{ds}, almost surely we have $\mu_{\widetilde{h}} = \mu_h \circ \psi$ and (provided $\psi$ extends to a homeomorphism between the closures of $\widetilde{D}$ and $D$ in the Riemann sphere) $\nu_{\widetilde{h}} = \nu_h \circ \psi$.
	
	We can then consider various types of \emph{quantum surfaces}, random surfaces that can be described by $(D,h)$ where $D$ is a domain in $\mathbb{C}$ and $h$ some form of the GFF on $D$, with (random) quantum area and boundary length measures given respectively by $\mu_h$ and $\nu_h$, and which are defined as equivalence classes of pairs $(D,h)$ that are related by conformal reparametrizations as described by (\ref{eq:reparam}). A particular one-parameter family of such surfaces are the \emph{$\alpha$-quantum wedges} for $\alpha \le Q$. An $\alpha$-quantum wedge is parametrized by $\h$ with marked points at~0 and~$\infty$, and is given by $h-\alpha \log{|\cdot|}$ where $h$ is a variant of the free-boundary GFF on $\h$ chosen so that the law of the resulting surface is invariant under the operation of replacing $h$ with $h + c$ for $c\in \R$. For any $\alpha \in (-\infty,Q)$, this surface is homeomorphic to $\h$, and is referred to as a \emph{thick} quantum wedge, as in~\cite[\S4.2]{dms}. The starting point for an alternative but equivalent definition~\cite[Def.~4.15]{dms} is a Bessel process of dimension
	\begin{equation*}
		\delta := 2 + \frac{2(Q-\alpha)}{\gamma}; 
	\end{equation*}
	this can be used to extend the definition to include $\alpha \in (Q,Q+\gamma/2)$. For such $\alpha$, the Bessel process has dimension in $(1,2)$ and thus hits zero, and one no longer obtains a single surface homeomorphic to~$\h$; for each excursion of the Bessel process away from 0 one obtains a surface with the topology of the disc, and concatenating all these surfaces (the \emph{beads} of the wedge) gives a \emph{thin} quantum wedge, as seen in~\cite[\S4.4]{dms}. Instead of using the parameters $\alpha$ or $\delta$, it is often more convenient to consider the value
	\begin{equation*}
		\mathfrak{w} = \gamma \left(\frac{\gamma}{2} + Q-\alpha \right),
	\end{equation*}
	called the \emph{weight} of the wedge.
	
	Different kinds of quantum surfaces include \emph{quantum cones}, which are homeomorphic to $\mathbb{C}$, and \emph{quantum spheres}, which are homeomorphic to the Riemann sphere (and can thus be parametrized by the bi-infinite cylinder $\mathscr{C}$ given by $\R\times[0,2\pi]$ with $\R\times\{0\}$ and $\R\times\{2\pi\}$ identified and the points~$-\infty$ and~$+\infty$ added). By~\cite[Thm~1.5]{dms}, a quantum cone of weight $\mathfrak{w}$ is the surface that results when the two sides $(-\infty,0)$ and $(0,\infty)$ of a quantum wedge of weight $\mathfrak{w}$ are conformally welded together. As with wedges, there are choices of parameter other than the weight parameter~$\mathfrak{w}$. A quantum cone of weight $\mathfrak{w}$ can be referred to as an $\alpha$-quantum cone, where the parameter $\alpha$ corresponding to the log singularity of the field and the weight $\mathfrak{w}$ are related by $\alpha = Q-\mathfrak{w}/(2\gamma)$. A quantum sphere of weight $\mathfrak{w}$ is a compact finite-volume surface constructed so as to look like a quantum cone of weight $\mathfrak{w}$ near each of its endpoints $-\infty$ and $+\infty$.
	
	It was proven in \cite{lqg1,lqg2,lqg3} for $\gamma = \sqrt{8/3}$, and later in \cite{gm} for $\gamma \in (0,2)$, there is a unique random metric $\fd_h$, measurable w.r.t.\ the GFF $h$, that satisfies a certain list of axioms associated with LQG ($\fd_h$ is required to induce the Euclidean topology and to transform appropriately under affine coordinate changes and adding a continuous function to $h$, and must also be a length metric locally determined by~$h$). This metric arises as a subsequential limit of \emph{Liouville first passage percolation} (LFPP), a family of random metrics obtained from a regularized version of the GFF; existence of such subsequential limits was established in~\cite{dddf}, and building on~\cite{dfgps}, the article~\cite{gm} then showed that the limit is unique and satisfies the requisite axioms. (More recently in~\cite{dg_supercrit} the \emph{critical} LQG metric corresponding to $\gamma = 2$ was constructed and proven to be unique, as were \emph{supercritical} LQG metrics corresponding to complex values of $\gamma$ with $|\gamma|=2$.)
	
	The result~\cite[Thm~1.8]{zip} (later generalized by~\cite[Thm~1.2]{dms}) says that when a certain quantum wedge $\mathcal{W}$ is cut by an appropriate independent random curve $\eta$, the regions to the left and right of $\eta$ (call them~$\mathcal{W}^-$,~$\mathcal{W}^+$ respectively) are independent quantum wedges; moreover, the original wedge~$\mathcal{W}$ and curve~$\eta$ may be reconstructed by \emph{conformally welding} the right side of $\mathcal{W}^-$ to the left side of $\mathcal{W}^+$ according to $\gamma$-LQG boundary length. The curve~$\eta$ is a variant of Schramm's \cite{schramm} SLE---more specifically it is an $\mathrm{SLE}_\kappa(\rho_1;\rho_2)$, as first defined in~\cite[\S8.3]{lsw}.
	
	Though we will not need it in this paper, we briefly discuss what is meant here by ``conformal welding''. Given a homeomorphism between boundary arcs of two topological surfaces, one can obtain a new surface by gluing along the boundary arcs; if the two original surfaces are each endowed with a conformal structure, the problem of \emph{conformally welding} them is that of obtaining a conformal structure on the glued surface compatible with those on the original surfaces. In the setting of the previous paragraph, it turns out~\cite[Thm~1.3]{zip} that the LQG boundary length measures on the boundaries of $\mathcal{W}^-$ and $\mathcal{W}^+$ agree for segments of $\eta$. This allows the glued surface~$\mathcal{W}$ to be recovered from the two pieces $\mathcal{W}^-$ and $\mathcal{W}^+$. Indeed, if $\mathcal{W}^-$ and $\mathcal{W}^+$ are reparametrized by~$\h$ with corresponding fields $h^-$ and $h^+$, then we can define a homeomorphism
	$$ \psi: [0,\infty) \to (-\infty,0] $$
	from the right-hand boundary arc of $\mathcal{W}^-$ to the left-hand boundary arc of $\mathcal{W}^-$ via the equation
	\begin{equation*}
		\nu_{h^-} ([0,x]) = \nu_{h^+}([\psi(x),0]), \quad x\in(0,\infty).
	\end{equation*}
	Crucially, $\psi$ is uniquely determined by $\mathcal{W}^-$ and $\mathcal{W}^+$ as surfaces (i.e., modulo reparametrization as in (\ref{eq:reparam})). We can glue the surfaces together by identifying each point $x\in[0,\infty) \subset \partial \mathcal{W}^-$ with its corresponding point $\psi(x) \in (-\infty,0] \subset \partial \mathcal{W}^+$; then by a \textbf{conformal welding} of $\mathcal{W}^-$ and $\mathcal{W}^+$ along $\psi$ we mean a map from the resulting space into $\h$ that is conformal on the interiors of~$\mathcal{W}^-$ and $\mathcal{W}^+$. In this case the glued space is the original surface $\mathcal{W}$, so such a map is given by a parametrization of $\mathcal{W}$ by~$\h$. In this case, this map is in fact (up to conformal automorphisms of~$\h$) the \emph{unique} conformal welding of $\mathcal{W}^-$ and $\mathcal{W}^+$ along $\psi$, so that both the original surface $\mathcal{W}$ and the SLE-type interface $\eta$ can be recovered from $\mathcal{W}^-$ and $\mathcal{W}^+$ (see~\cite[Thm~1.4]{zip}).
	\subsection{Metric gluing}
	Since these conformal welding uniqueness results do not give an explicit way to reconstruct the original surface, for applications a more explicit way to glue surfaces together may be required. In the case $\gamma = \sqrt{8/3}$, the theorem~\cite[Thm~1.5]{metglu} states that the $\gamma$-LQG metric on $\mathcal{W}$ can be obtained by \emph{metrically gluing} those on $\mathcal{W}^-$ and $\mathcal{W}^+$ along the conformal welding interface~$\eta$ according to $\gamma$-LQG boundary length, i.e.~as a quotient of the two metric spaces $\mathcal{W}^-$ and $\mathcal{W}^+$ under the identification of points given by the welding homeomorphism. This theorem---stating that conformal welding and metric gluing give the same result---was an essential input into the proof in~\cite{gm_saw} that the self-avoiding walk (SAW) on random quadrangulations converges to $\SLE_{8/3}$ on $\sqrt{8/3}$-LQG. Indeed, one can construct a SAW-decorated random quadrangulation by performing a discrete graph gluing of two quadrangulations with boundary, and~\cite{gm_saw} shows that this construction converges to an analogous one in the continuum using quantum wedges; the result of~\cite{metglu} then applies to show that we get the same surface by \emph{first} passing to the scaling limit of each of the two original quadrangulations and \emph{then} performing the metric gluing in the continuum. The importance of~\cite[Thm~1.5]{metglu} here is that, whilst metric gluing and conformal welding both provide ways to show that an LQG surface is determined by the two surfaces formed by cutting along an independent SLE, metric gluing recovers the original surface via a construction that has a direct discrete analogue, namely the graph gluing.
	
	The notion of ``metric gluing'' here is the natural way to define the quotient space obtained from identifying two metric spaces along a common subset; we define it below.
	\begin{defn}[metric gluing]\label{def:metglu}
		Let $(X,d_X)$ and $(Y,d_Y)$ be \emph{pseudometric spaces} (that is, $d_X$ satisfies all the conditions to be a metric on $X$ except that it need not be positive definite, and likewise for $d_Y$ on $Y$). Let $f$ be a function from a subset of $X$ to a subset of $Y$. Let $\sim$ be the finest equivalence relation on $X\sqcup Y$ such that $x \sim f(x)$ for each $x$ in the domain of $f$, and for each $x \in X\sqcup Y$ let $[x]$ be the equivalence class of $x$ under $\sim$. Define $d'$ on $(X \times X) \sqcup (Y \times Y)$ to equal~$d_X$ on~$X \times X$ and~$d_Y$ on~$Y \times Y$. Then the \textbf{metric gluing} of $X$ and $Y$ along $f$ is the quotient space $(X \sqcup Y)/\sim$ equipped with the \emph{gluing pseudometric} $d$ defined by
		\begin{equation}
			d([x],[y]) = \inf \sum_{i=1}^n d'(x_i,y_i)
		\end{equation}
		where the infimum is over all $n\in\mathbb{N}$ and all sequences $x_1, y_1, x_2, y_2 \ldots, x_n, y_n$ in $X \sqcup Y$ such that $x_1 \in [x]$, $y_n \in [y]$, and $x_{i+1} \sim y_i$ for each $i \in \{1, \ldots, n-1\}$, such that the sum is defined (so for each $i$ we must have~$x_i$ and~$y_i$ either both in $X$ or both in $Y$). If $(X_i,d_i)$ are pseudometric spaces for $i\in I$, we can define the \textbf{metric quotient} of the $X_i$ by an equivalence relation $\sim$ on $X := \bigsqcup_{i\in I}X_i$ by defining the partial function $d'$ on $\bigsqcup_{i\in I} (X_i \times X_i)$ and the gluing pseudometric $d$ on $X/\sim$ in the same way as above.
	\end{defn}
	Note that this $d$ is easily verified to be a pseudometric; in fact, it is the largest pseudometric on the quotient space which is bounded above by $d'$. In the case of~\cite[Thm~1.5]{metglu}, the gluing function $f$ sends a point $z$ on the right-hand part of $\partial \mathcal{W}^-$ to the point $w$ on the left-hand part of~$\partial \mathcal{W}^+$ such that the boundary segments from 0 to $z$ and from 0 to $w$ have equal $\gamma$-LQG boundary length. 
	\subsection{Main results}
	In the light of the construction of the $\gamma$-LQG metric for all $\gamma \in (0,2)$, the main result of this paper extends~\cite[Thm~1.5]{metglu}, giving the analogous statement for the $\gamma$-LQG metric for all values of $\gamma \in (0,2)$. In order to state the result, we need to define what it means for a metric defined on a subspace to extend by continuity to a larger set:
	\begin{defn}
		Let $(X,\tau)$ be a topological space and $Y$ a subset of $X$. If $d$ is a metric on $Y$ that is continuous w.r.t.\ the subspace topology induced by $\tau$ on $Y$ and $Z \subseteq X\setminus Y$, then we say $d$ \textbf{extends by continuity (w.r.t.\ $\tau$) to $Z$} if there exists a metric $d'$ on $Y\cup Z$ which agrees with $d$ on $Y$ and is continuous w.r.t.\ the subspace topology induced by $\tau$ on $Y\cup Z$.
	\end{defn}
	Note that if $Y$ is dense in $Y\cup Z$, there is at most one metric $d'$ extending $d$ by continuity to $Z$.
	\begin{thm}\label{thm:main}
		Let $\gamma \in (0,2)$, $\mathfrak{w}^{-}$, $\mathfrak{w}^{+}>0$ and $\mathfrak{w} = \mathfrak{w}^{-}+\mathfrak{w}^{+}$. Let $(\h,h,0,\infty)$ be a quantum wedge of weight $\mathfrak{w}$ if $\mathfrak{w}\ge \gamma^2/2$, or a single bead of a quantum wedge of weight $\mathfrak{w}$ with area $\mathfrak{a}>0$ and left and right boundary lengths $\mathfrak{l}^-$, $\mathfrak{l}^+>0$ otherwise. Independently sample an $\mathrm{SLE}_{\gamma^2}(\mathfrak{w}^{-}-2;\mathfrak{w}^{+}-2)$ process $\eta$ from 0 to $\infty$ in $\h$ with force points at $0^-$ and $0^+$. Denote the regions to the left and right of $\eta$ by $W^{-}$ and $W^{+}$ respectively, and let $\mathcal{W}^\pm$ be the quantum surface obtained by restricting~$h$ to~$W^\pm$. Let $\mathcal{U}^\pm$ be the ordered sequence of connected components of the interior of $W^\pm$, and let $\fd_h$, $\fd_{h|_{\mathcal{U}^-}}$, $\fd_{h|_{\mathcal{U}^+}}$ respectively be the $\gamma$-LQG metrics induced by $h$, $h|_{\mathcal{U}^-}$ and $h|_{\mathcal{U}^+}$. Then~$\fd_{h|_{\mathcal{U}^-}}$,~$\fd_{h|_{\mathcal{U}^+}}$ respectively extend by continuity (w.r.t.\ the Euclidean metric) to $\partial \mathcal{U}^-$, $\partial \mathcal{U}^+$ and $(\h, \fd_h)$ is obtained by metrically gluing $(\overline{\mathcal{U}^-},\fd_{h|_{\mathcal{U}^-}})$ and $(\overline{\mathcal{U}^+},\fd_{h|_{\mathcal{U}^+}})$ along $\eta$ according to $\gamma$-LQG boundary length.
	\end{thm}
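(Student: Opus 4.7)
The plan is to adapt the approach of \cite{metglu}, which proved the analogous result for $\gamma = \sqrt{8/3}$, but to replace the inputs from Brownian-surface theory by direct GFF-based estimates on $\fd_h$ valid throughout $\gamma \in (0,2)$. The argument splits into two main tasks: showing that $\fd_{h|_{\mathcal{U}^\pm}}$ extends continuously to $\overline{\mathcal{U}^\pm}$, and identifying the resulting gluing pseudometric with $\fd_h$.

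For the continuous extension, I would prove a quantitative bi-H\"{o}lder bound for $\fd_{h|_{\mathcal{U}^\pm}}$ against the Euclidean metric up to and including $\partial \mathcal{U}^\pm$. Interior bi-H\"{o}lder continuity is already part of the construction of $\fd_h$; the new input is a polynomial upper bound $\fd_{h|_{\mathcal{U}^\pm}}(z,w) \leq C|z-w|^\alpha$ near the boundary. For this I would combine GFF circle-average estimates with the Weyl-scaling and coordinate-change axioms for $\fd_h$, together with regularity of the $\SLE_{\gamma^2}(\cdot;\cdot)$ curve, to exhibit short paths in $\overline{\mathcal{U}^\pm}$ connecting nearby boundary points; the metric then extends uniquely by continuity to $\partial \mathcal{U}^\pm$.

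For the equality of the gluing pseudometric $d$ with $\fd_h$, the inequality $\fd_h \leq d$ is immediate, since welded pairs on the two sides of $\eta$ are literally the same point of $\h$, so any admissible concatenation realising $d([x],[y])$ yields a continuous path in $\h$ of equal length. The reverse inequality $d \leq \fd_h$ is the principal obstacle. The plan is to start from a near-$\fd_h$-geodesic $P$ from $x$ to $y$ and approximate it by a finite concatenation of paths, each contained in one of $\overline{\mathcal{U}^\pm}$, with welding-matched endpoints and total length at most $\fd_h(x,y) + O(\epsilon)$. The difficulty is that $P$ can intersect $\eta$ on a potentially fractal set of times, so one cannot naively split $P$ into finitely many sub-arcs.

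To overcome this I would proceed in three steps: (i) use GFF estimates to bound the $\fd_h$-length contributed by $P$ inside small Euclidean neighbourhoods of $\eta$; (ii) establish that the $\fd_h$-distance along $\eta$ between two points is controlled by the $\nu_h$-length of the intervening sub-arc, so that $P \cap \eta$ can be covered by finitely many short pieces, each of which may be replaced by a single welded pair of endpoints; (iii) on each complementary sub-arc of $P$, reroute onto one fixed side of $\eta$ using the boundary bi-H\"{o}lder bound from above, at cost $O(\epsilon)$ in total. The heart of the argument, and the main technical novelty replacing the Brownian-surface identities used in \cite{metglu}, is the quantitative comparison between $\fd_h$-distance, LQG area and $\nu_h$-boundary length near $\eta$, which furnishes precisely the estimates advertised in the abstract.
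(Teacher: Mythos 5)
Your overall strategy matches the paper's: follow the argument of the $\gamma = \sqrt{8/3}$ paper with GFF-based estimates replacing the Brownian-surface identities, establish a bi-H\"{o}lder bound at the boundary for the continuous extension, note $\fd_h \le d$ is immediate, and prove $d \le \fd_h$ by rerouting a near-geodesic around its $\eta$-crossings. The difficulty you identify (a geodesic may cross $\eta$ on a fractal set of times) is the right one. However, your outline of the rerouting step has a genuine gap, and it is precisely the quantitative heart of the argument.

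In your step (ii) you claim that controlling $\fd_h$-distance along $\eta$ by $\nu_h$-boundary length lets $P \cap \eta$ be ``covered by finitely many short pieces, each of which may be replaced by a single welded pair of endpoints.'' A per-piece bound on detour cost is not enough: you must also control the \emph{number} of $\eta$-segments of a given quantum length that the geodesic visits, and show that the total detour cost vanishes in the limit. The paper does this in Lemmas~\ref{lemma:46}--\ref{lemma:48}. The essential ingredients there are: (a) the \emph{independence} of $\eta$ from $(h,\gamma_{z_1,z_2})$, combined with a polynomial one-point estimate $\p[\eta \cap B_\ve(w) \ne \varnothing] = O(\ve^{\alpha_0})$ for the SLE curve, which bounds the expected number of Euclidean balls along the geodesic that $\eta$ hits; (b) the \emph{lower} bi-H\"{o}lder bound up to the boundary (i.e.\ $B_\delta(z;\fd_h) \subseteq B_{C\delta^\beta}(z)$), so that $\fd_h$-balls along the geodesic sit inside Euclidean balls of comparable radius to which (a) applies; and (c) upper/lower $\mu_h$-volume bounds on $\fd_h$-balls (plus a topological argument) that convert this into a bound on the number of distinct $\eta$-segments of quantum length $\delta^{d_\gamma/2}$ intersecting a given ball. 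Putting these together, the expected number of such segments hit by the geodesic is $O(\delta^{-1+\alpha/2})$, while the $\nu_h$-H\"{o}lder estimate gives each detour a cost $O(\delta^{1-u d_\gamma/2})$, so the total overshoot vanishes as $\delta\to 0$ (for $u$ small). None of (a)--(c) appear in your outline. In particular, (b) is not the same as the upper H\"{o}lder bound you describe for the continuity step; it is proved in Section~\ref{section:bh} and rests on SLE-specific results (the non-self-tracing hypothesis of~\cite{mmq} and H\"{o}lder-domain estimates), which is a substantial piece of work your plan omits. Also, your step (iii) of ``rerouting complementary sub-arcs onto one side of $\eta$'' is unnecessary: the complementary sub-arcs already lie in a single component of $\h\setminus\eta$, so their $\fd_h$-lengths equal their internal-metric lengths with no rerouting; the only rerouting needed is the replacement of the excursions between first and last hit of each $\eta$-segment by a geodesic of the internal metric on one side.
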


	Although we have no specific application in mind, this result is potentially useful in proving convergence of a path-decorated lattice model in the scaling limit to $\gamma$-LQG decorated by an $\mathrm{SLE}_{\gamma^2}$-type curve, as it would play the role of~\cite[Thm~1.5]{metglu} in an argument along the lines of~\cite{gm_saw}.
	
	A statement weaker than Theorem~\ref{thm:main} follows straightforwardly from a \emph{locality} property in the definition of the LQG metric, which gives that the $\fd_{h|_{\mathcal{U}^-}}$-distance between points in $\mathcal{U}^-$ coincides with the infimum of the $\fd_h$-lengths of paths between the points that stay in~$\mathcal{U}^-$, and likewise for~$\mathcal{U}^+$. It is important to note that this property does \emph{not} imply that the metric gluing recovers $(\h, \fd_h)$; Theorem~\ref{thm:main} is stronger because it rules out certain pathologies which can arise from metric gluings along badly behaved interfaces (note that the interfaces along which we are gluing are SLE-type curves and thus fractal).
	
	One such pathology can occur when the function used to identify boundary segments is insufficiently well behaved: for instance, using a Cantor-type function can collapse the gluing interface to a point (see~\cite[Lemma~2.2]{metglu}). This kind of behaviour does not occur in our setting, since we know that $\fd_h$ is a pseudometric on the glued space that is bounded above by the partial function~$d'$ constructed from $\fd_{h|_{\mathcal{U}^-}}$ and $\fd_{h|_{\mathcal{U}^+}}$, whereas the gluing pseudometric is always the \emph{largest} such pseudometric (and thus in this case is a bona fide metric). The main issue for us is that the definition of the gluing metric only considers paths which cross the gluing interface $\eta$ finitely many times, whereas paths in $(\h, \fd_h)$ which cross the interface infinitely many times might a~priori be significantly shorter.
	
	Though both this metric gluing result and the conformal welding result~\cite[Thm~1.2]{dms} for the same surfaces can be thought of as saying that we can recover the original surface from the two pieces it is cut into by the SLE-type curve~$\eta$, the conformal welding result is more of an abstract measurability statement, whereas the metric gluing result shows concretely how one can reconstruct the metric on the original surface. Nevertheless, we might intuitively expect that, since one result is true, so should the other be---so that we avoid the pathologies that are generally liable to arise from metric gluing along fractal curves. In our setting, we aim to rule out pathological behaviour of $\fd_h$-geodesics hitting $\eta$. An analogous problem in the conformal setting is to show that a curve is \emph{conformally removable}, i.e.\ that any homeomorphism of $\mathbb{C}$ that is conformal off the image of the curve must in fact be conformal everywhere. Indeed, the reason that the conformal welding is unique---that $\eta$ can be recovered from the two surfaces on either side of it---is that $\eta$ is conformally removable. Thus, if there were another welding along the same boundary arc homeomorphism which produced a different interface, the two weldings would differ by a homeomorphism of $\h$ that was conformal off the image of $\eta$, which  by removability would have to be a conformal automorphism of $\h$. The fact~\cite[Thm~5.2]{rohde}~\cite[Cor.~2]{JS_removability} that an $\mathrm{SLE}_\kappa$ curve with $\kappa \in (0,4)$ is conformally removable follows from the fact that it is the boundary of a \emph{H\"{o}lder domain}, i.e.\ a domain which can be uniformized by a H\"{o}lder-continuous map from the unit disc. Proving conformal removability of a curve involves controlling how much a straight line segment near the curve is distorted by such a homeomorphism, whereas as mentioned above our task is to establish control on the extent to which an LQG geodesic is affected by its crossings of $\eta$. Though the two problems are similar in flavour, in the metric gluing setting we do not have a simple sufficient criterion analogous to the H\"{o}lder domain condition for conformal removability.
	
	We also obtain the appropriate generalizations for the other main theorems in~\cite{metglu}. Our version of~\cite[Thm~1.6]{metglu}, concerning gluing the two boundary arcs of a quantum wedge together to create a quantum cone, is as follows:
	
	\begin{thm}\label{thm:16}
		Fix $\gamma \in (0,2)$ and $\mathfrak{w} \ge 0$. Let $(\mathbb{C},h,0,\infty)$ be a quantum cone of weight $\mathfrak{w}$ and let~$\fd_h$ be the $\gamma$-LQG metric induced by $h$. Let $\eta$ be an independent whole-plane $\mathrm{SLE}_{\gamma^2}(\mathfrak{w}-2)$ from~0 to~$\infty$ and let $U = \mathbb{C}\setminus\eta$. Then $\fd_{h|_U}$ almost surely extends by continuity to $\partial U$ (seen as a set of prime ends), and $(\mathbb{C},\fd_h)$ almost surely agrees with the metric quotient of $(U,\fd_{h|_U})$ under identifying the two sides of $\eta$ in the obvious way (i.e., two prime ends corresponding to the same point in $\mathbb{C}$ are identified).
	\end{thm}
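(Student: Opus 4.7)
The plan is to mirror the proof of Thm~\ref{thm:main}, using the correspondence between quantum cones and quantum wedges from~\cite{dms} to set up the analogous picture and then import the auxiliary estimates from the wedge setting. Specifically, by the cone-to-wedge cutting theorem of~\cite{dms}, the quantum surface $(\overline{U}, h|_U)$, equipped with the two sides of $\eta$ as its two boundary arcs and $0,\infty$ as marked points, has the law of a quantum wedge of weight $\mathfrak{w}$; moreover, the identification of prime ends of $U$ corresponding to a single point of $\mathbb{C}$ coincides with matching points on the two boundary arcs of this wedge by $\gamma$-LQG boundary length. Under this identification, the extension of $\fd_{h|_U}$ by continuity to $\partial U$ (viewed as prime ends) follows from the bi-H\"{o}lder continuity of the LQG metric at the boundary of a quantum wedge, which is one of the auxiliary results of independent interest developed en route to Thm~\ref{thm:main}.

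For the easy inequality $\fd_h \le d$, where $d$ denotes the gluing pseudometric on the metric quotient, let $x_1, y_1, \ldots, x_n, y_n$ be an admissible decomposition in the definition of $d([z],[w])$. Concatenating near-geodesics in $\overline{U}$ for each pair $(x_i, y_i)$ and using that consecutive identified prime ends correspond to the same point of $\mathbb{C}$ yields a continuous path in $\mathbb{C}$ from $z$ to $w$ whose $\fd_h$-length is at most $\sum_i \fd_{h|_U}(x_i, y_i)$ by the locality of the LQG metric; taking the infimum gives $\fd_h(z,w) \le d([z],[w])$.

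The reverse inequality $d \le \fd_h$ is the main content and, I expect, the main obstacle: an $\fd_h$-geodesic from $z$ to $w$ may \emph{a priori} cross $\eta$ infinitely many times, while valid decompositions in the gluing metric involve only finitely many crossings. I would follow the strategy of Thm~\ref{thm:main}: the quantitative estimates relating $\gamma$-LQG distances, areas and boundary lengths, combined with the boundary bi-H\"{o}lder estimates, should allow me to show that the accumulated contribution of crossings of $\eta$ at small Euclidean scales is negligible, so that any near-geodesic can be replaced, at arbitrarily small additional length cost, by one crossing $\eta$ only finitely often. The argument should carry over from the wedge case essentially unchanged; the main differences concern the behaviour near the two marked points $0$ and $\infty$ of the cone, which differ topologically from the wedge setup but are handled by the same estimates applied in local coordinates around each of them.
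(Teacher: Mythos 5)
Your outline is essentially what the paper does for the thick case $\mathfrak{w}\ge\gamma^2/2$: invoke the cone-to-wedge cutting result of~\cite{dms} to view $(U,h|_U)$ as a quantum wedge of weight $\mathfrak{w}$, re-run the machinery of Thm~\ref{thm:main} (the auxiliary estimates relating LQG distance, area, boundary length, and the bi-H\"older estimates at the boundary), and supplement it with a lemma (the analogue of~\cite[Lemma~4.4]{metglu}) ensuring that $\fd_h$-geodesics between quantum-typical points almost surely avoid the marked points $0$ and $\infty$. Your identification of the ``main obstacle'' as ruling out geodesics that cross $\eta$ infinitely often, and your plan to handle it via the quantitative crossing estimates, also matches the paper's strategy in the thick case.

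However, there is a genuine gap in your proposal: you nowhere address what happens when $\mathfrak{w}\in(0,\gamma^2/2)$, i.e.\ when the whole-plane $\SLE_{\gamma^2}(\mathfrak{w}-2)$ is self-intersecting and the wedge $(U,h|_U)$ is \emph{thin} (a chain of beads). Thm~\ref{thm:main} is proved only for a thick wedge or a \emph{single} bead of a thin wedge, so its conclusion cannot be applied ``essentially unchanged'' to an entire thin wedge; in particular the gluing must now also handle the pinch points where $\eta$ touches itself, and the argument of Thm~\ref{thm:main} does not localize across these automatically. Your concluding remark that ``the main differences concern the behaviour near the two marked points $0$ and $\infty$'' misidentifies the real difficulty: the marked points are handled by a short absolute-continuity argument, whereas the thin case requires a separate inductive scheme. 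The paper deals with this by introducing stopping times $\tau_j,\sigma_j$ at which $\eta$ enters and exits concentric circles around a fixed $z$, then inducting on $j$: at each stage it conformally maps a component of $B_{s'}(z)\setminus\eta|_{[0,\tau_j]}$ to $\h$, uses mutual absolute continuity of the stopped whole-plane SLE segment with a chordal $\SLE_{\gamma^2}(\mathfrak{w}-2)$ and of the transformed field with a free-boundary GFF, and then applies (the proof of) Thm~\ref{thm:main} locally, pushing $\delta$-neighbourhoods of the already-processed part of $\eta$ to zero to pass to the next $j$. Without this or an equivalent mechanism, the self-intersecting case of Thm~\ref{thm:16} would remain unproved by your proposal.
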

	Here the surface $(U,h|_U)$ is a quantum wedge of weight $\mathfrak{w}$ by~\cite[Thm~1.5]{dms}, and this result tells us that we can recover the original cone from this wedge via metric gluing.
	\begin{figure}\label{fig:thm-1-2}
		\includegraphics[width=0.9\textwidth]{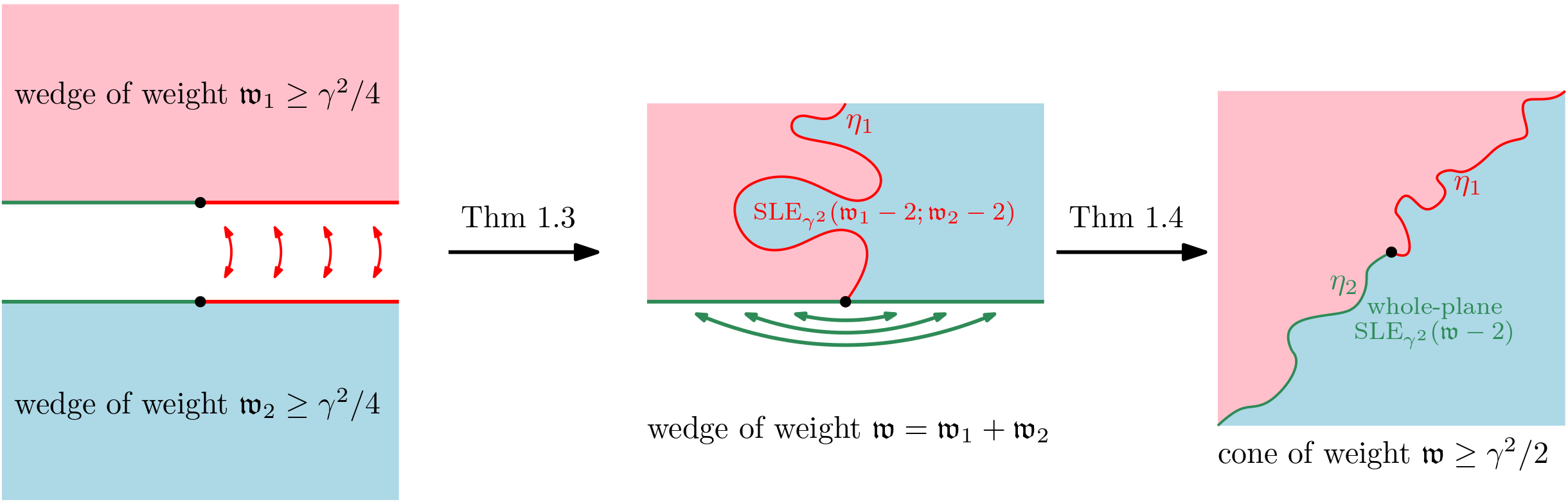}
		\caption{An illustration of Theorems \ref{thm:main} and \ref{thm:16} in the case of two thick wedges ($\mathfrak{w}_1$, $\mathfrak{w}_2 \ge \gamma^2/4$) which are glued along half their boundaries to yield a wedge of weight $\mathfrak{w} = \mathfrak{w}_1 + \mathfrak{w}_2$, then along the other half to yield a cone of weight $\mathfrak{w}$.}
	\end{figure}
	
	We also generalize~\cite[Thm~1.7]{metglu}, which says that a quantum cone cut by a space-filling variant of SLE into countable collection of beads of thin wedges can be recovered by metrically gluing the beads along their boundaries:
	\begin{thm}\label{thm:17}
		Fix $\gamma \in (0,2)$ and let $(\mathbb{C},h,0,\infty)$ be a $\gamma$-quantum cone with associated $\gamma$-LQG metric $\fd_h$. Let $\eta'$ be an independent whole-plane space-filling $\mathrm{SLE}_{16/\gamma^2}$ from $\infty$ to $\infty$ as defined in~\cite[Footnote 4]{dms}, reparametrized by \emph{quantum time} (i.e., so that $\mu_h(\eta([a,b])) = b-a$), with $\eta'(0) = 0$. Let $\mathcal{U}^-$ (resp.\ $\mathcal{U}^+$) be the set of connected components of the interior of $\eta'((-\infty,0])$ (resp.\ $\eta'([0,\infty))$) and for each $U \in \mathcal{U}^-\cup\mathcal{U}^+$ let $\fd_{h|_U}$ be the $\gamma$-LQG metric induced by~$h|_U$. Then almost surely, each~$\fd_{h|_U}$ extends by continuity (w.r.t.\ the Euclidean metric) to $\partial U$, and $(\mathbb{C},\fd_h)$ is the metric quotient of $$\bigsqcup_{U \in \mathcal{U}^-\cup\mathcal{U}^+} (\overline{U},\fd_{h|_U})$$ under the obvious identification.
	\end{thm}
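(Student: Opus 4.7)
The plan is to reduce Theorem~\ref{thm:17} to Theorems~\ref{thm:main} and~\ref{thm:16} via a two-stage decomposition together with a countable-limit argument.

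The continuous extension of each $\fd_{h|_U}$ to $\partial U$ follows from the bi-H\"{o}lder boundary continuity estimates established earlier in the paper, which are the same ones underlying the single-bead case of Theorem~\ref{thm:main}. Indeed, each $U \in \mathcal{U}^-\cup\mathcal{U}^+$ is, in law, a single bead of a thin quantum wedge, so those estimates apply uniformly.

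For the identification of the gluing pseudometric $d_{\mathrm{quot}}$ with $\fd_h$, the inequality $\fd_h \leq d_{\mathrm{quot}}$ is immediate, since any bead-wise concatenation in the quotient realises an actual path in $(\mathbb{C},\fd_h)$. For the reverse inequality I would proceed in two stages. First, by results of~\cite{dms}, the surface $(\eta'((-\infty,0]),h)$ (resp.\ $(\eta'([0,\infty)),h)$) is a quantum wedge $\mathcal{W}^-$ (resp.\ $\mathcal{W}^+$) whose beads are exactly the elements of $\mathcal{U}^-$ (resp.\ $\mathcal{U}^+$), and the interface at time $0$ between them is (the time reversal of) an $SLE_{\gamma^2}$-type curve of the sort appearing in Theorem~\ref{thm:16}. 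Applying Theorem~\ref{thm:16} (or its natural adaptation from the thick to the thin wedge setting) reduces the problem to showing that each $\mathcal{W}^\pm$ is the metric quotient of its own beads. Theorem~\ref{thm:main} applied iteratively along consecutive beads of $\mathcal{W}^\pm$ then identifies $\fd_{h|_{\mathcal{W}^\pm}}$ with the corresponding metric quotient on any finite subcollection of beads.

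The main obstacle is passing to the limit over the full countable collection of beads. Concretely, one must show that for any $\fd_h$-geodesic $\gamma:[0,T]\to\mathbb{C}$ and any $\varepsilon>0$, there is a concatenation of finitely many bead-paths of total length at most $T+\varepsilon$. I would expect to argue, using the area/length/distance estimates and the boundary H\"{o}lder continuity proven earlier in the paper, that (i)~for any compact $K\subset \mathbb{C}$ and any $\delta>0$ only finitely many beads intersecting $K$ have $\fd_h$-diameter exceeding $\delta$, and (ii)~the cost of routing $\gamma$ around the beads of diameter below $\delta$ tends to zero as $\delta\to 0$. Ensuring that these estimates are uniform across the countable collection of beads, and carefully controlling the accumulation of ``gluing error'' along the interface curve, is the key technical step.
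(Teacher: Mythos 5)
Your overall two-stage reduction to Theorems~\ref{thm:main} and~\ref{thm:16} is the right shape, but the first stage as you describe it does not go through, and the gap is in the identification of the relevant curve. The boundary of $\eta'((-\infty,0])$ in $\mathbb{C}$ is not a single $\SLE_{\gamma^2}$-type curve from~0 to~$\infty$: it is the \emph{union} of the left boundary $\eta_L$ and the right boundary $\eta_R$ of $\eta'((-\infty,0])$, each of which is a separate SLE-type curve from~0 to~$\infty$. Neither Theorem~\ref{thm:main} nor Theorem~\ref{thm:16} is formulated for gluing two quantum surfaces along a union of two SLE curves, so the reduction you propose (cone $=$ quotient of $\mathcal{W}^-$ and $\mathcal{W}^+$ via a single application of Theorem~\ref{thm:16}) is not available as stated. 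The paper resolves this by cutting one curve at a time in the opposite nesting: $\eta_L$ is a whole-plane $\SLE_{\gamma^2}(2-\gamma^2)$, so Theorem~\ref{thm:16} gives that the $\gamma$-quantum cone is the metric quotient of the weight-$(4-\gamma^2)$ wedge $\mathbb{C}\setminus\eta_L$ under identifying the two sides of $\eta_L$; then, conditionally on $\eta_L$, the curve $\eta_R$ is an $\SLE_{\gamma^2}(-\gamma^2/2;-\gamma^2/2)$ across that wedge, and Theorem~\ref{thm:main} cuts it into two wedges of weight $2-\gamma^2/2$, which are exactly $\mathcal{W}^\pm$.

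Your second stage and the ``countable limit'' worry are also aimed at the wrong target. Consecutive beads of a thin wedge meet at isolated pinch points, not along SLE curves, so ``iterative application of Theorem~\ref{thm:main} along consecutive beads'' is not the relevant mechanism: the statement that a thin wedge with its internal metric is the metric quotient of its beads is essentially automatic from the fact that every admissible path between distinct beads must pass through the separating cut points. The genuine countability issue that does arise (the curve $\eta_L$, respectively $\eta_R$, cutting across infinitely many beads when the ambient wedge is thin) is handled inside the thin-wedge proof of Theorem~\ref{thm:16} given in the paper, via localization to balls $B_r(z)$ and an inductive argument over stopping times $\tau_j,\sigma_j$ for the curve, rather than via uniform bead-diameter estimates of the type you outline in (i)--(ii).
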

	By~\cite[Thm~1.2, Thm~1.5]{dms}, when $\gamma > \sqrt{2}$ the surfaces $(U,h|_U)$ here are single beads of thin wedges of weight $2-\gamma^2/2$, whereas for $\gamma \le \sqrt{2}$ they are thick wedges. 
	Finally, we generalize~\cite[Thm~1.8]{metglu}, in which we recover a quantum sphere as a quotient of a set of surfaces into which it is cut by a space-filling $\mathrm{SLE}_{16/\gamma^2}$.
	
	\begin{thm}\label{thm:18}
		Fix $\gamma \in (0,2)$ and let $(\mathscr{C},h,-\infty,\infty)$ be a unit area quantum sphere with associated $\gamma$-LQG metric $\fd_h$. Let $\eta'$ be an independent whole-plane space-filling $\mathrm{SLE}_{16/\gamma^2}$ from $+\infty$ to $+\infty$ and reparametrize $\eta'$ by quantum time. Let $T$ be a $U[0,1]$ variable independent of everything else, and let $\mathcal{U}^-$ (resp.\ $\mathcal{U}^+$) be the set of connected components of the interior of $\eta'([0,T])$ (resp.\ $\eta'([T,1])$). For each $U \in \mathcal{U}^-\cup\mathcal{U}^+$ let $\fd_{h|_U}$ be the $\gamma$-LQG metric induced by $h|_U$. Almost surely, each $\fd_{h|_U}$ extends by continuity (w.r.t.\  the Euclidean metric) to $\partial U$, and $(\mathbb{C},\fd_h)$ is the metric quotient of $$\bigsqcup_{U \in \mathcal{U}^-\cup\mathcal{U}^+} (\overline{U},\fd_{h|_U})$$ under the obvious identification.
	\end{thm}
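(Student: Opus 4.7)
The plan is to deduce Theorem~\ref{thm:18} from Theorem~\ref{thm:17} by exploiting local absolute continuity between the unit-area quantum sphere and the $\gamma$-quantum cone. On any compact subset $K \subset \mathscr{C}\setminus\{-\infty,+\infty\}$, the sphere field $h$ (possibly after conditioning on an event of positive probability involving the total quantum area or a nearby circle average) has law mutually absolutely continuous with the corresponding restriction of a $\gamma$-quantum cone field; this is standard from the construction of the two surfaces. Since $\eta'$ is independent of $h$ and is built from a Markovian SLE-type construction, this absolute continuity extends to the joint law of $(h,\eta')$ when restricted to $K$, and in particular to the cell decomposition $\mathcal{U}^-\cup\mathcal{U}^+$ there.

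First I would verify that each cell $U \in \mathcal{U}^-\cup\mathcal{U}^+$ is almost surely a bounded component whose closure avoids $\{-\infty,+\infty\}$ (the space-filling $\mathrm{SLE}$ cells are a.s.\ compact and the marked points are avoided by generic cells). The continuity extension of $\fd_{h|_U}$ to $\partial U$ is a purely local statement on $U$, so it transfers from Theorem~\ref{thm:17} in the cone setting via absolute continuity on a small neighbourhood of $\overline{U}$. The extra marked point $\eta'(T)$ is $\mu_h$-uniform on $\mathscr{C}$ by the quantum-time reparametrization, so it is a.s.\ a generic bulk point and introduces no additional log-singularity to contend with.

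For the metric quotient identification, the inequality $\fd_h \le \fd_{\mathrm{quot}}$ is immediate from the locality property of $\fd_h$, since any finite chain of paths within individual cells furnishes a path in $\mathscr{C}$ of the same total length. The reverse inequality reduces, as in the proofs of Theorems~\ref{thm:main} and~\ref{thm:17}, to showing that $\fd_h$-geodesics can be approximated in length by paths crossing the interface $\eta'$ only finitely many times. This is a statement about how geodesics behave near the fractal cell boundaries on compact subsets, and the analogous statement in the cone setting (encoded in Theorem~\ref{thm:17}) transfers to the sphere via absolute continuity applied to a countable cover of $\mathscr{C}\setminus\{-\infty,+\infty\}$ by compact sets.

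The main obstacle I expect is the global-to-local reduction: a priori an approximating chain realising $\fd_{\mathrm{quot}}(x,y)$ could use cells whose closures accumulate at $\pm\infty$, so local absolute continuity on any fixed compact set does not directly rule out a shorter quotient distance. To close this gap one needs uniform diameter and hitting estimates for cells near each marked point, ensuring that chains venturing near $\pm\infty$ cannot beat those staying within a fixed compact region. These estimates should follow by combining the GFF-based distance--area--boundary-length estimates established earlier in the paper with the cone approximation of the sphere near each endpoint, together with a Borel--Cantelli argument ruling out pathological chains.
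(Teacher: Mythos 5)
Your overall strategy — deduce Theorem~\ref{thm:18} from Theorem~\ref{thm:17} by local absolute continuity between the unit-area quantum sphere and the $\gamma$-quantum cone, leveraging the independence of $\eta'$ and the compactness of the space-filling cells — is exactly the route the paper takes, which simply cites ``the same absolute continuity argument (between quantum spheres and $\gamma$-quantum cones) as in~\cite{metglu}.''

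However, the ``main obstacle'' you flag at the end is not a real one, and your framing of it suggests a misunderstanding of which direction of the inequality requires care. To establish $\fd_{\mathrm{quot}}\le\fd_h$ you do not need to analyse or constrain any chain that (nearly) attains the infimum in the definition of the quotient metric; you only need to \emph{exhibit} some chain of total internal length at most $\fd_h(z_1,z_2)+\ve$. The construction of that chain (as in the proof of Lemma~\ref{lemma:49}) starts from the $\fd_h$-geodesic between quantum-typical $z_1$, $z_2$, which by the analogue of Lemma~\ref{lem:44} (applicable here since a unit-area quantum sphere looks like a $\gamma$-quantum cone near each endpoint, with $\gamma<Q$) almost surely avoids $\pm\infty$ and hence lies in a fixed compact annulus. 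The approximating chain is obtained by modifying this geodesic locally near finitely many crossings of $\eta'$, so it too stays in a compact region; no uniform estimates near $\pm\infty$ or Borel--Cantelli argument over cells accumulating at the endpoints are needed. Once you combine the two inequalities, the quotient metric cannot be ``shorter'' anywhere, since $\fd_h\le\fd_{\mathrm{quot}}$ is the trivial direction from locality of the LQG metric. With this clarification your proposal matches the paper's proof.
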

	
	In~\cite{metglu} many of the preliminary results are proved using the results in~\cite{lqg2}. In that paper, the metric $\fd_h$ is constructed in the case $\gamma = \sqrt{8/3}$ (the more general $\gamma \in (0,2)$ result was not established until later). It is then shown that for $\gamma = \sqrt{8/3}$, there almost surely exists an isometry from the quantum sphere to another object, the \emph{Brownian map} introduced by Le Gall~\cite{legall} (whose law intuitively describes that of a metric space chosen ``uniformly at random'' from those spaces with the topology of a sphere), and further that this isometry almost surely pushes forward the LQG measure $\mu_h$ to the natural measure on the Brownian map. Similar isomorphisms of metric measure spaces are established between other quantum and Brownian surfaces. Distances in these surfaces have explicit formulae in terms of Brownian motion-type processes.
	
	Since the equivalence between quantum and Brownian surfaces only holds for $\gamma = \sqrt{8/3}$, the techniques used in~\cite[\S3.2]{metglu} to establish estimates on areas, distances and boundary lengths are not available in this more general setting. We instead obtain analogues of these estimates largely via GFF methods, as well as the conformal welding properties of quantum wedges, which let us transfer our understanding of the interior behaviour of our surfaces to their boundaries (sometimes using existing results about the SLE curves that form the welding interfaces). In fact, in the case $\gamma \neq \sqrt{8/3}$ the existing literature only addresses LQG metrics associated to whole-plane or \emph{zero-boundary} GFFs; this paper provides the first treatment of the metric on surfaces with free boundary conditions for the complete subcritical case $\gamma \in (0,2)$. In particular we establish that the LQG metric given by a free-boundary GFF actually does extend continuously to the boundary:
	\begin{prop}\label{prop:dh}
		Fix $\gamma \in (0,2)$. Let $h$ be a free-boundary GFF on $\h$ with the additive constant fixed so that the semicircle average $h_1(0)$ equals zero, and let $\fd_h$ be the associated $\gamma$-LQG metric on $\h$. Then $\fd_h$ almost surely extends by continuity to a metric on $\overline{\h}$ that induces the Euclidean topology on $\overline{\h}$.
	\end{prop}
	Some of our other results about the $\gamma$-LQG metric on the boundary may be of independent interest. For instance, we establish local bi-H\"{o}lder continuity w.r.t.\ the Euclidean metric:
	\begin{prop}\label{prop:biholder}
		In the setting of Prop.~\ref{prop:dh}, there are exponents $\alpha_1$, $\alpha_2 > 0$ such that, almost surely, for each compact $K \subset \overline{\h}$, there exists $C > 0$ finite such that
		\begin{equation*}
			C^{-1}|z-w|^{\alpha_1} \le \fd_h(z,w) \le C|z-w|^{\alpha_2}
		\end{equation*}
		for each $z,w \in K$.
	\end{prop}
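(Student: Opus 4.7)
The plan is to reduce the bi-Hölder bounds to single-scale polynomial tail estimates on LQG distances via a dyadic chaining argument together with the Weyl scaling property $\fd_{h+c} = e^{\xi c}\fd_h$ (for a constant $c$, where $\xi > 0$ is the LQG metric scaling exponent). In the interior of $\h$, the analogous bi-Hölder continuity is known: the moment bounds underlying the construction of $\fd_h$ in~\cite{gm}, combined with local absolute continuity of the free-boundary GFF with respect to the whole-plane GFF on compacts away from $\partial\h$, give Hölder bounds in both directions. The new work is near the boundary, where I will use the semicircle-average process $(h_r(x))_{r>0}$ for $x \in \R$.

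For the upper bound, it suffices to show that for each compact $K \subset \overline{\h}$, each $x \in K$, and each small $r > 0$, the $\fd_h$-diameter of $B_r(x) \cap \overline{\h}$ is at most $r^{\alpha_2}$ with polynomial-in-$1/r$ tail, so that Borel--Cantelli over a dyadic covering of $K$ will yield the almost sure statement. After translating so that $x \in \R$ and subtracting the semicircle average $h_r(x)$, the law of $h - h_r(x)$ restricted to $B_r(x) \cap \h$ is, modulo an absolutely continuous change of measure, that of a fixed-scale field whose LQG diameter has a polynomial moment bound. Multiplying back by $e^{\xi h_r(x)}$, and using that $h_r(x)$ is Gaussian of variance $O(\log 1/r)$, yields the required polynomial tail.

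For the lower bound, for each pair $z, w \in K$ with $|z-w| = r$, I will find a semi-annulus $A \subset \overline{\h}$ of inner radius $\sim r$ and outer radius $\sim r^{1-\delta}$ (for some small $\delta > 0$) that separates $z$ from $w$, and lower-bound the $\fd_h$-crossing distance across $A$ by a positive power of $r$. This will follow from a single-scale polynomial lower tail estimate on the crossing distance of a semi-annulus, combined with the same Weyl-scaling / semicircle-average reduction as in the upper bound.

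The hard part will be proving the single-scale lower tail bound on crossing distances across boundary semi-annuli. Interior crossing-distance estimates, of the kind used in the LFPP tightness of~\cite{dddf} and the axiomatic characterization of~\cite{gm}, do not transfer directly, because the free-boundary GFF is not absolutely continuous with respect to the whole-plane or zero-boundary GFF up to $\partial\h$. I plan to address this either by adapting those LFPP tightness arguments to the free-boundary setting, or by exploiting the conformal welding characterization of a quantum wedge as the welding of two halves to transfer whole-plane estimates across the welding interface -- a strategy consistent with the overall approach of the paper, which uses welding to pass information between the bulk and the boundary. Once this single-scale bound is in hand, the dyadic chaining argument above completes the proof of Proposition~\ref{prop:biholder}.
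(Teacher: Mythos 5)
Your upper-bound strategy — dyadic covering, Weyl scaling by $e^{\xi h_r(x)}$ with $h_r(x)$ the semicircle average, and polynomial tails for the LQG diameter of a fixed-scale box — is essentially what the paper does in Prop.~\ref{prop:holder}: cover $[0,1]\times(0,1]$ by dyadic rectangles $S_{n,k}$, apply the diameter moment bound of~\cite[Prop.~3.9]{dfgps} for $\fd_{h^{\mathrm{wp}}}$, control the harmonic correction $\widetilde{\mathfrak{h}}=h-\mathring{h}$ by Borell--TIS plus a circle-average variance computation, then Borel--Cantelli and chaining. The paper's actual proof of the right-hand inequality of Prop.~\ref{prop:biholder} is instead by pushing $K$ through the H\"older-continuous inverse uniformization $\varphi$ of an $\SLE_{\gamma^2}$-complementary domain and using interior H\"older continuity, but your route is the one developed in \S\ref{section:lqg} and is viable for the upper bound.

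The lower bound is where there is a genuine gap. You correctly identify the difficulty (a lower tail near $\partial\h$, which does not follow by absolute continuity from the interior), but neither proposed fix is a proof. Redoing the LFPP tightness analysis of~\cite{dddf,gm} for the free-boundary field is a research programme, not a lemma: before a single-scale crossing lower bound can even be formulated one must know that $\fd_h$ extends to a positive-definite function on $\overline{\h}$, which is Prop.~\ref{prop:posdef} and is itself established via conformal welding. Your second suggestion — transfer estimates across a welding interface — is the high-level idea the paper uses, but the realization requires specific ingredients absent from your sketch. The paper does \emph{not} prove a boundary semi-annulus crossing lower bound. It instead shows that the uniformization $\psi:W\to\h$ of an $\SLE_{\gamma^2}$-complementary component $W$ of a weight-$4$ wedge is locally H\"older, along the SLE curve $\eta$, with respect to the \emph{ambient} LQG metric $\fd_h$. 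This rests on: (i) a new SLE ``bottleneck'' estimate (Prop.~\ref{prop:bottleneck}), proved from the non-self-tracing hypothesis of~\cite{mmq} together with LQG volume bounds for Euclidean balls (Prop.~\ref{prop:volume_moment}) and the H\"older-domain property of $W$ from~\cite{rohde,GMS_multifractal}; (ii) Beurling/Poisson-kernel estimates comparing $|\psi(\eta(s))-\psi(\eta(t))|$ with $\mathrm{diam}\,\eta([s,t])$; and (iii) interior bi-H\"older continuity relating $|\eta(s)-\eta(t)|$ to $\fd_h(\eta(s),\eta(t))$. Combining these with $\fd_{\widetilde{h}}(\psi(\cdot),\psi(\cdot))\ge\fd_h(\cdot,\cdot)$ gives the boundary lower bound (Prop.~\ref{prop:bihldrr}), and a further three-case analysis (splitting on the relative sizes of $\mathrm{Im}\,z$, $\mathrm{Im}\,w$ and $|z-w|^\rho$) extends it to arbitrary pairs in $\overline{\h}$. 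Without (i)--(iii) and the case analysis, I do not see how to obtain the single-scale lower tail bound your chaining relies on.
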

	It should be noted that, although we obtain the right-hand inequality for any $\alpha_2 < \xi(Q-2)$ which is the optimal exponent even away from the boundary~\cite[Thm~1.7]{dfgps}, we make no attempt to obtain the optimal exponent for the left-hand inequality, and we do not expect that the value for $\alpha_1$ resulting from our proof is optimal. During the proof we establish a new regularity estimate for $\mathrm{SLE}_\kappa$ curves with $\kappa \in (0,4)$. Namely, we combine the ``non-self-tracing'' result in~\cite{mmq} for $\mathrm{SLE}_\kappa$ curves with $\kappa \in (0,8)$ with an argument based on conformal covariance of the LQG measure that rules out large bottlenecks to establish that, when $\kappa \in (0,4)$, the (Euclidean) diameter of an $\mathrm{SLE}_\kappa$ segment is at most polynomial in the distance between its endpoints. (Recall that $\kappa \le 4$ is the range for which $\mathrm{SLE}_\kappa$ is simple, though we do not investigate the critical value $\kappa = 4$ here.)
	\begin{prop}\label{prop:bottleneck}
		For each $\kappa \in (0,4)$ there is an exponent $\zeta > 0$ such that the following holds. Let $\eta$ be an $\mathrm{SLE}_\kappa$ in $\h$ from 0 to $\infty$ (with any parametrization). For each compact $K \subset \h$, there almost surely exists $C\in(0,\infty)$ such that
		\begin{equation*}
			\mathrm{diam}\, \eta([s,t]) \le C|\eta(s)-\eta(t)|^\zeta
		\end{equation*}
		whenever $\eta(s)$, $\eta(t) \in K$.
	\end{prop}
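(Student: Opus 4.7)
My plan is to reduce the statement to a quantitative non-self-tracing estimate for $\mathrm{SLE}_\kappa$ derived from \cite{mmq}, and conclude via a Borel--Cantelli argument over dyadic scales. The key geometric observation is that, since $\eta$ is simple for $\kappa \in (0,4)$, a bottleneck of the form $\mathrm{diam}\,\eta([s,t]) \ge R$ with $|\eta(s)-\eta(t)| \le r$ and $r \ll R$ says precisely that the curve enters a small ball $B(z,r)$ containing both $\eta(s)$ and $\eta(t)$, leaves the much larger ball $B(z,R)$, then returns to $B(z,r)$; so ruling out such loop-like excursions at all small scales will prove the proposition.

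For each dyadic $r_n = 2^{-n}$, I would fix a net $\{z_i^n\}$ of $O(r_n^{-2})$ points of spacing $r_n$ in the compact $K$. Let $\zeta \in (0,1)$ be a small exponent to be chosen, and define
\begin{equation*}
	A_i^n = \bigl\{ \eta \text{ enters } B(z_i^n, 2r_n), \text{ later exits } B(z_i^n, r_n^\zeta), \text{ then re-enters } B(z_i^n, 2r_n) \bigr\}.
\end{equation*}
Failure of the bottleneck bound at scale $r_n$ with exponent $\zeta$ forces $\bigcup_i A_i^n$ to occur. If one obtains a polynomial non-self-tracing bound $\p(A_i^n) \le C r_n^{q(1-\zeta)}$ uniformly in $i,n$ with some $q = q(\kappa) > 0$, then choosing $\zeta$ so small that $q(1-\zeta) > 3$ makes $\sum_n \p\bigl(\bigcup_i A_i^n\bigr) \le \sum_n O(r_n^{q(1-\zeta)-2})$ finite, and Borel--Cantelli supplies the almost sure estimate.

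The core step is thus proving this per-scale bound. I would condition on the first entry time $\tau_i^n$ of $\overline{B(z_i^n, 2r_n)}$ by $\eta$ and on $\eta|_{[0,\tau_i^n]}$; by the domain Markov property the continuation is a conformal image of $\mathrm{SLE}_\kappa$ in the unbounded component $D$ of $\h \setminus \eta([0,\tau_i^n])$. Mapping $D$ to $\h$ by a conformal map $\varphi$ fixing $\infty$, the pair $(B(z_i^n, 2r_n), B(z_i^n, r_n^\zeta))$ becomes a pair of nested neighbourhoods of a prime end of $\partial\h$; the non-self-tracing statement of \cite{mmq} then provides a polynomial-in-ratio bound for the probability that the image curve, after exiting $\varphi(B(z_i^n, r_n^\zeta))$, returns to $\varphi(B(z_i^n, 2r_n))$. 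Combining this with standard Koebe-type conformal distortion estimates to relate $\varphi(B(z_i^n, \cdot))$ back to Euclidean scales gives the required polynomial decay.

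The main obstacle will be extracting a clean quantitative polynomial exponent $q > 0$ from the MMQ result (which is most naturally stated qualitatively) and making the bound uniform over $i, n$, including for base points $z_i^n$ close to $\partial \h$ or with the past trace $\eta([0,\tau_i^n])$ passing near $B(z_i^n, r_n^\zeta)$, since in those situations the conformal distortion of $\varphi$ becomes large. Handling this likely requires an annular exploration argument that first produces a favourable separation between $z_i^n$ and $\eta([0,\tau_i^n])$ on an event of polynomially high probability, before the non-self-tracing estimate is applied on the remaining curve.
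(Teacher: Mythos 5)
Your approach is genuinely different from the paper's, and it has a substantive gap in the place you yourself flag as "the main obstacle."

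The paper does \emph{not} try to prove a quantitative probability decay for bottleneck events. Instead it introduces an auxiliary LQG structure and uses the LQG area measure as a ruler. Concretely: one samples an independent $(\gamma-2/\gamma)$-quantum wedge $(\h,h,0,\infty)$ and uses that $\eta$ cuts it into two $\gamma$-wedges to get a conformal map $\psi:W\to\h$ under which $\widetilde{h}=h\circ\varphi+Q\log|\varphi'|$ is again a wedge. The MMQ non-tracing hypothesis is applied in precisely the qualitative form you describe — it produces, for any time $u\in(s,t)$, a ball $B$ of radius $(\tfrac14\mathrm{diam}\,\eta([s,t]))^\alpha$ disconnected from $\infty$ by $\eta\cup[\eta(s),\eta(t)]$. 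Then Beurling plus a gambler's-ruin estimate show $\mathrm{diam}\,\psi(B)=O(\ell^{1/2})$, and the two-sided polynomial volume bounds of Proposition~\ref{prop:volume_moment} give $\mu_h(B)=\Omega((\mathrm{diam}\,\eta([s,t]))^{\alpha\zeta_1})$ and $\mu_h(B)=\mu_{\widetilde{h}}(\psi(B))=O(\ell^{\zeta_2/2})$, which forces $\mathrm{diam}\,\eta([s,t])=O(\ell^{\zeta_2/(2\alpha\zeta_1)})$. The argument is pathwise: given the curve, the MMQ hypothesis and the volume estimates hold a.s.\ simultaneously at all small scales, so no Borel--Cantelli over events $A_i^n$ is needed and no polynomial probability bound for an individual bottleneck event is ever invoked.

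The gap in your proposal is exactly the step you call the core step: deducing a polynomial-in-ratio bound $\p(A_i^n)\le C\,r_n^{q(1-\zeta)}$ from~\cite{mmq}. The non-tracing result there is an almost-sure geometric statement about the curve (existence of accessible balls near every point of the trace below some random scale $\delta_0$), not a tail estimate for the probability that the post-$\tau_i^n$ curve re-enters a small ball after leaving a larger one. Converting one into the other is not a bookkeeping issue: you would essentially be re-proving a one-arm/annulus-crossing-type estimate for $\mathrm{SLE}_\kappa$ uniformly over the configuration of the past hull near $z_i^n$, including the degenerate cases where the past trace already intrudes into the annulus, which is where the Koebe distortion blows up. You flag this and sketch an annular separation step, but the separation lemma you would need is itself a nontrivial quantitative input that neither~\cite{mmq} nor your proposal supplies, so as written the argument is incomplete at its central step. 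If you did have such a polynomial annulus estimate in hand, your route would be more self-contained (no auxiliary LQG field), but the paper's approach is precisely designed to avoid needing one.

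One smaller point: your Borel--Cantelli argument controls only the events at dyadic scales $r_n$, so you would still need a short interpolation to pass to arbitrary $|\eta(s)-\eta(t)|$; this is routine but should be stated.
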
 
	\subsection{Outline}
	This paper is structured as follows. In \S\ref{section:prelims} we recall the definition and basic properties of the GFF, SLE, quantum wedges and cones, and the Liouville quantum gravity metric. In \S\ref{section:lqg} we show that the LQG metric corresponding to a free-boundary GFF on $\h$ extends continuously to a metric on $\overline{\h}$ that is locally H\"{o}lder continuous w.r.t.\ the Euclidean metric. In \S\ref{section:bh} we prove that the LQG metric on the boundary is locally H\"{o}lder continuous w.r.t.\ the LQG boundary measure, and that the Euclidean metric is locally H\"{o}lder continuous w.r.t.\ the LQG metric. In \S\ref{section:volbound} we use an SLE/GFF coupling to establish a bound on the amount of LQG area within an LQG-metric neighbourhood of a boundary segment. Finally \S\ref{section:pfs} contains the proofs of the main results.
	\subsection*{Acknowledgments}
		L.H.\ was supported by the University of Cambridge Harding Distinguished Postgraduate Scholars Programme. J.M.\ was supported by ERC starting grant 804166 (SPRS). The authors would like to thank an anonymous referee for helpful comments on a previous version of this article. 
	\section{Preliminaries}\label{section:prelims}
	\subsection{The Gaussian free field}
	The \textbf{Gaussian free field} (GFF) is a random process analogous to Brownian motion, where the analogue of the time parameter ranges over a domain in the complex plane. We recall the definition of the \emph{zero-boundary} GFF from~\cite[Def.~2.10]{gffm}, which begins with an open set $D \subset \mathbb{C}$ with \emph{harmonically non-trivial} boundary (meaning that a Brownian motion started from $z\in D$ will almost surely hit $\partial D$). We let $H_s(D)$ be the set of smooth functions with compact support contained in $D$, equipped with the \emph{Dirichlet inner product}
	\begin{equation*}
		(f,g)_\nabla = \frac{1}{2\pi} \int_D \nabla f(x) \cdot \nabla g(x) \, dx,
	\end{equation*}
	and complete this inner product space to a Hilbert space $H(D)$.
	Taking an arbitrary orthonormal basis $(\varphi_n)$ of $H(D)$ and letting $(\alpha_n)$ be i.i.d.\ $N(0,1)$ variables, the zero-boundary GFF in $D$ is then defined as a random linear combination of elements of $H(D)$ given by
	\begin{equation}\label{eq:gffdef}
		h = \sum_n \alpha_n \varphi_n.
	\end{equation}
	It can be shown (see~\cite[Prop.~2.7]{gffm}) that this sum converges almost surely in the space of distributions and in the fractional Sobolev space $H^{-\ve}(D)$ for each $\ve > 0$ (even though it does \emph{not} converge pointwise or in $H(D)$ itself) and that the law of the limit $h$ does not depend on the choice of basis $(\varphi_n)$. This limiting distribution $h$ is the \textbf{zero-boundary Gaussian free field}. Writing $(\cdot,\cdot)$ for the usual $L^2$ inner product, we can define for $f \in H_s(D)$
	\begin{equation*}
		(h,f) := \lim_{n\to\infty} \left(\sum_n \alpha_n \varphi_n, f \right).
	\end{equation*}
	Note that, for each $f \in H_s(D)$, this sum converges almost surely as an $L^2$-bounded martingale. Indeed, the limit almost surely exists for all $f\in H_s(D)$ simultaneously, and is such that $f \mapsto (h,f)$ is a continuous functional on $H_s(D)$.
	
	Moreover, one can define the $L^2$ pairing of $h$ with certain other measures. Most importantly for us, if $h$ is a zero-boundary GFF, $\ve > 0$ and $B(z,\ve) \subset D$, we denote by $h_\ve(z)$ the \textbf{circle average} of $h$ on the circle $\partial B(z,\ve)$, defined as $(h,\rho_{z,\ve}) = -2\pi (h,\Delta^{-1} \rho_{z,\ve})_\nabla$ where $\rho_{z,\ve}$ is the uniform probability measure on $\partial B(z,\ve)$. In~\cite[Prop.~2.1]{hmp}, it is shown that for each fixed $z\in D$, the process $\{h_{e^{-t}}(z) \colon B(z,e^{-t})\subset D \}$ has the covariance structure of a standard Brownian motion on the interval $\{t\colon B(z,e^{-t}) \subset D \}$, and that the circle average process $\{h_{e^{-t}}(z) \colon B(z,e^{-t}) \subset D \}$ has a version that is continuous in both $t$ and $z$.
	
	Given a function $g$ on $\partial D$ such that there exists a unique function $\mathfrak{h}$ on $\overline{D}$ which is continuous at all but finitely many points of $\overline{D}$, equals $g$ on $\partial D$ and is harmonic in $D$, we define the law of a GFF in $D$ with boundary data $g$ to be the law of $h + \mathfrak{h}$ where $h$ is a zero-boundary GFF in $D$.
	
	We can instead set $D$ to be all of $\mathbb{C}$. In this case, as in~\cite[\S2.2.1]{ig4}, we define the \textbf{whole-plane Gaussian free field} $h$ in the same way, except that we consider $h$ \emph{modulo additive constant}. This means that we consider the equivalence relation~$\sim$ on the space of distributions defined by the condition that $h_1 \sim h_2$ if and only if~$h_1-h_2$ is a constant distribution, i.e.\ if and only if there exists $a\in\R$ such that $(h_1,f)-(h_2,f) = a\int_{\mathbb{C}} f(z)\, dz$ for all $f\in H_s(\mathbb{C})$. We take $(\varphi_n)$ to be a fixed orthonormal basis for $H(\mathbb{C})$ and sample i.i.d.\ $N(0,1)$ variables $\alpha_n$, and define $h$ as the equivalence class of~$\sim$ containing $\sum_n \alpha_n \varphi_n$. Equivalently, for~$f \in H_s(\mathbb{C})$, we only consider $(h,f)$ to be defined if $f \in H_{s,0}$, the subspace of those functions in~$H_s(\mathbb{C})$ whose integral over $\mathbb{C}$ is zero. Observe that the circle average process $(h_{e^{-t}}(z)-h_1(z))_{t\in\R}$ is well-defined, since $h_{e^{-t}}(z)-h_1(z)=(h,\rho_{z,e^{-t}}-\rho_{z,1})$ and $\int_{\mathbb{C}} d(\rho_{z,e^{-t}}-\rho_{z,1}) = 0$. It turns out that the process $(h_{e^{-t}}(z)-h_1(z))_{t\in\R}$ has a version which is a standard two-sided Brownian motion starting from 0.
	
	We can also \emph{fix} the additive constant, i.e.\ choose a representative of the equivalence class under~$\sim$. For example, we can stipulate that $h_1(0)=0$, obtaining a random distribution \emph{not} modulo additive constant. Note that we consider two \emph{random} distributions (on the same probability space) to be the same modulo additive constant if their difference is almost surely a constant distribution, i.e.\ constant in the spatial variable $z$; this constant need not be deterministic. Thus, if we have two ways of fixing the additive constant of a whole-plane GFF---say, the normalizations $h_1(0)=0$ and $h_e(0)=0$---their difference need not be a deterministic constant (indeed, in this case it is a standard Gaussian). We say that a random distribution $\widehat{h}$ on $\mathbb{C}$ (\emph{not} modulo additive constant) is a \textbf{whole-plane Gaussian free field plus a continuous function} if there exists a coupling of $\widehat{h}$ with a whole-plane GFF $h$ (with the additive constant fixed in some way) such that $\widehat{h}-h$ is almost surely a continuous function; note that this definition does not depend on how the additive constant for $h$ is fixed.
	
	We will also need the notion of the \textbf{free-boundary Gaussian free field} on a domain $D$ with harmonically non-trivial boundary, as defined in~\cite[\S3.2]{zip}. The free-boundary GFF is defined in the same way as the zero-boundary GFF but with $H(D)$ replaced by the Hilbert space closure $H^F(D)$ of the space of smooth functions whose gradients are in $L^2(D)$, considered modulo additive constant (these functions need not be compactly supported). Note that we have to consider functions only modulo additive constant in order for the Dirichlet inner product to be positive definite on this space. Note also that, since it is constructed as a limit (in a Sobolev space or space of distributions) of functions modulo additive constant, the free-boundary GFF is a distribution modulo additive constant.
	
	We note for later reference some key properties of the GFF. Firstly, it is straightforward to check that the Dirichlet inner product is conformally invariant in two dimensions, from which it follows that the GFF is also conformally invariant. In particular the whole-plane GFF, and the free-boundary GFF on $\h$, are invariant under scalings and translations (when considered modulo additive constant). Secondly, one has the \emph{domain Markov property}~\cite[\S2.6]{gffm}; for a zero-boundary GFF in $D$, this states that if $U \subseteq D$ is open, then we can write $h = h_1 + h_2$ where $h_1$ is a zero-boundary GFF on $U$ and~$h_2$ is a random harmonic function independent of $h_1$. This holds because $H(D)$ is the orthogonal direct sum of the space $H(U)$ and the subspace $H_\mathrm{harm}(U)$ of $H(D)$ given by functions that are harmonic in~$U$, so that one can define $h_1$ and $h_2$ as the orthogonal projections of $h$ onto, respectively, $H(U)$ and $H_\mathrm{harm}(U)$. Independence of $h_1$ and $h_2$ follows by taking the basis for $H(D)$ in~(\ref{eq:gffdef}) to be a union of bases for $H(U)$ and $H_\mathrm{harm}(U)$. Note that the domain Markov property also holds if $h$ is instead a whole-plane GFF, or if $h$ is a free-boundary GFF on $\h$ and $U=\h$. In these cases $h_2$ will only be defined modulo additive constant---this will be discussed further at the beginning of~\S\ref{section:lqg}. 
	
	If $h$ is a free-boundary GFF on $\h$, $x \in \partial \h$ and $\ve > 0$, we denote by $h_\ve(x)$ the \textbf{semicircle average} of $h$ on the semicircular arc $\partial B(x,\ve)\cap \h$, defined as $(h,\rho^+_{x,\ve}) = -2\pi (h,\Delta^{-1} \rho^+_{x,\ve})_\nabla$ where $\rho^+_{x,\ve}$ is the uniform probability measure on $\partial B(x,\ve) \cap \h$. Note that we are using the same notation for semicircle and circle averages, since for fields defined on $\h$ we will usually consider semicircle averages and for fields defined on $\mathbb{C}$ we will usually consider circle averages. If $h$ is defined on $D$, we can simply define $h_\ve(z)$ to mean the average of $h$ on $\partial B(z,\ve) \cap D$, which covers both these cases. However, for a field $h$ defined on $\h$, we will make it clear when we are considering circle averages as opposed to semicircle averages by writing $h_\ve^\mathrm{circ}(z)$ for the average of $h$ over the circle $\partial B(z,\ve)$ (when $B(z,\ve) \subset \overline{\h}$).
	
	By~\cite[Lemma~4.9]
	{dms}, another orthogonal decomposition of $H(\mathbb{C})$ is given by the \textbf{radial--lateral decomposition} into the space $H_\mathrm{rad}(\mathbb{C})$ of radially symmetric functions and the space $H_\mathrm{lat}(\mathbb{C})$ of functions with mean zero on all circles with centre at 0. We define the \emph{radial part} $h_\mathrm{rad}^\mathrm{wp}$ of a whole-plane GFF $h^\mathrm{wp}$, given by the projection of $h^\mathrm{wp}$ onto $H_\mathrm{rad}(\mathbb{C})$, as the function $h^\mathrm{wp}_{|\cdot|}(0)$ whose value on each circle centred at 0 is simply given the average of $h$ on that circle (and is only defined modulo additive constant). We also define the \emph{lateral part} $h_\mathrm{lat}^\mathrm{wp}$ of $h^\mathrm{wp}$ as the projection of $h^\mathrm{wp}$ onto $H_\mathrm{lat}(\mathbb{C})$, which is given by $h^\mathrm{wp}-h^\mathrm{wp}_{|\cdot|}(0)$ and is well-defined not just modulo additive constant. Then the radial--lateral decomposition implies that $h_\mathrm{rad}^\mathrm{wp}$ and $h_\mathrm{lat}^\mathrm{wp}$ are independent.
	
	One also has a radial--lateral decomposition for the free-boundary GFF on $\h$. Indeed, $H^F(\h)$ is the orthogonal sum of the space $H^F_\mathrm{rad}(\h)$ of functions that are radially symmetric about 0 and $H^F_\mathrm{lat}(\h)$ of functions that have the same average on all semicircles centred at 0 (recall that elements of $H^F(\h)$ are only defined modulo additive constant). Note that the radial part, i.e.\ the projection~$h^\mathrm{rad}$ of $h$ onto $H^F_\mathrm{rad}(\h)$, whose values are given by the semicircle average process centred at 0, is only defined modulo additive constant, but we can consider the lateral part $h^\mathrm{lat}=h-h^\mathrm{rad}$ as a function not just modulo additive constant, whose average is zero on every semicircle centred at~0. Again $h^\mathrm{rad}$ and $h^\mathrm{lat}$ are independent.
	
	Finally one can consider the radial--lateral decomposition for the free-boundary GFF $\widetilde{h}$ on the bi-infinite strip $\mathscr{S} = \mathbb{R}\times [0,\pi]$, which by conformal invariance can be obtained as $\widetilde{h}(\cdot) = h(\exp(\cdot))$ for $h$ a free-boundary GFF on $\h$. In this case the orthogonal decomposition of $H^F(\mathscr{S})$ is given~\cite[Lemma~4.3]{dms} by the space $H^F_\mathrm{rad}(\mathscr{S})$ of functions that are constant on the vertical line $u+[0,i\pi]$ for each $u\in\R$ and the space $H^F_\mathrm{lat}(\mathscr{S})$ of functions that have the same average on all such vertical lines. A similar decomposition holds for the bi-infinite cylinder $\mathscr{C}$ given by $\R\times[0,2\pi]$ with $\R\times\{0\}$ and $\R\times\{2\pi\}$ identified.
	
	We will show in Lemma~\ref{lem:cty} that, for $x$ fixed, the process $(h_{e^{-t}}(x)-h_1(x))_{t\in\R}$ has the covariance structure of $\sqrt{2}$ times a standard two-sided Brownian motion, and that the semicircle average process $(h_{e^{-t}}(x)-h_1(x))_{t,x\in\R}$ has a version that is continuous (in both $t$ and $x$). This is a straightforward adaptation of~\cite[Prop.~2.1]{hmp}, the analogous result for \emph{circle} averages of a zero-boundary GFF.
	\subsection{Quantum wedges and cones}\label{subsection:wedges}
	First we explain the basic definitions of the quantum surfaces described in the introduction. Given a domain $D\subseteq\mathbb{C}$ and~$h$ some form of the GFF on $D$ (with the additive constant fixed in some way if necessary), and $\gamma \in (0,2)$, we define the random area measure $\mu_h$ on $D$ as a Gaussian multiplicative chaos measure in the sense of \cite{kahane}, given by the weak limit of the regularized measures
	\begin{equation*}
		\mu_h^\ve := \ve^{\gamma^2/2} e^{\gamma h_\ve(z)}\, dz
	\end{equation*}
	as $\ve\to 0$ along powers of two, where $dz$ is Lebesgue measure on $D$ and $h_\ve(z)$ is the average of $h$ on the circle of radius~$\ve$ centred at $z$ (or on the intersection of this circle with $D$ when $z\in \partial D$). This limit was shown to exist almost surely in~\cite{ds}. Likewise,~\cite{ds} shows the almost sure existence of the corresponding weak limit $\nu_h$ of the measures
	\begin{equation*}
		\nu_h^\ve := \ve^{\gamma^2/4} e^{\gamma h_\ve(x)/2}\, dx
	\end{equation*}
	where $dx$ is Lebesgue measure on a linear segment of $\partial D$.. The regularization procedure implies the conformal coordinate change rule (\ref{eq:reparam}) given in the introduction, under which, by~\cite[Prop.~2.1]{ds}, almost surely we have $\mu_h = \mu_{\widetilde{h}} \circ \psi$ and $\nu_h = \nu_{\widetilde{h}} \circ \psi$. In particular, we can use this to define $\nu_h$ when $\partial D$ is not piecewise linear by conformally mapping to, for example, the upper half-plane (provided the conformal map extends to a homeomorphism $\partial D \to \mathbb{R} \cup \{\infty\}$).
	
	We define a \textbf{quantum surface} as an equivalence class of objects of the form $(D,h)$ where $D$ is a planar domain and $h$ is a random distribution on $D$, where $(\widetilde{D},\widetilde{h})$ and $(D,h)$ are considered equivalent if and only if there exists a conformal map $\psi: \widetilde{D} \to D$ such that $\widetilde{h}$ and $h$ satisfy the rule (\ref{eq:reparam}). Often one also wants to keep track of certain \emph{marked points}; to this end we define a \textbf{quantum surface with $k$ marked points} as an equivalence class of objects of the form $(D,h,z_1,\ldots,z_k)$ where $z_i \in \overline{D}$, so that two quantum surfaces $(D,h,z_1,\ldots,z_k)$ and $(\widetilde{D},\widetilde{h},\widetilde{z}_1,\ldots,\widetilde{z}_k)$ such that the conformal map $\psi: \widetilde{D} \to D$ satisfies the rule (\ref{eq:reparam}) are only considered equivalent as surfaces with~$k$ marked points if in addition we have $\psi(\widetilde{z}_i) = z_i$ for $i = 1,\ldots, k$.
	
	We will now define the notion of ``quantum wedge''; the idea is that we would like to define a quantum surface homeomorphic to $\h$, whose law is invariant under scaling and under the operation of adding a constant to the field, and thus a good candidate for infinite-volume scaling limits. As a warm-up we will define an ``unscaled quantum wedge'', for which the field is only defined modulo additive constant, but keep in mind that the ordinary quantum wedge does not arise by fixing this constant, since such a surface would not have the desired invariance properties.
	
	An \textbf{unscaled $\alpha$-quantum wedge} is given by $(\h,h^F-\alpha \log{|\cdot|},0,\infty)$ where $h^F$ is an instance of the free-boundary GFF on $\h$. (Note that this~$h^F$ is only defined modulo additive constant, meaning that $\mu_h$ and $\nu_h$ are only defined modulo multiplicative constant and thus the unscaled wedge is \emph{not} a quantum surface by our definition above.) The definition arises (as does the nomenclature) by considering a free-boundary GFF on an infinite wedge $W_\vartheta=\{z\in\mathbb{C}:\arg{z}\in[0,\vartheta]\}$ (viewed as a Riemann surface, so that the parametrization is not single-valued if $\vartheta \ge 2\pi$), and then using (\ref{eq:reparam}) to reparametrize by $\h$ via the conformal map $z \mapsto z^{\pi/\vartheta}$, where~$\vartheta=\pi(1-\alpha/Q)$.
	
	We can reparametrize by the infinite strip $\mathscr{S} = \R\times[0,\pi]$ instead of by $\h$. If we use an appropriate branch of $\log$ to map~$\h$ to $\mathscr{S}$, so that 0 maps to $-\infty$ whilst $\infty$ maps to $+\infty$, then the conformal coordinate change formula (\ref{eq:reparam}) gives that the mean of the resulting field $\widetilde{h}$ on the vertical segment $\{t\} \times [0,\pi]$ is given by $B_{2t}+(Q-\alpha)t$, where $B$ is a standard two-sided Brownian motion, defined \emph{modulo additive constant}. We next define an ordinary \textbf{quantum wedge}~\cite[Def.~4.5]{dms} by replacing the process $B_{2t}+(Q-\alpha)t$ by a related but different process, in such a way that we fix the additive constant and thus obtain a genuine quantum surface, whose \emph{law} will nonetheless be invariant under the operation of adding a constant to the field. Namely, define an $\alpha$-quantum wedge by $(\mathscr{S},\widehat{h},-\infty,+\infty)$ where $\widehat{h}$ is obtained from $\widetilde{h}$ by replacing the process $B_{2t}+(Q-\alpha)t$ by $(A_t)_{t\in \R}$, where for $t\le0$ we define $A_t = B_{-2t}+(Q-\alpha)t$ for $B$ a standard Brownian motion started from 0, and for $t > 0$ we define $A_t = \widehat{B}_{2t} + (Q-\alpha) t$ where $\widehat{B}$ is a standard Brownian motion started from zero, independent of $B$ and conditioned on the event that $\widehat{B}_{2t} + (Q-\alpha) t > 0$ for all $t > 0$. 
	
	This is called the \textbf{circle average embedding} since it has the property that, when we use $z \mapsto \exp(z)$ to map from $\mathscr{S}$ back to $\h$ to produce a different parametrization of the surface, namely $(\h,h,0,\infty)$ where $h=\widehat{h}\circ\log - Q\log|\cdot|$, we have $0 = \sup\{ t\in\R: h_{e^{t}}(0)+Qt=0 \}$, where $h_r(z)$ is the \textbf{semicircle average} on $\partial B(z,r)$, i.e.~the average of $h$ on $\partial B(z,r)\cap\h$. One can next construct the circle average embedding of $h+C$ where $C$ is a constant by spatially rescaling by~$e^{t^C}$, where $t^C = \sup\{ t\in\R: h_{e^{t}}(0)+Qt+C=0 \}$---note that by (\ref{eq:reparam}) this corresponds to replacing the field $h+C$ by $h(e^{t^C}\cdot) +Qt^C + C$. From the properties of Brownian motion with drift, one can then check~\cite[Prop.~4.7(i)]{dms} that a quantum wedge has the key property that its law as a quantum surface is invariant under the operation of adding a constant to the field, meaning that the circle average embeddings of~$h$ and~$h+C$ have the same law for a constant $C>0$. One can also observe the convenient property that if $(\h,h,0,\infty)$ is the circle average embedding of an $\alpha$-quantum wedge, then the restriction of $h$ to $\h \cap \mathbb{D}$ (where~$\mathbb{D}$ is the unit disc) has the same law as the restriction of $h^{F,0}-\alpha\log|\cdot|$ to $\h \cap \mathbb{D}$, where $h^{F,0}$ is a free-boundary GFF on $\h$ with the additive constant fixed so that the semicircle average $h^{F,0}_1(0)$ is~0.
	
	Since the conditioning event has probability zero, some care is needed to define the process~$\widehat{B}$; the details, given in~\cite[Remark 4.4]{dms}, are as follows. The process can be constructed by setting $\widehat{B}_{2t}+(Q-\alpha)t = \widetilde{B}_{2(t+\tau)}+(Q-\alpha)(t+\tau)$ for all $t\ge 0$, where $\widetilde{B}$ is a standard Brownian motion started from 0 and $\tau$ is the last time that $\widetilde{B}_{2t}+(Q-\alpha)t$ hits 0. Note that $\tau < \infty$ almost surely since $Q > \alpha$. Then $\widehat{B}$ is characterized by the property that, for each $\ve > 0$, if $\tau_\ve$ is the hitting time of~$\ve$ by~$\widehat{B}$ then $(\widehat{B}_{2(t+\tau_\ve)}+(Q-\alpha)(t+\tau_\ve))_t$ has the law of a Brownian motion with drift $Q-\alpha$ started from $\ve$ and conditioned not to hit 0, which makes the law of $\widehat{B}$ the only sensible choice for the required conditional law. The reason this property characterizes the law of $\widehat{B}$ is that if $X$ is another process with the same property and, for each $\ve>0$, the hitting time of $\ve$ by $X$ is~$\widetilde{\tau}_\ve$, then for each $\ve>0$ there is a coupling of $\widehat{B}$ and $X$ so that $(\widehat{B}_{2(t+\tau_\ve)})_t = (X_{2(t+\widetilde{\tau}_\ve)})_t$, whereas $\tau_\ve, \widetilde{\tau}_\ve \to 0$ as $\ve \to 0$ almost surely, so that in any subsequential limit of such couplings as $\ve \to 0$ we have $\widehat{B} = X$ almost surely. In fact, one can also define $A_t$ (see~\cite[\S1.1.2]{dms}) as the log of a Bessel process of dimension $2+2(Q-\alpha)/\gamma$, parametrized by quadratic variation; this definition also makes sense for $\alpha = Q$. A surface constructed as above (with $\alpha \le Q$) is an \textbf{$\alpha$-quantum wedge}; we refer to such wedges, which are homeomorphic to $\h$, as being \textbf{thick}.
	
	The Bessel process construction generalizes further, to the case $\alpha \in (Q,Q+\gamma/2)$. In this case the Bessel process has dimension between 1 and 2 so will hit 0; we obtain one surface for each excursion of the Bessel process away from 0, and thus by concatenating all these surfaces (see~\cite[\S1.1.2]{dms}) we get an infinite chain called a \textbf{thin quantum wedge} (in this case, the horizontal translation is fixed by requiring the process to attain a maximum at $t=0$). More formally, we use the fact~\cite[Ch.~XI,~XII]{ry} that the excursions of a Bessel process $X$ form a Poisson point process when indexed by local time at 0. Specifically, for each excursion $\mathbf{e}$ of $X$ (say, over the time interval $(a_{\mathbf{e}},b_{\mathbf{e}})$), if $s_{\mathbf{e}}$ is the local time at 0 accumulated by $X|_{[0,a]}$, then the $(s_{\mathbf{e}},\mathbf{e})$ form a Poisson point process with mean measure $ds \otimes N$ where $ds$ is Lebesgue measure on $[0,\infty)$ and $N$ is an infinite measure, the so-called \textbf{It\^{o} excursion measure} corresponding to $X$ (on the space of excursions translated back in time so as to start at time 0). It is known that this process determines $X$. We thus define an $\alpha$-quantum wedge for $\alpha \in (Q,Q+\gamma/2)$ as a point process where the points are of the form $(s_{\mathbf{e}},\mathbf{e}, h_{\mathbf{e}})$ where each $ h_{\mathbf{e}}$ is a quantum surface defined on the strip $\mathscr{S}$ as for a thick quantum wedge but using $\mathbf{e}$ parametrized by quadratic variation (and, for concreteness, with the parametrization chosen so that the maximum is attained at time 0) in place of $A_t$, and where the lateral parts of the $ h_{\mathbf{e}}$ for different excursions $\mathbf{e}$ are independent. Each doubly marked surface $(\mathscr{S},h_{\mathbf{e}},-\infty,+\infty)$ is a \textbf{bead} of the wedge, with the two marked points referred to as the \emph{opening point} ($-\infty$) and the \emph{closing point} ($+\infty$). 
	
	Since such \emph{beaded quantum surfaces} are no longer parametrized by domains in $\mathbb{C}$, we need to slightly amend the notion of equivalence for such surfaces: a beaded quantum surface is parametrized by a closed set $D$ such that each component of the interior of $D$ together with its prime-end boundary is homeomorphic to a closed disc, and we regard two surfaces parametrized by such sets $\widetilde{D}$, $D$ as equivalent if they are related by the formula (\ref{eq:reparam}) for $\psi\colon \widetilde{D}\to D$ a homeomorphism that is conformal on each component of the interior of $\widetilde{D}$.
	
	As noted in the introduction, we will often refer to an $\alpha$-quantum wedge, in either the thick or thin regimes, as a \textbf{quantum wedge of weight $\mathfrak{w}$} where the weight parameter $\mathfrak{w}>0$ is defined as$$\mathfrak{w}=\gamma\left(\frac{\gamma}{2}+Q-\alpha\right).$$Note that the wedge is thick when $\mathfrak{w}\ge\gamma^2/2$ and thin otherwise. The reason for using the weight parameter is that it is additive under the operation of conformally welding two independent wedges according to LQG boundary length to obtain another wedge. Specifically, \cite[Thm~1.2]{dms} states that if $\mathfrak{w}_1, \mathfrak{w}_2>0$ and $\mathfrak{w}=\mathfrak{w}_1+\mathfrak{w}_2$, when a wedge $\mathcal{W}$ of weight~$\mathfrak{w}$ is decorated by $\eta$, an independent $\mathrm{SLE}_{\gamma^2}(\mathfrak{w}_1-2;\mathfrak{w}_2-2)$ from 0 to $\infty$ (or if $\mathfrak{w} < \gamma^2/2$, a concatenation of independent $\mathrm{SLE}_{\gamma^2}(\mathfrak{w}_1-2;\mathfrak{w}_2-2)$ curves from the opening point to the closing point of each bead), then the region~$\mathcal{W}_1$ (resp.\ $\mathcal{W}_2$) to the left (resp.\ right) of $\eta$ is a wedge of weight $\mathfrak{w}_1$ (resp.\ $\mathfrak{w}_2$) and that $\mathcal{W}_1$ and $\mathcal{W}_2$ are independent as quantum surfaces. Moreover, \cite[Thm~1.4]{dms} states that there exists a unique conformal welding of the right-hand side of~$\mathcal{W}_1$ to the left-hand side of $\mathcal{W}_2$ according to $\gamma$-LQG boundary length, which recovers $\mathcal{W}$ and $\eta$.
	
	These results from~\cite{dms} build on the earlier result~\cite[Thm~1.8]{zip} that in the case of a wedge $(\h,h,0,\infty)$ of weight 4 cut by an $\SLE_\kappa$ $\eta$ into two wedges of weight 2, for each $t>0$ the law of the pair of surfaces to the left and right of $\eta$ is invariant under both the operation $\mathcal{Z}_{-t}$ of cutting only along $\eta([0,t])$ (where $\eta$ is parametrized by LQG boundary length) and the operation~$\mathcal{Z}_t$ of conformally welding the boundary segments $(x_t^-,0]$ and $[0,x_t^+)$ according to LQG boundary length, where $x_t^\pm$ are defined so that $\nu_h((x_t^-,0]) = \nu_h([0,x_t^+)) = t$ (in particular, \cite[Thm~1.8]{zip} states that this welding is almost surely unique). The group of transformations $\{\mathcal{Z}_t\colon t\in\R\}$ is called the \textbf{(length) quantum zipper}: for $t>0$, $\mathcal{Z}_t$ ``zips up'' the pair of surfaces by $t$ units of LQG boundary length whilst $\mathcal{Z}_{-t}$ ``unzips'' by $t$ units of LQG boundary length.
	
	We next define the whole-plane analogue of the quantum wedge. Intuitively, an \textbf{$\alpha$-quantum cone} is the doubly marked quantum surface corresponding to a GFF on the surface homeomorphic to $\mathbb{C}$ obtained by gluing together the sides of $W_\theta$, where $\theta = 2\pi(1-\alpha/Q)$, according to Lebesgue measure. It is given by $(\mathbb{C},h,0,\infty)$, where the field $h$ is defined in~\cite[Definition 4.10]{dms} for $\alpha < Q$ by taking the process~$A_t$ as for an $\alpha$-quantum wedge, except with $B_{2t}$ and $\widehat{B}_{2t}$ replaced by~$B_t$ and $\widehat{B}_t$ respectively, and then setting $h$ to be the field on $\mathbb{C}$ whose radial part is given by $A_t$ on the circle of radius~$e^{-t}$ around 0, and whose lateral part is that of an independent whole-plane GFF. Note that the radial part is only defined modulo additive constant, but we generally fix the constant as we do for a wedge, i.e.\ by requiring $A_0 = 0$. As before, the law of a quantum cone is invariant under the operation of adding a constant to the field (i.e., the circle average of the resulting cone will have the same law as that of the original one); analogously to the case with wedges, the restriction of the circle average embedding of an $\alpha$-quantum cone $h$ to the unit disc $\mathbb{D}$ is equal in law to the restriction of $h^{\mathrm{wp}}-\alpha\log|\cdot|$ to $\mathbb{D}$ where~$h^{\mathrm{wp}}$ is a whole-plane GFF with the additive constant chosen so that the circle average $h^{\mathrm{wp}}_1(0)$ is 0. 
	
	Again, instead of using the parameter $\alpha < Q$, we will often refer to an $\alpha$-quantum cone as a \textbf{quantum cone of weight~$\mathfrak{w}$} where this time the weight parameter $\mathfrak{w}>0$ is given by$$\mathfrak{w}=2\gamma(Q-\alpha).$$This choice is convenient because cones of weight $\mathfrak{w}$ are the whole-plane analogues of wedges of weight $\mathfrak{w}$: \cite[Thm~1.5]{dms} says that if a cone $\mathcal{C} = (\mathbb{C},h,0,\infty)$ of weight $\mathfrak{w}$ is decorated by $\eta$, an independent whole-plane $\mathrm{SLE}_{\gamma^2}(\mathfrak{w}-2)$ from 0, then the surface $\mathcal{W}$ described by $(\mathbb{C}\setminus\eta,h,0,\infty)$ is a wedge of weight $\mathfrak{w}$, and there is a unique conformal welding of left-hand and right-hand boundary segments of $\mathcal{W}$ according to $\gamma$-LQG boundary length, which recovers $\mathcal{C}$ and $\eta$.
	
	In order to construct a probability measure on finite-volume surfaces, we can first consider the ``law'' on finite-volume surfaces corresponding in the above constructions to that of a single Bessel excursion. (Note that the ``law'' of a Bessel excursion is an infinite measure, so the words ``law'' and ``sample'' do not have their literal meanings in this setting.) To define a \textbf{quantum sphere of weight $\mathfrak{w}$} we first define an infinite measure $\mathcal{N}_\mathfrak{w}$ on fields parametrized by the bi-infinite cylinder~$\mathscr{C}$ with marked points at $-\infty$ and $+\infty$ as follows. (Recall that $\mathscr{C}$ is given by $\R\times[0,2\pi]$ with $\R\times\{0\}$ and $\R\times\{2\pi\}$ identified.) A ``sample'' $h$ from $\mathcal{N}_\mathfrak{w}$ can be obtained by ``sampling'' a Bessel excursion $Z$ of dimension $2-2\mathfrak{w}/\gamma^2$, setting the radial part of $h$ (i.e., the projection of $h$ onto the space of functions that are constant on vertical lines $\{r\} \times [0,2\pi]$) to be given by $\frac{2}{\gamma}\log{Z}$ parametrized by quadratic variation, and setting the lateral part of $h$ (i.e., the projection of $h$ onto the space of functions that have the same mean on all vertical lines $\{r\} \times [0,2\pi]$) to be given by the corresponding projection of a GFF on $\mathscr{C}$ (so that the lateral part has mean zero on all vertical lines). One can show that, for $0 < a < b < \infty$, $\mathcal{N}_\mathfrak{w}$ assigns finite mass to the event $\mu_h(\mathscr{C}) \in [a,b]$, allowing us to construct the probability measure $\mathcal{N}_\mathfrak{w}(\cdot|\mu_h(\mathscr{C}) = r)$ as a regular conditional probability for almost every $r > 0$. The scaling properties of $\mathcal{N}_\mathfrak{w}$ mean that in fact this measure must exist for every $r > 0$, and we can thus define the law of a \textbf{unit area quantum sphere} as the law of a quantum sphere of weight $4-\gamma^2$ conditioned to have unit area, i.e.~as the probability measure $\mathcal{N}_{4-\gamma^2}(\cdot|\mu_h(\mathscr{C}) = 1)$. (The weight $4-\gamma^2$, corresponding to a $\gamma$-quantum cone, is special because in this case the marked points at 0 and $\infty$ ``look like'' \emph{quantum typical} points, i.e.\ ones sampled according to the measure $\mu_h$---see~\cite[Lemma~A.10]{dms}.) This argument for the existence of the conditional law appears in the discussion after~\cite[Definition~4.21]{dms}.
	
	\subsection{The subcritical Liouville quantum gravity metric}
	In~\cite[Thm~1.2]{gm} it is proven that for $\gamma \in (0,2)$ there exists a measurable map $h \mapsto \fd_h$, from the space of distributions on $\mathbb{C}$ with its usual topology to the space of metrics on $\mathbb{C}$ that induce the Euclidean topology, that is characterized by satisfying the following axioms whenever $h$ is a whole-plane GFF plus a continuous function:
	\begin{description}
		\item[Length space] Almost surely, the $\fd_h$-distance between any two points of $\mathbb{C}$ is the infimum of the $\fd_h$-lengths of continuous paths between the two points.
		\item[Locality] If $U \subseteq \mathbb{C}$ is deterministic and open, then the \textbf{internal metric $\fd_h(\cdot,\cdot;U)$ of~$\fd_h$ on~$U$}, defined between two points of $U$ by taking the infimum of the $\fd_h$-lengths of continuous paths between the two points that stay in $U$, is almost surely determined by $h|_U$.
		\item[Weyl scaling] Let $\xi = \gamma/d_\gamma$ where $d_\gamma$ is the dimension defined in~\cite{dg}. For $f\colon\mathbb{C}\to\R$ continuous and $z,w\in\mathbb{C}$, define
		\begin{equation*}
			(e^{\xi f}\cdot \fd_h)(z,w) = \inf_P \int_0^{\mathrm{length}(P;\fd_h)} e^{\xi f(P(t))} \, dt
		\end{equation*}
		where $P$ ranges over all continuous paths from $z$ to $w$ parametrized at unit $\fd_h$-speed. Then almost surely $e^{\xi f}\cdot \fd_h = \fd_{h+f}$ for all continuous $f$.
		\item[Affine coordinate change] For each fixed deterministic $r>0$ and $z\in\mathbb{C}$ we almost surely have, for all $u,v\in\mathbb{C}$,
		\begin{equation*}
			\fd_h(ru+z,rv+z) = \fd_{h(r\cdot+z)+Q\log{r}} (u,v).
		\end{equation*}
	\end{description}
	This map is unique in the sense that for any two such objects $\fd$, $\widetilde{\fd}$, there is a deterministic constant~$C$ such that whenever~$h$ is a whole-plane GFF plus a continuous function, almost surely we have $\fd_h = C\widetilde{\fd}_h$. We refer to this unique (modulo multiplicative constant) object as the \textbf{$\gamma$-LQG metric}. Existence is proven by constructing the metric as a subsequential limit of the \textbf{$\ve$-Liouville first passage percolation metric} defined by
	\begin{equation*}
		\fd_h^\ve (z,w) = \inf_P \int_0^1 e^{\xi (h\ast p_{\ve^2/2}) (P(t))} |P'(t)| \, dt
	\end{equation*}
	where the infimum is over all piecewise $C^1$ paths from $z$ to $w$, and $p_{\ve^2/2}$ is the heat kernel with variance $\ve^2/2$ (so we are using a mollified version of $h$). Existence of such subsequential limits was shown in~\cite{dddf}; subsequently the paper~\cite{gm} proved that such subsequential limits are unique and characterized by the above axioms, and in~\cite{gm_c} it was established that the resulting metric $\fd$ has a \emph{conformal covariance} property. Noting that we can, for instance, use the domain Markov property to write a zero-boundary GFF $\mathring{h}$ on a proper domain $U \subset \mathbb{C}$ as the restriction of a whole-plane GFF $h$ to $U$ plus a continuous function $f$, we can define the $\gamma$-LQG metric $\fd_{\mathring{h}}$ on $U$ corresponding to $\mathring{h}$ as the internal metric $\fd_{h+f}(\cdot,\cdot;U)$, and thus also define $\fd_{\mathring{h}+g}$ for $g$ continuous on $U$ via Weyl scaling. We will review this construction in more detail at the beginning of~\S\ref{section:lqg}. Then, if $U$, $V$ are domains and $\phi: U \to V$ is conformal, and $h$ is a GFF on $U$ plus a continuous function, the conformal covariance property states that almost surely
	\begin{equation*}
		\fd_{h^U}(z,w) = \fd_{h^U\circ\phi^{-1} + Q\log|(\phi^{-1})'|} (\phi(z),\phi(w))
	\end{equation*}
	for all $z,w \in U$.
	
	The reason the scaling in the axiomatic definition of $\fd_h$ is controlled by $\xi$, rather than $\gamma$, is that, since adding a constant~$C$ to $h$ scales $\mu_h$ by $e^{\gamma C}$, it should be true that $\fd_h$ is scaled by~$e^{\xi C}$, where $\xi := \gamma/d_\gamma$ and $d_\gamma$ is the Hausdorff dimension of the $\gamma$-LQG metric. In order to define the metric $\fd_h$, a candidate $d_\gamma$ was needed to state the scaling axiom. For each $\gamma \in (0,2)$ there is such a value, defined in~\cite{dg}, which describes distances in certain discrete approximations of $\gamma$-Liouville quantum gravity. A posteriori, it was shown in~\cite{gp1} that $d_\gamma$ is indeed the Hausdorff dimension of the $\gamma$-LQG metric. 
	
	\subsection{Schramm--Loewner evolutions}
	We briefly recap some basics about SLE. Firstly we recall (e.g., from~\cite[Def.\ 6.1]{lawler}) the construction of chordal $\SLE$ from 0 to $\infty$ in $\h$ using the \emph{chordal Loewner equation}
	\begin{equation}\label{eq:loewner}
		\partial_t g_t(z) = \frac{2}{g_t(z)-U_t}, \quad g_0(z) = z
	\end{equation}
	where $U \colon [0,\infty) \to \R$ is a continuous function. Here $U$ is the so-called \emph{(Loewner) driving function}. For each fixed $z \in \h$ the \emph{Loewner flow}, i.e.\ the solution to~(\ref{eq:loewner}), is defined up until the time $\tau(z) = \inf\{t \geq 0 : \mathrm{Im}(g_t(z)) = 0\}$. If we define the compact hull $K_t = \overline{\{ z \in \h: \tau(z) \le t \}}$, then~$g_t$ is the unique conformal map from $\h \setminus K_t$ to $\h$ that satisfies the \emph{hydrodynamic normalization} $g_t(z) - z \to 0$ as $z \to \infty$. (We also say that a conformal map $f\colon D\to\widehat{D}$ between unbounded domains ``looks like the identity at $\infty$'' if it satisfies $f(z)-z\to 0$ as $z\to\infty$.)
	
	When $U_t = \sqrt{\kappa} B_t$ for some multiple $\kappa > 0$ of a standard Brownian motion $(B_t)$, there almost surely exists a curve $\eta$ parametrized by $t \in [0,\infty)$ such that for each $t$, $\h \setminus K_t$ is the unbounded component of $\h \setminus \eta([0,t])$ and $g_t(\eta(t))=U_t$; we say that $\eta$ \emph{generates} the family of hulls $(K_t)_{t\ge 0}$. Moreover, the curve $\eta$ is determined by $U$. This was proven for $\kappa \neq 8$ in \cite{rohde}; the case $\kappa = 8$ was proven in \cite{lsw} as a consequence of the convergence of the uniform spanning tree Peano curve, but a proof has since been given in \cite{am2022sle8} for the $\kappa = 8$ case which does not rely on discrete models. The law of $\eta$ is, by definition, that of a \textbf{chordal $\SLE_\kappa$ in~$\h$ from~$0$ to~$\infty$}. The one-parameter family of $\SLE_\kappa$ laws for $\kappa>0$ has three distinct phases. When $\kappa \in (0, 4)$, the curve~$\eta$ is almost surely simple and does not hit $\partial\h$ other than at its endpoints. When $4 < \kappa < 8$,~$\eta$ almost surely does hit $\partial\h$ infinitely often, and has a dense set of double points, but does not cross itself \cite{rohde}; in this phase $\eta$ \emph{swallows} points, i.e.\ disconnects them from $\infty$ without hitting them. When $\kappa \ge 8$, $\eta$ is almost surely space-filling. 
	
	The Markov property of Brownian motion implies that $\SLE_\kappa$ has a \emph{conformal Markov property}~\cite[Thm 2.1(ii)]{rohde}: given $\eta|_{[0,t]}$, the conditional law of the image of $\eta|_{[t,\infty)}$ under the map $g_t-U_t$ is the same as the law of the whole curve~$\eta$. The scale invariance of Brownian motion, and the fact that the only conformal automorphisms of $\h$ that fix 0 and~$\infty$ are scalings, imply that $\SLE$ is conformally invariant up to time reparametrization, so that by applying a conformal map chordal $\SLE$ can be defined (up to time reparametrization) between any two distinct boundary points in any simply connected proper domain. 
	
	This definition can be generalized~\cite[\S8.3]{lsw} to the $\SLE_\kappa(\rho_1;\rho_2)$ processes where $\kappa > 0$ and $\rho_1, \rho_2 > -2$, a variant where one additionally keeps track of marked points known as \emph{force points}. The $\SLE_\kappa(\rho_1;\rho_2)$ process (with force points at $0^-$ and $0^+$) is defined from 0 to $\infty$ in $\h$ using~\eqref{eq:loewner}, where this time $U_t$ satisfies the SDE
	\begin{equation*}
		dU_t = \sqrt{\kappa} dB_t + \left(\frac{\rho_1}{U_t-V_t^1} + \frac{\rho_2}{U_t-V_t^2}\right) dt, \quad dV_t^1 = \frac{2}{V_t^1-U_t} dt, \quad dV_t^2 = \frac{2}{V_t^2-U_t} dt
	\end{equation*}
	with initial conditions $V_0^1 = U_0 = V_0^2=0$ and the further condition that $V_t^1 \le U_t \le V_t^2$ for all $t\ge 0$. To motivate the equations for $V_t^1$ and $V_t^2$, note that for any Loewner flow $(g_t)$ from 0 in $\h$ driven by a continuous function $U_t$, if we set$$x_t = \sup\{g_t(x): x<0, x\notin K_t\}, \quad y_t = \inf\{g_t(x): x>0, x\notin K_t\}$$(noting that the $x$ values quantified over are simple boundary points of $\h\setminus K_t$ and thus $g_t$ extends continuously to them), then (\ref{eq:loewner}) gives$$\partial_t x_t = \frac{2}{x_t-U_t}, \quad \partial_t y_t = \frac{2}{y_t-U_t},$$which means in this case that $V_t^1$ and $V_t^2$ can be seen as the images of $0^-$ and $0^+$ (i.e.\ the left-hand and right-hand prime ends of $\h\setminus K_t$ corresponding to 0) under $g_t$. We think of the two extra terms in the SDE for $U_t$ as providing ``forces'' causing the force points to either repel (for positive~$\rho$ values) or attract (for negative $\rho$ values) the driving function $U_t$.
	
	As before, the resulting family of hulls turns out to be generated by a continuous curve~$\eta$, with $g_t(\eta(t)) = U_t$ \cite[Thm 1.3]{ig1}. This defines the law of an $\SLE_\kappa(\rho_1;\rho_2)$ curve. If $x_L \le 0 \le x_R$, one obtains the same result for the \emph{$\SLE_\kappa(\rho_1;\rho_2)$ process with force points at $x_L$, $x_R$} given by replacing the initial conditions with $x_L = V_0^1 \le U_0 = 0 \le V_0^2 = x_R$. If $\rho_1 = 0$ (resp.~$\rho_2 = 0$), the process is known as $\SLE_\kappa(\rho)$ where $\rho = \rho_2$ (resp.~$\rho = \rho_1$). (Note that if $\rho_1=\rho_2=0$ we have an ordinary $\SLE_\kappa$.) 
	
	The driving function $U_t$ of an $\SLE_\kappa(\rho_1;\rho_2)$ process still satisfies Brownian scaling, and thus we have conformal invariance and can define $\SLE_\kappa(\rho_1;\rho_2)$ between any two distinct boundary points of any simply connected proper domain. The conformal Markov property changes slightly: given $\eta|_{[0,t]}$, the conditional law of the image of $\eta|_{[t,\infty)}$ under the map $g_t-U_t$ is that of an $\SLE_\kappa(\rho_1;\rho_2)$ process with the force points at $V_t^1-U_t$ and $V_t^2-U_t$.
	
	Although in the case $\kappa \le 4$ ordinary $\SLE_\kappa$ cannot intersect the boundary except at its endpoints, force points with sufficiently negative weights can make $\SLE_\kappa(\rho_1;\rho_2)$ processes hit the boundary. In particular, an $\SLE_\kappa(\rho_1;\rho_2)$ process from 0 to $\infty$ in $\h$ almost surely hits $(0,\infty)$ if $\rho_2 < \kappa/2-2$, but almost surely does not hit $(0,\infty)$ if $\rho_2 \ge \kappa/2-2$ (see~\cite[Lemma~2.1]{intersections}). The analogous result holds with $\rho_2$ replaced by $\rho_1$ and $(0,\infty)$ replaced by $(-\infty,0)$.
	
	As well as chordal $\SLE$, which goes from one boundary point to another, one can consider \emph{radial $\SLE$}, which grows from a boundary point towards an interior point. First we define radial $\SLE_\kappa$ in the unit disc $\mathbb{D}$ targeted at 0~\cite[Def.\ 6.20]{lawler} to be the set of hulls $K_t$ associated to the family of conformal maps $(g_t)_{t\ge 0}$ solving the \textbf{radial Loewner equation}
	\begin{equation}\label{eq:r_loewner}
		\partial_t g_t(z) = g_t(z) \frac{W_t+g_t(z)}{W_t-g_t(z)},\quad g_0(z) = z
	\end{equation}
	driven by $W_t = e^{i\sqrt{\kappa}B_t}$ where $B$ is a standard Brownian motion. This time the maps~$g_t\colon\mathbb{D}\setminus K_t \to \mathbb{D}$ are normalized by requiring $g_t(0) = 0$ and $g_t'(0) > 0$, which as in the chordal case defines a unique choice for each $g_t$. As with chordal $\SLE$, radial $\SLE$ has a generalization with a force point; we define 
	\begin{equation*}
		\Psi(z,w) = -z\frac{z+w}{z-w}, \quad \widetilde{\Psi}(z,w)=\frac{\Psi(z,w)+\Psi(1/\overline{z},w)}{2},
	\end{equation*}
	and let $(W,O)$ be the solution to the equations
	\begin{align}\label{eq:wo}
			dW_t &= \left[-\frac{\kappa}{2}W_t + \frac{\rho}{2} \widetilde{\Psi}(O_t,W_t)\right] dt + i\sqrt{\kappa}W_t \, dB_t, \\
		\nonumber dO_t &= \Psi(W_t,O_t) \, dt
	\end{align}
	(this solution exists and is unique---see~\cite[\S2.1.2]{ig4}). We can then define a \textbf{radial $\SLE_\kappa(\rho)$} as the process associated to the solution $(g_t)$ of~\eqref{eq:r_loewner} with this driving function $W$. As before, the family of hulls $(K_t)$ is generated by a continuous curve.
	
	Moreover, we can define a version of radial $\SLE_\kappa(\rho)$ in bi-infinite time. The radial $\SLE_\kappa(\rho)$ equations (\ref{eq:wo}) with $B$ a \emph{two-sided} Brownian motion still have a unique solution; if we take $W$ to be the resulting driving function, then there is a family $(\widetilde{g}_t)_{t\in\R}$ of conformal maps onto $\mathbb{C}\setminus \overline{\mathbb{D}}$ that each fix $\infty$, have positive spatial derivative at $\infty$, and satisfy~\eqref{eq:r_loewner} (without the initial condition) \cite[\S2.3]{ig4}. If we define the hull $K_t$ as the complement of the domain of $g_t$, then the family $(K_t)_{t\in\R}$ is generated by a \textbf{whole-plane $\SLE_\kappa(\rho)$} process from 0 to $\infty$.
	
	For $\kappa > 4$ and $\rho_1, \rho_2 \in (-2,\kappa/2-2)$, the \textbf{space-filling $\SLE_\kappa(\rho_1,\rho_2)$} process was defined in \cite{ig4}. When $\kappa \ge 8$, this coincides with ordinary $\SLE_\kappa(\rho_1,\rho_2)$, which as mentioned above is almost surely space-filling. When $\kappa \in (4,8)$ one starts with an ordinary $\SLE_\kappa(\rho_1,\rho_2)$ $\eta'$ and extends it by sequentially ``filling in'' the regions $\eta'$ disconnects from $\infty$. Indeed, for each component $C_i$ of the complement of $\eta'$, there is a first time $t_i$ such that $\eta'|_{[0,t_i]}$ disconnects $C_i$ from $\infty$. We then define the space-filling $\SLE_\kappa(\rho_1,\rho_2)$ to hit the points in the range of $\eta'$ in the same order that $\eta'$ does, but so that immediately after hitting $\eta'(t_i)$ it traces a $C_i$-filling $\SLE_\kappa$-type loop beginning and ending at $\eta'(t_i)$, constructed using a coupling with the Gaussian free field. This construction is described in \cite[\S1.2.3]{ig4}.
	
	Finally, one can define a \textbf{whole-plane space-filling $\mathrm{SLE}_\kappa$} from $\infty$ to $\infty$ using the chordal version as explained in~\cite[Footnote 4]{dms}. For $\kappa \in (4,8)$, one first uses the SLE/GFF coupling to draw SLE-type curves partitioning the plane into a countable collection of pockets, and then concatenates chordal space-filling $\mathrm{SLE}_\kappa$ curves in each pocket.
	
	\subsection{Notation}
	If $(E(C))_{C\in S}$ is a family of events indexed by a set $S \subseteq \R$ which is unbounded above, we say that $E(C)$ happens with \textbf{superpolynomially high probability} as $C\to\infty$ if for any $N\in\mathbb{N}$ we have $\p[E(C)^c] = O(C^{-N})$ as $C\to\infty$. If $E(C)$ depends on other parameters, we say~$E(C)$ happens with superpolynomially high probability \emph{at a rate which is uniform in} some subset of those parameters if the bounds on $\p[E(C)^c]C^N$ can be chosen not to depend on that subset of those parameters. Similarly, we say that a function $f$ \textbf{decays superpolynomially} if for all $N$ we have $f(x) = O(x^N)$ as $x\to 0$. 
	
	For $z\in \mathbb{C}$ and $r>0$, $B(z,r)$ and $\overline{B}(z,r)$ will always mean, respectively, the \emph{Euclidean} open and closed balls of radius~$r$ centred at $z$; we will define notation ad hoc for balls of other metrics. Likewise, $\mathrm{diam}$ alone will denote Euclidean diameter.
	\section{$\gamma$-LQG metric boundary estimates for the free-boundary GFF on $\overline{\h}$}\label{section:lqg}
	Throughout this section $h$ will be a free-boundary GFF on $\h$, though not always with the same choice of additive constant. Indeed, although the statements of our results require the additive constant for $h$ to be fixed in some way, it is easily seen that all the results of this section remain true regardless of how the constant is fixed, so we will not always specify a choice. In this section we show that the $\gamma$-LQG metric induced by $h$ extends continuously to a metric on $\overline{\h} \times \overline{\h}$, and give some estimates for the regularity of this metric, showing that, almost surely, it is locally H\"{o}lder continuous with respect to the Euclidean metric on $\overline{\h}$.)
	
	For a fixed $\gamma \in (0,2)$ we denote by $\fd_h$, $\mu_h$, $\nu_h$ respectively the $\gamma$-LQG metric, area measure and boundary length measure associated to $h$ on $\h$. As noted previously, the $\gamma$-LQG metric was constructed in~\cite{gm} for the whole-plane GFF, but it is explained in~\cite[Remark 1.5]{gm} how to adapt this to get the LQG metric on a proper domain $U \subset \mathbb{C}$ associated to $\mathring{h}+f$ where $\mathring{h}$ is a zero-boundary GFF on $U$ and $f$ is a continuous function on $U$; this is done as follows. If $h^{\mathrm{wp}}$ is a whole-plane GFF, then we can write $h^{\mathrm{wp}}|_{U} = \mathring{h} + \widehat{\mathfrak{h}}$ where $\mathring{h}$ is a zero-boundary GFF on $U$ and $\widehat{\mathfrak{h}}$ is a random harmonic function (modulo additive constant) independent of $\mathring{h}$. Recall that $\mathring{h}$ and $\widehat{\mathfrak{h}}$ are the projections of $h$ onto the spaces of functions that are, respectively, supported in $U$ and harmonic in $U$. Note that fixing the additive constant for~$h$ corresponds to fixing that for $\widehat{\mathfrak{h}}$ but may or may not preserve the independence of $\mathring{h}$ and $\widehat{\mathfrak{h}}$; for instance, we can fix the constant by requiring $\widehat{\mathfrak{h}}(z)=0$ for some choice of $z\in U$, in which case $\widehat{\mathfrak{h}}$ as a bona fide (random) function is still independent of $\mathring{h}$, or we can require that the average of $h$ on some circle $\Gamma \subset U$ vanishes, in which case $\mathring{h}$ and $\widehat{\mathfrak{h}}$ are \emph{not} independent, since their averages on $\Gamma$ are required to sum to zero. 
	
	Having fixed the additive constant in some way---whether or not $\widehat{\mathfrak{h}}$ with the constant fixed is independent of $\mathring{h}$---we can define~$\fd_{\mathring{h}}$ on~$U$ as a Weyl scaling of the internal metric induced by $\fd_{h^{\mathrm{wp}}}$ on $U$, i.e.
	\begin{equation*}
		\fd_{\mathring{h}}(\cdot,\cdot) = e^{-\xi \widehat{\mathfrak{h}}} \cdot \fd_{h^{\mathrm{wp}}} (\cdot,\cdot;U).
	\end{equation*}
	(This is well-defined since the definition of the internal metric only involves paths which stay in~$U$, so it does not matter that $\widehat{\mathfrak{h}}$ does not extend continuously to the boundary.) Moreover one can define $\fd_{\mathring{h}+f} = e^{\xi f}\cdot \fd_{\mathring{h}}$ for $f$ continuous on $U$. It is easy to see that $\fd_{\mathring{h}+f}$ thus defined is a metric on $\h$ that satisfies the axioms in~\cite[\S1.2]{gm} and conformal covariance. Observe also that $\fd_{\mathring{h}+f}$ induces the Euclidean topology on $\h$. Indeed, the internal metric $\fd_{h^{\mathrm{wp}}}(\cdot,\cdot;\h)$ is at least as large as~$\fd_{h^{\mathrm{wp}}}$, so since Euclidean open sets in $\h$ are open w.r.t.\ $\fd_{h^{\mathrm{wp}}}$ they must also be open w.r.t.\ the internal metric. Around each point $z\in\h$ the $\fd_{h^{\mathrm{wp}}}$-metric balls of sufficiently small radius must be contained in $\h$, so coincide with the $\fd_{h^{\mathrm{wp}}}(\cdot,\cdot;\h)$-metric balls of the same radius. These thus contain Euclidean open discs, which shows that $\fd_{h^{\mathrm{wp}}}(\cdot,\cdot;\h)$ induces the Euclidean topology. Since $-\mathfrak{h}+f$ is a continuous function on $\h$ (and thus locally bounded) the same has to be true for~$\fd_{\mathring{h}+f}$. 
	
	Since we can write $h = \mathring{h} + \widetilde{\mathfrak{h}}$ for $\mathring{h}$ a zero-boundary GFF on $\h$ and $\widetilde{\mathfrak{h}}$ a (random) harmonic function on $\h$, we may define $\fd_h$ (as a function on $\h\times\h$) similarly. Recall that $h$ and thus $\widetilde{\mathfrak{h}}$ are only defined modulo a global additive constant, so the above construction only defines $\fd_h$ modulo a multiplicative constant. Once the constant is fixed, it follows (as above for $\fd_{\mathring{h}+f}$) that this $\fd_h$ is a metric on $\h$ that induces the Euclidean topology on $\h$ and satisfies the axioms in~\cite[\S1.2]{gm} and conformal covariance. As noted, we will often be able to fix the constant somewhat arbitrarily. Note however that the same caveat applies as above: not every choice for fixing the constant makes the zero-boundary and harmonic parts of $h$ independent.
	
	We can extend the LQG metric to the boundary of $\h$ as follows. Firstly, we say that a path $P\colon [a,b] \to \overline{\h}$ is \textbf{admissible} if $P^{-1}(\partial \h)$ is finite, and define the \textbf{$\widehat{d}_h$-length} of such a path~$P$ to be
	\begin{equation*}
		\widehat{d}_h(P) := \sup{ \left\{ \sum_{i=1}^n\fd_h(P(t_{i-1}),P(t_i))  :\; a \le t_0 < t_1 < \cdots < t_n \le b, \; P(t_i) \in \h \right\} }.
	\end{equation*}
	$P^{-1}(\h)$ can be written uniquely as a finite union of disjoint intervals $I$, each of which is open as a subset of $[a,b]$; it is straightforward to check that the lengths of the $P|_{\overline{I}}$ sum to the length of $P$. 
	
	We now define the $\widehat{d}_h$-distance between two points of $\overline{\h}$ as the infimum of the lengths of admissible paths between them. To see that this definition actually does restrict to $\fd_h$ on $\h \times \h$, note that for $z,w\in\h$ we know that $\fd_h(z,w)$ is finite (indeed, one can find a path between $z$ and $w$ of finite $\fd_{\mathring{h}}$-length $L$ that stays in some bounded open set $U$ at positive distance from $\partial\h$, then we have $\fd_h(z,w)\le L\sup_U e^{\xi\widetilde{\mathfrak{h}}} < \infty$ ). Given $\ve > 0$, we can then take a path $P$ in $\overline{\h}$ with $\widehat{d}_h$-length in $[\widehat{d}_h(z,w), \widehat{d}_h(z,w) + \ve)$, and thus find a subdivision of that path with
	\begin{equation*}
		\sum_{i=1}^n \fd_h(P(t_{i-1}),P(t_i)) \le \widehat{d}_h(z,w) + \ve.
	\end{equation*}
	We know that $\fd_h$ is almost surely a length metric, so for each $i$ we can find a path from $P(t_{i-1})$ to $P(t_i)$ in $\h$ with $\fd_h$-length at most $ \fd_h(P(t_{i-1}),P(t_i)) + \ve/n$ and concatenate these to see that $\fd_h(z,w) \le \widehat{d}_h(z,w)+2\ve$. On the other hand clearly $\fd_h \ge \widehat{d}_h$ so the two must agree. Henceforth we will use~$\fd_h$ to refer to the function extended to all of $\overline{\h}$ (which we will show is a metric on $\overline{\h}$).
	\subsection{Joint H\"{o}lder continuity of the semicircle average}\label{subsection:hmp}
	For a point $x \in \R$ and $\ve > 0$, recall that $h_\ve(x)$ denotes the average of $h$ on the semicircular arc $\partial B(x,\ve) \cap \h$, defined as $(h,\rho^+_{x,\ve}) = -2\pi (h,\Delta^{-1} \rho^+_{x,\ve})_\nabla$ where $\rho^+_{x,\ve}$ is the uniform probability measure on $\partial B(x,\ve) \cap \h$. Since $h$ is only defined modulo additive constant, we have to fix the constant in order for the $h_\ve(x)$ to be well defined---the results of this subsection (\S\ref{subsection:hmp}) hold however the constant is fixed, but for concreteness, we will state and prove them using the normalization $h_1(0)=0$. We can establish continuity for this semicircle average via the same Kolmogorov--\v{C}entsov-type argument as in~\cite[Prop.~2.1]{hmp}.
	\begin{lemma}\label{lem:cty}
		Let $h$ be a free-boundary GFF on $\h$ with the additive constant fixed so that $h_1(0)=0$. There exist $\alpha, \beta > 0$ such that, for any $U \subseteq \mathbb{R}$ bounded and open, $\zeta > 1/\alpha$ and $\gamma \in (0, \beta/\alpha)$, there is a modification $\widetilde{X}$ of the process $X(z,r) = h_r(z)$ such that, for some random $M\in(0,\infty)$,
		\begin{equation*}
			|\widetilde{X}(z,r)-\widetilde{X}(w,s)| \le M \left( \log{\frac{2}{r}}\right)^\zeta \frac{|(z,r)-(w,s)|^\gamma}{r^\frac{1+\beta}{\alpha}}
		\end{equation*}
		whenever $z,w \in U$, $r, s \in (0, 1]$ and $1/2 \le r/s \le 2$. (This is unique in that any two such modifications are almost surely equal, by continuity.)
	\end{lemma}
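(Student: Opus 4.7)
The plan is to follow the Kolmogorov--Chentsov argument of \cite[Prop.~2.1]{hmp}, the only significant change being that the Neumann Green function replaces the Dirichlet one. Since the covariance kernel of the free-boundary GFF on $\h$ is (after fixing the additive constant by $h_1(0)=0$) the Neumann Green function $G^N(z,w) = -\log|z-w| - \log|z-\bar w|$ plus a deterministic smooth term coming from the normalization, I would first compute the covariance of $h_r(z)$ and $h_s(w)$ by integrating $G^N$ against the product of the two semicircle uniform measures $\rho_{z,r}^+ \otimes \rho_{w,s}^+$. The key observation is that on symmetrising $\rho_{z,r}^+$ across $\R$, this reduces to an integral of $-\log|\zeta-\eta|$ against a full circle measure, so that the computation is reduced to the classical one carried out in \cite{hmp}, with an extra factor of $2$ arising from the reflection. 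In particular, for fixed $x \in \R$, the process $(h_{e^{-t}}(x)-h_1(x))_{t\in\R}$ inherits the covariance structure of $\sqrt{2}$ times a two-sided Brownian motion, as claimed in the text preceding the lemma.

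Next, I would establish a polynomial variance estimate: there exist $\alpha, \beta > 0$ such that, uniformly over $z,w$ in any bounded subset of $\R$ and $r,s \in (0,1]$ with $1/2 \le r/s \le 2$,
$$\mathrm{Var}\bigl(h_r(z) - h_s(w)\bigr) \le C\, \frac{|(z,r)-(w,s)|^\alpha}{r^\beta}.$$
By the triangle inequality it suffices to treat the case $z=w$ (pure radius change, controlled by the Brownian scaling $\asymp |\log(r/s)|$ together with a first-order expansion of $\log$) and the case $r=s$ (pure centre change, bounded by elementary geometric estimates on the overlap of the two arcs $\partial B_r(z)\cap\h$ and $\partial B_r(w)\cap\h$, together with the fact that the difference of the corresponding logarithmic potentials is smooth away from the diagonal). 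Since $h_r(z)-h_s(w)$ is a centred Gaussian once the additive constant is fixed, Gaussian hypercontractivity immediately upgrades this to the moment bound
$$\mathbb{E}\bigl[|h_r(z)-h_s(w)|^{2k}\bigr] \le C_k\, \frac{|(z,r)-(w,s)|^{\alpha k}}{r^{\beta k}}, \qquad k \in \mathbb{N}.$$

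Applying the modified Kolmogorov--Chentsov criterion \cite[Lemma~C.1]{hmp} to the parameter space $(z,r) \in U \times (0,1]$, with ambient dimension $d = 2$ and with $k$ chosen large enough that $\alpha k/2$ exceeds the value of $\gamma$ plus the necessary margin, then directly yields a continuous modification $\widetilde{X}$ of $X(z,r) = h_r(z)$ satisfying the stated bound, with the logarithmic factor $(\log(2/r))^\zeta$ and the power $r^{(d+\beta)/\alpha}$ arising from that lemma in exactly the same way as in \cite{hmp}. The main obstacle is the bookkeeping in the variance computation: although structurally identical to the zero-boundary case, one must carefully verify that the additional boundary term $-\log|z-\bar w|$ in $G^N$ neither destroys the polynomial bound nor spoils uniformity over $(z,w)$ in a compact subset of $\R$, particularly as the centres approach the normalization point $0$ where semicircles centred at different boundary points become close to the semicircle used to define $h_1(0)$.
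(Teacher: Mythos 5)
Your proposal follows essentially the same route as the paper: compute $\mathbb{E}[(h_r(z)-h_s(w))^2]$ by integrating the Neumann Green's function $-\log|x-y|-\log|x-\bar y|$ against the semicircle measures, observe that for $x \in \R$ the two logarithmic terms coincide (the ``reflection'' you mention), bound the variance by a polynomial in $|(z,r)-(w,s)|/(r\wedge s)$, upgrade to arbitrary Gaussian moments, and feed this into the modified Kolmogorov--\v{C}entsov lemma from \cite{hmp}. Two small notes: the parameter $d$ in the lemma should be $1$ here (spatial variable $z\in\R$), not $2$, as one can check against the exponents chosen in the subsequent application; and your worry about the normalisation point $0$ is a non-issue, since $h_r(z)-h_s(w)$ is a difference and its variance is independent of how the additive constant is fixed.
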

	\begin{proof}
		By the ``modified Kolmogorov--\v{C}entsov'' result~\cite[Lemma~C.1]{hmp} it suffices to show that there exist $\alpha, \beta, C > 0$ such that for all $z,w \in U$ and $r,s \in (0,1]$ we have
		\begin{equation}\label{eq:hrzhsw}
			\mathbb{E}[|h_r(z)-h_s(w)|^\alpha] \le C \left( \frac{|(z,r)-(w,s)|}{r\wedge s} \right)^{2+\beta}.
		\end{equation}	
		Thus we can show continuity for the semicircle average by bounding the absolute moments of $h_r(z)-h_s(w)$. In fact, since this is a centred Gaussian, we need only bound its second moment. We can do this by considering the Green's function for $h$, given by the Neumann Green's function in $\h$:
		\begin{equation*}
			G(x,y) = -\log{|x-\bar{y}|}-\log{|x-y|}.
		\end{equation*}
		This $G$ is the Green's function such that
		\begin{equation*}
			-\Delta^{-1} \rho(\cdot) = \frac{1}{2\pi} \int_\h G(\cdot,y) \rho(y) \, dy.
		\end{equation*}
		Recall that $\rho_{x,\ve}$ denotes the uniform probability measure on $\partial B(x,\ve)$ and $\rho_{x,\ve}^+$ denotes that on $\partial B(x,\ve) \cap \h$. Since
		\begin{align*}
			\nonumber\int G(z,y) \, \rho_{x,\ve}^+(dy)& = \int (-\log{|z-\bar{y}|}-\log{|z-y|}) \, \rho_{x,\ve}^+(dy) \\
			&= \int -2\log{|z-y|} \, \rho_{x,\ve}(dy) = -2\log{\max{(|z-x|,\ve)}},
		\end{align*}
		we find that 
		\begin{align}
			\nonumber &\mathbb{E}[(h_s(w)-h_r(z))(h_s(w)-h_r(z))] \\
			= &\int [ -2\log{\max{(|\zeta-w|,s)}} +2\log{\max{(|\zeta-z|,r)}} ] \, (\rho_{w,s}^+ - \rho_{z,r}^+)(d\zeta) \label{eq:greenfn}\\
			\nonumber\le &\int \left|-2\log{\max{(|\zeta-w|,s)}} +2\log{\max{(|\zeta-z|,r)}}\right| \, (\rho_{w,s}^+ + \rho_{z,r}^+)(d\zeta) \\
			\nonumber \le &\int 2\, \frac{|z-w|+|r-s|}{r\wedge s} \, (\rho_{w,s}^+ + \rho_{z,r}^+)(d\zeta) = 4\, \frac{|z-w|+|r-s|}{r\wedge s}.
		\end{align}
		Here we used (as in the proof of~\cite[Prop.~2.1]{hmp}) that $|\log{\frac{a}{b}}| \le \frac{|a-b|}{a\wedge b}$ for $a,b>0$, and that $|(a\vee b)-(c\vee d)| \le |a-c| \vee |b-d|$ for all $a,b,c,d$. Since $h_s(w)-h_r(z)$ is a centred Gaussian, we now have that for every $\alpha > 0$ there is $C_\alpha$ such that
		\begin{equation*}
			\mathbb{E}[|h_s(w)-h_r(z)|^\alpha] \le C_\alpha \left(\frac{|(z,r)-(w,s)|}{r\wedge s}\right)^\frac{\alpha}{2},
		\end{equation*} 
		concluding the proof. Observe also that when $w=z$ and $s>r$ the integral (\ref{eq:greenfn}) becomes
		\begin{equation*}
			\int 2\log\left( \frac{\max(|\zeta-z|,r)}{\max(|\zeta-z|,s)} \right) (\rho^+_{z,s}(d\zeta)-\rho^+_{z,r}(d\zeta));
		\end{equation*}
		the integral w.r.t.\ $\rho^+_{z,s}(d\zeta)$ vanishes whilst the integral w.r.t.\ $\rho^+_{z,r}(d\zeta)$ gives $2\log{(s/r)}$. A similar computation shows that the increments $h_s(z)-h_r(z)$ and $h_t(z)-h_u(z)$ have zero covariance when $r < s \le t < u$, which together with continuity implies that $\widetilde{X}(z,e^{-t})-\widetilde{X}(z,1)$ evolves as $\sqrt{2}$ times a two-sided standard Brownian motion.
	\end{proof}
Note that, since the boundary conditions are Neumann rather than Dirichlet, the semicircle average process evolves as $\sqrt{2}$ times a Brownian motion, as opposed to circle averages which yield standard Brownian motion. This remains true for the free-boundary GFF; since we will need it later and the calculation is similar, we will now give a corresponding estimate for circle averages of the free-boundary GFF.
\begin{lemma}\label{lem:var}
	Let $h$ be a free-boundary GFF with the additive constant fixed such that $h_1(0)=0$. Let $K\subset\overline{\h}$ be compact. Then there exists a constant $C=C(K)$ such that, for all $w\in K$ and $s\in(0,1]$ such that $B(w,s)\subset\overline{\h}$, we have$$\mathrm{var}\, [h^\mathrm{circ}_s(w)-h^\mathrm{circ}_1(i)] \le -\log{s}+C(K) .$$
\end{lemma}
\begin{proof}
	First we compute
	\begin{align*}
		\int G(z,y) \rho_{x,\ve}(dy) &= \int (-\log|z-\bar{y}| -\log|z-y|) \rho_{x,\ve}(dy) \\
		&= -\log\max(|z-x|,\ve) -\log\max(|z-\bar{x}|,\ve).
	\end{align*}
	Thus, for $w, z\in \h$ and $r, s>0$ such that $r\le \mathrm{Im}\, z$, $s\le\mathrm{Im}\, w$,
	\begin{align*}
		\mathbb{E} [(h^\mathrm{circ}_s&(w)-h^\mathrm{circ}_r(z))(h^\mathrm{circ}_s(w)-h^\mathrm{circ}_r(z))] \\
		= \int \bigg[ &-\log\max(|\zeta-w|,s) -\log\max(|\zeta-\bar{w}|,s) \\
		&+\log\max(|\zeta-z|,r) +\log\max(|\zeta-\bar{z}|,r)  \bigg] \, (\rho_{w,s}-\rho_{z,r})(dy) \\
		= \int [&-\log\max(|\zeta-w|,s) +\log\max(|\zeta-z|,r) ] \, (\rho_{w,s}-\rho_{z,r})(dy),
	\end{align*}
	where in the last line we used that $|\zeta-\bar{w}|\ge s$ and $|\zeta-\bar{z}|\ge r$ for $\zeta\in\h$, so the corresponding integrals w.r.t.~$\rho_{w,s}$ and~$\rho_{z,r}$ cancel.
	
	Setting $z=i$, $r=1$, note that the term $\log\max(|\zeta-i|,1)$ vanishes on $\partial B(i,1)$ and is bounded above by some constant (depending only on $K$) on the closed Euclidean 1-neighbourhood of $K$, whereas $\log\max(|\zeta-w|,s)$ is equal to $\log{s}$ on $\partial B(w,s)$ and bounded above by some constant (depending only on $K$) on $\partial B(i,1)$. The claimed result follows.
\end{proof}
	\subsection{Thick points on the boundary}
	We refer to $x\in\mathbb{R}$ as an \textbf{$\alpha$-thick point} if
	\begin{equation*}
		\lim_{r\to 0} \frac{h_r(x)}{\log(1/r)} =\alpha.
	\end{equation*}
	
	Our aim in this subsection is to show that boundary points have maximum thickness 2. This matches the maximum thickness in all of $\h$ for the zero-boundary GFF, as calculated in~\cite{hmp}. This is because $\partial\h$ has Euclidean dimension half that of $\h$, but the semicircle averages centred at boundary points for the free-boundary GFF behave like $\sqrt{2}$ times the circle averages of the zero-boundary GFF; these two effects cancel each other out.
	\begin{lemma}\label{lem:thicc}
		Let $h$ be a free-boundary GFF with the additive constant fixed such that $h_1(0)=0$. Almost surely, for every $x\in\R$ we have
		\begin{equation}\label{eq:thicc}
			\limsup_{r\to 0} \frac{h_r(x)}{\log{(1/r)}} \le 2.
		\end{equation}	
	\end{lemma}
	The proof is an application of the argument that proves~\cite[Lemma~3.1]{hmp}.
	\begin{proof}
		Fix $a>2$, and choose $\ve$ such that $0 < \ve < \frac{1}{2} \wedge (\frac{a^2}{8}-\frac{1}{2})$. For each $n \in \mathbb{N}$ let $r_n = n^{-1/\ve}$. By setting $U=(-1,1)$, $\zeta = \frac{1}{2}$, $\alpha = \frac{16}{\ve}$, $\beta = \frac{8}{\ve} - 2$, $\gamma = \frac{1}{2}-\frac{3\ve}{16}$ in Lemma~\ref{lem:cty} there is a random $M \in \R$ such that
		\begin{equation}\label{eq:tholder}
			|h_{s'}(y) - h_s(x)| \le M \left(\log{\frac{2}{s'}}\right)^\frac{1}{2} \frac{|(y,s')-(x,s)|^{\frac{1}{2}-\frac{3\ve}{16}}}{s'^{\frac{1}{2}-\frac{\ve}{16}}},
		\end{equation}
		whenever $x,y \in (-1,1)$, $s,s' \in (0,1]$ with $1/2\le s'/s\le 2$.
		Thus, for all $x \in (0,1)$ and $n > (2^\ve-1)^{-1}$ (so that $r_n/r_{n+1} \in (1,2)$), and all $\log{\frac{1}{r_n}} < t \le \log{\frac{1}{r_{n+1}}}$, we have
		\begin{align*}
			\nonumber |h_{e^{-t}}(x) - h_{r_n}(x)| &\le M \left(\log{\frac{2}{r_n}}\right)^\frac{1}{2} \frac{\left(r_n-r_{n+1} \right)^{\frac{1}{2}-\frac{3\ve}{16}}}{r_n^{\frac{1}{2}-\frac{\ve}{16}}} \\
			\nonumber &= M \left( \log{2} + \frac{1}{\ve} \log{n}\right)^\frac{1}{2}  \left( 1-\left( \frac{n}{n+1} \right)^\frac{1}{\ve} \right)^{\frac{1}{2}-\frac{3\ve}{16}}  n^\frac{1}{8} \\
			\nonumber & \le 2M \ve^{-\frac{1}{2}} (\log{n})^\frac{1}{2} \left( 1-\left( \frac{n}{n+1} \right)^\frac{1}{\ve} \right)^{\frac{1}{2}-\frac{3\ve}{16}}  n^\frac{1}{8} \\
			&\le 2M\ve^{-1+\frac{3\ve}{16}} n^{-\frac{3}{8}+\frac{3\ve}{16}} (\log{n})^\frac{1}{2},
		\end{align*}
	where in the last step we used that $(1-1/(n\ve))^\ve < (1-1/((n+1)\ve))^\ve < 1-1/(n+1)=n/(n+1)$, so that $1-(n/(n+1))^{1/ve}\le 1/(n\ve)$. This shows
		\begin{equation}\label{eq:rns}
			\limsup_{r\to 0} \frac{h_r(x)}{\log{(1/r)}} > a \Leftrightarrow \limsup_{n\to\infty} \frac{h_{r_n}(x)}{\log{(1/r_n)}} > a.
		\end{equation}
		Given $x \in (0,1)$ we can find $k \in \mathbb{N}$ such that $kr_n^{1+\ve} \in (0,1)$ with $|x-kr_n^{1+\ve}| < r_n^{1+\ve}$. Then (\ref{eq:tholder}) gives
		\begin{equation*}
			|h_{r_n}(x)-h_{r_n}(kr_n^{1+\ve})| < M \left(\log{\frac{2}{r_n}}\right)^\frac{1}{2} r_n^{(1+\ve)\frac{\ve}{8}} =  M \left(\log{\frac{2}{r_n}}\right)^\frac{1}{2} n^{-\frac{1}{8}(1+\ve)}.
		\end{equation*}
		So if the right-hand side of (\ref{eq:rns}) holds for some $x\in(0,1)$, then for some $\delta >0$ there are infinitely many $n$ for which $h_{r_n}(x)>(a+\delta)\log{\frac{1}{r_n}}$, and thus infinitely many $n$ for which some $k \in \{1, 2, \ldots, \lfloor r_n^{-(1+\ve)} \rfloor \}$ makes the event $A_{n,k} = \{ h_{r_n}(kr_n^{1+\ve}) > a\log{\frac{1}{r_n}} \}$ hold. We can now apply a Gaussian tail bound. Indeed, for each $x$, $(h_{e^{-t}}(x)-h_1(x))_t$ is $\sqrt{2}$ times a standard Brownian motion, independent of $h_1(x) = h_1(x)-h_1(0)$ (recall our choice of additive constant) whose variance is at most some constant $C$ for $x \in (0,1)$, as follows from (\ref{eq:hrzhsw}). Thus $h_{r_n}(kr_n^{1+\ve}) \sim \mathcal{N}(0,c+2\log{\frac{1}{r_n}})$ for some $c\le C$. Thus for any $\eta > 0$, if $n$ is sufficiently large we have 
		\begin{align*}
			\p[A_{n,k}] = \p\left[Z > \frac{a\log{\frac{1}{r_n}}}{\sqrt{c+2\log{\frac{1}{r_n}}}} \right] < \p\left[Z > \frac{(a-\eta)\log{\frac{1}{r_n}}}{\sqrt{2 \log{\frac{1}{r_n}}}}\right] < r_n^{\frac{(a-\eta)^2}{4}},
		\end{align*}
		where $Z \sim \mathcal{N}(0,1)$. So by a union bound we have, for $n$ sufficiently large depending on $\eta$,
		\begin{equation*}
			\p\left[\bigcup_{k=1}^{\lfloor r_n^{-(1+\ve)} \rfloor} A_{n,k}\right] \le r_n^{\frac{(a-\eta)^2}{4} - 1 -\ve} = n^{1+\frac{1}{\ve}\left(1-\frac{(a-\eta)^2}{4}\right)}.
		\end{equation*}
		Since $1+\frac{1}{\ve}\left(1-\frac{a^2}{4}\right) < -1$, we can choose $\eta$ so that $1+\frac{1}{\ve}\left(1-\frac{(a-\eta)^2}{4}\right) < -1$, in which case the Borel--Cantelli lemma gives
		\begin{equation*}
			\p\left[\bigcap_{m=1}^\infty \bigcup_{n=m}^\infty \bigcup_{k=1}^{\lfloor r_n^{-(1+\ve)} \rfloor} A_{n,k}\right] = 0.
		\end{equation*}
		Thus almost surely there is no $x \in (0,1)$ for which
		\begin{equation*}
			\limsup_{r\to 0} \frac{h_r(x)}{\log{(1/r)}} > a.
		\end{equation*}
		By translation invariance we can conclude that there is no such $x \in \R$; all that changes is the bound $c$ on the variance of $h_1(x)$, but we will still locally have boundedness. Since $a>2$ was arbitrary we are done.
	\end{proof} 
	\begin{remark}
		In the proof of Lemma~\ref{lem:thicc}, we find that almost surely there is some $N$ such that $\bigcup_k A_{n,k}$ never happens for $n \ge N$, which gives a uniform bound on $h_r(x)/\log{(1/r)}$ for $x \in (0,1)$ and $r \le r_N$. Using this, and translation invariance, we can deduce the stronger statement that, almost surely, for every $K \subset \R$ compact we have
		\begin{equation}\label{eq:uthicc}
			\limsup_{r\to 0} \, \sup_{x\in K} \, \frac{h_r(x)}{\log{(1/r)}} \le 2.
		\end{equation}
		Indeed, since we have
		\begin{equation*}
			\p\left[ \bigcup_{n=m}^\infty \bigcup_{k=1}^{\lfloor r_n^{-(1+\ve)} \rfloor} A_{n,k}\right] = O\left(m^{2+\frac{1}{\ve}\left(1-\frac{(a-\eta)^2}{4} \right)}\right),
		\end{equation*}
		it in fact holds that, as $r\to 0$,
		\begin{equation}\label{eq:quthicc}
			\p\left[ \sup_{s\le r} \, \sup_{x\in K} \, \frac{h_s(x)}{\log{(1/s)}} > a \right] = O\left( r^{\frac{(a-\eta)^2}{4}-1-2\ve} \right),
		\end{equation}
		where since $\eta$ and $\ve$ can be made arbitrarily small the exponent is arbitrary subject to being less than $a^2/4-1$.
	\end{remark}
	\subsection{Controlling pointwise distance via semicircle averages}\label{subsection:dub}
	We give an analogue of part of~\cite[Prop.~3.14]{dfgps}.
	\begin{prop}\label{prop:dub}
		Fix $a \in \R$ and $r > 0$. Let $h$ be a free-boundary GFF on $\h$ and let $K \subset \h \cap \partial B(0,1)$ be a closed arc consisting of more than a single point. Then we have
		\begin{equation}\label{eq:a2k}
			\fd_h(a,a+rK) \le C \int_{\log{(1/r)}}^\infty e^{\xi(h_{e^{-t}}(a)-Qt )} \, dt
		\end{equation}
		with superpolynomially high probability as $C \to\infty$, at a rate which is uniform in $a$ and $r$.
	\end{prop}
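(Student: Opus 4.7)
The plan is to chop a path from $a$ to $a+rK$ into crossings of dyadic semicircular shells centred at $a$, and control each crossing using the affine coordinate change and Weyl scaling axioms of $\fd_h$. First I would reduce to the case $(a,r)=(0,1)$: applying the affine coordinate change axiom to $\phi(z) = rz + a$ followed by Weyl scaling by $c := h_r(a) + Q\log r$ gives $\fd_h(a, a+rK) = e^{\xi c}\, \fd_{\hat h}(0, K)$ with $\hat h(z) := h(rz+a) - h_r(a)$. By translation and scale invariance of the free-boundary GFF modulo additive constant, $\hat h$ has exactly the law of the reference free-boundary GFF with $\hat h_1(0) = 0$. A matching change of variables turns the target integral into $e^{\xi c}\int_0^\infty e^{\xi(\hat h_{e^{-t}}(0)-Qt)}\, dt$, cancelling the $e^{\xi c}$ factor. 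So it suffices to prove $\fd_{\hat h}(0,K) \le C\int_0^\infty e^{\xi(\hat h_{e^{-t}}(0)-Qt)}\, dt$ with superpolynomially high probability in $C$ uniformly for the reference field $\hat h$.

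For $n \ge 0$ set $A_n := \{z \in \overline{\h} : e^{-(n+1)} \le |z| \le e^{-n}\}$ and let $D_n$ be the infimum of $\fd_{\hat h}$-lengths of admissible paths in $A_n$ joining its inner and outer semicircular arcs. A path from $0$ to a chosen point of $K$ can be built by concatenating such shell crossings together with a short lateral path in $A_0$ of controlled length, so $\fd_{\hat h}(0,K) \le C_0 + \sum_{n \ge 0} D_n$ for a random $C_0$ with finite moments. Applying the affine coord change axiom to $z \mapsto e^{-n}z$ and Weyl scaling by $c_n := \hat h_{e^{-n}}(0) - Qn$ yields $D_n = e^{\xi c_n} X_n$, where $X_n$ is the $\fd_{\tilde h^{(n)}}$-crossing distance of the reference semicircular annulus $\{e^{-1} \le |z| \le 1\} \cap \overline{\h}$ under the field $\tilde h^{(n)}(z) := \hat h(e^{-n}z) - \hat h_{e^{-n}}(0)$. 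Scale invariance modulo additive constant gives that each $\tilde h^{(n)}$ has the same law as $\hat h$, so in particular every $X_n$ has the same marginal distribution, and each pair $(X_n, I_n)$ (with $I_n$ defined below) has a fixed joint law.

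To match the target integral, decompose it as $\int_0^\infty e^{\xi(\hat h_{e^{-t}}(0) - Qt)}\, dt = \sum_{n \ge 0} e^{\xi c_n} I_n$ where $I_n := \int_0^1 e^{\xi((\hat h_{e^{-(n+s)}}(0) - \hat h_{e^{-n}}(0))-Qs)}\, ds$. The remaining task is to show $\sum_{n \ge 0} e^{\xi c_n} X_n \le C \sum_{n \ge 0} e^{\xi c_n} I_n$ with superpolynomially high probability. Conditionally on the radial part of $\hat h$ (which fixes both $(c_n)$ and $(I_n)$), the $X_n$ are determined by the lateral part of $\hat h$ on nearly disjoint annuli and so are approximately independent with a marginal distribution admitting superpolynomial upper tails. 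The required single-scale tail bound $\p[X > C] = O(C^{-N})$ for every $N$ can be obtained by transferring single-scale LQG-crossing estimates from the LQG-metric construction \cite{gm} to the free-boundary setting via absolute continuity near $\partial\h$; analogously, $\p[I < C^{-1}] = O(C^{-N})$ follows from direct Gaussian tail estimates on the Brownian integral $\int_0^1 e^{\xi(\sqrt 2 B_s - Qs)}\, ds$. A weighted union bound across $n$, with polynomial weights in $n$ absorbed by the superpolynomial decay of both tails, together with the observation that typical exponential decay of $e^{\xi c_n}$ in $n$ ensures polynomial weights do not distort the comparison between $\sum e^{\xi c_n} X_n$ and $\sum e^{\xi c_n} I_n$, yields the desired bound. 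The main technical obstacle is the uniform single-scale tail bound on $X$: bootstrapping LQG-metric tail estimates across the boundary via a careful local absolute continuity argument near $\partial\h$ is the key step, and is of a type that this paper needs to develop in any event.
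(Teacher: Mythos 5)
Your approach is conceptually close to the paper's but packaged differently, and there is one genuine gap. Both you and the paper decompose the distance scale-by-scale along dyadic semicircular shells, bound the crossing at each scale by $A\,e^{\xi(h_{e^{-n}}(a)-Qn)}$ with superpolynomial tails, and then sum. The paper does this by coupling the free-boundary field $h$ to a whole-plane GFF $h^{\mathrm{wp}}$ with $h-h^{\mathrm{wp}}$ a random harmonic function, importing the per-scale crossing estimate from~\cite[Prop.~3.1]{dfgps} for $h^{\mathrm{wp}}$ inside a set $U$ at positive distance from $\partial\h$, and controlling the harmonic difference on $e^{-n}U$ via Borell--TIS; the combination across scales, including uniformity in $a$ and $r$, is then delegated to the machinery of~\cite[Prop.~3.14]{dfgps}. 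Your reduction to $(a,r)=(0,1)$ by affine coordinate change plus Weyl scaling is cleaner and makes the uniformity essentially free, since the $e^{\xi c}$ factor cancels exactly; that is a genuine simplification.

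The gap is in the concatenation step. You write $\fd_{\hat h}(0,K)\le C_0 + \sum_{n\ge 0} D_n$ with $D_n$ the infimum of $\fd_{\hat h}$-lengths of crossings of $A_n$, but a near-optimal crossing of $A_{n+1}$ need not land where a near-optimal crossing of $A_n$ begins, so the concatenation of crossings is not a connected path. You only allow a lateral correction at the outermost scale, whereas one is needed at every scale, and its contribution must be controlled with the same $e^{\xi c_n}$-weighted superpolynomial tail. The paper deals with this (following~\cite{dfgps}) by simultaneously producing, at each scale, a \emph{disconnecting path} whose trace separates the inner from the outer arc of $A_n$ and has length bounded by $A\,e^{\xi(h_{e^{-n}}(a)-Qn)}$ with superpolynomially high probability; any two crossings of consecutive shells must both meet this disconnecting path, which supplies the lateral glue. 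You need an analogous second per-scale estimate. Relatedly, your $X_n$ is the crossing distance of the full semicircular shell $A_n$, which reaches $\partial\h$, and you then transfer tails ``via absolute continuity near $\partial\h$''. The free-boundary and whole-plane fields are not mutually absolutely continuous up to $\partial\h$, so as stated this is circular — controlling $\fd_{\hat h}$ near the boundary is precisely what the proposition is for. The fix is to define $X_n$ (and the lateral connector) as the crossing distance within a fixed sub-region of the reference annulus at positive distance from $\R$: this still gives an upper bound on $D_n$, and the absolute continuity / Borell--TIS transfer is then legitimate, which is exactly what the paper's choice of $U$ and $U'$ at positive distance from $\partial\h$ accomplishes. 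With those two amendments your argument goes through and is broadly equivalent to the paper's.
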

	\begin{proof}
		Without loss of generality fix the additive constant for $h$ such that $h_1(a)=0$ (note that the statement does not depend on the normalization, since adding a constant $c$ to the field scales both sides by $e^{\xi c}$). 
		
		Fix a coupling of $h$ with $h^{\mathrm{wp}}$, a whole-plane GFF with the additive constant fixed so that $h^{\mathrm{wp}}_1(a)=0$, such that $h^{\mathrm{wp}}|_\h = \mathring{h} + \widehat{\mathfrak{h}}$ and $h = \mathring{h} + \widetilde{\mathfrak{h}}$, where $\mathring{h}$ is a zero-boundary GFF on $\h$ and~$\widehat{\mathfrak{h}}$ and $\widetilde{\mathfrak{h}}$ are random functions harmonic in $\h$. (Note that $h_1(a)$ is a semicircle average whereas $h^{\mathrm{wp}}_1(a)$ is a circle average; note also that although we could choose the normalizations differently to make $\mathring{h}$, $\widehat{\mathfrak{h}}$ and $\widetilde{\mathfrak{h}}$ independent, we do not do so in this proof as we will not require this.)
		
		Let $\mathfrak{h} = \widetilde{\mathfrak{h}}-\widehat{\mathfrak{h}}$; then $\fd_h = e^{\xi \mathfrak{h}} \cdot \fd_{h^{\mathrm{wp}}|_{\h}}$, by Weyl scaling. We will bound $\fd_h(a+e^{1-n}K,a+e^{-1-n}K)$ for each $n \in \mathbb{N}$. Let $U$ be a bounded connected open set containing $eK \cup K \cup e^{-1}K$ and at positive distance from $\partial \h$. Then by~\cite[Prop.~3.1]{dfgps}\footnote{Many of the results in~\cite{dfgps} involve constants $\mathfrak{c}_r$ for each $r>0$, which describe the scaling of LQG distances. In~\cite[Thm~1.5]{dfgps} a ``tightness'' result is obtained for the $\mathfrak{c}_r$ in lieu of actual scale invariance, which was established later in \cite{gm}; in this work, we will thus use the subsequent result (see~\cite[Thm~1.8]{gm}) that we can take $\mathfrak{c}_r = r^{\xi Q}$.}, with superpolynomially high probability as $A \to \infty$, at a rate which is uniform in $n$, we have
		\begin{equation}\label{eq:3.1hat}
			\fd_{h^{\mathrm{wp}}}(a+e^{1-n}K,a+e^{-1-n}K;a+e^{-n}U) \le A e^{\xi (h^{\mathrm{wp}}_{e^{-n}}(a) - Qn)}.
		\end{equation}
		(Recall that, for an open set $V \subset \mathbb{C}$, $\fd_{h^{\mathrm{wp}}}(\cdot,\cdot;V)$ is the internal metric on $V$ induced by $\fd_{h^{\mathrm{wp}}}$.) Since $a+U$ is at positive distance from $\partial \h$, $\mathfrak{h}$ is almost surely bounded on $a+U$. Thus the variables
		\begin{equation*}
			\{\mathfrak{h}(z) - [h_1(a)-h^{\mathrm{wp}}_1(a)]: z\in a+ U \}
		\end{equation*}
		form an almost surely bounded Gaussian process, so by the Borell--TIS inequality (see~\cite[Thm~2.1.1]{at}) the supremum has a Gaussian tail: there exist $c_1,c_2 > 0$ such that for all $M$ sufficiently large we have
		\begin{equation*}
			\p\left[\sup_{z\in a+ U}{(\mathfrak{h}(z) - [h_1(a)-h^{\mathrm{wp}}_1(a)])} > M \right] < c_1 e^{-c_2M^2}.
		\end{equation*}
		Setting $u>0$ to be the Euclidean distance between $U$ and $\partial\h$ and writing $h_r^\mathrm{circ}(z)$ for the average of $h$ on a circle $\partial B(z,r) \subset \overline{\h}$, we can write 
		\begin{equation*}
			\sup_{z\in a+ U}{(\mathfrak{h}(z) - [h_1(a)-h^{\mathrm{wp}}_1(a)])} = \sup_{z\in a+ U} ( (h_u^\mathrm{circ}(z)-h_1(a)) - (h_u^\mathrm{wp}(z)-h^{\mathrm{wp}}_1(a)) ),
		\end{equation*}
		where both differences on the right hand side are independent of how the additive constants for~$h$ and~$h^\mathrm{wp}$ are fixed, since they only depend on the fields $h$ and $h^{\mathrm{wp}}$ when integrated against mean-zero test functions). Thus, by scale invariance, with the same $c_1, c_2$ we have for every $n$
		\begin{equation*}
			\p\left[\sup_{z\in a+ e^{-n}U}{(\mathfrak{h}(z) - [h_{e^{-n}}(a)-h^{\mathrm{wp}}_{e^{-n}}(a)])} > M \right] < c_1 e^{-c_2M^2}.
		\end{equation*}
		It follows that
		\begin{equation}\label{eq:fraktail}
			\sup_{z\in a+ e^{-n}U}{\mathfrak{h}(z) - [h_{e^{-n}}(a)-h^{\mathrm{wp}}_{e^{-n}}(a)]} \le \log{A} 
		\end{equation} 
		with superpolynomially high probability as $A \to \infty$, at a rate which is uniform in $n$. Since 
		\begin{align*}
			&\fd_h(a+e^{1-n}K,a+e^{-1-n}K;a+e^{-n}U) \\
			&\le e^{\xi (\sup_{z\in a+e^{-n}U}{\mathfrak{h}(z)})} \fd_{h^{\mathrm{wp}}}(a+e^{1-n}K,a+e^{-1-n}K;a+e^{-n}U)
		\end{align*}
		and $h_{e^{-n}}(a) = h^{\mathrm{wp}}_{e^{-n}}(a) + \mathfrak{h}_{e^{-n}}(a)$, we find that 
		\begin{equation}\label{eq:across}
			\fd_h(a+e^{1-n}K,a+e^{-1-n}K;a+e^{-n}U) \le A e^{\xi (h_{e^{-n}}(a) - Qn)}
		\end{equation}
		on the intersection of the events of (\ref{eq:3.1hat}) with  $A$ replaced by $A^{1/2}$ and (\ref{eq:fraktail}) with $A$ replaced by~$A^{1/(2\xi)}$; the probability of this event is superpolynomially high as $A\to\infty$.
		
		By replacing $U$ by a suitable bounded connected open neighbourhood $\widetilde{U}$ of $K$, again with positive distance to $\partial \h$, and using compact subsets of $\widetilde{U}$ on either side of $K$, a similar argument shows that for each $n$ there is a path $\gamma_n$ in $e^{-n}\widetilde{U}$ whose intersection with $U$ disconnects $e^{1-n}K$~and~$e^{-1-n}K$ in $U$ such that \begin{equation}\label{eq:gamman}
		\mathrm{length}(\gamma_n;\fd_h)	\le A e^{\xi (h_{e^{-n}}(a) - Qn)}
		\end{equation} with superpolynomially high probability as $A\to\infty$, uniformly in $n$. This provides the adaptation of~\cite[Prop.~3.1]{dfgps} that we need---namely, fixing $\zeta>0$ small, as $C\to\infty$ the probability is superpolynomially high that (\ref{eq:across}) holds, and there is a path $\gamma_n$ such that (\ref{eq:gamman}) holds, with $A=C$ whenever $n\le C^{1/\zeta}$ and with $A=n^\zeta$ whenever $n > C^{1/\zeta}$. Stringing together the paths $\gamma_n$ with paths of near-minimal length connecting $a+e^{1-n}K$ and $a+e^{-1-n}K$ for each $n$, we find that
		\begin{equation}
			\fd_h(a,a+rK) \le C r^{\xi Q} \sum_{n=0}^{\lfloor C^{1/\zeta} \rfloor} e^{\xi h_{re^{-n}}(0)-\xi Qn}  + \sum_{\lfloor C^{1/\zeta} \rfloor + 1}^\infty n^\zeta e^{\xi h_{re^{-n}}(0)-\xi Qn}.
		\end{equation} 
		We now have to bound the right-hand side by the integral in (\ref{eq:a2k}). The argument for this, using Gaussian tail bounds, is exactly the same as in Steps 2--3 of the proof of~\cite[Prop.~3.14]{dfgps}.\footnote{The proof there gives their result with an added factor $\psi(t) = o(t)$ in the exponent, which arises in the proof from the fact that exact scale invariance was not then known, but $\psi$ can be taken to be identically zero in light of the relation $\mathfrak{c}_r = r^{\xi Q}$.} We thus conclude that (\ref{eq:a2k}) holds with superpolynomially high probability. (Uniformity in $a$ follows by translation invariance for $\fd_h$ and the fact that the result, and in particular the probability (\ref{eq:a2k}), does not depend on the choice of additive constant for $h$.)
	\end{proof}
	\begin{figure}
		\includegraphics[width=0.9\textwidth]{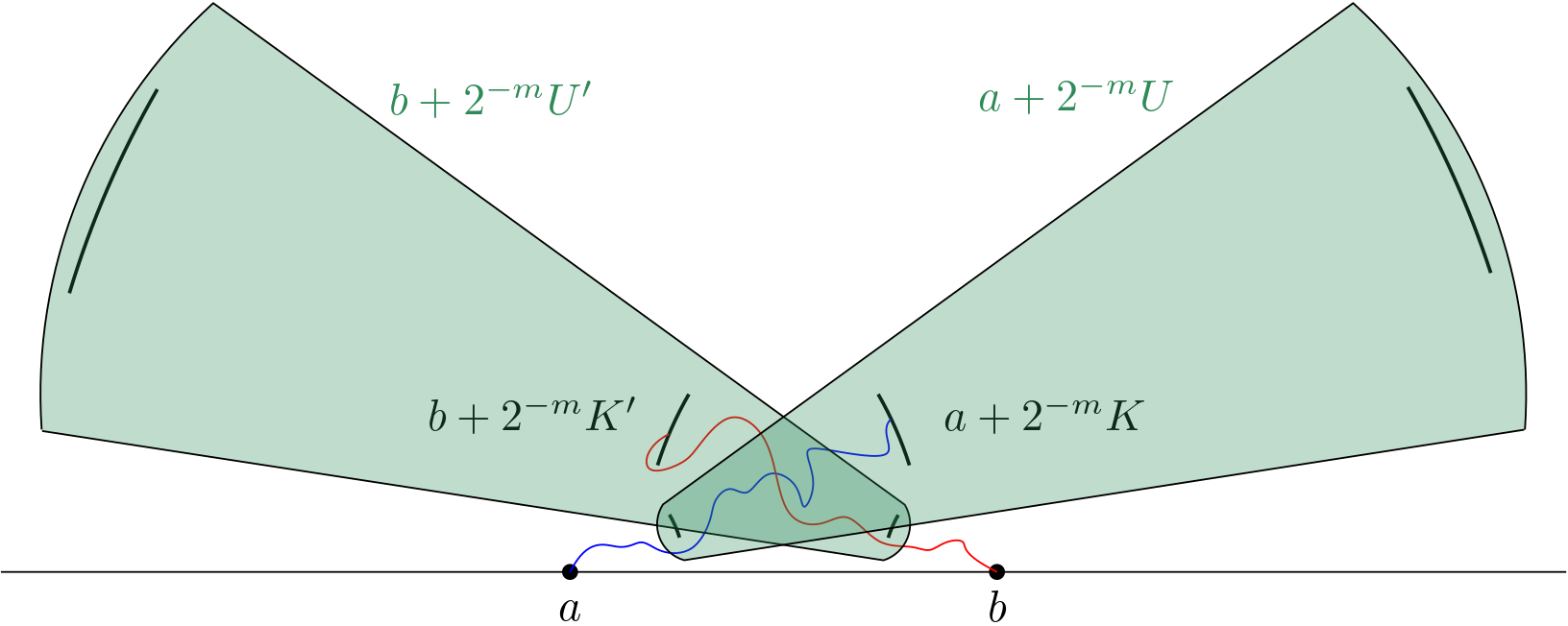}
		\caption{\label{fig:a-b-k}An illustration of the arcs $K$, $K'$ and their neighbourhoods $U$, $U'$ from the proof of Prop.~\ref{prop:dint}.}
	\end{figure}
	Given points $a$, $b$ with $n$ maximal such that $|b-a|<2^{1-n}$, arcs $K$, $K'$. and open sets $U$, $U'$ at positive distance to $\partial\h$ and such that $eK\cup K\cup e^{-1}K\subset U$, $eK'\cup K'\cup e^{-1}K'\subset U'$, we can apply Prop.~\ref{prop:dub} to find paths from $a$ to $a+2^{-n}K$ and from $b$~to~$b+2^{-n}K'$ that respectively stay in $a+\bigcup_{m\ge n}e^{-m}U$, $a+\bigcup_{m\ge n} e^{-m}U'$ and whose lengths are respectively bounded by$$C \int_{\log{(|b-a|^{-1})}}^\infty e^{\xi(h_{e^{-t}}(a)-Qt )} \, dt, \quad C \int_{\log{(|b-a|^{-1})}}^\infty e^{\xi(h_{e^{-t}}(b)-Qt )} \, dt ,$$with superpolynomially high probability as $C\to\infty$. By judiciously choosing $K,K'$ and the open sets $U,U'$ (see Figure~\ref{fig:a-b-k}), we can arrange that the path from $a$ to $a+2^{-n}K$ and that from~$b$~to~$b+2^{-n}K'$ cross each other, provided $2^{1-n} > |b-a|$, giving an analogue of part of~\cite[Prop.~3.15]{dfgps}:
	\begin{prop}\label{prop:dint}
		For $h$ a free-boundary GFF on $\h$ and $a,b \in \R$ with $ 0 <|b-a| \le 1$, we have
		\begin{equation}\label{eq:dint}
			\fd_h(a,b) \le C \int_{\log{(|b-a|^{-1})}}^\infty \left[e^{\xi(h_{e^{-t}}(a)-Qt )} + e^{\xi(h_{e^{-t}}(b)-Qt )} \right] \, dt 
		\end{equation}
		with superpolynomially high probability as $C \to \infty$.
	\end{prop}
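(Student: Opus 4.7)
The plan is to combine two applications of Proposition~\ref{prop:dub} via the triangle inequality, using a geometric setup that forces the two resulting paths to meet. First I would pick $n \in \mathbb{N}$ with $2^{-n} \le |b-a| < 2^{1-n}$ and set $r = 2^{-n}$, so that $r$ and $|b-a|$ are comparable with $r \le |b-a| < 2r$. I would then invoke Proposition~\ref{prop:dub} twice at this common radius $r$: once centred at $a$ with some arc $K$ and open neighbourhood $U$, and once centred at $b$ with another arc $K'$ and open neighbourhood $U'$. The construction in the proof of Prop.~\ref{prop:dub} actually produces paths, so this yields, with superpolynomially high probability (uniformly in $a, b, n$), continuous paths $\pi_a$ from $a$ to some point of $a + rK$ (confined to $a + rU$) and $\pi_b$ from $b$ to some point of $b + rK'$ (confined to $b + rU'$) whose $\fd_h$-lengths are bounded by the corresponding integrals on the right-hand side of~(\ref{eq:dint}) starting at $\log(1/r)$.

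The crux is to choose $(K, K', U, U')$ so that $\pi_a$ and $\pi_b$ must intersect by topology, uniformly over all $a, b$ with $|b-a| \in [r, 2r]$. Schematically, take $K$ (resp.\ $K'$) to be a short closed arc in $\h \cap \partial B_1(0)$ close to $e^{i\pi/4}$ (resp.\ $e^{3i\pi/4}$), and let $U, U'$ be bounded wedge-shaped neighbourhoods of $K, K'$ at positive distance from $\R$, chosen so that any continuous path in $a + rU$ from $a$ to $a + rK$ crosses the vertical line $\{\operatorname{Re} z = (a+b)/2\}$ at some height comparable to $r$, and likewise for $b, K', U'$. The condition $|b-a| \le 2r$ guarantees this vertical line meets both $a + rU$ and $b + rU'$ in overlapping height ranges, so a standard topological argument forces $\pi_a \cap \pi_b \ne \emptyset$. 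Since the relevant geometric condition depends only on the ratio $|b-a|/r \in [1,2]$, translation- and scale-invariance show that a single deterministic choice of $(K, K', U, U')$ works uniformly in $a, b, r$.

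Once $\pi_a$ and $\pi_b$ meet at a common point $z$, the triangle inequality gives
\begin{equation*}
\fd_h(a,b) \le \fd_h(a,z) + \fd_h(z,b) \le \ell_{\fd_h}(\pi_a) + \ell_{\fd_h}(\pi_b).
\end{equation*}
Because $r \ge |b-a|/2$, the lower limits $\log(1/r)$ differ from $\log(1/|b-a|)$ by at most $\log 2$, and since the integrands are positive, $\int_{\log(1/r)}^\infty \le \int_{\log(1/|b-a|)}^\infty$; the desired bound follows after adjusting $C$. A union bound over the two superpolynomially-high-probability events from Prop.~\ref{prop:dub} preserves the superpolynomial decay. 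The main obstacle I anticipate is the careful geometric setup of $(K, K', U, U')$ ensuring uniform topological crossing of the two paths across the full parameter range $|b-a|/r \in [1,2]$; once this is in place the rest of the argument is a routine combination of Prop.~\ref{prop:dub} and the triangle inequality.
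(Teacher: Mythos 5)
Your approach is essentially identical to the paper's: apply Prop.~\ref{prop:dub} at a common scale $r = 2^{-n}$ with $2^{-n} \le |b-a| < 2^{1-n}$, first at $a$ with arc $K$ and then at $b$ with a different arc $K'$, choose $K, K', U, U'$ so that the two resulting paths must cross by topology when $|b-a| < 2r$, and conclude via the triangle inequality and positivity of the integrands. The paper is terser about the geometry (it defers to its Figure for the arc/neighbourhood choices and a short remark about the dependence on the ratio $2r/|b-a|$), but the content is the same as what you have written out.
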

	\begin{remark}\label{rem:dint}
		Since the choices of $K$, $K'$, $U$, $U'$ depend only on $2^{1-n}/|b-a|$, if we assume that $|b-a| = 2^{-n}$ for some $n \in \mathbb{N}$ then we get that the rate at which the probability decays is uniform in the choice of $a$ and $b$ (and $n$).
	\end{remark}
	We now want to use Prop.~\ref{prop:dint} to find sequences of points in $\h$ that converge to a point in~$\R$ w.r.t.\ both $\fd_h$ and the Euclidean metric; eventually we will use these to show that, almost surely, both metrics induce the same topology.
	\begin{lemma}\label{lemma:h2r}
		Let $h$ be a free-boundary GFF on $\h$ and fix $a\in\R$. Almost surely, for every closed arc $K \subset \h\cap \partial B(0,1))$ consisting of more than one point, we have 
		\begin{equation*}
			\fd_h(a,a+2^{-n}K) \to 0
		\end{equation*}
		as $n\to\infty$. Thus, this convergence almost surely holds simultaneously for all such $K$ and all dyadic rationals $a$.
	\end{lemma}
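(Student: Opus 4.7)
The plan is to apply Proposition~\ref{prop:dub} with $r = 2^{-n}$, combine with Borel--Cantelli to obtain an almost sure upper bound that holds for all but finitely many $n$, show that this upper bound tends to $0$ as $n\to\infty$, and finally extend from a countable family of arcs to every arc via monotonicity.

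First, for fixed $a\in\R$ and a fixed closed arc $K\subset\h\cap\partial B_1(0)$ of more than one point, I would apply Proposition~\ref{prop:dub} with $r = 2^{-n}$ and $C = n^2$. Because the failure probability decays superpolynomially uniformly in $n$, it is summable in $n$, so by Borel--Cantelli almost surely
\begin{equation*}
	\fd_h(a, a + 2^{-n}K) \le n^2 \int_{n\log 2}^\infty e^{\xi(h_{e^{-t}}(a) - Qt)}\,dt
\end{equation*}
for all $n$ sufficiently large. To see that the right-hand side vanishes, I would use Lemma~\ref{lem:thicc} (applied with $\ve = e^{-t}$) to get $\limsup_{t\to\infty} h_{e^{-t}}(a)/t \le 2$ a.s., together with the fact that $Q = 2/\gamma + \gamma/2 > 2$ strictly throughout the subcritical regime $\gamma\in(0,2)$ by AM--GM. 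Fixing $\delta > 0$ with $2 + \delta < Q$, the integrand is bounded by $e^{-\xi(Q - 2 - \delta)\,t}$ for $t$ large (depending on $a$), so the integral from $n\log 2$ to $\infty$ decays exponentially in $n$ and easily absorbs the $n^2$ prefactor.

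To get the statement uniformly in $K$, I would fix a countable family $\{K_j\}_{j\in\N}$ of closed arcs in $\h\cap\partial B_1(0)$, each of more than one point, such that every closed arc of more than one point contains some $K_j$ (for example, arcs with endpoints at rational multiples of $\pi$). A countable union of the preceding argument gives, almost surely, $\fd_h(a, a + 2^{-n}K_j)\to 0$ simultaneously for every $j$. For a general arc $K$, choose $K_j\subset K$; since $a + 2^{-n}K_j\subset a + 2^{-n}K$ and the distance from a point to a set can only decrease when the set is enlarged,
\begin{equation*}
	\fd_h(a, a + 2^{-n}K) \le \fd_h(a, a + 2^{-n}K_j)\to 0.
\end{equation*}
A further countable union over dyadic $a\in\R$ then gives the final assertion. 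The only genuine input is the strict bound $Q > 2$, which makes the exponential decay of the integrand beat the worst-case linear growth of the semicircle averages; the only mildly subtle point is to reduce to a countable family of \emph{inner} arcs and use the correct direction of monotonicity, since shrinking $K$ makes the distance \emph{larger}, not smaller.
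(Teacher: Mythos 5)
Your proof is correct and follows essentially the same strategy as the paper's: reduce to countably many rational-endpoint arcs by the (correctly oriented) monotonicity of $\fd_h(a,\cdot)$ under enlarging the target set, apply Prop.~\ref{prop:dub} together with Borel--Cantelli, and then kill the resulting integral using the boundary thickness bound $\limsup h_{e^{-t}}(a)/t \le 2 < Q$. The one place you diverge from the paper is in the bookkeeping: you take the fixed polynomial constant $C=n^2$ (which is summable against the superpolynomial decay for a single $a$, and you handle the union over dyadic $a$ as a separate final step), whereas the paper takes $C=(\ve 2^{\ve n})^{1/2}$ growing geometrically in $n$, and runs Borel--Cantelli simultaneously over the $O(2^n)$ points $a\in 2^{-n}\mathbb{Z}\cap[-N,N]$, then massages the resulting bound with Jensen's inequality to get an expression that visibly tends to zero. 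Your version is slightly more direct since it avoids the Jensen step, at the cost of not giving the all-$a$ statement in a single Borel--Cantelli application; both are perfectly valid, and you reach the same final conclusion.
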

	\begin{proof}
		Since every such $K$ contains an arc with endpoints at rational angles, we can assume $K$ is fixed. Fix also $\ve \in (0,Q-2)$. Applying Prop.~\ref{prop:dub} and the Borel--Cantelli lemma for all $n\in\mathbb{N}$ with $r=2^{-n}$ and $C=2^{\xi\ve n}$, it almost surely holds that for $n$ large enough, we have
		\begin{equation*}
			\fd_h(a,a+2^{-n}K) \le 2^{\xi\ve n} \int_{n\log 2}^\infty e^{\xi (h_{e^{-t}}(a)-Qt)} \, dt \le \int_{n\log 2}^\infty e^{\xi (h_{e^{-t}}(a)-(Q-\ve)t)} \, dt.
		\end{equation*}
		Moreover, for $u \in (0,Q-\ve-2)$, by (\ref{eq:uthicc}) it almost surely holds that the integrand of the rightmost integral is bounded by $e^{-ut}$ for $n$ large enough, so the rightmost integral almost surely tends to 0 as $n \to \infty$, as required.
	\end{proof}
\subsection{Local H\"{o}lder continuity w.r.t.\ the Euclidean metric}
	We now prove that $\fd_h$ is almost surely locally H\"{o}lder continuous w.r.t.\ the Euclidean metric on $\overline{\h}$.
	\begin{prop}\label{prop:holder}
	Let $h$ be a free-boundary GFF on $\h$ with some choice of additive constant. Almost surely, for each $u\in(0,\xi(Q-2))$ and each compact $K \subset \overline{\h}$ there exists $C>0$ finite such that whenever $z, w\in K$, we have
	\begin{equation}\label{eq:holder}
		\fd_h(z,w) \le C|z-w|^{\xi(Q-2)-u}.
	\end{equation}
\end{prop} 
\begin{proof}
	By scale invariance it suffices to consider $K = [0,1]^2$. We will use the domain Markov property to couple $h$ with a zero-boundary field $\mathring{h}$ and a harmonic correction $\widetilde{\mathfrak{h}} := h-\mathring{h}$ given respectively by the projections of $h$ onto the spaces of functions, respectively, supported in and harmonic on $\h$. Unlike in the proof of Prop.~\ref{prop:dub}, this time we will need $\mathring{h}$ and $\widetilde{\mathfrak{h}}$ to be independent, so we will fix the additive constant for $h$ so that $\widetilde{\mathfrak{h}}(i)=0$. (Note that the claimed result does not depend on the choice of normalization.) Fix $S$ to be the rectangle $[0,1]\times[\frac{1}{2},1]$, and $U$ to be a neighbourhood of $S$ at positive distance from $\partial\h$. By~\cite[Prop.~3.9]{dfgps} we know that, for $h^{\mathrm{wp}}$ a whole-plane GFF with the additive constant fixed such that $h^{\mathrm{wp}}_1(0)=0$, and for any $p<4d_\gamma/\gamma^2$,
	\begin{equation}\label{eq:withca}
		\mathbb{E}[( \sup_{z,w\in S} \fd_{h^{\mathrm{wp}}} (z,w;U) )^p] = \mathbb{E}[( e^{-\xi h^{\mathrm{wp}}_1(0)} \sup_{z,w\in S} \fd_{h^{\mathrm{wp}}} (z,w;U) )^p] < \infty.
	\end{equation}
	Now consider the coupling of $h^{\mathrm{wp}}$ with $\mathring{h}$ and $\widehat{\mathfrak{h}}$ from Prop.~\ref{prop:dub} (we will not need independence here). Using that $\sup_S \widehat{\mathfrak{h}}$ has a Gaussian tail by Borell--TIS and thus $\mathbb{E}[\sup_S e^{q\xi\widehat{\mathfrak{h}}}] < \infty$ for all $q>0$, we get that
	\begin{equation}\label{eq:sansca}
		\mathbb{E}[( \sup_{z,w\in S} \fd_{\mathring{h}} (z,w;U) )^p] < \infty
	\end{equation}
	for each $p < 4d_\gamma/\gamma^2$ (by applying (\ref{eq:withca}) for a slightly larger value of $p$ and using H\"{o}lder's inequality). Now define for each $n \in \mathbb{N}$, $1 \le j \le 2^{n+1}-1$, $1\le k \le 2^n$ the sets$$S_{n,j,k} = 2^{-n}\left(S+k-1+\frac{1}{2}(j-1)i\right) = [(k-1)2^{-n},k2^{-n}] \times [j2^{-(n+1)},(j+1)2^{-(n+1)}],$$$$U_{n,j,k} = 2^{-n}\left(U+k-1+\frac{1}{2}(j-1)i\right).$$Note that for each $n$ the $S_{n,j,k}$ divide the rectangle $[0,1]\times [2^{-(n+1)},1]$ into rectangles of dimensions $2^{-n} \times 2^{-(n+1)}$. Then by scaling and translation invariance, for each $p<4d_\gamma/\gamma^2$, there exists $M_p < \infty$ such that
	\begin{equation}\label{eq:sansca2}
		\mathbb{E}[( 2^{n \xi Q} \sup_{z,w\in S_{n,j,k}} \fd_{\mathring{h}} (z,w;U_{n,j,k}) )^p] \le M_p,
	\end{equation}
	Moreover, we know that
	\begin{equation*}
		\sup_{z,w\in S_{n,j,k}} \fd_h (z,w;U_{n,j,k}) \le \sup_{z,w\in S_{n,j,k}} \fd_{\mathring{h}} (z,w;U_{n,j,k}) \cdot \sup_{z \in U_{n,j,k}} e^{\xi\widetilde{\mathfrak{h}}(z)}.
	\end{equation*}
	Since $\mathring{h}$ and $\widetilde{\mathfrak{h}}$ are independent, we have
	\begin{equation}\label{eq:holdere}
		\mathbb{E}\,\left[\left(2^{n\xi Q}\sup_{z,w\in S_{n,j,k}} \fd_h (z,w;U_{n,j,k})\right)^p\right] \le \mathbb{E}\,\left[\left(2^{n\xi Q}\sup_{z,w\in S_{n,j,k}} \fd_{\mathring{h}} (z,w;U_{n,j,k})\right)^p\right] \mathbb{E}\,\left[\sup_{z \in U_{n,j,k}} e^{p\xi\widetilde{\mathfrak{h}}(z)}\right].
	\end{equation}
	Note now that $\sup_{z \in U} \xi(\widetilde{\mathfrak{h}}(z) = \sup_{z \in U} \xi(\widetilde{\mathfrak{h}}(z)-\widetilde{\mathfrak{h}}(i))$ has a Gaussian tail by Borell--TIS, since $U$ is at positive distance from $\partial \h$. By scaling and translation invariance, we can conclude that there are $\sigma$, $c$ for which, for all $t > 0$ and all $n$, $j$,~$k$,
	\begin{equation*}
		\p[\sup_{z \in U_{n,j,k}} \xi(\widetilde{\mathfrak{h}}(z)-\widetilde{\mathfrak{h}}(2^{-n}(k-1+\frac{1}{2}(j+1)i))) \ge t] \le ce^{-t^2/2\sigma^2}.
	\end{equation*}
	This means that for every $p > 0$, there is a constant $K_p < \infty$ such that, for all $n$, $j$, $k$,
	\begin{equation}\label{eq:harmdifftis}
		\mathbb{E} \left[ \sup_{z \in U_{n,j,k}} e^{p\xi(\widetilde{\mathfrak{h}}(z)-\widetilde{\mathfrak{h}}(2^{-n}(k-1+\frac{1}{2}(j+1)i)))} \right] \le K_p.
	\end{equation}
	Note that $\widetilde{\mathfrak{h}}(2^{-n}(k-1+\frac{1}{2}(j+1)i))$ is a Gaussian variable; we proceed to bound its variance. Denoting by~$\widetilde{\mathfrak{h}}_r^\mathrm{circ}(z)$ the average of $\widetilde{\mathfrak{h}}$ on the circle $\partial B(z,r)$, we can write $\widetilde{\mathfrak{h}}(2^{-n}(k-1+\frac{1}{2}(j+1)i)) = \widetilde{\mathfrak{h}}_{2^{-n}}^\mathrm{circ}(2^{-n}(k-1+\frac{1}{2}(j+1)i))$ by harmonicity, using that $2^{-n}(k-1+\frac{1}{2}(j+1)i)$ is at distance at least $2^{-n}$ from the boundary. We now use that~$\widetilde{\mathfrak{h}}$ is an orthogonal projection of $h$, so that there is a constant $c > 0$ not depending on~$n$,~$j$,~$k$ such that (by Lemma~\ref{lem:var})
	\begin{align}\label{eq:harmvar}
		\nonumber\mathrm{var} \, \widetilde{\mathfrak{h}}(2^{-n}(k-1+\frac{1}{2}(j+1)i)) &= \mathrm{var} \, [\widetilde{\mathfrak{h}}_{2^{-n}}^\mathrm{circ}(2^{-n}(k-1+\frac{1}{2}(j+1)i)) - \widetilde{\mathfrak{h}}_1^\mathrm{circ} (i)] \\
		&\le \mathrm{var} \, [h_{2^{-n}}^\mathrm{circ}(2^{-n}(k-1+\frac{1}{2}(j+1)i)) - h_1^\mathrm{circ} (i)] \le n \log{2} + c.
	\end{align}
	(We could also compute this variance exactly using a similar argument to~\cite[Lemma~2.9]{GMS_multifractal}.)
	Now fix $q' > q > 0$; we compute
	\begin{align*}
		\nonumber\mathbb{E} \left[\sup_{z \in U_{n,j,k}} e^{q\xi\mathfrak{h}(z)}\right]
		&\le (K_{\frac{qq'}{q'-q}})^\frac{q'-q}{q'} \left(\mathbb{E}\,\left[ e^{\xi q' \mathfrak{h}(2^{-n}(k-1+\frac{1}{2}(j+1)i)) }\right]\right)^{q/q'} &\text{((\ref{eq:harmdifftis}) and H\"{o}lder)} \\
		&\le (K_{\frac{qq'}{q'-q}})^\frac{q'-q}{q'} e^{\xi^2 qq' (n \log{2} + c)/2} &\text{((\ref{eq:harmvar}) and Gaussian m.g.f.)} \\
		&=  C(q,q') 2^{n\xi^2qq'/2}. &
	\end{align*}
	Using these bounds (with $\delta>0$, $q=p$, $q' = p+\delta/p$) in (\ref{eq:holdere}), we have for $p < 4d_\gamma/\gamma^2$ that
	\begin{equation*}
		\mathbb{E}\,\left[\left(2^{n\xi Q}\sup_{z,w\in S_{nj,,k}} \fd_h (z,w;U_{n,k})\right)^p \right]\le \widetilde{C}(p,\delta) 2^{n\xi^2(p^2+\delta)/2} 
	\end{equation*}
	and thus, for any $s\in\R$,
	\begin{equation}\label{eq:holderp}
		\p\,\left[2^{n\xi Q}\sup_{z,w\in S_{n,j,k}} \fd_h (z,w;U_{n,j,k}) \ge 2^{ns} \right] \le 2^{-nps} \widetilde{C}(p,\delta) 2^{n\xi^2(p^2+\delta)/2}.
	\end{equation}
	If $s > 2\xi$ is sufficiently close to $2\xi$, setting $p = s/\xi^2$ we have $p < 4d_\gamma/\gamma^2$ since $\gamma > 2$ implies $2/\xi > 4/(\xi\gamma) = 4d_\gamma/\gamma^2$, so the right-hand side becomes $\widetilde{C}(p,\delta)2^{n(-s^2/(2\xi^2)+\xi^2\delta/2)}$, and if $\delta$ is sufficiently small we have $-s^2/(2\xi^2)+\xi^2\delta/2<-2$. Thus, using Borel--Cantelli and setting $u = s-2\xi$, we conclude that, almost surely, for every $u > 0$, there exists $n_0$ such that for all $n\ge n_0$, $1\le j \le 2^{n+1}-1$ and $1 \le k \le 2^n$, we have 
	\begin{equation}\label{eq:snk}
		\sup_{z,w\in S_{n,j,k}} \fd_h (z,w;U_{n,j,k}) < 2^{-n(\xi (Q-2)-u)}. 
	\end{equation}
	Fixing $u>0$ and taking $n$ as above, if $m \ge n_0$, $b \in (0,2^{-m}]$, $a \in [0,1]$, we can concatenate near-minimal paths connecting $a+2^{-n}i$ to $a+2^{-(n+1)i}$ in $U_{n,1,\lceil 2^na \rceil}$ for each $n \ge m+1$ with a near-minimal path connecting $a+2^{-(m+1)}i$ to $a+bi$ in $U_{m,1,\lceil 2^ma \rceil}$, to find that
	\begin{equation}\label{eq:hldrabi}
		\fd_h(a,a+bi) \le 2^{(1-m)(\xi(Q-2)-u)}.
	\end{equation}
	Indeed, by the same token it follows that, whenever $a\in[0,1]$ and $b \le 2^{-n_0}$, the $\fd_h$-diameter of the vertical line segment $[a,a+bi]$ from $a$ to $a+bi$ satisfies
	\begin{equation}\label{eq:vertdiam}
		\mathrm{diam}([a,a+bi];\fd_h) \le (4b)^{\xi(Q-2)-u}.
	\end{equation}
	Now consider general $w$, $z$ in $K$. If $|w-z| \le 2\min\{ \mathrm{Im}\, w, \mathrm{Im}\, z \}$, then with $n$ such that $2^{-n}\le |w-z| < 2^{-(n-1)}$, we have $\min\{ \mathrm{Im}\, w, \mathrm{Im}\, z \} \ge 2^{-(n+1)}$, so we can find $j_1$, $j_2$, $k_1$, $k_2$ such that $z \in S_{n,j_1,k_1}$ and $w \in S_{n,j_2,k_2}$. Moreover, $|w-z| < 2^{-(n-1)}$ implies that $|j_1-j_2| \le 5$ and $|k_1-k_2| \le 3$, which means that if $n\ge n_0$, then by applying (\ref{eq:snk}) to a set of rectangles of the form $S_{n,j,k}$ connecting $S_{n,j_1,k_1}$ and $S_{n,j_2,k_2}$ we find that
	\begin{equation}\label{eq:7}
		\fd_h(z,w) \le 7 \cdot  2^{-n(\xi (Q-2)-u)} \le 7 |w-z|^{\xi(Q-2)-u}.
	\end{equation}
	On the other hand, suppose $|w-z| > 2\min\{ \mathrm{Im}\, w, \mathrm{Im}\, z \}$. Since it also holds that $2|w-z| > 2(\max\{ \mathrm{Im}\, w, \mathrm{Im}\, z \} - \min\{ \mathrm{Im}\, w, \mathrm{Im}\, z \})$, by adding these inequalities we find $\max\{ \mathrm{Im}\, w, \mathrm{Im}\, z \} < \frac{3}{2}|w-z|$. Moreover, with $n$ such that $2^{-n}\le |w-z| < 2^{-(n-1)}$, we can find $k_1$ and $k_2$ such that $\mathrm{Re}\, w + |w-z|i \in S_{n-1,1,k_1}$, $\mathrm{Re}\, z + |w-z|i \in S_{n-1,1,k_2}$ and (since $|\mathrm{Re}\, w - \mathrm{Re}\, z| \le |w-z| < 2^{-(n-1)}$) $|k_1-k_2| \le 1$. Thus
	\begin{align*}
		\fd_h(w,z) &\le \fd_h(w,\mathrm{Re}\, w + |w-z|i) + \fd_h(\mathrm{Re}\, w + |w-z|i,\mathrm{Re}\, z + |w-z|i) + \fd_h(z,\mathrm{Re}\, z + |w-z|i) \\
		&\le \mathrm{diam}([\mathrm{Re}\, w, \mathrm{Re}\, w + \frac{3}{2}|w-z|i];\fd_h) + \mathrm{diam}(S_{n-1,1,k_1}; \fd_h(\cdot,\cdot; U_{n-1,1,k_1})) \\
		&+ \mathrm{diam}(S_{n-1,1,k_2}; \fd_h(\cdot,\cdot; U_{n-1,1,k_2})) + \mathrm{diam}([\mathrm{Re}\, z, \mathrm{Re}\, z + \frac{3}{2}|w-z|i];\fd_h).	
	\end{align*}
	Assuming now that $n-2\ge n_0$ we can use (\ref{eq:vertdiam}) to bound the first and fourth terms each by $(6|w-z|)^{\xi(Q-2)-u}$ and use (\ref{eq:snk}) to bound the second and third terms each by $(2|w-z|)^{\xi(Q-2)-u}$. Along with (\ref{eq:7}), we have just shown that (\ref{eq:holder}) holds with $C=\max\{7,2(2^{\xi(Q-2)-u} +6^{\xi(Q-2)-u}) \}$ provided $|w-z| < 2^{-(n_0+1)}$. Since $K$ clearly has finite $\fd_h$-diameter (e.g., combine (\ref{eq:vertdiam}) with $b=2^{-n_0}$ and (\ref{eq:snk}) for all $j$ and $k$ with $n=n_0$), the result for general $w$ and $z$ in $K$ follows by possibly increasing the constant $C$.
	
	Note that our exponent matches the one in~\cite[Prop.~3.18]{dfgps} for the zero-boundary GFF in the bulk, which is proved there to be optimal in the sense that $\fd_{h^\mathrm{wp}}$ is almost surely not locally $(\xi(Q-2)+u)$-H\"{o}lder continuous w.r.t.\ the Euclidean metric on any bounded open set for any $u>0$. Since $\mathfrak{h}$ is continuous away from the boundary it is easy to see that the same holds for $\fd_h$. We obtain the same optimal exponent here because, as we have already seen, for the free-boundary GFF the maximum thickness at the boundary is the same as that in the bulk.
\end{proof}
\begin{remark}\label{rem:avoid-r}
	Note that the above argument provides near-minimal paths that do not intersect~$\partial\h$ except possibly at their endpoints. In particular, it follows that with $h$, $u$, $K$, $C$ as in the statement of Prop.~\ref{prop:holder}, we have
	\begin{equation}\label{eq:avoid-r}
		\sup_{z,w\in K\cap\h} \inf\left\{ \mathrm{length}(P;\fd_h) \,\middle|\, P\colon z\rightsquigarrow w, P^{-1}(\partial\h) =\varnothing \right\} \le C (\mathrm{diam}\, K)^{\xi(Q-2)-u}.
	\end{equation}
\end{remark}
	\subsection{Positive definiteness}
The aim of this subsection is to establish positive definiteness for $\fd_h$ on $\overline{\h}$, completing the proof that it is a metric.
\begin{prop}\label{prop:posdef}
	If $h$ is a free-boundary GFF on $\h$ with some choice of additive constant, then the function $\fd_h$ is almost surely a metric on $\overline{\h}$; in particular it is almost surely positive definite.
\end{prop}
Note that, since finiteness follows for instance from Lemma~\ref{lemma:h2r}, we now have only to establish positive definiteness. Firstly we show positive definiteness at the boundary:
\begin{lemma}\label{lem:pdboundary}
	If $h$ is a free-boundary GFF on $\h$ with some choice of additive constant, then almost surely for all $a,b \in \partial\h$ we have $\fd_h(a,b) > 0$.
\end{lemma}
\begin{proof}
	We want to show that, almost surely, $\fd_h(a,b)>0$ whenever $a,b\in\partial\h$ are distinct. Firstly we can consider the analogous problem for the quantum wedge. Recall that part of~\cite[Thm~1.8]{zip} states that, if a $(\gamma-2/\gamma)$-quantum wedge $(\h,h,0,\infty)$ (equivalently, a wedge of weight~4) is decorated by an independent $\mathrm{SLE}_{\gamma^2}$  $\eta$ in $\h$ from 0 to $\infty$, then the surfaces parametrized by the left and right components $(W_1,h|_{W_1},0,\infty)$ and $(W_2,h|_{W_2},0,\infty)$ are independent $\gamma$-quantum wedges (equivalently, wedges of weight 2). 
	
	If we take any two distinct points on $\eta\setminus \{0\}$, we know that they are at positive $\fd_h$-distance w.r.t.~$h$ in $\h$, since they are away from the boundary $\partial \h$ (since $\eta$ does not hit $\partial\h \setminus \{0\}$ by~\cite[Thm~6.1]{rohde}). The distance w.r.t.\ $\fd_h(\cdot,\cdot;W_i)$ cannot be less than that w.r.t.~$h$, which means that any two distinct points on the right-hand (resp.\ left-hand) side of the boundary of $W_1$ (resp.~$W_2$) are at positive LQG distance w.r.t.~$h$ in $W_1$ (resp.~$W_2$). (By conformal covariance, this remains true regardless of the embedding of these wedges.) This suffices to establish positive definiteness of $\fd_h$ on $(0,\infty)$ for the $\gamma$-wedge. Note also that if we consider the canonical (circle-average) embedding of the wedge given by~$W_1$ into~$\h$, then fix a particular compact set $K \subset \overline{\h}$ not containing~0, we can find $L\subset \overline{\h}$ not containing~0 such that $K \subset \mathrm{int}\, L$ (i.e.\ the relative interior of $L$ within $\overline{\h}$); then almost surely the LQG metric distance between $K$ and $\partial L$ in $\overline{\h}$ is positive, so that the positive definiteness of the LQG metric on $K \cap \partial \h$ is determined by the field values inside $\mathrm{int}\, L$.
	
	Given the result for the $\gamma$-wedge, we can deduce it for the free-boundary GFF on $\h$ by an absolute continuity argument, using the radial--lateral decomposition. Indeed, recalling the construction of the circle average embedding from \S\ref{subsection:wedges}, when we parametrize by $\mathcal{S}$, the field average on $\{t\} \times [0,\pi]$ for the $\gamma$-wedge can be expressed, for a standard two-sided Brownian motion $B$ considered modulo vertical translation, as $B_{2(t+\tau)}+(Q-\gamma)(t+\tau)$ where $\tau$ is the last time at which $B_{2t}+(Q-\gamma)t$ hits 0. Translating horizontally by $\tau$ gives a field whose average on $\{t\} \times [0,\pi]$ is given by $B_{2t}+(Q-\gamma)t$, and whose lateral part (i.e., the part with mean zero on vertical line segments) has the same distribution as that of the wedge's lateral part (since it is independent of the radial part, with scale-invariant distribution). By conformal covariance, we again have positive definiteness for the LQG distance defined with respect to this field (when we map back to $\h$, on $(0,\infty)$).
	
	Finally the process $B_{2t}+(Q-\gamma)t$, with $t$ restricted to any compact subset of $\R$, is mutually absolutely continuous with the law of $B_{2t}$. So, considering the radial--lateral decomposition, the result for the wedge implies that for the free-boundary GFF (at least away from 0, but translation invariance then covers the case when $a$ or $b$ is 0).
\end{proof}
In the light of Lemma~\ref{lem:pdboundary} it is straightforward to complete the proof of Prop.~\ref{prop:posdef}.
\begin{proof}[Proof of Prop.~\ref{prop:posdef}]
	It remains only to rule out the possibility that there exist some $a\in\R$, $z\in\h$ at $\fd_h$-distance zero from each other. If we had $a\in\R$ and $\fd_h(a,z)=0$ for some $z\neq a$, then by Lemma~\ref{lem:pdboundary} we necessarily have $z\in\h$. Taking $z_n\to a$ w.r.t.\ the Euclidean metric, by Prop.~\ref{prop:holder} we also have $z_n\to a$ w.r.t.\ $\fd_h$. Thus $z_n\to z$ w.r.t.\ $\fd_h$, but (since $\mathrm{Im}\, z_n\to 0$) not w.r.t.\ the Euclidean metric. This contradicts the fact (noted in the discussion at the start of \S\ref{section:lqg}) that $\fd_h$ induces the Euclidean topology on $\h$, concluding the proof.
\end{proof}
	\section{Further H\"{o}lder continuity estimates for $\gamma$-LQG metrics}\label{section:bh}
	In this section we will show that, almost surely, $\fd_h$ is in fact locally \emph{bi}-H\"{o}lder continuous w.r.t.\ the Euclidean metric on~$\overline{\h}$ (so in particular $\fd_h$ induces the Euclidean topology on $\overline{\h}$). First, however, we will show that the $\fd_h$-distance between two points of $\partial\h$ is almost surely bi-H\"{o}lder continuous w.r.t.\ the $\nu_h$-measure of the interval between them. Throughout this subsection, $h$ will always be a free-boundary GFF on $\h$ with some choice of additive constant (in this subsection it will never be necessary to specify the choice).

	\subsection{Upper bound on distance in terms of boundary measure}
For what follows we will need to adapt the version of~\cite[Lemma~4.6]{ds} for the boundary measure $\nu_h$. 
\begin{lemma}\label{lem:46}
 There exist $c_1,c_2>0$ such that, for all $a \in \R$, $\ve > 0$ and $\eta \le 0$, we have
	\begin{equation*}
		\p [\nu_h([a,a+\ve]) < e^{\eta+\frac{\gamma}{2}\sup_{\delta \le \ve} (h_\delta(a)-Q\log{\frac{1}{\delta}})}  ] \le c_1e^{-c_2\eta^2}.
	\end{equation*}
\end{lemma}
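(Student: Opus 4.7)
The plan is to adapt the proof of~\cite[Lemma~4.6]{ds} for the bulk measure to the boundary setting. By translation invariance of the law of $h$ modulo additive constant, it suffices to treat $a = 0$; write $h_\delta := h_\delta(0)$. The central scaling identity is the following: for each $\delta \in (0,\ve]$, applying the coordinate change rule~(\ref{eq:reparam}) to $\psi(z) = \delta z$ gives
$$ \nu_h([0, \delta]) = e^{(\gamma/2)(h_\delta - Q\log(1/\delta))} Y_\delta, $$
where $Y_\delta := \nu_{h^\delta}([0,1])$ with $h^\delta(z) := h(\delta z) - h_\delta$. By scale invariance of the free-boundary GFF modulo additive constant, $h^\delta$ has the same law as $h$ itself (under our normalization $h_1 = 0$), so in particular $Y_\delta$ has the same marginal distribution as $Y := \nu_h([0,1])$.

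The first ingredient I would establish is the unit-scale left-tail bound
$$ \p[Y < e^\eta] \le c_1' e^{-c_2' \eta^2}, \quad \eta \le 0, $$
a standard consequence of Gaussian concentration for GMC masses, saying that $\log Y$ has a Gaussian left tail. This can be obtained either via a Cameron--Martin/Borell--TIS argument applied to a mollified version of $\log Y$, or by combining the moment estimate $\mathbb{E}[Y^{-q}] \le e^{Cq^2}$ (a standard GMC bound derivable from Kahane's convexity inequality; see e.g.~\cite{ds,kahane}) with Chebyshev's inequality at $q = -c\eta$.

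Next I would discretize the supremum. Since $t \mapsto h_{e^{-t}}$ is continuous (Lemma~\ref{lem:cty}) with Gaussian oscillations of order $1$ over unit-length intervals in $t$, the continuous supremum $\sup_{\delta \le \ve}(h_\delta - Q\log(1/\delta))$ differs from the dyadic supremum $\sup_{k \ge k_0} X_k$---where $X_k := h_{2^{-k}} - Qk\log 2$ and $k_0 := \lceil \log_2(1/\ve) \rceil$---by a Gaussian-tailed error that can be absorbed into the constants $c_1, c_2$. Applying the scaling identity and using $\nu_h([0,\ve]) \ge \nu_h([0,2^{-k}])$ for every $k \ge k_0$, the event on the LHS of the lemma forces $Y_{2^{-k^\star}} \le C e^\eta$ at the (approximate) argmax $k^\star$ of $(X_k)_{k \ge k_0}$.

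The main obstacle is running a union bound over $k$ without destroying the Gaussian tail in $\eta$. The key observation, made at the end of the proof of Lemma~\ref{lem:cty}, is that $(X_k - X_{k_0})_{k \ge k_0}$ evolves like $\sqrt{2}$ times a Brownian motion with drift $-Q$ in the time parameter $(k - k_0)\log 2$, so that $\p\bigl[\sup_{k \ge k_0 + K}(X_k - X_{k_0}) > 0\bigr]$ decays at least exponentially in $K$. Consequently, with probability at least $1 - c e^{-c\eta^2}$, the argmax satisfies $k^\star \in [k_0, k_0 + C|\eta|]$ for a sufficiently large absolute constant $C$. On this high-probability event a union bound over $O(|\eta|)$ dyadic scales, combined with the unit-scale bound, yields
$$ \p\left[\nu_h([0,\ve]) < e^{\eta + (\gamma/2) \sup_{\delta \le \ve}(h_\delta - Q\log(1/\delta))}\right] \le C|\eta| \cdot c_1' e^{-c_2' \eta^2} + c e^{-c\eta^2}, $$
which is bounded by $c_1 e^{-c_2 \eta^2}$ after slightly shrinking $c_2'$ to absorb the polynomial factor. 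Uniformity in $a$ is by the initial translation-invariance reduction.
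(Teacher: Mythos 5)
Your overall strategy is a genuine alternative to the paper's. Both proofs rest on the one-scale lower-tail bound from~\cite[Lemma 4.6]{ds}. The paper's proof exploits the fact that the argument in~\cite{ds} actually gives this bound \emph{conditionally} on the coarser-scale circle averages; it then applies the conditional bound at the first-hitting stopping time $T^M := \inf\{t \ge \log(1/\ve) : Y_t \ge M\}$ (so the supremum in the exponent is replaced by the deterministic level $M$), and finally uses the $\mathrm{Exp}(Q)$ overshoot law of the drifted Brownian motion after $T^M$ to pass from the fixed level $M$ to $\sup Y_t - \log 2$. This avoids any union bound over scales. You instead invoke only the unconditional one-scale bound together with scale invariance, discretize the supremum to dyadic scales, and union-bound over a window of scales that should contain the argmax. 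Your route is workable and in one sense more elementary (no need to re-examine the conditional structure of~\cite{ds}'s proof), at the cost of having to control the argmax location and the discretization error explicitly.

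However, there is a concrete quantitative error in the argmax-localization step. You claim that with probability $\ge 1 - ce^{-c\eta^2}$ the argmax lies in $[k_0, k_0 + C|\eta|]$ for an absolute constant $C$. But $(X_k - X_{k_0})_{k \ge k_0}$ is (a discretization of) a Brownian motion with diffusivity $2$ and drift $-Q$, so
\begin{equation*}
\p\Bigl[\sup_{k \ge k_0 + K}(X_k - X_{k_0}) > 0\Bigr] \asymp e^{-Q^2 K (\log 2)/4},
\end{equation*}
which decays only \emph{exponentially} in $K$, not super-exponentially. Taking $K = C|\eta|$ therefore gives a failure probability of order $e^{-c|\eta|}$, which is much larger than $e^{-c\eta^2}$ when $|\eta|$ is large; since the window-escape term then dominates your final estimate, the argument as written produces only an exponential, not Gaussian, lower-tail bound. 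The fix is simple: take the window length $K \asymp \eta^2$ rather than $K \asymp |\eta|$. Then $\p[k^\star > k_0 + K] \lesssim e^{-c\eta^2}$ as needed, the union bound over $O(\eta^2)$ scales contributes $O(\eta^2) e^{-c_2'\eta^2}$, and the discretization error (maximal oscillation of $Y_t$ over the $O(\eta^2)$ unit intervals in the window) also has tail $O(\eta^2)e^{-c\eta^2}$; all of these are absorbable into $c_1 e^{-c_2\eta^2}$ after slightly decreasing $c_2$.
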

\begin{proof}
	Note that this result does not depend on the choice of additive constant for $h$, since adding a constant $c$ scales both sides of the inequality by $e^{\gamma c/2}$. The original~\cite[Lemma~4.6]{ds} gives the almost sure lower tail bound
	\begin{equation*}
		\p\left[\nu_h([a-\ve,a+\ve]) < e^{\eta + \frac{\gamma}{2} (h_\ve(a)-Q\log{\frac{1}{\ve}})} \middle| (h_{\ve'}(a): \ve' \ge \ve) \right] \le C_1e^{-C_2\eta^2},
	\end{equation*}
	for all $\eta \le 0$, with $C_1, C_2 > 0$ deterministic constants independent of $a$, $\ve$, $\eta$. (Actually in \cite{ds} the statement and proof are given just for an analogue involving the area measure $\mu_h$ instead of the boundary measure $\nu_h$, but the proof is similar for the boundary measure---see~\cite[\S6.3]{ds}.) We will need a lower tail bound for the conditional law of $\nu_h([a,a+\ve])$ given $(h_{\ve'}(a): \ve' \ge \ve)$, rather than for that of $\nu_h([a-\ve,a+\ve])$, but this in fact follows from the proofs of~\cite[Lemmas~4.5--4.6]{ds}, which (when reformulated for the boundary measure) proceed by partitioning $[a-\ve,a+\ve]$ into $[a-\ve,a]$ and $[a,a+\ve]$ and thus actually obtain that, almost surely (possibly changing $C_1, C_2$):
	\begin{equation}\label{eq:lotail}
		\p\left[\nu_h([a,a+\ve]) < e^{\eta + \frac{\gamma}{2} (h_\ve(a)-Q\log{\frac{1}{\ve}})} \middle| (h_{\ve'}(a): \ve' \ge \ve) \right] \le C_1e^{-C_2\eta^2},
	\end{equation}
	Observe also that if $\delta < \ve$, since $[a,a+\delta] \subset [a,a+\ve]$ we have, almost surely,
	\begin{equation*}
		\p\left[\nu_h([a,a+\ve]) < e^{\eta + \frac{\gamma}{2} (h_\delta(a)-Q\log{\frac{1}{\delta}})} \middle| (h_{\ve'}(a): \ve' \ge \delta) \right] \le C_1e^{-C_2\eta^2}.
	\end{equation*}
	Moreover, if $T$ is a stopping time for the process $(h_{e^{-t}}(a))_t$ such that $e^{-T} \le \ve$, then, by continuity of this process, the usual argument considering the discrete stopping times $T_n := 2^{-n}\lceil 2^nT \rceil$ yields that
	\begin{equation*}
		\p\left[\nu_h([a,a+\ve]) < e^{\eta + \frac{\gamma}{2} (h_{e^{-T}}(a)-QT)} \middle| (h_{e^{-t}}(a): t\le T) \right] \le C_1e^{-C_2\eta^2}
	\end{equation*}
	almost surely on the event $\{T < \infty\}$. Note that, if we set $Y_t = h_{e^{-t}}(a)-Qt$, then $Z_{t,a} := \sup_{s\ge t} Y_s-Y_t$ is the maximum over all time of a standard Brownian motion with drift $-Q/2$, and is thus an $\mathrm{Exp}(Q)$ random variable. (This standard result can be obtained by using the Girsanov theorem to deduce the law of the maximum over the time interval $[0,T]$ from that for a Brownian motion without drift and then sending $T\to\infty$---see~\cite[Prop.~10.4]{privault}.) Thus $\p[Z_{t,a} \le \log{2}] =: q$ is positive and independent of $t$ and $a$. 
	Now, let $T^M := \inf\{t\ge \log\frac{1}{\ve}: Y_t \ge M \}$. Conditioning on $\{\sup_{t\ge\log\frac{1}{\ve}} Y_t \ge M\}$ (equivalently, on $\{T^M < \infty\}$), the above applied to the stopping time $T^M$ gives that
	\begin{equation*}
		\p\left[\nu_h([a,a+\ve]) < e^{\eta + \frac{\gamma}{2} M} \middle| \sup_{t\ge\log\frac{1}{\ve}} Y_t \ge M \right] \le C_1e^{-C_2\eta^2}.
	\end{equation*}
	But since we also have 
	\begin{equation*}
		\p\left[\sup_{t\ge\log\frac{1}{\ve}} Y_t \le M + \log{2} \middle| \sup_{t\ge\log\frac{1}{\ve}} Y_t \ge M\right] = \p\left[Z_{T^M,a} \le \log{2} \middle| T^M < \infty\right] = q
	\end{equation*} 
	(by the strong Markov property), by conditioning on which interval of the form $[(n-1)\log 2,n\log 2)$ contains $\sup_{t\ge\log\frac{1}{\ve}} Y_t$ we have
	\begin{equation*}
		\p\left[\nu_h([a,a+\ve]) < e^{\eta + \frac{\gamma}{2} (\sup_{t\ge\log\frac{1}{\ve}} Y_t - \log{2})}\right] \le C_1 q^{-1} e^{-C_2\eta^2},
	\end{equation*}
	from which the result follows.
\end{proof}
\begin{lemma}\label{lem:dnu}
	Fix $\alpha \in (0, 2/d_\gamma)$. Then there exist a constant $\ve > 0$ and a random integer $N$ such that whenever $m \in \mathbb{N}$, $m \ge N$, $a \in [0,1] \cap 2^{-m}\mathbb{Z}$, we have
	\begin{equation}\label{eq:dnu}
		\fd_h(a,a+2^{-m}) \le \nu_h([a,a+2^{-m}])^\alpha \cdot 2^{-m\ve}.
	\end{equation}
	Moreover, $\ve > 0$ can be chosen so that the minimal such $N$ satisfies $\p[N \ge n] = O(2^{-n\beta})$ for every $\beta \in (0,Q^2/4-1)$.
\end{lemma}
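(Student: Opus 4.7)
The plan is to combine the upper bound on $\fd_h(a,a+2^{-m})$ from Prop.~\ref{prop:dint} with the lower bound on $\nu_h([a,a+2^{-m}])$ from Lemma~\ref{lem:46}, control the resulting ratio via a Chernoff argument on a Gaussian-plus-heavy-tail sum, and conclude by a union bound over the $O(2^m)$ dyadic points together with the Borel--Cantelli lemma. Write $T := m\log 2$, $b := a+2^{-m}$, $Y_t^z := h_{e^{-t}}(z) - Qt$, $J^z := \int_0^\infty e^{\xi(Y_{T+s}^z - Y_T^z)}\,ds$, $R^z := \sup_{s\ge 0}(Y_{T+s}^z - Y_T^z)$, and $\lambda := \xi - \alpha\gamma/2 > 0$ (positivity uses $\alpha < 2/d_\gamma = 2\xi/\gamma$). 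Applying Prop.~\ref{prop:dint} with prefactor $C = 2^{m\delta_1}$ for some small $\delta_1 > 0$ (so the superpolynomial failure probability is summable over the $O(2^m)$ pairs and all $m$) and Lemma~\ref{lem:46} at both $a$ and symmetrically at $b$ with $\eta = -K\sqrt m$ for $K$ large enough that the Gaussian failure probabilities are summable, we obtain (up to a summably small event), using $\sup_{t\ge T}Y^z_t = Y^z_T + R^z$:
\begin{equation*}
\frac{\fd_h(a, a+2^{-m})}{\nu_h([a, a+2^{-m}])^\alpha} \le 2^{m\delta_1 + 1} e^{\alpha K \sqrt m} \max_{z\in\{a,b\}} e^{\lambda Y_T^z} V^z, \qquad V^z := e^{-(\alpha\gamma/2) R^z} J^z.
\end{equation*}

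The crucial claim is that $\max_{z \in [0,1+2^{-m}] \cap 2^{-m}\mathbb{Z}} e^{\lambda Y_T^z} V^z \le 2^{-m\gamma_0}$ with summable failure probability, for any $\gamma_0 < \lambda(Q-2)$. Writing $J^z = e^{\xi R^z} W^z$ with $W^z := \int_0^\infty e^{-\xi(R^z - Y^z_{T+s} + Y^z_T)}\,ds$, we have $V^z \le e^{\lambda R^z} W^z$. By Williams' decomposition of the Brownian motion $s\mapsto Y^z_{T+s}-Y^z_T$ (drift $-Q$, variance $2$) around its supremum, the process $s\mapsto R^z - Y^z_{T+s} + Y^z_T$ is a time-reversed Bessel-$3$ with drift $+Q$ before the supremum time and an ordinary Bessel-$3$ with drift $+Q$ after; this gives that $W^z$ has all positive moments (uniformly in $z$, by translation invariance of the semicircle-average increments for the free-boundary GFF), and hence by H\"older $\mathbb{E}[(V^z)^\theta] < \infty$ for all $\theta < Q/(2\lambda)$. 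Since $Y^z_T$ is Gaussian with mean $-QT$ and variance $2T + O(1)$ uniformly in $z\in[0,1]$, and is independent of $V^z$ by the martingale property of semicircle averages, applying Chernoff to $u^z := \lambda Y_T^z + \log V^z$ at the optimal $\theta^* := (\lambda Q - \gamma_0)/(2\lambda^2)$ (which satisfies $\theta^*\lambda < Q/2$ for $\gamma_0 > 0$, ensuring $\mathbb{E}[(V^z)^{\theta^*}] < \infty$) yields
\begin{equation*}
\p[u^z > -\gamma_0 T] \le C \cdot 2^{-m(\lambda Q - \gamma_0)^2/(4\lambda^2)},
\end{equation*}
with $C$ uniform in $z$. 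A union bound over the $2^m+2$ dyadic values gives summable probability in $m$ precisely when $(\lambda Q - \gamma_0)^2 > 4\lambda^2$, i.e.\ $\gamma_0 < \lambda(Q-2)$; such $\gamma_0 > 0$ exists since $Q > 2$ throughout $\gamma \in (0,2)$ and $\lambda > 0$.

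Combining the three ingredients, on the good event one has $\fd_h(a,a+2^{-m})/\nu_h([a,a+2^{-m}])^\alpha \le 2^{-m(\gamma_0 - \delta_1) + O(\sqrt m)} \le 2^{-m\ve}$ for $\ve := (\gamma_0 - \delta_1)/2 > 0$ (with $\delta_1$ chosen small) and $m$ sufficiently large. Borel--Cantelli then makes $N := \min\{m_0 : \text{the bound holds for all } m \ge m_0 \text{ and all } a \in [0,1]\cap 2^{-m}\mathbb{Z}\}$ almost surely finite, with $\p[N \ge n] \le \sum_{m \ge n} O(2^{-m\beta_0}) = O(2^{-n\beta_0})$ for some $\beta_0 > 0$, giving the required (and in fact stronger) polynomial decay.

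The main technical subtlety is the Chernoff step: if one instead tried to control $Y_T^z$ and $V^z$ separately (say by the thickness bound of Lemma~\ref{lem:thicc} for $\max_z Y_T^z$ and the polynomial tail $\p[V^z > M] \sim M^{-Q/\lambda}$ for $\max_z V^z$) one would incur a loss of order $2^{m\lambda/Q}$ from the latter supremum, requiring $Q > 1 + \sqrt{2}$ for $\ve > 0$, which fails for $\gamma$ near $2$. Exploiting independence of $Y_T^z$ and $V^z$ and the fact that the Gaussian tail of $\lambda Y_T^z$ dominates the polynomial tail of $\log V^z$ in their sum is what allows the union bound over the dyadic grid to succeed at the correct threshold $\gamma_0 < \lambda(Q-2)$.
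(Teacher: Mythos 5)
Your proof is correct and, at the level of skeleton, follows the same route as the paper: Prop.~\ref{prop:dint} bounds $\fd_h(a,a+2^{-m})$ by an integral of the semicircle-average process, Lemma~\ref{lem:46} bounds $\nu_h([a,a+2^{-m}])$ from below via $\sup_{t\ge T}Y_t^a$, the normalized integral is shown to have all positive moments (you via Williams' decomposition, the paper via~\cite[Lemma~A.5]{dms}; both give the same content), and a union bound over the $\sim 2^m$ dyadic points plus Borel--Cantelli finishes. The substantive improvement is in the key tail estimate. Decomposing $\sup_{t\ge T}Y_t = Y_T + R$ with $Y_T$ Gaussian of mean $\approx -QT$ and variance $\approx 2T$, and $R\sim\mathrm{Exp}(Q)$, the paper splits at a level $c$ and optimizes $\min(c^2/4,\ Q(Q-\delta-c))$ over $c$; this yields a single-point decay exponent $(3-2\sqrt{2})Q^2$, which exceeds $1$ (as required to close the $2^m$-point union bound) only for $Q>1+\sqrt{2}$. (The paper's stated bound $(2\sqrt{2}-2)Q$ appears to be the optimal $c$ itself rather than the exponent $c^2/4$.) You instead exploit the independence of $Y_T^z$ (determined by $h$ at scales $\ge 2^{-m}$) from $V^z$ (determined by smaller scales) and run Chernoff on the sum $\lambda Y_T^z+\log V^z$ at the Gaussian-optimal $\theta^*$, which you correctly verify lies below the moment threshold for $V^z$. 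This gives exponent $(\lambda Q-\gamma_0)^2/(4\lambda^2)\to Q^2/4 > 1$ for all $Q>2$, so the union bound closes for the whole range $\gamma\in(0,2)$. Your closing paragraph correctly identifies $Q>1+\sqrt{2}$ as the threshold below which any ``separate'' control of the Gaussian and the heavy tail fails, and explains why the Chernoff-on-the-sum trick repairs it; as far as I can tell this genuinely fixes the paper's own estimate in a non-trivial subrange of $\gamma$. One small inconsistency: you state $\mathbb{E}[(V^z)^\theta]<\infty$ for $\theta < Q/(2\lambda)$ while quoting the tail $\p[V^z>M]\sim M^{-Q/\lambda}$; the moment threshold is in fact $Q/\lambda$, but since $\theta^* < Q/(2\lambda)$ this has no bearing on the argument.
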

\begin{proof}
	Recall $d_\gamma = \gamma/\xi$. Fix $\zeta > \gamma/(2\xi)$. Then to prove the result for $\alpha = \zeta^{-1}$, it suffices to show that for each fixed $a$ and $m$, the complement of the event in (\ref{eq:dnu}) has probability $O(2^{-\lambda m})$ (uniformly for $a\in[0,1]\cap 2^{-m}\mathbb{Z}$) for some $\lambda > 1$ (then we can apply Borel--Cantelli to prove the result with $\beta=\lambda-1$). 
	
	Put $b = a+2^{-m}$. By Prop.~\ref{prop:dint} (and the subsequent Remark~\ref{rem:dint}), for any $\ve > 0$, it holds that, for every $n$, we have
	\begin{align*}
		\p[\nu_h([a,b]) \le |a-b|^{-\zeta\ve}\fd_h(a,b)^\zeta] \le \, & \p\left[\nu_h([a,b]) \le |a-b|^{-2\zeta\ve} \left(\int_{\log{(|a-b|^{-1})}}^\infty e^{\xi(h_{e^{-t}}(a)-Qt )} \, dt\right)^\zeta  \right] \\
		&+\p\left[\nu_h([a,b]) \le |a-b|^{-2\zeta\ve} \left(\int_{\log{(|a-b|^{-1})}}^\infty e^{\xi(h_{e^{-t}}(b)-Qt )} \, dt\right)^\zeta  \right] \\
		&+ O(|a-b|^n).
	\end{align*}
	Consider the first of the probabilities on the RHS. By~\cite[Lemma~A.5]{dms}, if we let the integral be $I$ and the supremum of its integrand be $M$, then for any $p>0$ there is some $c_p < \infty$ for which we have 
	\begin{equation*}
		\mathbb{E} [I^p|M] \le c_p M^p.
	\end{equation*} 
	(Actually~\cite[Lemma~A.5]{dms} is stated for an integral with lower limit 0 and a Brownian motion started from 0, so to obtain our statement we can, say, use the lemma to bound the expectation conditional on both $h_{|a-b|}(a)$ and $\sup_{u\ge 0} e^{\xi(h_{e^{-u}|a-b|}(a)-h_{|a-b|}(a)-Qu)}$, and use that these have finite moments of all positive orders.)
	Dividing and taking expectations we find that $I/M$ has finite moments of all positive orders, so that by Markov's inequality, for any $k > 0$ we have that~$I$ is bounded by $|a-b|^{-k}$ times $M$ except on an event of superpolynomially decaying probability (in $|a-b|^{-1}$). 
	Thus we have, for every $n$:
	\begin{align}\label{eq:poly}
		\nonumber&\p\left[\nu_h([a,b]) \le |a-b|^{-2\zeta\ve} \left(\int_{\log{(|a-b|^{-1})}}^\infty e^{\xi(h_{e^{-t}}(a)-Qt )} \, dt\right)^\zeta  \right] \\
		\nonumber\le \, &\p\left[\nu_h([a,b]) \le |a-b|^{-2\zeta\ve} \exp{\left(\zeta\xi\sup_{t\ge\log{(|a-b|^{-1})}}{(h_{e^{-t}}(a)-Qt)}\right)} \right] + O(|a-b|^n) \\
		\nonumber\le \, &\p\left[ \nu_h([a,b]) \le |a-b|^{\zeta\ve} \exp{\left(\frac{\gamma}{2}\sup_{t\ge\log{(|a-b|^{-1})}}{(h_{e^{-t}}(a)-Qt)}\right)}  \right] + O(|a-b|^n) \\
		&+ \p\left[ \zeta\xi\sup_{t\ge\log{(|a-b|^{-1})}}{(h_{e^{-t}}(a)-Qt)} \ge 3\zeta\ve\log{|a-b|}+ \frac{\gamma}{2}\sup_{t\ge\log{(|a-b|^{-1})}}{(h_{e^{-t}}(a)-Qt)} \right] .
	\end{align}
	Note that the first probability on the right-hand side of (\ref{eq:poly}) decays superpolynomially in $|a-b|$ by Lemma~\ref{lem:46}, whereas the last probability is equal to
	\begin{equation}\label{eq:polybound}
		\p\left[ \sup_{t\ge\log{(|a-b|^{-1})}}{(h_{e^{-t}}(a)-Qt)} \ge \frac{3\zeta\ve}{\zeta\xi-\gamma/2} \log|a-b| \right].
	\end{equation}
	It thus suffices to show that the union of the event in (\ref{eq:polybound}) over all $m\ge n$, $a\in [0,1]\cap 2^{-m}\mathbb{Z}$, $b=a+2^{-m}$ has probability $O(2^{-n\beta})$ for some $\beta > 0$. 
	Setting $\delta = \frac{3\zeta\ve}{\zeta\xi-\gamma/2}$, this union is contained in the event
	\begin{equation*}
		\left\{	\sup_{r\le 2^{-n}} \sup_{x\in[0,1]} \frac{h_r(x)}{\log{(1/r)}} \ge Q-\delta \right\},
	\end{equation*}
	which by (\ref{eq:uthicc}) has probability $O(2^{-n( (Q-\delta)^2/4-1-u )})$ for every $u>0$. Since $\ve$ and therefore $\delta$ can be made arbitrarily small, this completes the proof for arbitrary $\beta \in (0, Q^2/4-1)$ (note that the result is non-trivial since $Q>2$).
\end{proof}
\begin{prop}\label{prop:dnu}
	Fix $\alpha \in (0, 2/d_\gamma)$. Then almost surely there exists $C' \in (0,\infty)$ such that, for all $a$, $b$ in $[0,1]$, we have
	\begin{equation*}
		\fd_h(a,b) \le C' \nu_h([a,b])^\alpha .
	\end{equation*}
	Moreover $\p[C'\ge x]$ decays at worst polynomially in $x$.
\end{prop}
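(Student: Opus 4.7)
My plan is to bootstrap Lemma~\ref{lem:dnu} (which controls $\fd_h(a,a+2^{-m})$ for dyadic $a$ of level $m$) to a statement about arbitrary $a,b\in[0,1]$ by a standard two-step dyadic chaining, using subadditivity of $t\mapsto t^{\alpha}$ (valid because $\alpha<2/d_\gamma\le 1$) to convert sums of $\nu_h^\alpha$-terms into $\nu_h([a,b])^\alpha$.

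First I would treat the case of $a<b$ with $b-a$ sufficiently small that $m_0:=\lceil\log_2(4/(b-a))\rceil\ge N$, where $N$ is the random integer from Lemma~\ref{lem:dnu}. Set $a':=2^{-m_0}\lceil 2^{m_0}a\rceil$ and $b':=2^{-m_0}\lfloor 2^{m_0}b\rfloor$; a short calculation using $b-a\ge 4\cdot 2^{-m_0}$ shows $a\le a'\le b'\le b$ and $a'-a,b-b'\le 2^{-m_0}$. The interval $[a',b']$ decomposes into at most $K=2^{m_0}(b-a)+1=O(1)$ consecutive dyadic intervals of level $m_0$, so applying Lemma~\ref{lem:dnu} to each and summing via the triangle inequality gives
\begin{equation*}
\fd_h(a',b')\le 2^{-m_0\ve}\sum_j \nu_h(I_j)^\alpha\le 2^{-m_0\ve}K^{1-\alpha}\nu_h([a',b'])^\alpha\le C\,\nu_h([a,b])^\alpha,
\end{equation*}
where the middle step uses $\sum x_i^\alpha\le K^{1-\alpha}(\sum x_i)^\alpha$ (power mean / concavity of $t\mapsto t^\alpha$), and the factor $2^{-m_0\ve}\cdot K^{1-\alpha}$ is bounded since $K=O(1)$ and $2^{-m_0\ve}\le 1$. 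To handle $\fd_h(a,a')$ (and symmetrically $\fd_h(b',b)$) I would chain along the dyadic approximants from above $a_m:=2^{-m}\lceil 2^m a\rceil$ for $m\ge m_0$: consecutive increments lie in $\{0,2^{-m-1}\}$, the interval $[a_{m+1},a_m]\subseteq[a,a']\subseteq[a,b]$ when nonzero, and Lemma~\ref{lem:dnu} together with the geometric series $\sum_{m\ge m_0}2^{-(m+1)\ve}$ yield $\fd_h(a,a')\le C\,\nu_h([a,b])^\alpha$.

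Second, for $a,b\in[0,1]$ with $b-a$ not small (i.e.~$m_0<N$), I would partition $[a,b]$ into $K'=O(2^N)$ consecutive subintervals each of length $\le 2^{-N-2}$, apply the previous step to each, and bound
\begin{equation*}
\fd_h(a,b)\le\sum_i \fd_h(x_{i-1},x_i)\le C\sum_i\nu_h([x_{i-1},x_i])^\alpha\le C(K')^{1-\alpha}\nu_h([a,b])^\alpha,
\end{equation*}
again by the power mean inequality. Combining both cases yields a single constant $C'$ of order $2^{N(1-\alpha)}$ (times a deterministic constant).

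The main subtlety is the claimed polynomial tail for $C'$, since the lemma only states $\p[N\ge n]=O(n^{-\beta})$, which would give only $\p[C'\ge x]=O((\log x)^{-\beta})$. To upgrade to genuine polynomial decay I would revisit the proof of Lemma~\ref{lem:dnu}: the per-point failure probability at scale $m$ is $O(2^{-\lambda m})$ with $\lambda>1$, so a union bound over the $2^m$ dyadic points at level $m$ and a geometric sum over $m\ge n$ actually give $\p[N\ge n]=O(2^{-(\lambda-1)n})$, i.e.~exponential decay. Thus $\p[C'\ge x]\le\p[2^{N(1-\alpha)}\ge x/C_0]=O(x^{-(\lambda-1)/(1-\alpha)})$, which is the required polynomial tail. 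Verifying this upgrade of the tail bound on $N$ is the one slightly delicate point, but it follows directly from the Borel--Cantelli step in Lemma~\ref{lem:dnu}'s proof rather than requiring new input.
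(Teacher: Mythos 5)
Your proof is correct but follows a genuinely different route from the paper's. The paper uses a Kolmogorov--\v{C}entsov--style multi-scale dyadic partition: it writes $[a,b]$ as a union of dyadic intervals $[a_{i-1},a_i]$ of varying levels $m_i$, with at most two intervals per level, and then simply bounds
\begin{equation*}
\frac{\fd_h(a,b)}{\nu_h([a,b])^\alpha} \le \sum_{i} \frac{\fd_h(a_{i-1},a_i)}{\nu_h([a_{i-1},a_i])^\alpha} \le 2C\sum_{n\ge 0}2^{-n\ve},
\end{equation*}
using the trivial monotonicity $\nu_h([a_{i-1},a_i])\le\nu_h([a,b])$ to pass to the individual ratios and the summable factor $2^{-m_i\ve}$ from Lemma~\ref{lem:dnu} to control the sum over all scales at once. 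No concavity inequality is needed, and no case split on whether $b-a$ is small relative to $N$; the ``bad'' scales $m<N$ are bundled into a single random constant $C$. Your approach instead uses a fixed reference scale $m_0\approx\log_2(1/(b-a))$, a bounded number of level-$m_0$ intervals for the middle, dyadic chaining for the two tails, and the power-mean (concavity) inequality $\sum x_i^\alpha\le K^{1-\alpha}(\sum x_i)^\alpha$; this is sound (and the inequality is indeed available since $\alpha<2/d_\gamma\le 1$), but it forces you to control $K'=O(2^N)$ explicitly when $b-a$ is not small, and hence to sharpen the tail on $N$. Your observation there is correct: the Borel--Cantelli step in the proof of Lemma~\ref{lem:dnu} actually gives $\p[N\ge n]=O(2^{-(\lambda-1)n})$, an exponential tail, even though the lemma's statement only records $O(n^{-\beta})$; this exponential tail is what makes your $C'\sim 2^{N(1-\alpha)}$ have a polynomial tail in $x$. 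So your argument is complete, but the paper's is slightly more economical: it buys the uniformity over all scales directly from the $2^{-m\ve}$ factor and so never needs the power-mean inequality or the case distinction.
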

\begin{proof}
	With $N$ as in Lemma~\ref{lem:dnu}, note that for $m<N$, for $a\in[0,1]\cap 2^{-m}\mathbb{Z}$, we have
	\begin{align*}
		\fd_h(a,a+2^{-m}) \le \sum_{i=1}^{2^{N-m}} \fd_h(a+(i-1)2^{-N},a+i2^{-N}) &\le 2^{-N\ve} \sum_{i=1}^{2^{N-m}} \nu_h([a+(i-1)2^{-N},a+i2^{-N}])^\alpha \\
		&\le 2^{-N\ve} 2^{(N-m)(1-\alpha)} \nu_h([a,a+2^{-m}])^\alpha,
	\end{align*}
	using the power mean inequality (since $\alpha < 2/d_\gamma < 1$).
	It follows that there is a random constant~$C$ such that whenever $m \in \mathbb{N}$, $a \in [0,1] \cap 2^{-m}\mathbb{Z}$, we have
	\begin{equation*}
		\fd_h(a,a+2^{-m}) \le C \nu_h([a,a+2^{-m}])^\alpha \cdot 2^{-m\ve},
	\end{equation*}
	and that, assuming $\ve<1-\alpha$,
	\begin{equation*}
		\p[C \ge 2^{n(1-\alpha-\ve)}] \le \p[N \ge n] = O(2^{-n\beta}),
	\end{equation*}
	i.e., $C$ has polynomial decay. We now argue as in the proof of the Kolmogorov criterion: if $a,b$ are dyadic rationals in $[0,1]$ we can partition $[a,b]$ as $a = a_0 < a_1 < \cdots < a_l = b$ where $[a_{i-1},a_i] = [2^{-m_i}n_i, 2^{-m_i}(n_i+1)] $ for non-negative integer $m_i$, $n_i$ such that no three of the $m_i$ are equal. Then we have
	\begin{equation}\label{eq:kc}
		\frac{\fd_h(a,b)}{\nu_h([a,b])^\alpha} \le \sum_{i=1}^l \frac{\fd_h(a_{i-1},a_i)}{\nu_h([a_{i-1},a_i])^\alpha} \le 2C \sum_{n=0}^\infty 2^{-n\ve} =: C'.
	\end{equation}
	The same argument works for $a,b$ arbitrary, using a countably infinite partition $a = a_0 < a_1 < a_2 < \cdots$ with $a_n\uparrow b$. In order to obtain the analogue of the first inequality in (\ref{eq:kc}) we have to verify that
	\begin{equation}\label{eq:infinitesum}
		\fd_h(a,b) \le \sum_{i=1}^\infty \fd_h(a_{i-1},a_i).
	\end{equation} 
	This follows by using the triangle inequality to obtain
		\begin{equation*}
		\fd_h(a,b) \le \sum_{i=1}^n \fd_h(a_{i-1},a_i) + \fd_h(a_n,b)
	\end{equation*} 
	and noting that $\fd_h(a_n,b)\to 0$ by Prop.~\ref{prop:holder}.
\end{proof} 
\subsection{Local reverse H\"{o}lder continuity}

		For the metric gluing proof we will need a H\"{o}lder exponent for the Euclidean metric w.r.t.~$\fd_h$. We know this exists away from $\partial \h$, since this is true for the whole-plane GFF by the results of~\cite{dfgps}, and the harmonic correction is well-behaved away from the boundary. However, we can obtain H\"{o}lder continuity even at the boundary. We can establish such a reverse H\"{o}lder inequality by an argument based on~\cite[Lemma~3.22]{dfgps}. First we will need the following lemma:
	\begin{lemma}\label{lem:bounded}
		Let $\alpha < Q$, and let $h$ be either the circle average embedding into $\h$ of an $\alpha$-quantum wedge, or equal to $h^F-\alpha\log|\cdot|$ where $h^F$ is a free-boundary GFF on $\h$ with $h^F_1(0)=0$. Then, almost surely, every $\fd_h$-bounded subset of~$\overline{\h}$ is also Euclidean-bounded.
	\end{lemma}
	\begin{proof}
	First observe that since $\fd_h$ is locally bounded (indeed, locally H\"{o}lder continuous) w.r.t.\ the Euclidean topology, as $a\downarrow 0$,$$\p[\fd_h(\partial B(0,r) \cap \overline{\h}, \partial B(0,2r)\cap \overline{\h})  \ge a r^{\xi Q} e^{\xi h_r(0)} ] \to 1,$$and that by conformal covariance this probability does not depend on $r$. Since in (\ref{eq:gffdef}), $\alpha_n$ and $\sigma(h|_{\h\setminus B(0,r)})$ are independent if $\mathrm{supp}\, \varphi_n \subset B(0,r)$, the tail $\sigma$-algebra $\bigcap_{r>0}\sigma(h|_{\h  \setminus B(0,r)})$ is trivial, and from this and the fact that $\fd_h$ is locally determined by $h$ it follows that for $a$ large enough there are almost surely infinitely many $k\in\mathbb{N}$ for which$$\fd_h(\partial B(0,2^k) \cap \overline{\h}, \partial B(0,2^{k+1})\cap \overline{\h})  \ge a 2^{k\xi Q} e^{\xi h_{2^k}(0)}.$$Since $e^{t\xi Q} e^{\xi h_{e^t}(0)} \to \infty$ as $r\to\infty$ (for $t\ge0$ it has the law of the exponential of a Brownian motion with positive drift $Q-\alpha$, albeit started at the last time this process hits 0 in the case of a wedge), it follows that for any compact $K\subset\overline{\h}$, $\fd_h(K,\partial B(0,r)) \to \infty$ as $r\to\infty$. 
	\end{proof}
	\begin{prop}\label{prop:bihldrabi}
		Let $h$ be a free-boundary GFF on $\h$ with some choice of additive constant. Almost surely, for each $u > 0$	and $K \subset \overline{\h}$ compact there exists $C\in(0,\infty)$ such that whenever $a$, $b \in \R$ with $a$, $a+bi \in K$, we have
		\begin{equation*}
			\fd_h(a,a+bi) \ge C^{-1} b^{\xi(Q+2)+u}.
		\end{equation*}
	\end{prop}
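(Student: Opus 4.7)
The plan is to adapt the strategy of~\cite[Lemma~3.22]{dfgps}, which establishes the analogous interior lower bound, to the boundary setting. Writing $A_m(a) := \h\cap(B_{2^{-m}}(a)\setminus \overline{B_{2^{-m-1}}(a)})$, the idea is that any continuous path in $\overline{\h}$ from $a$ to $a+bi$ must cross each semi-annulus $A_m(a)$ with $2^{-m} \le b$; I will lower bound the $\fd_h$-distance across each such semi-annulus by combining a boundary crossing estimate with a ``lower thickness'' bound for the semicircle average at boundary points.

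For the lower thickness bound, the proof of Lemma~\ref{lem:thicc} applies \emph{verbatim} to $-h$ in place of $h$, since the semicircle-average process $(h_{e^{-t}}(x)-h_1(x))_{t\ge 0}$ has the same law as its negation and $h_1(x)$ has bounded variance on compact subsets of $\R$. This yields that almost surely, for each compact $K \subset \R$ and each $v>0$, there is a random $N_1$ such that
\begin{equation}\label{eq:antithick}
h_{2^{-m}}(a) \ge -(2+v)m\log 2 \qquad \text{for all } m \ge N_1 \text{ and } a \in K.
\end{equation}
For the crossing estimate, I will show that there exist $c,\theta>0$ with the property that, uniformly over $a \in [0,1]$ and $m \in \mathbb{N}$,
\begin{equation}\label{eq:crossbd}
\p\!\left[\fd_h\bigl(\partial B_{2^{-m-1}}(a)\cap\h,\, \partial B_{2^{-m}}(a)\cap\h;\, \overline{A_m(a)}\bigr) \ge c\,2^{-m\xi Q}e^{\xi h_{2^{-m}}(a)}\right] \ge 1-2^{-\theta m}.
\end{equation}
The interior analogue of~\eqref{eq:crossbd} is a consequence of negative-moment bounds for the Liouville first passage percolation distance across an annulus and scale invariance, both available in~\cite{dfgps}. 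To obtain~\eqref{eq:crossbd}, I would couple $h$ with a whole-plane GFF $h^{\mathrm{wp}}$ (normalized so that $h^{\mathrm{wp}}_1(0)=0$) via the decomposition used in Prop.~\ref{prop:dub}, apply the interior crossing estimate to $h^{\mathrm{wp}}$ formulated in terms of its circle average, and then use a Borell--TIS tail bound on the harmonic correction $\mathfrak h := h-h^{\mathrm{wp}}|_{\h}$ to pass from the circle average of $h^{\mathrm{wp}}$ to the semicircle average of $h$, exactly as in the coupling argument of Prop.~\ref{prop:dub}. Uniformity in $a$ follows by translation invariance and scaling.

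Once~\eqref{eq:crossbd} is established, a union bound over $a \in [0,1]\cap 2^{-m}\mathbb{Z}$ and $m\in\mathbb{N}$, together with the Borel--Cantelli lemma and interpolation to non-dyadic $a$ (using the local Euclidean continuity of semi-annulus distances provided by Prop.~\ref{prop:holder}), yields an almost sure $N_2$ such that~\eqref{eq:crossbd} holds for all $m \ge N_2$ and $a \in [0,1]$. Combining this with~\eqref{eq:antithick} with $v := u/\xi$, we conclude that almost surely, for all $m \ge N_1\vee N_2$ and $a \in [0,1]$,
\begin{equation*}
\fd_h\bigl(\partial B_{2^{-m-1}}(a)\cap\h,\, \partial B_{2^{-m}}(a)\cap\h;\,\overline{A_m(a)}\bigr)\ \ge\ c\,2^{-m(\xi(Q+2)+u)}.
\end{equation*}
For $a \in [0,1]$ and $b \in (0, 2^{-(N_1\vee N_2)})$, choosing $m$ with $2^{-m} \le b < 2^{-m+1}$ and using that any continuous path from $a$ to $a+bi$ in $\overline{\h}$ must cross $A_m(a)$ then gives $\fd_h(a,a+bi) \ge c'\,b^{\xi(Q+2)+u}$; the finitely many $(a,b)$ with $b$ bounded below contribute only a constant factor (since $\fd_h$ is positive and continuous on the relevant compact set by Prop.~\ref{prop:posdef}), and translation invariance extends the conclusion to arbitrary compact $K \subset \overline{\h}$. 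The main obstacle is establishing~\eqref{eq:crossbd}: the relevant estimates in~\cite{dfgps} are formulated for Euclidean annuli at positive distance from $\partial\h$, so care is needed both in the choice of the set over which the internal metric is taken (as the harmonic correction $\mathfrak h$ need not be controlled on the whole of $\overline{A_m(a)}$) and in matching the semicircle average of $h$ with the circle average of $h^{\mathrm{wp}}$ at $a$.
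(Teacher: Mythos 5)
The approach has a genuine gap at the step you yourself flag as the main obstacle. The semi-annulus $\overline{A_m(a)}$ reaches all the way down to $\R$, and a crossing of it from the inner to the outer semicircle can hug the real line arbitrarily closely. The Weyl-scaling identity $\fd_h = e^{\xi\mathfrak{h}}\cdot\fd_{h^{\mathrm{wp}}}$ therefore gives no useful lower bound here: the harmonic correction $\mathfrak{h} = h - h^{\mathrm{wp}}|_{\h}$ is almost surely unbounded below near $\R$ (its pointwise variance at height $y$ grows like $\log(1/y)$, so $\inf_{A_m(a)}\mathfrak{h} = -\infty$ a.s.), and one cannot restrict to crossings at positive distance from $\R$ because such a restriction would omit genuine crossings of the semi-annulus. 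Getting a lower bound on $\fd_h$ \emph{along} the boundary is precisely what the later Prop.~\ref{prop:bihldrr} establishes, and that argument requires the SLE-based conformal map, so appealing to it here would be circular (indeed the proof of Prop.~\ref{prop:biholder} uses the present proposition as an input). Your ``anti-thickness'' bound is correct but not the bottleneck; the problem is the crossing estimate~\eqref{eq:crossbd}, for which you supply no mechanism to handle the near-boundary portion of the crossing.

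The paper avoids this by a different choice of crossing geometry. Rather than a semi-annulus centred at $a$, it bounds from below $\fd_h(\R + 2^{-(n-1)}i,\, \R + 2^{-n}i; U)$: any path from $a$ to $a+bi$ must pass from height $0$ to height $b$, hence cross the strip at heights between $2^{-n}$ and $2^{-(n-1)}$, and \emph{the crossing portion is trapped in that strip}, which is at Euclidean distance $\ge 2^{-n}$ from $\R$. The strip is covered by $2^n$ translates of a rectangle $2^{-n}\widetilde{U}$, on which scale invariance plus Borell--TIS gives control of $\mathfrak{h}$ with Gaussian tails of variance $O(1)$ after subtracting $\mathfrak{h}(2^{-n}i)$, the latter having variance $O(n)$; taking $t \sim n\ve$ makes the resulting probabilities summable against the $2^n$ translates, losing only an arbitrarily small $2\ve$ in the exponent. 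The interior H\"{o}lder estimate of~\cite[Lemma~3.22]{dfgps} for $\fd_{h^{\mathrm{wp}}}$ then finishes the argument; note that this estimate already encodes the anti-thickness information, so no separate lower-thickness lemma (nor any crossing estimate in terms of semicircle averages) is needed. The essential insight you would need to add to your sketch is to replace the semi-annulus by this horizontal strip, so that the crossing is forced into a region where the harmonic correction can be controlled.
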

	\begin{proof}
		Since we know that $\mathrm{diam}\, (K;\fd_h)$ is finite, by Lemma~\ref{lem:bounded} we can find a (random) $K' \supseteq K$ compact so that $\fd_h(K,\partial K' \setminus \partial \h) > \mathrm{diam}\, (K;\fd_h)$, so that $\fd_h(z,w) = \fd_h(z,w;\mathrm{int}\, K')$ for each $z,w \in K$. Thus it suffices to prove the result for $\fd_h(a,a+bi;U)$ and each $a,a+bi\in U$ for each bounded~$U$ open in~$\overline{\h}$. 
		
		\begin{figure}\label{fig:u-tilde}
			\includegraphics[width=0.5\textwidth]{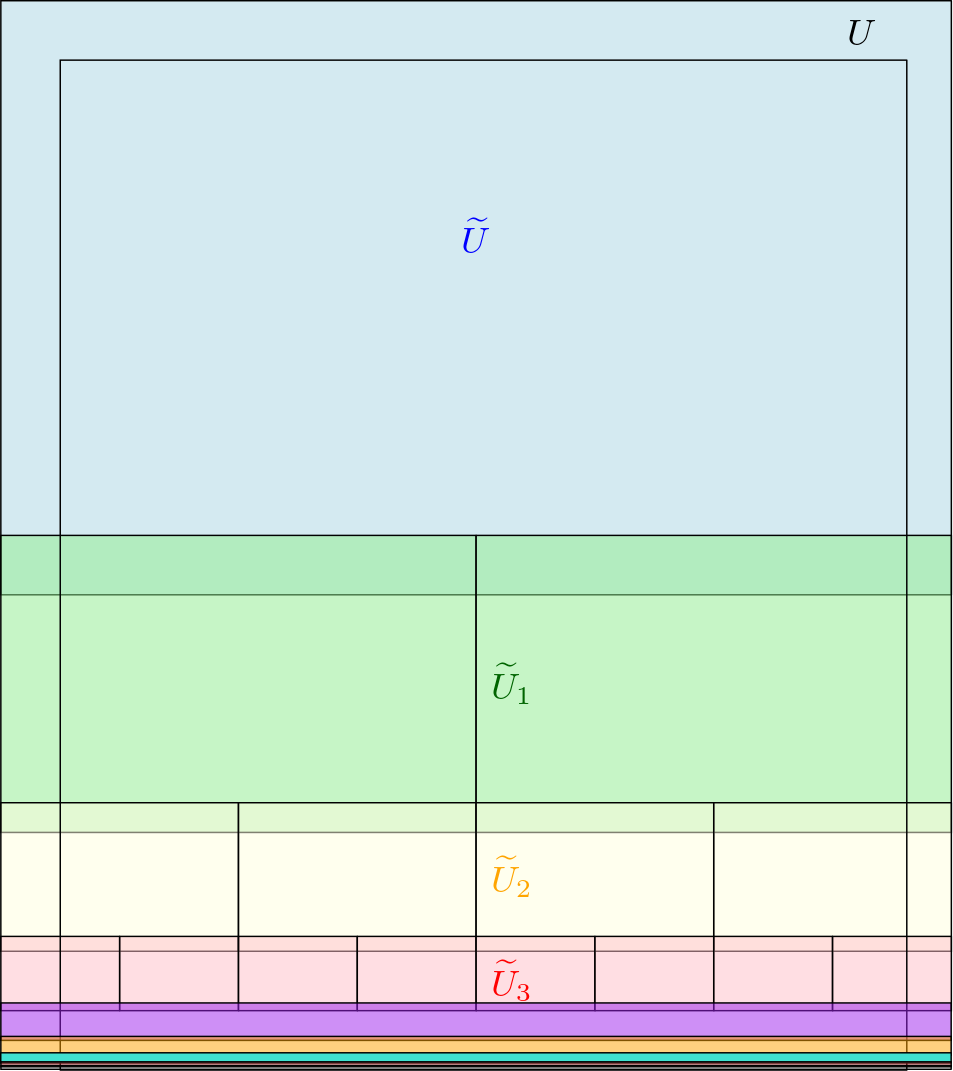}
			\caption{The sets $\widetilde{U}$, $\widetilde{U}_n$ used in the proof of Prop.~\ref{prop:bihldrabi}.}
		\end{figure}
		Choose $U$ to be an axis-parallel rectangle containing 0 with dyadic rational vertices. Let $\widetilde{U}$ be another such rectangle containing $U \cap (\h+i)$ so that the lower vertices of $\widetilde{U}$ have imaginary part greater than $1/2$ and the upper vertices have imaginary part greater than $2$, so that there exists~$\widetilde{U}_n$ a union of $2^n$ horizontal translates of $2^{-n}\widetilde{U}$ covering$$(U \cap (\h+2^{-n}i)) \setminus (\h+2^{-(n-1)}i)).$$Define $h^\mathrm{wp}$ and $\mathfrak{h}$ by coupling with $h$ as in Prop.~\ref{prop:dub}. By~\cite[Lemma~3.22]{dfgps} (and the Borel--Cantelli lemma applied to $\ve = 2^{-n}$) we know that the Euclidean metric on $K$ is $(\chi')^{-1}$-H\"{o}lder continuous w.r.t.\ $\fd_{h^{\mathrm{wp}}}$ for each $\chi' > \xi(Q+2)$. By Borell--TIS and a union bound over the $2^n$ translates of $2^{-n}\widetilde{U}$, we find that there are $c_1$, $c_2$ for which, for each $t\ge 0$,
		\begin{equation*}
			\p [\inf_{z\in\widetilde{U}_n} e^{\xi (\mathfrak{h}(z)-\mathfrak{h}(2^{-n}i)) } \le e^{-t}] \le 2^n c_1 e^{-c_2t^2}.
		\end{equation*}	
		Note that setting $t = n\ve \log{2}$ makes this summable. As before, we can get a similar tail bound for $\mathfrak{h}(2^{-n}i)-\mathfrak{h}(i)$; combining all of these we find that (with $\chi = \chi'-2\ve$) we get that for every $\chi > \xi(Q+2)$ there is almost surely a finite constant $C>0$ for which we have
		\begin{equation}\label{eq:dyadicholder}
			\fd_h(\R+2^{-(n-1)}i, \R+2^{-n}i;U) \ge C 2^{-n\chi}
		\end{equation}
		for all $n$, from which the result follows.
	\end{proof}
 We will now need estimates for LQG areas of Euclidean balls, deduced from the results of~\cite{rv}:
	\begin{prop}\label{prop:volume_moment}
		Fix $K\subset \h$ compact; let $h$ be a free-boundary GFF on $\h$ with the constant fixed such that $h_1(0)=0$. Then whenever $\zeta_1 > \gamma(Q+2)>\gamma(Q-2)> \zeta_2 > 0$ there almost surely exists a random $\ve_0 > 0$, such that, for all $\ve \in (0,\ve_0)$ and all $z \in K$, if $B(z,\ve)$ is the Euclidean ball of radius $\ve$ around $z$ then
		\begin{equation*}
			\ve^{\zeta_1} \le \mu_h(B(z,\ve) \cap \overline{\h}) \le \ve^{\zeta_2}.
		\end{equation*} 
		Moreover, define $$s_+=\frac{4\gamma^2+2\sqrt{2}\gamma\sqrt{(2+\gamma^2)(8+\gamma^2)}}{(4+\gamma^2)^2}.$$ If the condition on $K$ is weakened to $K\subset \overline{\h}$ compact (so that $K$ is allowed to intersect $\partial\h$), then provided $\zeta_2 \in (0,\gamma(Q-2)(1-s_+))$ there still almost surely exists a random $\ve_0 > 0$ such that the upper bound holds for all $z\in K$, $\ve \in (0,\ve_0)$.
		
	\end{prop}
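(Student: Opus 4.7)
The plan is to reduce the statement to known moment estimates for Gaussian multiplicative chaos due to Rhodes--Vargas, and to propagate these from a fixed point $z$ to all $z \in K$ simultaneously by a Markov plus union-bound argument over a $\ve$-net at dyadic scales, closed with Borel--Cantelli. First, by the radial--lateral decompositions of Sec.~\ref{subsection:wedges} together with the mutual absolute continuity of $B_{2t}+(\gamma-Q)t$ with a Brownian motion on compact time intervals, the thick-wedge and thin-wedge-bead cases are locally absolutely continuous with respect to the free-boundary GFF case, so I may restrict to $h$ a free-boundary GFF on $\h$.

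For the interior statement (where $K \subset \h$ is at positive Euclidean distance from $\partial\h$), I would write $h = \mathring h + \widetilde{\mathfrak{h}}$ with $\mathring h$ a zero-boundary GFF and $\widetilde{\mathfrak{h}}$ an independent harmonic function. On a slightly larger compact set $K'$ that still avoids $\partial\h$, $\sup_{K'}|\widetilde{\mathfrak{h}}|$ has a Gaussian tail by Borell--TIS (applied to $\widetilde{\mathfrak{h}} - \widetilde{\mathfrak{h}}(i)$) combined with the Gaussian tail of $\widetilde{\mathfrak{h}}(i)$ itself, so that $\mu_h$ and $\mu_{\mathring h}$ are comparable on $K$ up to a random multiplicative factor with superpolynomial tails. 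Standard GMC moment bounds give
\begin{equation*}
  \mathbb{E}[\mu_{\mathring h}(B_\ve(z))^q] \le C_q\, \ve^{(2+\gamma^2/2)q - \gamma^2 q^2/2}, \qquad q \in [0, 4/\gamma^2),
\end{equation*}
as well as finite negative moments
$\mathbb{E}[\mu_{\mathring h}(B_\ve(z))^{-q}] \le C_q\, \ve^{-(2+\gamma^2/2)q + \gamma^2 q^2/2}$ for every $q>0$ (here with the sign of the quadratic term reversed via Girsanov). Choosing $q$ near the critical value $4/\gamma^2$, a Markov inequality plus union bound over a $\ve$-net in $K$ (of size $O(\ve^{-2})$) yields the upper bound $\mu_h(B_\ve(z)\cap\overline{\h}) \le \ve^{\zeta_2}$ uniformly over $z \in K$ with probability summable along $\ve = 2^{-n}$; Borel--Cantelli concludes. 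The lower bound $\mu_h \ge \ve^{\zeta_1}$ with $\zeta_1 > 2$ follows identically using the negative-moment bound with $q$ taken as large as needed.

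For the boundary upper bound, the key input is that near a boundary point $x$ the semicircle average has doubled variance $\mathrm{Var}(h_\ve(x)) = 2\log(1/\ve) + O(1)$ (Lemma~\ref{lem:cty}), which via the exact scaling of $\mu_h$ under $z \mapsto \ve z$ (using the coordinate change~\eqref{eq:reparam}) gives
\begin{equation*}
  \mu_h(B_\ve(x)\cap \h) \stackrel{d}{=} \ve^{\gamma Q} e^{\gamma h_\ve(x)} Z
\end{equation*}
for $Z$ the $\mu$-mass of the unit half-disc under an independent rescaled field. Hence $\mathbb{E}[\mu_h(B_\ve(x)\cap\h)^q] \asymp \ve^{(2+\gamma^2/2)q - \gamma^2 q^2}\,\mathbb{E}[Z^q]$, and $\mathbb{E}[Z^q]$ is finite precisely up to a Seiberg-type critical exponent $q_c$ for GMC with a logarithmic boundary singularity, a bound established in the Rhodes--Vargas framework. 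I would then cover $K$ by $O(\ve^{-2})$ balls of radius $\ve$, applying the interior bound at interior-centered balls and the boundary bound at boundary-centered balls, and optimize the Markov--union-bound over $q \in (0, q_c)$. The optimal $q$ is the solution of a quadratic, and one checks that the resulting supremum $\sup_{q \in (0,q_c)}(\xi^{\partial}(q) - 2)/q$ matches exactly $(2 - \gamma^2/2)(1 - s_+)$, with $s_+$ the root of the relevant quadratic giving the stated formula.

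The main obstacle is the boundary upper bound: both verifying that the critical moment $q_c$ produces precisely the exponent $s_+$ appearing in the statement, and ensuring the moment bound $\mathbb{E}[Z^q]<\infty$ at the boundary Seiberg threshold. Everything else (the interior bounds, the Borel--Cantelli union bound, and the reduction to the free-boundary GFF) is relatively mechanical once the right moment estimate is in hand.
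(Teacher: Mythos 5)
Your interior argument (reduction to the free‑boundary GFF by absolute continuity, coupling with the zero‑boundary GFF plus a harmonic correction, Rhodes--Vargas moment bounds, and a net‑plus‑Borel--Cantelli argument) is essentially the same as the paper's, and is sound modulo the usual bookkeeping. The genuine divergence, and the genuine gap, is in the boundary upper bound.

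The paper does \emph{not} attempt a direct boundary GMC moment computation. Instead it uses~\cite[Thm~1.8]{zip} to cut a weight-$4$ wedge by an independent $\SLE_{\gamma^2}$ into two independent wedges, then invokes the fact (\cite[Thm~5.2]{rohde}, quantified by~\cite[Cor.~1.8]{GMS_multifractal}) that the complementary components of an $\SLE_\kappa$ for $\kappa<4$ are H\"older domains: the uniformizing map $\varphi \colon \h \to W^{\pm}$ is locally $\alpha$-H\"older for any $\alpha < 1-s_+$. A Euclidean $\ve$-ball centred on $\partial\h$ is therefore mapped into a Euclidean ball of radius $O(\ve^{\alpha})$ sitting strictly inside the image wedge, where the \emph{interior} estimate applies, and since the conformal coordinate change preserves quantum area, $\mu_h(B_\ve(x)\cap\overline{\h}) = \mu_{\widetilde h}(\varphi(B_\ve(x)\cap\overline{\h})) = O(\ve^{\alpha\zeta_2^{\mathrm{int}}})$. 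This is exactly how $(2-\gamma^2/2)(1-s_+)$ arises: it is a \emph{product} of an interior GMC exponent and a purely SLE-theoretic H\"older exponent, not a single GMC exponent.

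Your proposed route --- computing $\mathbb{E}[\mu_h(B_\ve(x)\cap\h)^q]$ directly at a boundary point, optimizing $q$ up to a ``boundary Seiberg threshold'' $q_c$, and asserting that ``one checks'' the optimal exponent equals $(2-\gamma^2/2)(1-s_+)$ --- does not hold up. First, $s_+$ is the optimal multifractal exponent for the derivative of the $\SLE_{\gamma^2}$ uniformizing map; there is no reason a boundary GMC moment optimization would reproduce it, and you offer no calculation that it does. Second, and more seriously, the semicircle average at a boundary point has variance $2\log(1/\ve)+O(1)$ (Lemma~\ref{lem:cty}), so the Neumann Green's function behaves like $-2\log|x-y|$ near $\partial\h$. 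This effectively doubles the GMC intermittency parameter at the boundary, pushing the positive-moment threshold down to roughly $2/\gamma^2$ rather than $4/\gamma^2$. For $\gamma$ close to $2$ this is close to $1/2$, and the Markov-plus-union-bound strategy over an $O(\ve^{-2})$ net requires moments of order strictly larger than~$1$, so the argument cannot even get off the ground in part of the range $\gamma\in(0,2)$ that the proposition covers. Your scaling identity $\mu_h(B_\ve(x)\cap\h)\stackrel{d}{=}\ve^{\gamma Q}e^{\gamma h_\ve(x)}Z$ is correct, but the finiteness of $\mathbb{E}[Z^q]$ for $q$ in the range you would need is precisely the thing you cannot take for granted, and you do not address it. The paper's welding argument avoids both problems entirely: it never takes moments of a boundary GMC mass, only of interior ones.

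If you want to repair your approach you would need an honest boundary moment computation for the Neumann GFF's $\mu_h$-mass of a half-disc (not the boundary measure $\nu_h$, for which the Seiberg bound is standard), including identifying the exact critical $q_c^{\partial}$ and showing the resulting exponent is positive throughout $\gamma\in(0,2)$ --- and even then you would most likely obtain an exponent different from $(2-\gamma^2/2)(1-s_+)$, which would require you to re-examine every downstream use of the proposition.
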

	\begin{proof}	
		We will first assume that $K$ is at positive distance from $\partial{\h}$, using the fact~\cite[Prop.~3.5, Prop.~3.6]{rv} that the $\mu_{h^{\mathrm{wp}}}$-area of a fixed Euclidean ball has finite $p^\mathrm{th}$ moments for all $p \in (-\infty,4/\gamma^2)$. Fix the additive constant for $h^{\mathrm{wp}}$ so that $h^{\mathrm{wp}}_1(0)=0$, and for each $p \in (-\infty,4/\gamma^2)$ define $C_p := \mathbb{E}[\mu_{h^{\mathrm{wp}}} (B(0,1))^p]$. Then by (\ref{eq:reparam}), we have
		\begin{equation}
			\mu_{h^{\mathrm{wp}}} (B(0,\ve)) \overset{(d)}{=} e^{\gamma h^{\mathrm{wp}}_\ve (0)} \ve^{\gamma Q} \mu_{h^{\mathrm{wp}}} (B(0,1)).
		\end{equation}
		If $p>0$, fixing $q,q'>p$ with $1/q+1/q'=1/p$ and using that $h^{\mathrm{wp}}_\ve (0) \sim N(0,\log{(1/\ve)})$, we obtain by H\"{o}lder's inequality that
		\begin{equation}
			\mathbb{E}[\mu_{h^{\mathrm{wp}}} (B(0,\ve))^p] \le C_q^{p/q} \ve^{\gamma Qp-\gamma^2pq'/2}.
		\end{equation}
		Likewise, since the centred Gaussian variables $h^\mathrm{wp}_\ve(z)-h^\mathrm{wp}_1(0)$ have variance $\log(1/\ve)+O(1)$ uniformly in $\ve$ and $z\in K$, we obtain the same result up to a multiplicative constant for $\mu_{h^{\mathrm{wp}}} (B(z,\ve))$ and conclude by taking $q'$ sufficiently close to $p$ that, for each $p, u > 0$, we have$$\mathbb{E}[\mu_{h^{\mathrm{wp}}} (B(z,\ve))^p] \le C(p,u) \ve^{\gamma p(Q-\gamma p/2)-u}$$for each $\ve > 0$ and each $z\in K$, where $C(p,u)$ depends neither on $\ve$ nor on $z$. 
		
		Fixing a neighbourhood $U$ of $K$ still at positive distance from $\partial\h$, using that $\sup_U (h-h^\mathrm{wp})$ has a Gaussian tail, a further application of H\"{o}lder's inequality gives the same result for $h$ (except with a different $C(p,u)$ and only over choices of $\ve$ and $z$ such that $B(z,\ve)\subset U$). Setting $p=2/\gamma<4/\gamma^2$, we obtain the exponent $2Q-2-u$. Now we can cover $K$ by $O(\ve^{-2})$ balls of radius $\ve$ such that each ball $B_\ve$ satisfies$$\p[\mu_h(B_\ve) > \ve^{\zeta_2}] \le \ve^{-p\zeta_2} \mathbb{E}[\mu_h(B_\ve)^p] = O(\ve^{2Q-2-2\zeta_2/\gamma-u}).$$This exponent is greater than 2 whenever $\zeta_2 < \gamma(Q-2)$ and $u$ is chosen small enough, so applying Borel--Cantelli to covers $\mathcal{C}_n$ with $\ve=2^{-n}$ for each $n$, and noting that each ball of radius $\ve$ centred in $K$ and such that $B(z,2\ve)$ is contained in $U$ can be covered by an absolute constant number of balls in $\mathcal{C}_{\lfloor \log_2(1/\ve) \rfloor}$, we obtain the second inequality whenever $\zeta_2 < \gamma(Q-2)$. (Note that $Q-2> 0$.) 
		
		The same argument for $p=-2/\gamma$, considering $\inf_U (h-h^\mathrm{wp})$ instead, produces the bound$$\p[\mu_h(B_\ve) < \ve^{\zeta_1}] =O(\ve^{2\zeta_1/\gamma -2Q-2-u}),$$giving summability whenever $\zeta_1 > \gamma(Q+2)$ and hence the first inequality.
		
		We will again employ the result~\cite[Thm~1.8]{zip} that a $(\gamma-2/\gamma)$-quantum wedge (a wedge of weight 4) is cut by an independent $\mathrm{SLE}_{\gamma^2}$ (call it $\eta$) into two independent $\gamma$-quantum wedges (of weight 2). (It suffices to prove the result for a wedge, by mutual absolute continuity.) Parametrize the original wedge by $(\h,h,0,\infty)$, so that if $\eta$ is an independent $\mathrm{SLE}_{\gamma^2}$ from 0 to $\infty$ and $W^{-}$,~$W^{+}$ are respectively the left and right sides of $\eta$, then $(W^{-},h|_{W^{-}},0,\infty)$ and $(W^{+},h|_{W^{+}},0,\infty)$ are independent $\gamma$-quantum wedges.
		
		It follows from~\cite[Thm~5.2]{rohde} that the components of the complement of an $\mathrm{SLE}_\kappa$ in a smooth bounded domain for $\kappa \in (0,4)$ are almost surely H\"{o}lder domains. Therefore (e.g.\ by using a M\"{o}bius map to transfer from $\h$ to the unit disc) we have a conformal map $\varphi: \h \to W$ that is almost surely \emph{locally} H\"{o}lder continuous (away from a single point on $\partial \h$) for $W = W^{-}$ or $W^{+}$. Moreover~\cite[Cor.~1.8]{GMS_multifractal} gives that this holds with any exponent $\alpha < 1-s_+$ (and~\cite[Remark~1.2]{GMS_multifractal} we have $s_+ < 1$).
		So even if $K$ intersects $\partial\h$, if $0\notin K$ we can use $\varphi$ to map $K$ H\"{o}lder-continuously to a subset of $W$ away from $\partial\h$, then deduce the upper bound for $K$ from that for~$\varphi(K)$ and~$\mu_{\widetilde{h}}$, where $\widetilde{h} = h \circ \varphi^{-1} + Q \log{(|\varphi'|^{-1})}$.
	\end{proof}
	We now prove H\"{o}lder continuity for the Euclidean metric w.r.t.\ $\fd_h$. We begin by establishing this at the boundary:
	\begin{prop}\label{prop:bihldrr}
		Let $h$ be a free-boundary GFF on $\h$ with the additive constant fixed in some way. There exists $\widetilde{\beta} > 0$ such that the following holds. Almost surely, for every $u \in (0, \widetilde{\beta})$ and each fixed compact interval $I \subset \R$ there is a finite constant $C>0$ such that $|x-y| \le C\fd_h(x,y)^{(\widetilde{\beta}-u)}$ for all $x,y\in I$.
	\end{prop}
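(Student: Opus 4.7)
My plan is to prove Proposition~\ref{prop:bihldrr} by reducing it to a multi-scale lower bound on LQG crossing distances across semicircular annuli centred at boundary points, then establishing that bound by adapting the argument of Proposition~\ref{prop:bihldrabi}. For $a,b$ in a fixed compact $I \subset \R$ with $\delta := |a-b|$ small and $k_0 := \lceil \log_2(2/\delta) \rceil$, any admissible path in $\overline{\h}$ from $a$ to $b$ must cross the semi-annulus $A_{k_0}(a) := \{z \in \overline{\h} : 2^{-k_0 - 1} \le |z - a| \le 2^{-k_0}\}$, yielding $\fd_h(a,b) \ge \fd_h(\partial B_{2^{-k_0-1}}(a) \cap \overline{\h},\, \partial B_{2^{-k_0}}(a) \cap \overline{\h})$. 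It therefore suffices to establish: there exist $\chi > 0$ and $\lambda > 0$ such that the $\fd_h$-crossing distance across $A_k(x)$ is at least $C^{-1} 2^{-k\chi}$ with probability at least $1 - O(2^{-k(1+\lambda)})$, uniformly in $x \in I$. A union bound over a $2^{-k}$-net of $I$ with $O(2^k)$ points, followed by Borel--Cantelli, then gives the uniform estimate almost surely for all large $k$, and setting $\widetilde{\beta} = 1/\chi$ yields the proposition.

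For the crossing estimate I mirror the approach of Proposition~\ref{prop:bihldrabi}. Couple $h$ with a whole-plane GFF $h^{\mathrm{wp}}$ via a shared Dirichlet component: $h^{\mathrm{wp}}|_\h = \mathring{h} + \widehat{\mathfrak{h}}$ and $h = \mathring{h} + \widetilde{\mathfrak{h}}$ with $\mathring{h}$, $\widehat{\mathfrak{h}}$, $\widetilde{\mathfrak{h}}$ mutually independent and $\widehat{\mathfrak{h}}$, $\widetilde{\mathfrak{h}}$ harmonic on $\h$, and set $\mathfrak{h} := \widetilde{\mathfrak{h}} - \widehat{\mathfrak{h}}$, so that $\fd_h = e^{\xi \mathfrak{h}} \cdot \fd_{h^{\mathrm{wp}}}|_\h$ on the interior by Weyl scaling. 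Applying \cite[Lemma~3.22]{dfgps} to a fixed compact superset of the unit semi-annulus together with the affine coordinate change axiom and translation invariance modulo additive constant, one obtains that the $\fd_{h^{\mathrm{wp}}}$-crossing distance across $A_k(x)$ is at least $C^{-1} 2^{-k\chi'}$ for any fixed $\chi' > \xi(Q+2)$, with polynomial-in-$2^{-k}$ failure probability that survives the net union bound. It remains to lower-bound $\inf_{A_k(x) \cap \h} \mathfrak{h}$ with polynomial tail. Since $\mathfrak{h}$ is harmonic with $\mathrm{Var}\,\mathfrak{h}(z) = O(\log(1/\mathrm{Im}(z)))$ as $\mathrm{Im}(z) \to 0$, a dyadic decomposition of $A_k(x)$ into horizontal strata at heights $\sim 2^{-j}$ for $j$ in a suitable range, with Borell--TIS applied on each stratum relative to a reference semicircle average (whose variance grows like $j$), yields $\inf \mathfrak{h} \ge -Ck^{1/2+u}$ with the required polynomial tail, inflating the effective exponent from $\chi'$ to $\chi' + u$ for any fixed small $u > 0$.

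The principal technical obstacle is handling crossing paths that hug $\partial\h$, where $\mathfrak{h}$ has unbounded fluctuations and the interior Weyl scaling comparison cannot be invoked. The dyadic-strata argument controls $\mathfrak{h}$ above any fixed height $\eta 2^{-k}$, but a crossing path of $A_k(x)$ could \emph{a priori} remain entirely within the boundary layer $\{z \in \overline{\h} : \mathrm{Im}(z) < \eta 2^{-k}\}$. The natural resolution is a dichotomy: either the path enters the interior, in which case the dyadic bound yields the desired lower bound; or it remains in the boundary layer, in which case one combines Proposition~\ref{prop:bihldrabi} (which lower-bounds $\fd_h(a, a+bi)$ and hence the $\fd_h$-cost of any vertical excursion reaching height $\eta 2^{-k}$) with a matching polynomial lower bound $\nu_h([a, a+2^{-m}]) \ge c \, 2^{-m\beta}$ on the boundary measure (obtainable by running the argument of Lemma~\ref{lem:46} using the \emph{upper} rather than lower tail of the semicircle average), together with the extension of $\fd_h$ to $\overline{\h}$ via admissible paths from Section~\ref{section:lqg}. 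Combining the interior and boundary contributions into a single polynomial crossing estimate and optimizing over $\eta$, $\chi'$, and the dyadic parameters is where the bulk of the technical work lies.
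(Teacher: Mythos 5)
Your route is genuinely different from the paper's. The paper does not attack the crossing estimate directly via the GFF; it routes everything through the SLE/conformal-welding structure. Concretely, the paper views $(\h,h)$ as one side of a larger $\gamma$-quantum wedge cut by an independent $\SLE_{\gamma^2}$ curve $\eta$, so that boundary points of $\h$ correspond to points of $\eta$ sitting strictly in the \emph{interior} of the bigger wedge. It then proves (via the bottleneck estimate of Prop.~\ref{prop:bottleneck}, the Beurling estimate, a Poisson-kernel calculation, and the area bounds of Prop.~\ref{prop:volume_moment}) that the uniformizing map $\psi$ is locally H\"older along $\eta$, and finishes by invoking the interior H\"older continuity of the Euclidean metric w.r.t.\ $\fd_{h^{\mathrm{wp}}}$ from~\cite{dfgps}. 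This ``go into the interior of a larger surface'' trick is precisely what lets the paper sidestep the boundary-layer pathology that you are forced to confront head-on.

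That boundary layer is where your argument has a real gap. Your dyadic-strata Borell--TIS bound on $\inf \mathfrak{h}$ controls $\mathfrak{h}$ only above a fixed height $\sim\eta\,2^{-k}$: the variance of $\mathfrak{h}(z)$ diverges like $\log(1/\mathrm{Im}\,z)$, so on the full semi-annulus $A_k(x)$ (which reaches $\mathrm{Im}\,z \to 0$) the infimum of $\mathfrak{h}$ is a.s.\ $-\infty$, and the claimed bound $\inf\mathfrak{h}\ge -Ck^{1/2+u}$ cannot hold. For the case of a crossing path that remains in $\{\mathrm{Im}\,z < \eta\,2^{-k}\}$, the two ingredients you propose do not give a lower bound. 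Proposition~\ref{prop:bihldrabi} bounds $\fd_h(a,a+bi)$ from below, i.e.\ the cost of a \emph{vertical} excursion; a path crossing $A_k(x)$ entirely inside the boundary layer makes no such excursion, so it is never charged. And a polynomial lower bound on $\nu_h([a,a+2^{-m}])$ bounds boundary \emph{length}, not $\fd_h$-distances of near-boundary interior paths: the only inequality the paper proves relating the two (Prop.~\ref{prop:dnu}) goes in the opposite direction, $\fd_h\le C\nu_h^\alpha$, which is useless here. One cannot rule out a priori that the field is very negative along a narrow corridor hugging $\R$, producing a cheap horizontal crossing; ruling this out is exactly the content of the proposition, and nothing in your sketch supplies the estimate. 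Iterating the annulus argument into smaller scales inside the boundary layer is the natural next idea, but as written the proposal does not close this loop, and without it the Borel--Cantelli union bound never gets off the ground.
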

	In order to prove this we will begin by proving Prop.~\ref{prop:bottleneck}, showing that $\mathrm{SLE}_\kappa$ curves for $\kappa < 4$ cannot bottleneck too much.
	\begin{proof}[Proof of Prop.~\ref{prop:bottleneck}]
		Let $h$, $\eta$ and $W^{\pm}$ be as in the proof of Prop.~\ref{prop:volume_moment} (so $\eta$ cuts the wedge $(\h,h,0,\infty)$ of weight 4 into independent wedges of weight 2 parametrized by $W^{\pm}$). We will need the result of~\cite[\S4.2]{mmq} that for $\kappa < 8$, chordal $\mathrm{SLE}_\kappa$ curves in $\h$ from 0 to $\infty$ almost surely satisfy the following \emph{non-tracing hypothesis}: for any compact rectangle $K \subset \h$ and any $\widetilde{\alpha}>\widetilde{\xi}>1$ there exists $\delta_0>0$ such that for any $\delta \in (0,\delta_0)$, and any $t$ such that $\eta(t) \in K$, there exists a point~$y$ with the following properties:
		\begin{figure}\label{fig:bottleneck}
			\includegraphics[width=0.9\textwidth]{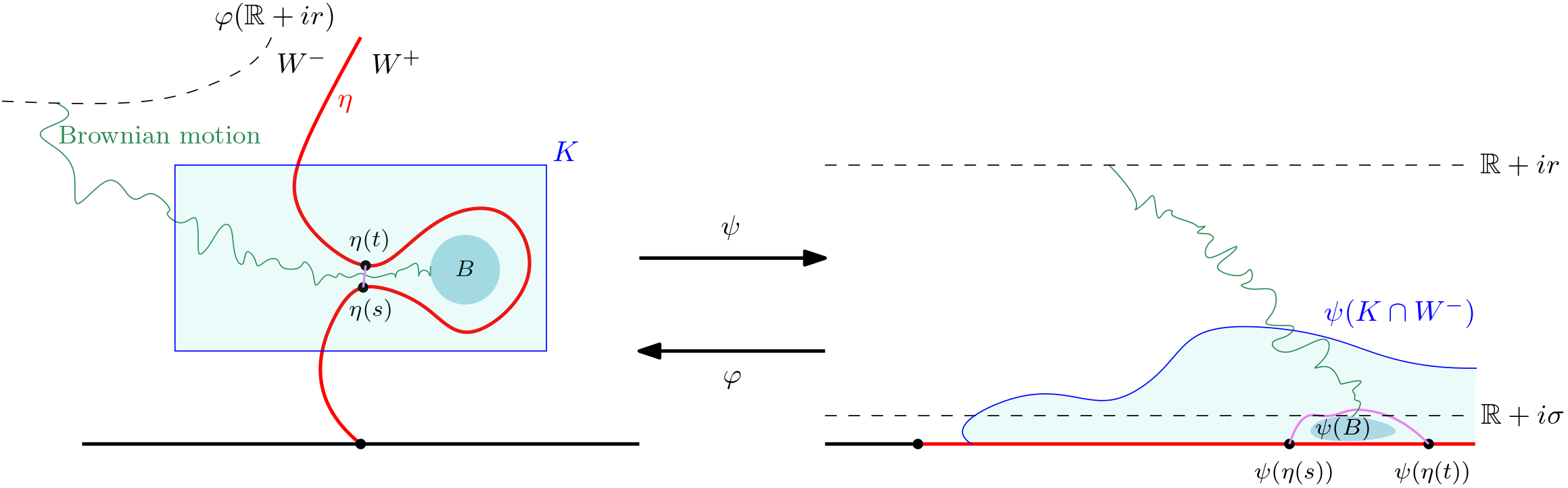}
			\caption{We establish a bound on the narrowness of bottlenecks in $\mathrm{SLE}_\kappa$ curves for $\kappa \in (0,4)$. If $\mathrm{diam}\, \eta({[}s,t{]}) \gg |\eta(s)-\eta(t)| $, we have a large ball $B$ surrounded by the union of $\eta({[}s,t{]})$ and the line segment ${[}\eta(s),\eta(t){]}$. Since a Brownian motion started on $\varphi(\R+ir)$ is unlikely to hit $B$ before exiting $W^-$, a Brownian motion started on $\R+ir$ is unlikely to hit $\psi(B)$ before exiting $\h$, making $\mathrm{diam}\, \psi(B)$ small. This is impossible since the conformal coordinate change preserves quantum areas, which are bounded above and below by polynomials in Euclidean diameter, so the diameter of $\psi(B)$ cannot be smaller than a certain power of the diameter of $B$.}
		\end{figure}
		\begin{itemize}
			\item $B(y,\delta^{\widetilde{\alpha}}) \subseteq B(\eta(t),\delta)\setminus \eta$, and $B(y,2\delta^{\widetilde{\alpha}})$ intersects $\eta$;
			\item if $O$ is the connected component of $y$ in $B(\eta(t),\delta)\setminus\eta$, and $a \in \partial O\setminus \eta(t;\delta)$, then every path from $y$ to $a$ in $O\cup \{a\}$ exits the ball $B(y,\delta^{\widetilde{\xi}})$. (Here $\eta(t;\delta))$ is defined as the SLE segment $\eta([\sigma,\tau])$, where $\sigma$ and $\tau$ are respectively the last time before $t$, and the first time after $t$, that $\eta$ hits $B(\eta(t),\delta)$.) 
		\end{itemize}
		Note that the proof in that paper can be used to find such points on either side of $\eta$, and that the hypothesis does not depend on the parametrization of $\eta$; indeed, the parametrization will not matter for what follows, so we will choose to parametrize our $\mathrm{SLE}$ curves by capacity.
		
		 Let $W$ be either $W^{-}$ or $W^{+}$, fix compact axis-parallel rectangles $K', K \subset \h$ such that $K' \subset \mathrm{int}\, K$, and fix a conformal map $\psi: W\to \h$ fixing 0 and $\infty$, with inverse $\varphi$. Fix $r$ such that $\mathrm{Im}\, z \le r/2$ for each $z \in \psi(K\cap W)$. Given $0 < s < t$ such that $\eta(s),\eta(t)\in~K$, let $[\eta(s),\eta(t)]$ be the straight line segment from~$\eta(s)$ to~$\eta(t)$, and let $\ell=|\eta(s)-\eta(t)|$. Also let $\p_z$ be the law of a complex Brownian motion started at $z$. Then if $B$ is a closed ball in $\h$ contained in $K'$ and $B$ is disconnected from $\varphi(\R+ir)$ by the union of $\eta$ and $[\eta(s),\eta(t)]$, the Beurling estimate gives that there exists $c>0$ such that, for each $z \in B$, $a\in \R$,
		\begin{equation}
			\p_z [ \mathrm{hit} \, \varphi(\R+ir) \, \mathrm{before} \, \eta\,\cup\,\R ] \le \frac{c\ell^{1/2}}{(\mathrm{dist}\, (K',\partial K))^{1/2}}, \quad \p_{\varphi(a+ir)}[ \mathrm{hit} \, B \, \mathrm{before} \, \eta\,\cup\,\R ] \le \frac{c\ell^{1/2}}{(\mathrm{dist}\, (K',\partial K))^{1/2}}.
		\end{equation}
		By conformal invariance of Brownian motion, applying $\psi$ gives
		\begin{equation}\label{eq:hitpsib}
			\p_{\psi(z)} [ \mathrm{hit} \, \R+ir \, \mathrm{before} \, \R ] \le \frac{c\ell^{1/2}}{(\mathrm{dist}\, (K',\partial K))^{1/2}}, \quad \p_{a+ir}[\mathrm{hit} \, \psi(B) \, \mathrm{before} \, \R ] \le \frac{c\ell^{1/2}}{(\mathrm{dist}\, (K',\partial K))^{1/2}}.
		\end{equation}
		Let $\sigma = \sup_{z\in\psi(B)}{\mathrm{Im}\, z}$. Then by compactness we can choose $a\in\R$ so that $a+i\sigma \in \psi(B)$. Now by gambler's ruin we have
		\begin{equation}
			\p_{a+i\sigma} [ \mathrm{hit} \, \R+ir \, \mathrm{before} \, \R ] = \frac{\sigma}{r}
		\end{equation}
		and therefore $\sigma/r \le \frac{c\ell^{1/2}}{(\mathrm{dist}\, (K',\partial K))^{1/2}}$.
		
		Suppose $z, w \in \psi(B)$ with $|\mathrm{Re}(z-w)| = \rho>0$. Without loss of generality suppose $\mathrm{Im} \, z > \mathrm{Im} \, w$. Set $a = \mathrm{Re}(\frac{z+w}{2})$. Then set $L_1, L_2, L_3$ to be horizontal line segments of respective lengths $\rho/2$, $2\rho/3$, $2\rho/3$ and centres $a+i\,\mathrm{Im}\, z$, $a$, $a +i\,\mathrm{Im}\, z$. Let $R$ be the rectangle that has $L_2$ and $L_3$ as opposite sides. If a Brownian motion from $a+ir$ exits $\h+i\,\mathrm{Im}\, z$ through $L_1$ and after hitting~$L_1$ makes an excursion across $R$ from $L_3$ to $L_2$ without hitting the vertical sides of $R$ or hitting $(\R+i\mathrm{Im}\, z)\setminus L_3$, then it must hit $\psi(B)$ before $\R$. Since we already have $\mathrm{Im}\, z$, $\mathrm{Im}\, w = O(\ell^{1/2})$, the probability from $a+ir$ of exiting $\h+i\,\mathrm{Im}\, z$ through $L_1$ is $\frac{2}{\pi}\arctan{\frac{\rho}{4(r-\mathrm{Im}\, z)}}$ (using the Poisson kernel in $\h$), whereas assuming $\rho \ge \ell^{1/2}$ (and thus $\rho = \Omega(\sigma) = \Omega(\mathrm{Im}\, z)$) the probability from $u$ of the latter event can be bounded below by a constant uniformly for $u \in L_1$, so by the second inequality in (\ref{eq:hitpsib}) we must have $\rho = O(\ell^{1/2})$. We have therefore shown that the (Euclidean) diameter of $\psi(B)$ is $O(\ell^{1/2})$. 
		
		Now, given $s<t$ and $\widetilde{\alpha} > 1$, we can apply the non-self-tracing hypothesis to the ball$$B\left(\eta(u),\frac{1}{4}\mathrm{diam} \, \eta([s,t])\right)$$for some $s<u<t$, so that the connected component of $\eta(u)$ in that ball is a subsegment of $\eta([s,t])$. This gives us a ball~$B$ of radius $\left(\frac{1}{4}\mathrm{diam} \, \eta([s,t])\right)^{\widetilde{\alpha}}$ which is disconnected from~$\infty$ by the union of~$\eta$ and the straight line segment from $\eta(s)$ to $\eta(t)$ (recalling that we can choose the ball to be on the appropriate side of $\eta$).
		
		We can now compare $\mathrm{diam} \, \eta([s,t])$ to $\ell$ using Prop.~\ref{prop:volume_moment}, which implies that for $\zeta_1$ and $\zeta_2$ as in the statement of that proposition, if $B$ is the ball above, we have $\mu_h(B) = \Omega((\mathrm{diam}\,\eta([s,t]))^{\widetilde{\alpha}\zeta_1})$, but if $\widetilde{h} = h\circ\varphi + Q\log{|\varphi'|}$ then $\widetilde{h}$ itself has the law of a quantum wedge, so we have $\mu_h(B) = \mu_{\widetilde{h}}(\psi(B)) = O(\ell^{\zeta_2/2})$. So $\mathrm{diam}\,\eta([s,t]) = O(\ell^{\zeta_2/(2\widetilde{\alpha}\zeta_1)})$, as required.
	\end{proof}
	We will now prove Prop.~\ref{prop:bihldrr} from the results of~\cite{dfgps} giving H\"{o}lder continuity away from the boundary.
	\begin{proof}[Proof of Prop.~\ref{prop:bihldrr}]
		Continuing in the setting of the proof of Prop.~\ref{prop:bottleneck}, another use of the Poisson kernel in $\h$ gives that, for $a = \frac{1}{2}\mathrm{Re}\,(\psi(\eta(s))+\psi(\eta(t)))$, 
		\begin{equation*}
			\p_{a+ir}[\mathrm{exit} \, \h \, \mathrm{through} \, [\psi(\eta(s)),\psi(\eta(t))]] = \Theta(|\psi(\eta(s))-\psi(\eta(t))|).
		\end{equation*}
		But the LHS is equal to
		\begin{equation*}
			\p_{\varphi(a+ir)}[\mathrm{exit} \, W \, \mathrm{through} \,  \eta([s,t])] =O( (\mathrm{diam} \, \eta([s,t]))^{1/2})
		\end{equation*}
		(by the Beurling estimate). Combining this with the diameter estimate we find
		\begin{equation*}
			|\psi(\eta(s))-\psi(\eta(t))| = O(\ell^{\zeta_2/(4\widetilde{\alpha}\zeta_1)}) = O(|\eta(s)-\eta(t)|^{\zeta_2/(4\alpha\zeta_1)}).
		\end{equation*}
		In other words, $\psi$ is locally H\"{o}lder continuous on $\eta$. Note that for a fixed compact set $K \subset \h$ at positive distance from~$\partial \h$, we have $\chi^{-1}$-H\"{o}lder continuity of the Euclidean metric w.r.t.\ $\fd_h$ on $K$ for any $\chi > \xi(Q+2)$ (this follows from~\cite[Prop.~3.18]{dfgps} for the whole-plane GFF $h^{\mathrm{wp}}$ and the almost sure finiteness of $\sup_K{\mathfrak{h}}$). So, if $\eta(s)$, $\eta(t) \in K$, then
		\begin{equation*}
			|\psi(\eta(s))-\psi(\eta(t))| = O(|\eta(s)-\eta(t)|^{\zeta_2/(4\widetilde{\alpha}\zeta_1)}) = O(\fd_h(\eta(s),\eta(t))^{\zeta_2/(4\widetilde{\alpha}\chi\zeta_1)}).
		\end{equation*}
		Observing finally that $\fd_h(\eta(s),\eta(t)) \le \fd_h(\eta(s),\eta(t);W) = \fd_{\widetilde{h}}(\psi(\eta(s)),\psi(\eta(t)))$ gives the desired Prop.~\ref{prop:bihldrr} with $\widetilde{\beta} = \zeta_2/(4\widetilde{\alpha}\chi\zeta_1)$. Observe that since we require $\widetilde{\alpha}>1$, $\chi > \xi(Q+2)$, $\zeta_1>\gamma(Q+2)>\gamma(Q-2)(1-s_+)>\zeta_2$, we obtain that the result holds in the range\begin{equation}0<\widetilde{\beta}< \frac{(Q-2)(1-s_+)}{4\xi(Q+2)^2}.
		\end{equation}
	\end{proof}
	\begin{proof}[Proof of Prop.~\ref{prop:biholder}]
		It suffices to prove the left-hand inequality, since the right-hand inequality is given by Prop.~\ref{prop:holder}. Let $\chi > \xi(Q+2)$ and $\sigma < \xi(Q-2)$ be arbitrary. Fix some $\beta < \widetilde{\beta}$, i.e.\ $\beta < (Q-2)(1-s_+)/(4\xi(Q+2)^2) $. To recap what we have proven so far, suppose we are on the intersection of the almost sure events of Prop.~\ref{prop:bihldrabi} and Prop.~\ref{prop:bihldrr}. Then fixing $K \subset \overline{\h}$ compact, there is some finite $C > 0$ on which we have
		\begin{equation}\label{eq:case2}
			|(a_1+b_1i)-(a_2+b_2i)| \le C\fd_h(a_1+b_1i,a_2+b_2i)^\beta
		\end{equation}
		provided $a_1+b_1i$, $a_2+b_2i \in K$ and \emph{either} $b_1=b_2=0$ \emph{or} $b_1=0$, $a_1=a_2$. (Note that we can use Prop.~\ref{prop:bihldrabi} because $\beta < (\xi(Q+2))^{-1}$.)
		
		In order to remove this second condition and thus deduce a H\"{o}lder exponent for the Euclidean metric w.r.t.\ $\fd_h$ on a compact set $K \subseteq \overline{\h}$, we will split into cases. Fix $\rho > 1/(\beta\sigma)$. Note that $ 1/(\beta\sigma)$ can be made arbitrarily close to $ 4(Q+2)^2/((Q-2)^2(1-s_+)$ and thus can be chosen to force $\rho > 1$. Suppose $\mathrm{Im}\, w \ge \mathrm{Im}\, z$ and $\mathrm{Im}\, w >0$.  We will justify that, almost surely, there exist finite constants $C_1$, $C_2$, $C_3$, $C_4 > 0$ such that:
		\begin{itemize}
			\item Firstly, if $\mathrm{Im}\, w > |w-z|^\rho$ then
\begin{equation}
\label{eq:wR}\fd_h(w,\R+(i/4)\mathrm{Im}\, w) \ge C_1|w-z|^{\chi\rho}; \\
\end{equation}
			\item Secondly, if $4\,\mathrm{Im}\, z > \mathrm{Im}\, w$ and $\mathrm{Im}\, w > |w-z|^\rho$ then
\begin{equation}
\label{eq:wzH}\fd_h(w,z; \h+(i/4)\mathrm{Im}\, w) \ge C_2 |w-z|^\chi.
\end{equation}
			\item Finally, if $\mathrm{Im}\, w \le |w-z|^\rho$ and $|w-z|\le 4^{1/(1-\rho)}$ then:
			\begin{align}
			\label{eq:RwRz} \fd_h(\mathrm{Re} \, w, \mathrm{Re} \, z) &\ge C_3 |w-z|^{1/\beta}; \\
			\label{eq:wRw}  \fd_h(w, \mathrm{Re} \, w) &\le C_4 |w-z|^{\sigma\rho}; \\
			\label{eq:zRz}  \fd_h(z, \mathrm{Re} \, z) &\le C_4 |w-z|^{\sigma\rho}.
			\end{align}
		\end{itemize}

		Fixing the compact set $K$, we can choose $U$ a bounded axis-parallel rectangle open in $\overline{\h}$ and containing $K$ with the property that $\mathrm{dist} (z,\partial U \setminus \R) \ge 1$ for all $z\in K$. By the local H\"{o}lder continuity of $\fd_h$ w.r.t.\ the Euclidean metric, this ensures that $\fd_h (z,\partial U \setminus \R)$ will almost surely be bounded below by some $C_K > 0$ uniformly in $z\in K$. We now bound $\fd_h(w,\R+\frac{i}{4}\mathrm{Im}\, w)$ from below by the minimum of $C_K$ and the internal metric distance $\fd_h(w,\R+\frac{i}{4}\mathrm{Im}\, w; U)$. By (\ref{eq:dyadicholder}), there exists an almost surely finite constant $C'$ such that for all $w\in K$, we have 
		\begin{equation*}
			\fd_h(w,\R+(i/4)\mathrm{Im}\, w;U) \ge C' (\mathrm{Im}\, w)^\chi,
		\end{equation*}
		which together with the assumption $\mathrm{Im}\, w > |w-z|^\rho$ establishes (\ref{eq:wR}).
		
		By~\cite[Prop.~3.18]{dfgps}, for $\ve > 0$ sufficiently small the Euclidean metric is almost surely $(\chi-\ve)^{-1}$-H\"{o}lder continuous on~$K$ w.r.t.\ $\fd_{h^{\mathrm{wp}}}$ (where the additive constant in $h^{\mathrm{wp}}$ is fixed, say, so that $h^{\mathrm{wp}}_1(0)=0$), and thus also w.r.t.\ the larger internal metrics $\fd_{h^{\mathrm{wp}}}(\cdot,\cdot;\h+yi)$ for each $y>0$, with the same H\"{o}lder constant. Thus there almost surely exists $C_5>0$ such that$$\fd_{h^\mathrm{wp}} (w,z; \h+(i/4)\, \mathrm{Im}\, w) \ge C_5 |w-z|^{\chi-\ve}.$$This implies that$$\fd_h(w,z;\h+(i/4)\, \mathrm{Im}\, w) \ge \min\left\{C_K, C_5 |w-z|^{\chi-\ve} \cdot \inf_{U\cap(\h+(i/4)\, \mathrm{Im}\, w)} e^{\xi(h-h^\mathrm{wp})} \right\}.$$By the proof of Prop.~\ref{prop:bihldrabi}, if $h^\mathrm{wp}$ is coupled with $h$ so that the difference $h^\mathrm{wp}-h$ is a harmonic function, we almost surely have 
		\begin{equation}\label{eq:corrections}
			\sup_{U \cap (\h + yi)} e^{\xi(h^\mathrm{wp}-h)} = O(y^{-\ve/\rho})
		\end{equation}
		for each $\ve > 0$, while the same holds for $e^{\xi(h-h^\mathrm{wp})}$. In other words, $\inf_{U\cap(\h+yi)} e^{\xi(h-h^\mathrm{wp})}  = \Omega(y^{\ve/\rho})$, and since $\mathrm{Im}\, w > |w-z|^\rho$ we have $(\mathrm{Im}\, w)^{\ve/\rho} > |w-z|^\ve$, which establishes (\ref{eq:wzH}).
		
		Now turn to the case $\mathrm{Im}\, w \le |w-z|^\rho$. Since the assumption $|w-z|\le 4^{1/(1-\rho)}$ gives that $|w-z|^\rho \le \frac{1}{4}|w-z|$, we have $|\mathrm{Re}\, w - \, \mathrm{Re}\, z| \ge \frac{1}{2}|w-z|$, and thus$$\fd_h(\mathrm{Re}\, w, \mathrm{Re}\, z)  \ge C^{-1/\beta} |\mathrm{Re}\, w - \, \mathrm{Re}\, z|^{1/\beta} \ge (2C)^{-1/\beta}|w-z|^{1/\beta}.$$We thus obtain (\ref{eq:RwRz}) with $C_3 = (2C)^{-1/\beta}$.
		
		The existence of $C_4>0$ finite satisfying (\ref{eq:wRw}) and (\ref{eq:zRz}) follows from Prop.~\ref{prop:holder} together with the assumption that $\mathrm{Im}\, z\le \, \mathrm{Im}\, w \le |w-z|^\rho$. 

		Having justified the estimates (\ref{eq:wR})--(\ref{eq:zRz}) we now finish the proof. If $\mathrm{Im}\, w > |w-z|^\rho$ then either $4\, \mathrm{Im}\, z \le \mathrm{Im}\, w$, in which case it follows from (\ref{eq:wR}) that we have
		\begin{equation*}
			\fd_h(w,z) \ge \fd_h(w,\R+(i/4)\,\mathrm{Im}\, w) \ge C_1 |w-z|^{\chi\rho},
		\end{equation*}
		or $4\,\mathrm{Im}\, z > \mathrm{Im}\, w$. In this latter case, since $\rho>1$ and $\mathrm{Im}\, w > |w-z|^\rho$, it follows from (\ref{eq:wR}) and (\ref{eq:wzH}) that there almost surely exists $C_6 > 0$ such that 
		\begin{equation*}
			\fd_h(w,z) \ge \min\{ \fd_h(w,\R+(i/4)\,\mathrm{Im}\, w), \fd_h(w,z; \h+(i/4)\,\mathrm{Im}\, w) \} \ge C_5 |w-z|^{\chi\rho}.
		\end{equation*}
	
		On the other hand, if $\mathrm{Im}\, w \le |w-z|^\rho$ then, since by the triangle inequality $\fd_h(\mathrm{Re} \, w, \mathrm{Re} \, z) \le \fd_h(\mathrm{Re} \, w,w) + \fd_h(w,z) + \fd_h(z, \mathrm{Re} \, z)$, it follows from (\ref{eq:RwRz}), (\ref{eq:wRw}) and (\ref{eq:zRz}) that 
		\begin{equation*}
			\fd_h(w,z) \ge C_3 |w-z|^{1/\beta} - 2C_4 |w-z|^{\sigma\rho}.
		\end{equation*}
		Since $\sigma\rho > 1/\beta$, the last three displays imply the left-hand inequality of the proposition for any~$\alpha_1$ subject to 
		\begin{equation}
			\alpha_1^{-1} < \frac{(Q-2)^2(1-s_+)}{4\xi(Q+2)^3}
		\end{equation}
		at least when $|w-z|\le 4^{1/(1-\rho)}$, but we can deduce it for general $w$, $z$ by considering points $w = w_0, w_1, \ldots, w_k =z$ along a path of finite $\fd_h$-length from $w$ to $z$ such that $|w_i-w_{i-1}| \le 4^{1/(1-\rho)}$ and using a Kolmogorov criterion-type argument as in (\ref{eq:kc}).
	\end{proof}
	Note that local bi-H\"{o}lder continuity on $\overline{\h}$ implies that $\fd_h$ induces the Euclidean topology on~$\overline{\h}$ and not just on $\h$, which completes the proof of Prop.~\ref{prop:dh}. Finally, we show the existence of geodesics.
	\begin{prop}\label{prop:geodesic-existence}
		Let $h$ be a free-boundary GFF on $\h$ minus $\alpha\log|\cdot|$ for some $\alpha < Q$, with the additive constant fixed such that $h_1(0)=0$. Then it is almost surely the case that for any $z$, $w \in \overline{\h}$ there exists a $\fd_h$-geodesic between $z$ and $w$, which does not hit $\infty$.
	\end{prop}
	\begin{proof}
		Since $\fd_h$-bounded subsets are also Euclidean-bounded by Lemma~\ref{lem:bounded}, and the two metrics induce the same topology, the Heine--Borel theorem gives that $(\overline{\h},\fd_h)$ is almost surely a \emph{boundedly compact} space (i.e., closed bounded sets are compact), which implies that there exists a geodesic between any two points (provided they are connected by a rectifiable curve, which we know holds for any two points since $\fd_h$ is a length metric). This is a standard result in metric geometry~\cite[Cor.~2.5.20]{bbi} proven by taking an infimizing sequence of paths which by bounded compactness can be assumed to lie in a compact set, extracting a uniformly converging subsequence by an Arzel\`{a}--Ascoli-type result, and applying lower semicontinuity of length to conclude that the limit is a geodesic.
	\end{proof}
	\section{Bound on $\gamma$-LQG area near the boundary}\label{section:volbound}
	Our aim in the entirety of this section is to prove the following lower bound on the $\mu_h$-area near a boundary segment. We will achieve this via the result~\cite[Thm~1.2]{dms} that an independent SLE-type curve cuts a quantum wedge into two independent wedges, but here we will use several curves to cut out many independent surfaces that each have a positive chance to accumulate a large $\mu_h$-area within a $\fd_h$-neighbourhood of our boundary segment. These surfaces can be described as contiguous portions of a space-filling $\mathrm{SLE}_{16/\gamma^2}$-type curve similar to the one that generates the ``topological mating'' in~\cite[\S8]{dms}, but we will not need that description here.
	\begin{prop}\label{prop:volbound}
		Let $h$ be a free-boundary GFF with the additive constant fixed so that $h_1(0)=0$. For $\delta > 0$ and $a$, $b\in \R$ with $a<b$, define $\mathcal{B}_\delta([a,b])$ to be the set of points at $\fd_h$-distance $< \delta$ from the interval $[a,b]$. Fix $I \subset \R$ a compact interval and $u > 0$. Then there almost surely exists $M>0$ such that for each $[a,b] \subset I$ and $\delta \in (0,1)$ such that $\nu_h([a,b]) \ge 4\delta^{d_\gamma/2-u}$, we have
		\begin{equation}\label{eq:volbound}
			\mu_h(\mathcal{B}_\delta([a,b])) \ge M \delta^{\frac{d_\gamma}{2}} \nu_h([a,b]).
		\end{equation}
	\end{prop}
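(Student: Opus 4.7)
The plan is to use the imaginary geometry SLE/GFF coupling to decompose a neighbourhood of $[a,b]$ into a disjoint union of regions $R_i$, each adjacent to a sub-interval $J_i\subset[a,b]$ of $\nu_h$-length of order $\delta^{d_\gamma/2}$, each with $\mu_h$-area of order $\delta^{d_\gamma}$ and contained in $B_\delta([a,b])$. Summing over $i$ will then yield
\begin{equation*}
\mu_h(B_\delta([a,b])) \ge \sum_i\mu_h(R_i) \gtrsim N\delta^{d_\gamma} \gtrsim \delta^{d_\gamma/2}\nu_h([a,b]),
\end{equation*}
where $N$ is the number of sub-intervals, establishing the proposition up to a final Borel--Cantelli argument giving uniformity in $(a,b,\delta)$.

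More concretely, I would first use atomlessness of $\nu_h$ to partition $[a,b]$ into consecutive $J_i=[x_i,x_{i+1}]$ with $\nu_h(J_i)\in[\delta^{d_\gamma/2},2\delta^{d_\gamma/2}]$; the hypothesis $\nu_h([a,b])\ge 4\delta^{d_\gamma/2-2u}$ gives $N\ge 2\delta^{-2u}$ such intervals. Then, using the imaginary geometry coupling of \cite{ig1} (valid for $\kappa=\gamma^2\in(0,4)$), I would draw at each $x_i$ a flow line $\eta_i$ of $h$ with some fixed angle; by Miller--Sheffield these are $\SLE_{\gamma^2}(\rho^-;\rho^+)$ curves that are pairwise non-crossing. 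Stopping each $\eta_i$ at a quantum capacity chosen so that the region $R_i$ bounded between $\eta_i$, $\eta_{i+1}$ and $J_i$ has quantum boundary length of order $\delta^{d_\gamma/2}$ on each side, the conformal welding theory of \cite{dms} identifies the law of $R_i$ as a quantum surface with a bead of a quantum wedge of known weight and prescribed left/right boundary lengths.

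From this quantum-surface description I would derive the two required bounds. For the area, scaling for a wedge bead of boundary length $\ell$ gives quantum area of order $\ell^2$ with uniformly positive probability, so $\mu_h(R_i)\ge c\delta^{d_\gamma}$ with probability bounded below; a Chernoff/concentration bound, exploiting the approximate independence of the $R_i$ inherited from the imaginary geometry structure, then makes the total area lower bound hold outside a superpolynomially small event. For the diameter bound $\mathrm{diam}_{\fd_h}(R_i)\le\delta$: along $J_i$ this is immediate from Prop.~\ref{prop:dnu}; along each flow-line boundary $\eta_i$ an analogous bound should follow by conformal covariance, since after the DMS identification the field near $\eta_i$ is mutually absolutely continuous with a free-boundary GFF on a neighbourhood of $\partial\h$, allowing a reapplication of Prop.~\ref{prop:dnu}. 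Combined with a bound on the internal $\fd_h$-distance from any point of $R_i$ to $\partial R_i$ (obtainable from the local LQG estimates underlying Prop.~\ref{prop:biholder}), this shows $R_i\subseteq B_\delta([a,b])$.

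The main obstacle will be rigorously transferring Prop.~\ref{prop:dnu} from $\partial\h$ to the internal SLE interfaces $\eta_i$: although DMS identifies each $R_i$ abstractly as a quantum surface, turning this into a quantitative metric diameter bound requires careful tracking of the embedding and of the associated multiplicative constants, as well as a uniform handling of the Radon--Nikodym derivative between the field near $\eta_i$ and a free-boundary GFF. Once the per-$R_i$ bounds are in place, a Borel--Cantelli argument over a countable dense set of triples $(a,b,\delta)\in I\times I\times(0,1)$, combined with the obvious monotonicity of $[a,b]\mapsto B_\delta([a,b])$ in $[a,b]$ and in $\delta$, upgrades the bounds to the uniform random constant $M$ claimed in the proposition.
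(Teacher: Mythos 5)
Your overall strategy -- partition $[a,b]$ into sub-intervals of $\nu_h$-length $\approx\delta^{d_\gamma/2}$, use imaginary geometry to cut the surface into near-independent pieces along $\SLE$ curves started at the partition points, lower-bound the area of each piece and concentrate -- is the same as the paper's. But there are two genuine gaps.

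First, you propose to take the $\eta_i$ to be flow lines \emph{of $h$ itself}. That breaks the independence input that the whole argument hinges on: the conformal welding results (\cite[Thm~1.8]{zip}, \cite[Thm~1.2]{dms}) say that a wedge decorated by an \emph{independent} $\SLE$ is cut into independent wedges. Flow lines of $h$ are determined by $h$, so the pieces they cut out are not independent quantum surfaces given $h$, and the Chernoff/concentration step has nothing to bite on. The paper samples a separate zero-boundary GFF $\mathring h$, independent of $h$, and uses its flow lines (which are $\SLE_{\gamma^2}(-1;-1)$ curves) so that the regions between consecutive flow lines are genuinely independent as quantum surfaces modulo embedding.

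Second, you try to establish $R_i\subseteq B_\delta([a,b])$ by bounding the full $\fd_h$-diameter of $R_i$, which requires a distance--boundary-length estimate along the interior $\SLE$ interfaces; you flag this yourself as ``the main obstacle,'' and indeed it is both hard and unnecessary. The paper does not attempt any such diameter bound. Instead, inside each region it fixes a boundary point $w_{k,n}$ in $[a,b]$ (the $\nu_h$-midpoint of the boundary arc) and considers the ball of radius $\delta$ around $w_{k,n}$ in the \emph{internal} metric $\fd_h(\cdot,\cdot;\mathrm{int}\,\mathcal{S}_{a_{(k-1)/n},a_{k/n}})$. Since the internal metric dominates $\fd_h$ and the centre lies in $[a,b]$, this ball is automatically contained in $B_\delta([a,b])$; the only thing to show is that it has positive $\mu_h$-mass with uniformly positive probability, which follows because the region contains a Euclidean semidisc and so the internal ball around $w_{k,n}$ is nonempty and open. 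This removes the need to transfer Prop.~\ref{prop:dnu} to the $\SLE$ interfaces, which is where your proposal stalls. Relatedly, the claim that the law of $R_i$ is a wedge bead with prescribed left/right boundary lengths does not match the construction: the flow lines from consecutive points merge, and the region between them up to the merge point is not a bead with fixed boundary lengths; the paper uses shift-invariance and scaling of the $\gamma$-wedge rather than a per-bead law to get the uniform lower bound $p_c$.
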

	\begin{proof}
		As before, using the radial--lateral decomposition and mutual absolute continuity, it is enough to prove this for a quantum wedge. In particular, we will consider a $\gamma$-wedge $(\h,h,0,\infty)$ (i.e., a wedge of weight 2, which is thick since $2 > \gamma^2/2$ for $\gamma \in (0,2)$), and use the result~\cite[Prop.~1.7]{zip} that the law of such a wedge is invariant under translating one marked point by a fixed amount of $\nu_h$-length. More precisely, if $(\h,h,0,\infty)$ is a $\gamma$-wedge and we fix $L>0$ and let $y>0$ be defined by $\nu_h([0,y])=L$, then the surface given by recentring the wedge such that $y$ becomes the origin (which can be described either by $(\h,h,y,\infty)$ or by $(\h,h(\cdot+y),0,\infty)$) is itself a $\gamma$-wedge.
		
		By the conformal welding/cutting result~\cite[Thm~1.2]{dms}, an independent $\SLE_{\gamma^2}(-1;-1)$ from~0 to~$\infty$ cuts the wedge $(\h,h,0,\infty)$ into two independent wedges of weight 1; by shift invariance, for any $L>0$, the same is true if we instead use an independent $\SLE_{\gamma^2}(-1;-1)$ from $a_L$ to~$\infty$, where~$a_L$ is defined as the point in $(0,\infty)$ for which $\nu_h([0,a_L])=L$.
		
		We can couple $\SLE_{\gamma^2}(-1;-1)$ curves $\eta_x$ from each $x \in \R$ to $\infty$ (or at least from each $x$ in a countable dense subset of $\R$) using the imaginary geometry results from~\cite{ig1}. Indeed, by~\cite[Thm~1.1]{ig1}, the flow line of a zero-boundary GFF $\mathring{h}$ on $\h$ started at $x\in\R$ is an $\SLE_{\gamma^2}(-1;-1)$ curve from $x$ to $\infty$, so we can simultaneously generate~$\eta_r$ for different values of $r>0$ by sampling such a GFF $\mathring{h}$ independently of $h$. By~\cite[Thm~1.5(ii)]{ig1}, almost surely, whenever any two such curves $\eta_c$, $\eta_{c'}$ intersect, they merge immediately upon intersecting and never subsequently separate. Moreover, by~\cite[Lemma~7.7]{ig1}, if $K$ is the set formed by the initial portions of two such curves $\eta_c$, $\eta_{c'}$ run until they intersect, then the subsequent merged curve stays in the unbounded component of $\h\setminus K$.
		
		Note that for $c<c'$ the curves $\eta_c$, $\eta_{c'}$ will merge almost surely. Indeed, if $-1 < \gamma^2/2-2$, i.e.\ $\gamma > \sqrt{2}$, then $\eta_c$ hits $(0,\infty)$ almost surely, and by scale invariance $\eta_c$ will then almost surely hit arbitrarily large $x>0$. Thus $\eta_c$ swallows $c'$ and then the transience of $\eta_{c'}$ implies that the two curves merge. 
		On the other hand, when $\gamma \le \sqrt{2}$, $\eta_c$ almost surely does not hit $\partial\h$. In this case one can map the unbounded region to the right of $\eta_c$ back to $\h$ via a conformal map~$\phi$; since~$\eta_c$ is a flow line, the field on $\h$ given by $\mathring{h}\circ\phi^{-1}-\chi\arg(\phi^{-1})'$ (the appropriate imaginary geometry coordinate change formula for $\phi(\eta_{c'})$ to be a flow line in $\h$) has boundary conditions $\lambda = \pi/\gamma$ on $(-\infty,\phi(c))$ and 0 on $(\phi(c),\infty)$. This means that, by~\cite[Thm~1.1]{ig1}, $\phi(\eta_{c'})$ is an $\SLE_{\gamma^2}(\underline{\rho})$ from $\phi(c')$ with two left-hand force points of weight $-1$ at $\phi(c)$ and $\phi(c')^-$ and a right-hand force point of weight $-1$ at $\phi(c')^+$. Since the weights of the force points on the left sum to $-2$, the curve $\phi(\eta_{c'})$ must collide with a left-hand force point, meaning that it merges with the left-hand boundary segment $\phi(\eta_c)$---indeed the denominator $V^{2,L}-W$ in~\cite[(1.11)]{ig1} evolves until hitting~0 as a Bessel process of dimension 1, i.e.\ a Brownian motion, and thus will hit~0 almost surely.
		
		\begin{figure}\label{fig:s5}
			\includegraphics[width=0.9\textwidth]{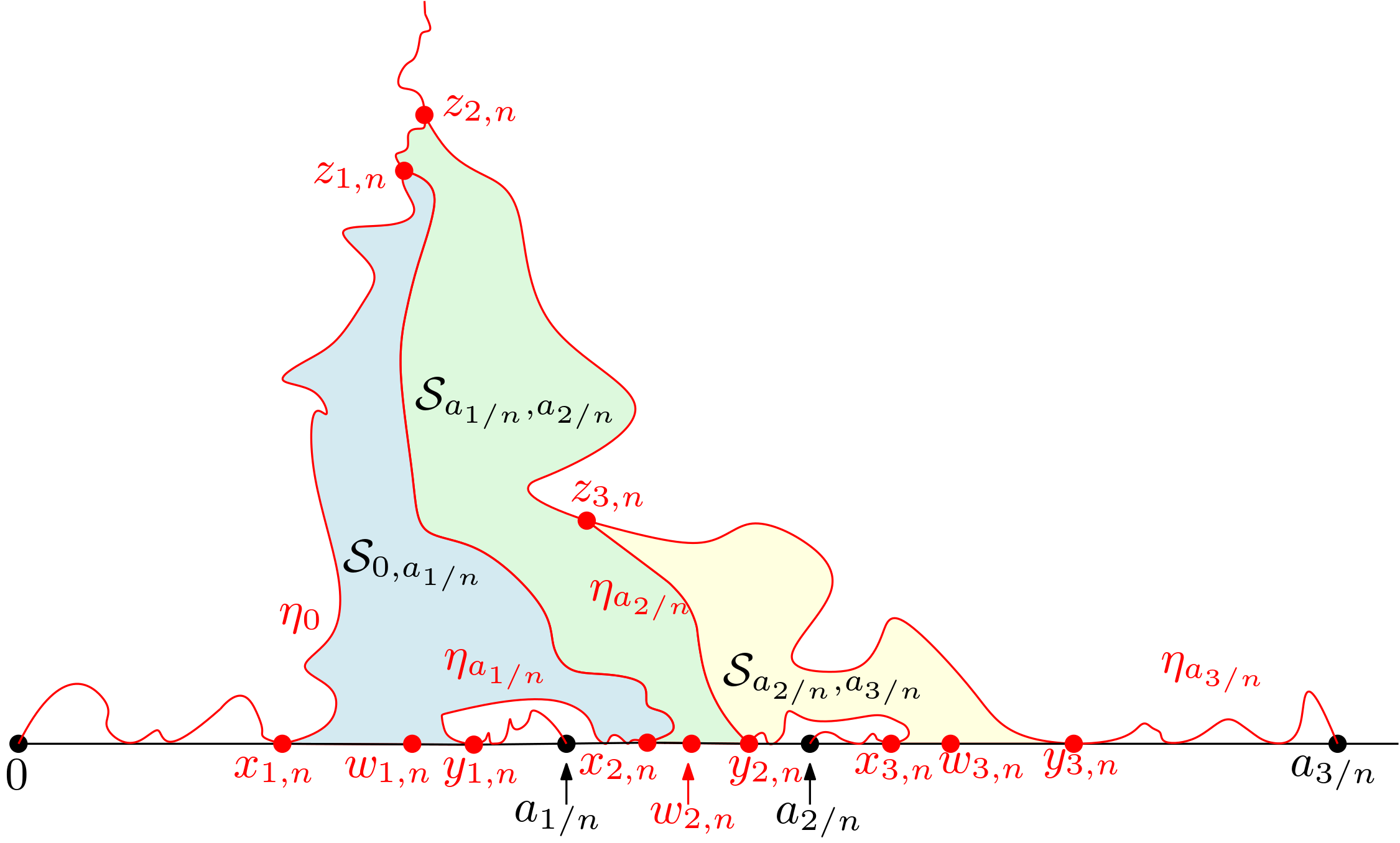}
			\caption{We show a lower bound on the $\mu_h$-area near a boundary segment by using coupled $\SLE_{\gamma^2}(-1;-1)$ curves to cut a wedge into independent surfaces each of which have a positive chance of accumulating some positive amount of $\mu_h$-area within a small $\fd_h$-distance of the boundary.}
		\end{figure}
		
		Given $c<c'$ denote by $\mathcal{S}_{a_c,a_{c'}}$ the quantum surface described by the restriction of the field $h$ to the unique connected component $S_{a_c,a_{c'}}$ of $\h\setminus(\eta_{a_c}\cup\eta_{a_{c'}})$ which is to the right of $\eta_{a_c}$ and to the left of~$\eta_{a_{c'}}$ and whose boundary contains non-trivial segments of both~$\eta_{a_c}$ and~$\eta_{a_{c'}}$. Almost surely, $\eta_{a_c}$,~$\eta_{a_{c'}}$ do not intersect on $\R$. Indeed, since it is an $\SLE_{\gamma^2}(-1;-1)$ from $a_c$ to $\infty$ and given~$a_c$ is conditionally independent of $h$, $\eta_{a_c}$ almost surely does not hit $a_{c'}$. On this event, $\eta_{a_{c'}}$ is an $\SLE_{\gamma^2}(-1;-1)$ from $a_{c'}$ to $\infty$, and since given $a_{c'}$ this curve is conditionally independent of $h$, $\eta_{a_{c'}}$ almost surely does not hit the unique point on $\mathbb{R} \cap \eta_{a_c}$ that is on the boundary of the unbounded component of $\h\setminus\eta_{a_c}$ to the right of $\eta_{a_c}$). Thus, it is almost surely the case that the intersection of $\partial S_{a_c,a_{c'}}$ with $\mathbb{R}$ is a bounded interval and that, for each interior point $x$ of this interval, there exists $r>0$ such that $S_{a_c,a_{c'}}$ contains the Euclidean semi-disc $\overline{B}(x,r)\cap\overline{\h}$. Moreover, since the law of $h$ on each deterministic open set not containing 0 is absolutely continuous w.r.t.~that of a free-boundary Gaussian free field, it almost surely holds that for each such $x$ and $r$, the smaller semi-disc $\overline{B}(x,r/2) \cap \overline{\h}$ has finite diameter w.r.t.\ the internal metric $\fd_h(\cdot,\cdot;B(x,r)\cap\overline{\h})$ (since we can find rationals $q_1$, $q_2$, $r_1$, $r_2$ for which $\overline{B}(x,r/2) \subseteq \overline{B}(q_1,r_1) \subset B(q_2,r_2) \subseteq \overline{B}(x,r)$, and it is almost surely the case that, for all $q_1$, $q_2$, $r_1$, $r_2$ such that $\overline{B}(q_1,r_1) \subset B(q_2,r_2)$, $\overline{B}(q_1,r_1)\cap\overline{\h}$ has finite diameter w.r.t.\ $\fd_h(\cdot,\cdot;B(q_2,r_2)\cap\overline{\h})$).
		
		Notice $\mathcal{S}_{a_{c_1},a_{c_2}}$ and $\mathcal{S}_{a_{c_3},a_{c_4}}$ are independent as quantum surfaces (i.e., modulo embedding) when $c_1 < c_2 \le c_3 < c_4$. Indeed, we know from the conformal welding result~\cite[Thm~1.2]{dms} that the surfaces given by the restrictions of $h$ to the regions to the left and right of $\eta_{c_2}$ are independent; the same holds for $\mathring{h}$ since $\eta_{c_2}$ is a flow line. Moreover, $h$ and $\mathring{h}$ are independent of each other. These independences together imply that $\mathcal{S}_{a_{c_1},a_{c_2}}$ and $\mathcal{S}_{a_{c_3},a_{c_4}}$ are independent.
		
		For each $k,n\in\mathbb{N}$, we can consider the surfaces $\mathcal{S}_{a_{(k-1)/n},a_{k/n}}$, with three marked points $x_{k,n}$, $y_{k,n}$, $z_{k,n}$ given by, respectively, the last point on $\R\cap S_{a_{(k-1)/n},a_{k/n}}$ that $\eta_{a_{(k-1)/n}}$ hits before merging with $\eta_{a_{k/n}}$, the last point on $\R\cap S_{a_{(k-1)/n},a_{k/n}}$ that $\eta_{a_{k/n}}$ hits before merging with $\eta_{a_{(k-1)/n}}$, and the point in $\h$ where the two curves merge. As explained above these surfaces are independent. By shift invariance, these surfaces are identically distributed when considered as triply marked surfaces modulo embedding.
		
		Consider a point $w_{k,n}$ in the interval $(x_{k,n},y_{k,n})$ (which has positive length almost surely, since $\eta_{a_{(k-1)/n}}$ and~$\eta_{a_{k/n}}$ do not merge on $\R$); for concreteness we may set $w_{k,n}$ to be the unique point in the interval such that $\nu_h([x_{k,n},w_{k,n}])=\nu_h([w_{k,n},y_{k,n}])$. As explained earlier, we can almost surely find $r>0$ such that $\overline{B}(w_{k,n},r) \cap \overline{\h}$ is contained in $S_{a_{(k-1)/n},a_{k/n}}$, and that  $\overline{B}(w_{k,n},r/2) \cap \overline{\h}$ has finite diameter w.r.t.\ the internal metric $\fd_h(\cdot,\cdot;B(w_{k,n},r)\cap\overline{\h})$. Thus, the set $\mathcal{B}^{n^{-2/d_\gamma}}_{k,n}$ consisting of the intersection of $\mathrm{int}\, S_{a_{(k-1)/n},a_{k/n}}$ with the open ball of radius $n^{-2/d_\gamma}$ centred on $w_{k,n}$ w.r.t.\ the internal metric $\fd_h(\cdot,\cdot;\mathrm{int}\, S_{a_{(k-1)/n},a_{k/n}})$ is non-empty and open w.r.t.~the Euclidean topology, so has positive $\mu_h$-measure almost surely. Thus, for every $p\in(0,1)$, there exists $c>0$ such that 
		\begin{equation}\label{eq:pc}
			p_c := \p[ \mu_h(\mathcal{B}^{n^{-2/d_\gamma}}_{k,n}) \ge cn^{-2} ] > p.
		\end{equation}
		Indeed $p_c\to 1$ as $c\to 0$. Observe that by shift invariance $p_c$ does not depend on $k$.
		
		Adding a constant $C$ to the field $h$ scales $\nu_h$-lengths by $e^{\gamma C/2}$ and $\mu_h$-areas by $e^{\gamma C}$, as well as scaling $\fd_h$-distances by $e^{\xi C}$. By \cite[Prop.~4.7(i)]{dms}, the circle-average embedding of~$h+C$ into~$\h$ has the same law as that of $h$, so if we add a constant $C$ to the field $h$ on $\h$ and then rescale appropriately to achieve the circle-average embedding, the resulting surface has the same law as $(\h,h,0,\infty)$. The rescaling factor is independent of $\mathring{h}$, which itself has a scale-invariant law, so if we also apply the rescaling to the field $\mathring{h}$ and $\eta$ then the joint law is invariant. This shows that, as a triply marked quantum surface, the law of $\eta_{a_0,a_t}$ is the same as the law of $\eta_{a_0,a_1}$ but with $\nu_h$-lengths scaled by $t$, $\mu_h$-areas scaled by $t^2$ and $\fd_h$-lengths scaled by $t^{2/d_\gamma}$. This implies that the probability $p_c$ in (\ref{eq:pc}) does not depend on $n$.
		
		For $c>0$ and $k,n\in\mathbb{N}$, define the event$$A_{c,k,n} =\left\{\mu_h(\mathcal{B}^{n^{-2/d_\gamma}}_{k,n}) \ge cn^{-2}\right\}.$$If $c>0$ is chosen so that $p_c>1/2$, then by a standard binomial tail estimate (see, for example,~\cite[Lemma~2.6]{mq}), there exists $C_0(p_c)>0$ for each $N, n_0\in\mathbb{N}$, we have 
		\begin{equation}\label{eq:ackn}
			\p \left[ \sum_{n=n_0}^{n_0+N-1} \mathbbm{1}_{A_{c,k,n}} \le N/2 \right] \le e^{-C_0(p_c)N}.
		\end{equation}
		Moreover $C_0(p_c)\to\infty$ as $p_c\to 1$, i.e.~as $c\to 0$. Thus, since the $\mathcal{B}^{n^{-2/d_\gamma}}_{k,n}$ are disjoint for different~$k$, if $t, s\in\R$ are such that $t-s\ge 1$ (so that $\lfloor t-s \rfloor \ge \frac{1}{2}(t-s)$), we have $$\p\left[\mu_h(\mathcal{B}_{n^{-2/d_\gamma}}([a_{s/n},a_{t/n}])) \le \frac{1}{4} (t-s) cn^{-2} \le \frac{1}{2} \lfloor t-s \rfloor cn^{-2} \right] \le e^{-C_0(p_c)\lfloor t-s \rfloor} \le e^{-\frac{1}{2}C_0(p_c)(t-s)}.$$
		For $T,v>0$ fixed, this probability converges when summed over all choices of $n=2^{m}$, $s=j2^{mv}$, $t=(j+1)2^{mv}$ with $m, j$ non-negative integers such that $(j+1)2^{-m(1-v)} \le T$. Indeed, the sum is bounded by $\sum_m 2^{m(1-v)}Te^{-\frac{1}{2}C_0(p_c)2^{mv}}$ which converges superpolynomially fast in $2^m$. Thus we find that, with superpolynomially high probability in $2^{m_0}$ as $m_0\to\infty$, whenever $m\ge m_0$ and $j\ge 0$ is an integer such that $(j+1)2^{-m+mv} \le T$, we have
		\begin{equation}
			\mu_h(\mathcal{B}_{2^{-2m/d_\gamma}}([a_{j2^{-m+mv}},a_{(j+1)2^{-m+mv}}])) > \frac{c}{4} 2^{-2m+mv}.
		\end{equation}
		Furthermore, by disjointness of the $\mathcal{B}^{n^{-2/d_\gamma}}_{k,n}$, on the event considered above (and thus still with superpolynomially high probability in $2^{m_0}$ as $m_0\to\infty$) it holds that whenever $j < k$ are non-negative integers with $k2^{-m+mv}\le T$, we have
		\begin{equation}
			\mu_h(\mathcal{B}_{2^{-2m/d_\gamma}}([a_{j2^{-m+mv}},a_{k2^{-m+mv}}])) > \frac{c}{4} 2^{-2m+mv} (k-j).
		\end{equation}
		
		On this event, for each $m\ge m_0$ (\ref{eq:volbound}) holds for each subinterval $[a,b]$ of $[a_0,a_T]$ of $\nu_h$-length at least $2^{-m+mv+2}$ with $\delta = 2^{-2m/d_\gamma}$, with $u=d_\gamma v/2$ and with $M=\frac{c}{16}$, since we can find a subinterval of $[a,b]$ of the form $[a_{j2^{-m+mv}},a_{k2^{-m+mv}}]$ whose $\nu_h$-measure is at least $1/4$ that of $[a,b]$. This gives an overall constant (i.e., one holding for all $\delta \in (0,1)$) of $M = c2^{-m_0}/16$ (using the right-hand side of (\ref{eq:volbound}) with $\delta = 2^{-2m_0/d_\gamma}$ as the lower bound for all larger $\delta$) and holds with probability bounded by $T\sum_{m\ge m_0} 2^{m(1-v)}e^{-\frac{1}{2}C_0(p_c)2^{mv}}$. Since $\nu_h(I)$ has a finite first moment, for any $\alpha > 0$ we can set $T = 2^{m_0\alpha} $ and observe that the probability that we can take $M = c2^{-m_0}/16$ is bounded by $2^{m_0\alpha}\sum_{m\ge m_0} 2^{m(1-v)}e^{-\frac{1}{2}C_0(p_c)2^{mv}}$ plus the probability that $\nu_h(I)$ is greater than $T$, which is $O(2^{-m_0\alpha})$. Since $\alpha$ is arbitrary, this gives superpolynomial decay of the constant $M$.
	\end{proof}
	\section{Proofs of main results}\label{section:pfs}
	\subsection{Proof of Theorem~\ref{thm:main}}
	We now prove Theorem~\ref{thm:main}, the extension of~\cite[Thm~1.5]{metglu} to the $\gamma$-LQG metric for all $\gamma \in (0,2)$. Suppose we are in the setup of Theorem~\ref{thm:main}. That is, fix $\gamma \in (0,2)$ and $\mathfrak{w}^-$, $\mathfrak{w}^+>0$, and let $(\h,h,0,\infty)$ be a quantum wedge of weight $\mathfrak{w}:=\mathfrak{w}^-+\mathfrak{w}^+$ if $\mathfrak{w}\ge\gamma^2/2$ (so that the wedge is thick), or a single bead of a wedge of weight $\mathfrak{w}$, with specified $\gamma$-LQG area $\mathfrak{a}$ and $\gamma$-LQG boundary lengths $\mathfrak{l}^-$, $\mathfrak{l}^+ > 0$, if $\mathfrak{w} < \gamma^2/2$ (corresponding to a thin wedge). Let~$\eta$ be an independent $\mathrm{SLE}_{\gamma^2}(\mathfrak{w}^--2;\mathfrak{w}^+-2)$ from 0 to $\infty$ which we will parametrize by $\nu_h$-length as measured on either side of the curve (recall that these two boundary length measures agree by~\cite[Thm~1.4]{dms}). As in~\cite{metglu}, we define $V_\rho = \{ z \in \mathbb{C}: |z| < \rho, \mathrm{Im}\, z > \rho^{-1} \}$ for $\rho > 1$. For $z\in\h$ and $r>0$, write $B_r(z;\fd_h)$ for the open $\fd_h$-metric ball of radius~$r$ centred at $z$.
	
	We will replicate the argument of~\cite[\S4]{metglu}, establishing analogues of the lemmas in that section, beginning with an analogue of~\cite[Lemma~4.1]{metglu}:
	\begin{lemma}\label{lem:41}
		In the setting of Theorem~\ref{thm:main}, let $R>1$ and let $z_1$ and $z_2$ be independent samples from $\mu_h|_{V_R}$, normalized to be a probability measure. Almost surely, there exists a $\fd_h$-geodesic from $z_1$ to $z_2$ that does not hit 0 or $\infty$.
	\end{lemma}
	\begin{proof}
	First note that \cite[Lemmas~4.2,~4.3]{metglu} hold for general $\gamma$ just as in the $\gamma = \sqrt{8/3}$ case, since their proofs just rely on the locality and Weyl scaling properties of the $\gamma$-LQG metric, along with (in the case of \cite[Lemma~4.3]{metglu}) calculations for the Gaussian free field (\cite[Lemma~5.4]{lqg2}) that do not depend on $\gamma$. This establishes that no $\fd_h$-geodesic between points of $\h$ hits 0. For our analogue of~\cite[Lemma~4.1]{metglu}, we also need to know that for quantum typical points $z_1$, $z_2$ (i.e.\ if~$z_1$ and~$z_2$ are sampled independently according to $\mu_h$) there almost surely exists a $\fd_h$-geodesic. For a thick wedge, existence of geodesics that do not intersect $\infty$ follows from Prop.~\ref{prop:geodesic-existence} plus absolute continuity with the free-boundary GFF plus a log singularity. For beads of thin wedges, since $(\h,h,\infty,0) \overset{(d)}{=} (\h,h,0,\infty)$, the analogue of~\cite[Lemma~4.3]{metglu} gives that paths of near-minimal $\fd_h$-length between $z_1$ and $z_2$ must stay in a set that is Euclidean-bounded and thus $\fd_h$-compact (since $\fd_h$ still induces the Euclidean topology away from 0, by absolute continuity w.r.t.\ the free-boundary GFF away from 0), and thus we can still deduce the existence of a geodesic between $z_1$ and $z_2$ by the argument of~\cite[Cor.~2.5.20]{bbi}.
	\end{proof}
	We will not address the question of whether geodesics are unique here, since we do not need uniqueness for our results. 
	
	We now proceed to state and prove analogues of~\cite[Lemmas~4.5--4.9]{metglu}. We begin by using the estimates established in the previous sections to prove that a global regularity event $G_C$ holds with high probability, which is analogous to~\cite[Lemma~4.5]{metglu}. The remaining lemmas (the analogues of~\cite[Lemmas~4.6--4.9]{metglu}) will follow from this one in essentially the same way as in~\cite[\S4]{metglu}, though we will give the proofs here since there are minor differences, since conditions (iii) and (iv) in Lemma~\ref{lemma:45} are slightly weaker than those in~\cite[Lemma~4.5]{metglu}, and we have not ruled out the possibility of geodesics hitting the boundary. Given these lemmas, the remainder of the proof of Theorem~\ref{thm:main} will be identical to the argument in~\cite[Thm~1.5]{metglu}.
	\begin{lemma}\label{lemma:45}
		In the setting of Theorem~\ref{thm:main}, there exists $\beta>0$ such that, for all $u \in (0,1)$, $\rho>2$, $p\in(0,1)$, there is $C>\rho$ such that $\p[G_C]\ge 1-p$, where $G_C$ is the event that all the following hold:
		\begin{enumerate}[(i)]
			\item For each $z \in V_\rho$ and $0 < \delta \le 1$ such that $B_\delta(z;\fd_h) \cap \R = \varnothing$, we have $\mu_h(B_\delta(z;\fd_h)) \le C\delta^{d_\gamma-u}$.
			\item For each $U \in \mathcal{U}^- \cup \mathcal{U}^+$ with $U \cap V_\rho \neq \varnothing$, each $z \in \overline{U} \cap V_\rho$, and each $0 < \delta \le 1$, we have $\mu_h(B_\delta(z;\fd_{h|_U})) \ge C^{-1}\delta^{d_\gamma+u}$.
			\item For each $U \in \mathcal{U}^-\cup\mathcal{U}^+$ with $U \cap V_\rho \neq \varnothing$, and each $x,y \in \partial U \cap V_\rho$, we have
			\begin{equation*}
				\fd_{h|_U}(x,y) \le C \nu_h([x,y]_{\partial U})^{(2/d_\gamma)-u}.
			\end{equation*}
			\item For each $U \in \mathcal{U}^-\cup\mathcal{U}^+$ with $U \cap V_\rho \neq \varnothing$, each $0 < \delta < 1$, and each $x,y \in \partial U \cap V_\rho$ with $\nu_h([x,y]_{\partial U}) \ge 4\delta^{d_\gamma/2-u}$, we have
			\begin{equation*}
				\mu_h(B_\delta([x,y]_{\partial U};\fd_{h|_U})) \ge C^{-1} \delta^{(d_\gamma/2)} \nu_h([x,y]_{\partial U}).
			\end{equation*}
			\item For each $z \in {V_\rho}$ and $0 < \delta \le 1$, we have $B_\delta(z;\fd_h) \subseteq B(z,C\delta^\beta)$.
			\item For each $t>s>0$ such that $\eta(s)\in V_{\rho/2}$ and $|t-s|\le C^{-1}$, we have $\eta(t)\in V_\rho$.
		\end{enumerate}	
	\end{lemma}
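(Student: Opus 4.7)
The plan is to verify each of the six events (i)--(vi) individually holds with probability at least $1-p/6$ for $C = C(u,\rho,p)$ large enough, and then conclude by a union bound. Items (v) and (vi) are the most direct: (v) is immediate from Prop.~\ref{prop:biholder} (local bi-H\"{o}lder continuity of $\fd_h$ w.r.t.\ the Euclidean metric) applied to the compact set $\overline{V'_\rho}$, taking $\beta$ to be any positive exponent strictly less than $\alpha_2$ and absorbing the random constant into $C$; while (vi) follows from the continuity of $\eta$ in capacity time combined with the fact that, on a compact sub-interval of $(0,\infty)$ bounded away from the endpoints, the $\nu_h$-parametrization is a continuous reparametrization of capacity time (using that $\nu_h$ is atom-free by~\cite{ds} and assigns positive mass to every nontrivial sub-arc of $\eta$).

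For item (i), note that $V_\rho$ lies at positive Euclidean distance from $\partial\h$ and we further restrict to $B_\delta(z;\fd_h) \cap \R = \varnothing$, so that the event is really a statement about interior LQG metric ball masses. By local absolute continuity of $h$ to a whole-plane GFF plus a continuous function away from $\partial\h$, the plan is to appeal to standard upper bounds of the form $\mu_{h^{\text{wp}}}(B_\delta(z;\fd_{h^{\text{wp}}})) \le \delta^{d_\gamma-u}$ obtained via moment estimates for LQG metric ball masses (as in e.g.~\cite{dfgps, gp1}), combined with a Borel--Cantelli union bound over a dyadic cover of $V_\rho$ and dyadic scales of $\delta$.

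For items (ii)--(iv), we use that, by~\cite[Thm~1.2]{dms}, each component $U \in \mathcal{U}^- \cup \mathcal{U}^+$ parametrizes, as a quantum surface, either a thick quantum wedge (when the corresponding $\mathfrak{w}^\pm \ge \gamma^2/2$) or a single bead of a thin quantum wedge. Fix a conformal map $\phi \colon U \to \h$ and set $\widetilde{h} = h \circ \phi^{-1} + Q\log|(\phi^{-1})'|$. Conformal covariance of the $\gamma$-LQG metric, area and boundary length then reduces item (iii) to Prop.~\ref{prop:dnu} applied to $\widetilde{h}$, item (iv) to Prop.~\ref{prop:volbound} applied to $\widetilde{h}$, and item (ii) to a lower bound $\mu_{\widetilde{h}}(B_\delta(\phi(z);\fd_{\widetilde{h}})) \ge C^{-1}\delta^{d_\gamma + u}$, which we will obtain by combining the bi-H\"{o}lder upper bound on $\fd_{\widetilde{h}}$ in terms of Euclidean distance (Prop.~\ref{prop:biholder} applied to $\widetilde{h}$, showing the metric ball contains a Euclidean half-ball) with the Euclidean-ball mass lower bound from Prop.~\ref{prop:volume_moment}. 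Since $\widetilde{h}$ is locally absolutely continuous to a free-boundary GFF away from the marked points, all these preparatory results apply to it.

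The main obstacles will be threefold. First, uniformity across the possibly countable family of beads $U \in \mathcal{U}^\pm$ in the thin case: only beads with Euclidean diameter bounded below by some $\rho$-dependent constant can meaningfully intersect $V_\rho$, and the remaining beads are controlled by summable tail bounds stemming from Prop.~\ref{prop:volume_moment}, Prop.~\ref{prop:dnu} and Prop.~\ref{prop:volbound}, which can be union-bounded. Second, transferring estimates through $\phi$ up to $\partial U$ requires $\phi$ to be H\"{o}lder continuous at the boundary; this holds because $\eta$, as an $\mathrm{SLE}_{\gamma^2}(\mathfrak{w}^- - 2; \mathfrak{w}^+ - 2)$ curve with $\gamma^2 \in (0,4)$, bounds a H\"{o}lder domain by~\cite[Thm~5.2]{rohde} (cf.~\cite[Cor.~1.8]{GMS_multifractal}). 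Third, and expected to be the most delicate, for item (ii) when $z \in \partial U \cap \eta$ one must ensure that $B_\delta(z; \fd_{h|_U})$ genuinely contains a Euclidean half-ball of comparable size rather than being squeezed by a bottleneck of $\eta$; this is exactly what Prop.~\ref{prop:bottleneck} is designed to rule out.
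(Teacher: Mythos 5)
Your plan for items (i), (iii), (iv) and (vi) matches the paper's: (i) via metric-ball mass bounds and local absolute continuity, (iii)--(iv) by mapping each complementary component $U$ to $\h$ via a conformal map, invoking conformal covariance and the boundary estimates Prop.~\ref{prop:dnu} and Prop.~\ref{prop:volbound} for the free-boundary GFF, then passing back by absolute continuity, and (vi) via continuity/transience of $\mathrm{SLE}_{\gamma^2}(\rho_1;\rho_2)$. For item (v) you have a small label mix-up: you need the \emph{left-hand} inequality of Prop.~\ref{prop:biholder}, $C^{-1}|z-w|^{\alpha_1}\le\fd_h(z,w)$, so the resulting exponent is $\beta = 1/\alpha_1$, not ``less than $\alpha_2$.''

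The genuine gap is in item (ii). You propose to show $\mu_{\widetilde h}(B_\delta(w;\fd_{\widetilde h}))\ge C^{-1}\delta^{d_\gamma+u}$ by combining the upper bound $\fd_{\widetilde h}(z,w)\le C|z-w|^{\alpha_2}$ from Prop.~\ref{prop:biholder} (so the metric ball contains a Euclidean half-ball of radius $\asymp \delta^{1/\alpha_2}$) with the Euclidean-ball lower bound $\mu(B_\ve)\ge\ve^{\zeta_1}$ from Prop.~\ref{prop:volume_moment}. This chain gives an exponent $\zeta_1/\alpha_2$, and the numbers do not work: Prop.~\ref{prop:volume_moment} requires $\zeta_1>2$, while Prop.~\ref{prop:biholder} as proved in the paper yields $\alpha_2 < \xi(Q-2)(1-s_+)$, and $\xi(Q-2)=(\gamma-2)^2/(2d_\gamma)$. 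Hence
\[
\frac{\zeta_1}{\alpha_2} > \frac{2}{\xi(Q-2)} = \frac{4d_\gamma}{(\gamma-2)^2} > d_\gamma
\]
for every $\gamma\in(0,2)$, with a gap bounded away from zero. So the bound you obtain, $\mu\gtrsim\delta^{\zeta_1/\alpha_2}$, is strictly weaker than the required $\delta^{d_\gamma+u}$ for small $u$, and no choice of admissible exponents closes the gap. Conceptually, passing through Euclidean balls is lossy precisely because LQG metric balls have a very different geometry from Euclidean ones; the correct route is to work directly with the LQG metric-ball volume estimate (\ref{eq:afs}) as the paper does for item (i), and then confront the genuine remaining issue of extending the \emph{lower} half of that two-sided bound from interior compacts to compacts meeting $\partial\h$. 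A separate but related issue in your proposal: the lower bound in Prop.~\ref{prop:volume_moment} is stated only for $K\subset\h$ at positive distance from $\partial\h$; for $K$ meeting $\partial\h$ only the upper bound is asserted, so even the input you want to feed into the Euclidean-ball route is not available at boundary points as stated. Your discussion of Prop.~\ref{prop:bottleneck} is relevant to the geometry of $\partial U$ near $\eta$, but it does not supply the missing volume lower bound at the correct scale.
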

	\begin{proof}
		Note first that it suffices to show that for each item, there almost surely exists some $C\in(\rho,\infty)$ for which that item holds, since this forces $\p[G_C]\to 1$ as $C\to\infty$.
		
		With this in mind, item (i) follows from~\cite[Thm~1.1]{afs}. Indeed, that result gives us that, for $h^{\mathrm{wp}}$ a whole-plane GFF normalized so that the circle average $h^{\mathrm{wp}}_1(0)=0$, $K$ a compact set and $\ve > 0$, we almost surely have
		\begin{equation}\label{eq:afs}
			\sup_{s\in(0,1)} \sup_{z\in K} \frac{\mu_{h^{\mathrm{wp}}} (B_s(z;\fd_{h^{\mathrm{wp}}}))}{s^{d_\gamma - \ve}} < \infty \quad \mathrm{and} \quad \inf_{s\in(0,1)} \inf_{z\in K} \frac{\mu_{h^{\mathrm{wp}}} (B_s(z;\fd_{h^{\mathrm{wp}}}))}{s^{d_\gamma + \ve}} > 0.
		\end{equation}
		Recall that we can couple a free-boundary GFF $h^{\mathrm{F}}$ on $\h$, normalized so that the semicircle average $h_1(0)$ is zero, with $h^{\mathrm{wp}}$, so that $\mathfrak{h}=h-h^{\mathrm{wp}}$ is a random harmonic function. We thus find that (\ref{eq:afs}) holds with $h^{\mathrm{F}}$ in place of $h^{\mathrm{wp}}$ provided $K \subset \h$ is at positive Euclidean distance from $\R$ (and thus positive $\fd_{h^{\mathrm{F}}}$-distance, so that we need only consider $s < \fd_{h^{\mathrm{F}}}(K,\R)$), and we can then deduce the same for $h$ either a thick quantum wedge or a bead of a thin quantum wedge (in the latter case with specified area and boundary lengths) by local absolute continuity, which implies that there almost surely exists $C<\infty$ for which item (i) holds.
		
		Item (v) follows from Prop.~\ref{prop:biholder} (for a free-boundary GFF, then for a wedge or bead thereof by absolute continuity). Just as in~\cite{metglu}, item (vi) follows from the continuity and transience of SLE from 0 to $\infty$ with force points, proved in~\cite[Thm~1.3]{ig1} (although the parametrization by quantum length depends on $\gamma$, observe that if (vi) holds for one parametrization then it holds for any other parametrization, though not necessarily with the same $C$). We now turn to items (ii)--(iv), which are required to hold for each of the surfaces $U$ cut out by $\eta$ that intersect $V_\rho$. We can reduce to considering finitely many such surfaces: exactly as explained in the first part of the proof of~\cite[Lemma~4.5]{metglu}, it suffices to show that for each $U \in \mathcal{U}^-\cup\mathcal{U}^+$ intersecting $V_\rho$, there almost surely exists $C\in(\rho,\infty)$ such that items (ii)--(iv) hold for $U$. We will map to~$\h$ and use absolute continuity arguments; in particular for each $U$ we will consider the surface $\phi_U(U)$, where we define~$x_U$ (resp.~$y_U$) as the first (resp.~last) point on $\partial U$ to be hit by $\eta$ (with $y_U = \infty$ when $U$ is a thick wedge) and set $\phi_U$ to be the unique conformal map $U\to \mathbb{H}$ sending $x_U$ to 0 and $y_U$ to~$\infty$ with the property that the covariantly transformed field $h_U:=h\circ\phi_U^{-1} + Q\log|(\phi_U^{-1})'|$ satisfies
		\begin{equation*}
			\mu_{h_U}(\mathbb{D}\cap\h) =\begin{cases}
				1 & \mu_h(U)=\infty \\
				\mu_h(U)/2 & \mu_h(U)<\infty
			\end{cases}.
		\end{equation*}
		As in the proof of~\cite[Lemma~4.5]{metglu}, we can find $\widetilde{\rho}$ such that $\phi_U(U\cap{V_{2\rho}})\subset {V_{\widetilde{\rho}}}$ with high probability (this is because the marked points $x_U$ and $y_U$ must be in $\mathbb{R}\cup\infty$). For the free-boundary GFF, item (ii) follows since (\ref{eq:afs}) holds with $K = \phi_U(\overline{U \cap V_\rho})$, whereas items (iii) and (iv) follow from Prop.~\ref{prop:dnu} and Prop.~\ref{prop:volbound} respectively, so it suffices to observe that the restriction of $h_U$ to ${V_{\widetilde{\rho}}}$ is absolutely continuous w.r.t.~the corresponding restriction of $h^F$.
	\end{proof}
	We now proceed as in~\cite{metglu}. Our version of~\cite[Lemma~4.6]{metglu} is as follows:
	\begin{lemma}\label{lemma:46}
		In the setting of Theorem~\ref{thm:main}, for each $v \in (0,1)$ there exists $u_0 = u_0(v)\in(0,1)$ such that whenever $0 < u \le u_0$, $\rho > 2$, $C > 1$, and $G_C = G_C(u,\rho)$ is the event of Lemma~\ref{lemma:45}, there exists $\ve_0 = \ve_0(C,u,v,\rho) > 0$ such that the following holds almost surely on $G_C$. If $0 < a < b \le a+\ve_0 < \infty$ and $\eta([a,b]) \cap V_{\rho/2} \neq \varnothing$, then we have
		\begin{equation*}
			\mathrm{diam}(\eta([a,b]);\fd_h) \ge 7(b-a)^{2(1+v)/d_\gamma}.
		\end{equation*}
	\end{lemma}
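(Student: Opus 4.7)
The strategy is to argue by contradiction: suppose $D := \mathrm{diam}(\eta([a,b]);\fd_h) < 7(b-a)^{2(1+v)/d_\gamma}$ (so $D$ is very small once $\ve_0$ is small) and derive a contradiction by sandwiching the $\mu_h$-mass of a small $\fd_h$-neighbourhood of $\eta([a,b])$ between bounds drawn from the items of $G_C$.

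First I would reduce to the case in which $\eta([a,b])$ is a single arc of the $\eta$-portion of $\partial U$ for a single component $U \in \mathcal{U}^-$. By hypothesis there is some $a^* \in [a,b]$ with $\eta(a^*) \in V_{\rho/2}$; taking $\ve_0 \le C^{-1}$ so that item (vi) of $G_C$ is applicable gives $\eta([a,b]) \subseteq V_\rho$, and in particular $\mathrm{Im}\,\eta(t) \ge \rho^{-1}$ for every $t \in [a,b]$. Since $\kappa = \gamma^2 < 4$, the curve $\eta$ is a.s.\ simple, and since it does not meet $\R$ on $[a,b]$, the arc lies inside a single excursion of $\eta$ away from $(-\infty,0]$; therefore it is contained in $\partial U$ for a uniquely determined $U \in \mathcal{U}^-$. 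Because $\eta$ is parametrised by $\nu_h$-length and the welding identifies $\eta([a,b])$ with the boundary arc $[x,y]_{\partial U}$ where $x = \eta(a)$, $y = \eta(b)$, we also have $\nu_h([x,y]_{\partial U}) = b-a$.

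Next I would set $\delta = D$ and bracket $\mu_h(B_{2D}(\eta(a);\fd_h))$ from below and above. Since $\fd_{h|_U} \ge \fd_h$ on $\overline{U}\times\overline{U}$ (by locality) and $\mathrm{diam}(\eta([a,b]);\fd_h) = D$,
\begin{equation*}
B_D([x,y]_{\partial U};\fd_{h|_U}) \subseteq B_D(\eta([a,b]);\fd_h) \subseteq B_{2D}(\eta(a);\fd_h).
\end{equation*}
Item (iv) applied with $\delta = D$ (whose hypothesis $b-a \ge 4D^{d_\gamma/2 - 2u}$ is routinely checked from the contradiction assumption once $u < u_0$) therefore gives
\begin{equation*}
\mu_h(B_{2D}(\eta(a);\fd_h)) \ge C^{-1} D^{d_\gamma/2 + u}(b-a).
\end{equation*}
Conversely, $\eta(a) \in V_\rho \subseteq {V_\rho}'$, so item (v) gives $B_{2D}(\eta(a);\fd_h) \subseteq B_{C(2D)^\beta}(\eta(a))$, which for $\ve_0$ small lies at Euclidean distance $> \rho^{-1}/2$ from $\R$; hence item (i) applies and produces the matching upper bound
\begin{equation*}
\mu_h(B_{2D}(\eta(a);\fd_h)) \le C(2D)^{d_\gamma - u}.
\end{equation*}
Combining the two bounds and solving for $D$ yields $D \ge C''(b-a)^{2/(d_\gamma - 4u)}$ for some $C'' = C''(C,u,\gamma) > 0$.

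Finally it remains to match exponents. The inequality $2/(d_\gamma - 4u) < 2(1+v)/d_\gamma$ rearranges to $u < v d_\gamma/(4(1+v))$, which dictates the definition $u_0(v) := v d_\gamma/(4(1+v))$ (or any slightly smaller value ensuring strict inequality throughout the range $u \le u_0$). For such $u$, the quantity $(b-a)^{2/(d_\gamma-4u) - 2(1+v)/d_\gamma}$ tends to $+\infty$ as $b-a \to 0$, so shrinking $\ve_0$ ensures $C''(b-a)^{2/(d_\gamma - 4u)} \ge 7(b-a)^{2(1+v)/d_\gamma}$, contradicting the supposed bound on $D$. I expect the main obstacle to be the single-component reduction: without item (vi), $\eta$ could in principle bounce off $(-\infty,0)$ within $[a,b]$, fragmenting $\eta([a,b])$ into arcs on the boundaries of many distinct elements of $\mathcal{U}^-$ and forcing a delicate summation of the item (iv) estimates over possibly many small pieces; item (vi) sidesteps this scenario cleanly as soon as $\ve_0 \le C^{-1}$.
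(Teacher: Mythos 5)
Your proposal follows the same overall strategy as the paper: assume the diameter bound fails, reduce to a single $U \in \mathcal{U}^-$, then sandwich the $\mu_h$-mass of a small metric neighbourhood between the lower bound from item (iv) and the upper bound from item (i), and extract a contradiction from the exponents. The algebra with $\delta = D$ rather than the paper's fixed $\delta = (b-a)^{2(1+v)/d_\gamma}$ is a cosmetic reparametrisation and works out to the same exponent constraint $u < v d_\gamma/(4(1+v))$.

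The one step that needs repair is the single-component reduction. You invoke item (vi) to conclude $\eta([a,b]) \subseteq V_\rho$ from $\eta(a^*) \in V_{\rho/2}$, but item (vi) is stated only for $t > s$: it propagates the containment \emph{forward} in time from $a^*$, giving $\eta([a^*,b]) \subseteq V_\rho$ but saying nothing about $\eta([a,a^*])$. Since the hypothesis only gives you \emph{some} $a^* \in [a,b]$ with $\eta(a^*) \in V_{\rho/2}$, not $\eta(a) \in V_{\rho/2}$, the reduction is incomplete as written. The paper instead gets $\eta([a,b]) \subseteq V_\rho$ from the contradiction hypothesis plus item (v): with $z = \eta(a^*) \in V_{\rho/2} \subseteq V_\rho'$ and $\delta = (b-a)^{2(1+v)/d_\gamma}$, the assumed bound $\mathrm{diam}(\eta([a,b]);\fd_h) < 7\delta$ places $\eta([a,b]) \subseteq B_{7\delta}(z;\fd_h)$, which item (v) traps inside a Euclidean ball $B_{C(7\delta)^\beta}(z) \subseteq V_\rho$ once $\ve_0$ is small. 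That route has no directionality issue, simultaneously delivers $\eta(a), \eta(b) \in V_\rho$ (which you need for item (iv)), and gives the Euclidean containment you later invoke to verify that item (i) applies — so you may as well use it from the start. The rest of your argument then goes through unchanged.
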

	\begin{proof}
		The proof is essentially the same as that of~\cite[Lemma~4.6]{metglu}. Fixing $v$, $C$, $u$, $\rho$, by condition (v) in Lemma~\ref{lemma:45} we can choose $\ve_0 \in (0,1)$ such that whenever $z \in V_{\rho/2}$, we have $B_{8\ve_0^{2(1+v)/d_\gamma}}(z;\fd_h) \subseteq V_\rho$. In particular this ball does not intersect $\R$.
		
		Now suppose $G_C$ occurs and fix $0 < a < b < a+\ve_0 < \infty$ and $z \in \eta([a,b]) \cap V_{\rho/2}$. Setting $\delta = (b-a)^{2(1+v)/d_\gamma}$, if we assume the statement of the lemma is false we have $\eta([a,b]) \subseteq B_{7\delta} (z;\fd_h) \subseteq V_\rho$. Noting that $V_\rho$ does not intersect~$\R$, we can find $U \in \mathcal{U}^-$ such that $\eta([a,b]) \subseteq \partial U$. Now, since $b-a \le \ve_0 < 1$ and $d_\gamma > 2$, we have $b-a= \delta^{d_\gamma/(2+2v)} \ge 4\delta^{d_\gamma/2-u}$ provided $u < \frac{d_\gamma}{2}(1-(1+v)^{-1})$ (i.e.~provided $u$ is sufficiently small depending on $v$) and $\ve_0$ is sufficiently small depending on $u$ and $v$, so by condition (iv) in Lemma~\ref{lemma:45}, we have
		\begin{equation*}
			\mu_h(B_\delta(\eta([a,b]);\fd_h)) \ge \mu_h(B_\delta(\eta([a,b]);\fd_{h|_U})) \ge C^{-1}(b-a)^{2+v}.
		\end{equation*}
		Condition (i) in Lemma~\ref{lemma:45} gives us
		\begin{equation*}
			\mu_h(B_{8\delta}(z;\fd_h)) \le 8^{d_\gamma-u} C (b-a)^{2(1+v)(1-u/d_\gamma)}.
		\end{equation*}
		If $u$ is sufficiently small (depending only on $v$), then if $\ve_0$ is small enough depending on $u$ and $C$ we can ensure that, whenever $b-a \le \ve_0$,
		\begin{equation*}
			8^{d_\gamma-u} C (b-a)^{2(1+v)(1-u/d_\gamma)} < C^{-1}(b-a)^{2+v}.
		\end{equation*}
		Thus $B_\delta(\eta([a,b]);\fd_h) \nsubseteq B_{8\delta}(z;\fd_h)$, so $\eta([a,b])$ cannot be contained in $B_{7\delta}(z;\fd_h)$.
	\end{proof}
	Next we give a version of~\cite[Lemma~4.7]{metglu}, which bounds the number of segments of $\eta$ of a fixed quantum length that can intersect a $\fd_h$-metric ball.
	\begin{lemma}\label{lemma:47}
		In the setting of Theorem~\ref{thm:main}, for each $v \in (0,1)$, there exists $u_0 = u_0(v) \in (0,1)$ such that, whenever $\rho > 2$, $C > 1$, $0 < u \le u_0$, there exists $\delta_0 = \delta_0(C,u,v,\rho) > 0$ such that, almost surely on $G_C = G_C(u,\rho)$, for each $z \in V_{\rho/2}$ and $\delta \in (0, \delta_0]$, the number of $k \in \mathbb{N}$ for which $\eta([(k-1)\delta^{d_\gamma/2},k\delta^{d_\gamma/2}])$ intersects $B_{\delta^{1+v}}(z;\fd_h)$ is at most $\delta^{-v}$.
	\end{lemma}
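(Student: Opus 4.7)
The plan is to argue by contradiction: fix $v\in(0,1)$ and suppose, for some $z\in V_{\rho/2}$ and $\delta\le\delta_0$ (with $\delta_0$ to be chosen), that there are more than $\delta^{-v}$ indices $k\in\mathbb{N}$ for which the segment $\eta([(k-1)\delta^{d_\gamma/2},k\delta^{d_\gamma/2}])$ meets $B:=B_{\delta^{1+v}}(z;\fd_h)$. Writing $r:=\delta^{1+v}$ for brevity, I would compare an upper bound on $\mu_h$ of a bounded $\fd_h$-enlargement of $B$ (coming from item (i) of Lemma~\ref{lemma:45}) against a lower bound on the same quantity forced by the presence of so many intersecting segments. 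By item (v) of Lemma~\ref{lemma:45}, for $\delta$ small the balls $B$ and $B_{Cr}(z;\fd_h)$ (for any fixed constant $C$) lie in a Euclidean ball contained in $V_\rho$, hence away from $\R$, so item (i) yields $\mu_h(B_{Cr}(z;\fd_h))\le C'(Cr)^{d_\gamma-u}$.

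Since $\gamma^2<4$ the interface $\eta$ is simple, so for each $U\in\mathcal{U}^-\cup\mathcal{U}^+$ the set $\partial U\cap\eta$ is a single connected subarc of $\eta$, and the intersecting segments lying on $\partial U$ occupy a range of consecutive indices. Grouping them by $U$, let $n_U$ denote the number of intersecting segments on $\partial U$, and split the components into \emph{large} (those with $n_U\delta^{d_\gamma/2}\ge 4r^{d_\gamma/2-2u}$, so that condition (iv) of Lemma~\ref{lemma:45} applies with radius $r$) and \emph{small} (the rest). For each large $U$, I would apply (iv) to the sub-arc of $\partial U$ spanned by the intersecting segments; since each of those segments meets $B$ and has $\fd_h$-diameter at least $7r$ by Lemma~\ref{lemma:46}, some care is needed (passing to a further sub-arc sitting close to $z$, and enlarging the outer ball by a constant factor as necessary) to ensure the resulting $\fd_{h|_U}$-neighborhood embeds in $B_{Cr}(z;\fd_h)$. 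These neighborhoods are disjoint across distinct large $U$'s by interior-disjointness of the components, and summing the bound from (iv) yields
\begin{equation*}
C^{-1}r^{d_\gamma/2+u}\delta^{d_\gamma/2}N_{\text{large}}\le\mu_h(B_{Cr}(z;\fd_h))\le C'(Cr)^{d_\gamma-u},
\end{equation*}
so $N_{\text{large}}\lesssim\delta^{vd_\gamma/2-2u(1+v)}$. For small $U$'s, condition (ii) shows that each distinct $U$ with $\overline{U}\cap B\neq\varnothing$ contributes at least $C^{-1}r^{d_\gamma+u}$ of $\mu_h$-mass to $B_{2r}(z;\fd_h)$, so the number of such $U$'s is $\lesssim\delta^{-2u(1+v)}$; combined with the bound $n_U\le 4r^{d_\gamma/2-2u}\delta^{-d_\gamma/2}$ inherent in smallness, this gives $N_{\text{small}}\lesssim\delta^{vd_\gamma/2-4u(1+v)}$.

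Choosing $u_0(v)<v(d_\gamma/2+1)/(4(1+v))$ makes both exponents strictly greater than $-v$, so taking $\delta_0$ small forces $N_{\text{large}},N_{\text{small}}<\delta^{-v}/2$, contradicting $N>\delta^{-v}$. The main obstacle is the parallel handling of the two regimes: the boundary-length estimate (iv) applies only where enough $\nu_h$-mass of $\eta$ sits near $z$, and one must supplement it by the interior volume estimate (ii) to control the components contributing too little $\nu_h$-mass; checking that a single $u_0(v)$ closes both sides is the delicate algebraic step. A secondary technicality is verifying that the relevant $\fd_{h|_U}$-neighborhoods of the boundary arcs embed into a common $\fd_h$-ball of comparable radius, which is where Lemma~\ref{lemma:46} (to control the spatial extent of each length-$\delta^{d_\gamma/2}$ segment) and item (v) of Lemma~\ref{lemma:45} (to transfer between $\fd_h$- and Euclidean neighborhoods and keep the picture away from $\R$) come in.
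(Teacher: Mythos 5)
Your argument, as written, has a genuine gap in the ``large'' case, and your ``small'' case is in fact vacuous, so the gap is fatal. First the small case: for $U$ with $n_U\ge 1$ you have $n_U\delta^{d_\gamma/2}\ge\delta^{d_\gamma/2}$, and $\delta^{d_\gamma/2}\ge 4r^{d_\gamma/2-2u}$ is equivalent to $4\delta^{vd_\gamma/2-2u(1+v)}\le 1$. Since $d_\gamma>2$, your choice $u_0<v(d_\gamma/2+1)/(4(1+v))$ is strictly below $vd_\gamma/(4(1+v))$, so the exponent $vd_\gamma/2-2u(1+v)$ is positive for all $u\le u_0$; hence for $\delta$ small every $U$ meeting $B$ is ``large''. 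The large case is then the whole argument, and there the localization step fails. Condition (iv) lower-bounds the $\mu_h$-mass of the $\fd_{h|_U}$-neighbourhood of a boundary \emph{arc}, and that neighbourhood is anchored to the arc, not to $z$; the arc spanned by the intersecting segments may extend far from $z$. Lemma~\ref{lemma:46} gives only a \emph{lower} bound $\ge 7r$ on the $\fd_h$-diameter of each segment, so it gives no upper control on their spatial extent --- the opposite of the role you assign it. Passing to a shorter sub-arc near the contact point with $B$ does not repair this: condition (iii) bounds the $\fd_{h|_U}$-diameter of a sub-arc of $\nu_h$-length $4r^{d_\gamma/2-2u}$ only by $O(r^{1-cu})$ for some $c>0$, which is not $O(r)$ for any fixed $u>0$; and the $\fd_{h|_U}$-neighbourhoods of sub-arcs taken on distinct intersecting segments of the same $\partial U$ may overlap, so their (iv)-masses cannot simply be added. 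You do not have the disjointness needed to turn a per-arc estimate into a count of segments.

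The paper's proof avoids these problems by a topological device and uses condition (ii) instead of (iv). Since, by Lemma~\ref{lemma:46}, no intersecting segment is contained in $B_{2r}(z;\fd_h)$, each such segment crosses the annulus $B_{2r}\setminus B_r$. The number of intersecting segments is then bounded, up to a factor $2$ and an additive $2$, by the number of connected components $O$ of $V\setminus\eta$ (for $V$ a component of $\h\setminus B_r(z;\fd_h)$) that reach $\h\setminus B_{2r}(z;\fd_h)$ --- a purely topological count. For each such $O$ one takes $w_O\in\partial O\cap\eta$ at $\fd_h$-distance exactly $3r/2$ from $z$ and sets $B_O=B_{r/2}(w_O;\fd_{h|_{U_O}})$. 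These $B_O$ lie in the annulus, are pairwise disjoint (each lies in its own $O$), and each has $\mu_h(B_O)\ge C^{-1}(r/2)^{d_\gamma+u}$ by condition (ii); comparing against $\mu_h(B_{2r})\le C(2r)^{d_\gamma-u}$ from condition (i) bounds the count by a constant times $r^{-2u}=\delta^{-2u(1+v)}\le\delta^{-v}$ for $u$ small. So the correct proof replaces your use of the boundary-arc estimate (iv) by the pointwise ball estimate (ii), and replaces the per-component bookkeeping by a topological reduction to pairwise-disjoint excursion components contained in a fixed annulus. It is exactly that reduction that makes the localization and double-counting issues disappear.
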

	\begin{proof}
		Assume $G_C$ occurs; then for $\delta$ small enough depending on $C$ and $\rho$ and $z \in V_{\rho/2}$, we have (using condition (v) in Lemma~\ref{lemma:45}) that $B_{3\delta^{1+v}}(z;\fd_h) \subseteq V_\rho$. By Lemma~\ref{lemma:46}, if $u$ is small enough depending on $v$ and $\delta$ is small enough depending on $C$, $u$, $v$, $\rho$, we have that for all $z \in V_{\rho/2}$ and all $k \in \mathbb{N}$, $\eta([(k-1)\delta^{d_\gamma/2},k\delta^{d_\gamma/2}]) \nsubseteq B_{2\delta^{1+v}}(z;\fd_h)$. Assume that $\delta$ and $u$ are chosen so that the above conditions hold. Let $K$ be the set of $k \in \mathbb{N}$ for which $\eta([(k-1)\delta^{d_\gamma/2},k\delta^{d_\gamma/2}])$ intersects $B_{\delta^{1+v}}(z;\fd_h)$; we now know that each $\eta([(k-1)\delta^{d_\gamma/2},k\delta^{d_\gamma/2}])$ also intersects $\h \setminus B_{2\delta^{1+v}}(z;\fd_h)$.
		Let~$\mathcal{V}$ be the set of connected components of $\h \setminus B_{\delta^{1+v}}(z;\fd_h)$, and for each $V \in \mathcal{V}$ let $\mathcal{O}_V$ be the set of those connected components of $V \setminus \eta$ which intersect $V \setminus B_{2\delta^{1+v}}(z;\fd_h)$.
		A topological argument given in Step 1 of the proof of~\cite[Lemma~4.7]{metglu} shows that we have $|K| \le 2+2\sum_{\mathcal{V}} |\mathcal{O}_V|$. This argument only on the facts that $\eta$ is continuous and transient and does not hit itself and that $\fd_h$ induces the Euclidean topology, so it applies here unchanged. 
		
		Fixing $V \in \mathcal{V}$, $O \in \mathcal{O}_V$, by the definition of $\mathcal{O}_V$ and the fact that $B_{3\delta^{1+v}}(z;\fd_h)$ does not intersect~$\R$, there exists $w_O \in \partial O \cap \eta$ satisfying$$\fd_h(w_O,\partial B_{\delta^{1+v}}(z;\fd_h)) = \fd_h(w_O,\partial B_{2\delta^{1+v}}(z;\fd_h)) = \frac{1}{2}\delta^{1+v}.$$Let $U_O$ be the connected component of $\h\setminus\eta$ containing $O$ (so $U_O \in \mathcal{U}^- \cup \mathcal{U}^+$), and let $B_O = B_{\frac{1}{2}\delta^{1+v}}(w_O;\fd_{h|_{U_O}})$. Then by construction, $\eta$ does not cross $B_O$, and
		$B_O \subseteq B_{\frac{1}{2}\delta^{1+v}}(w_O;\fd_h) \subseteq B_{2\delta^{1+v}}(z;\fd_h) \setminus B_{\delta^{1+v}}(z;\fd_h)$. In particular $B_O \subseteq O$, which implies that $B_O$ and $B_{O'}$ are disjoint when $O$ and $O'$ are distinct elements of $\bigcup_{V\in\mathcal{V}} \mathcal{O}_V$. 
		
		Now since $B_{3\delta^{1+v}}(z;\fd_h) \subseteq V_\rho$, each $U_O$ must intersect $V_\rho$, so by condition (ii) in Lemma~\ref{lemma:45}, for each $O \in \bigcup_{V\in\mathcal{V}} \mathcal{O}_V$ we have $\mu_h(B_O) \ge C^{-1}(\frac{1}{2}\delta)^{(d_\gamma+u)(1+v)}$. We thus find that
		\begin{equation*}
			C^{-1}(\delta/2)^{(d_\gamma+u)(1+v)} \sum_{V \in \mathcal{V}} |\mathcal{O}_V| \le \mu_h(B_{2\delta^{1+v}}(z;\fd_h)) \le C(2\delta)^{(d_\gamma-u)(1+v)},
		\end{equation*} 
		where the second inequality is by condition (i) in Lemma~\ref{lemma:45}. This, combined with the earlier fact that $|K| \le 2+2\sum_{\mathcal{V}} |\mathcal{O}_V|$, gives us a bound on $|K|$ of a universal constant times $C^2\delta^{-2u(1+v)}$, which after possibly shrinking $u$ and $\delta$ is enough to prove the lemma. 
	\end{proof}
	Next we adapt~\cite[Lemma~4.8]{metglu}:
	\begin{lemma}\label{lemma:48}
		In the setting of Theorem~\ref{thm:main}, let $v \in (0,1)$ and let $u_0 = u_0(v)$ be as in Lemma~\ref{lemma:47}. Let $u \in (0,u_0]$, $\rho>2$ and $C>1$, and let $G_C = G_C(u,\rho)$. Let $z_1$, $z_2 \in V_\rho$ and let $\gamma_{z_1,z_2}$ be a $\fd_h$-geodesic from $z_1$ to $z_2$ contained in $V_{\rho/2}$, all chosen in a manner that is independent from $\eta$. For $\delta \in (0,1)$ let $K_{z_1,z_2}^\delta$ be the set of $k \in \mathbb{N}$ for which $\gamma_{z_1,z_2}$ intersects $\eta([(k-1)\delta^{d_\gamma/2},k\delta^{d_\gamma/2}])$. Then there is an exponent $\alpha > 0$ depending only on $\gamma$, $\mathfrak{w}^-$, $\mathfrak{w}^+$ and the exponent $\beta$ in Lemma~\ref{lemma:45}, and a deterministic constant $M = M(C,u,v,\rho)$, such that for each $\delta \in (0,1)$ we have
		\begin{equation}\label{eq:48}
			\mathbb{E}\left[|K_{z_1,z_2}^\delta| \cdot \mathbbm{1}_{G_C} \, \big| \, h, \gamma_{z_1,z_2} \right] \le M \delta^{-1-2v+\alpha(1+v)} \fd_h(z_1,z_2).
		\end{equation}
	\end{lemma}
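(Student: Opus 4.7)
The plan is to dominate $|K_{z_1,z_2}^\delta|$ by summing over a cover of the geodesic $\gamma_{z_1,z_2}$ by $\fd_h$-metric balls of radius $\delta^{1+v}$, applying Lemma~\ref{lemma:47} to each ball, and gaining an extra factor by conditioning on $(h,\gamma_{z_1,z_2})$ and using that $\eta$ is an independent $\mathrm{SLE}_{\gamma^2}(\mathfrak{w}^--2;\mathfrak{w}^+-2)$ whose probability of hitting a small Euclidean ball in the interior of $\h$ decays as a power of the radius.

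First, since $\gamma_{z_1,z_2}$ is contained in $V_{\rho/2}$ and has $\fd_h$-length $\fd_h(z_1,z_2)$, I can parametrize it by $\fd_h$-arclength and select $N\le 2\fd_h(z_1,z_2)/\delta^{1+v}+1$ points $w_1,\ldots,w_N$ along it such that the balls $B_j:=B_{\delta^{1+v}}(w_j;\fd_h)$ cover $\gamma_{z_1,z_2}$. Every $\eta$-segment contributing to $K_{z_1,z_2}^\delta$ must intersect at least one $B_j$, and on $G_C$ Lemma~\ref{lemma:47} gives at most $\delta^{-v}$ such segments per ball (for $\delta$ smaller than $\delta_0(C,u,v,\rho)$ and $u\le u_0(v)$). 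Hence
\begin{equation*}
|K_{z_1,z_2}^\delta|\cdot\mathbbm{1}_{G_C}\le \delta^{-v}\sum_{j=1}^N \mathbbm{1}\{\eta\cap B_j\ne\varnothing\}.
\end{equation*}
Condition (v) of Lemma~\ref{lemma:45} implies that on $G_C$ each $B_j$ is contained in the Euclidean ball $\widetilde B_j:=B_{C\delta^{\beta(1+v)}}(w_j)$, and since $w_j\in V_{\rho/2}$ these Euclidean balls sit at uniformly positive distance from $\R$.

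Now I take conditional expectation given $(h,\gamma_{z_1,z_2})$. Because $\eta$ is independent of $h$, its conditional law given $h$ is the $\mathrm{SLE}_{\gamma^2}(\mathfrak{w}^--2;\mathfrak{w}^+-2)$ law, and the $B_j$, $\widetilde B_j$ are measurable with respect to the conditioning. I invoke the standard one-point estimate for this SLE process: there exist exponents $\alpha_{\mathrm{SLE}}>0$ and $C'=C'(\rho,\mathfrak{w}^-,\mathfrak{w}^+)$ such that for every Euclidean ball $B_r(w)$ with $w\in V_{\rho/2}$ and $r\in(0,1)$,
\begin{equation*}
\mathbb{P}[\eta\cap B_r(w)\ne\varnothing]\le C' r^{\alpha_{\mathrm{SLE}}}.
\end{equation*}
Such an estimate follows from the conformal Markov property and the Girsanov/Radon--Nikodym comparison of $\mathrm{SLE}_{\gamma^2}(\mathfrak{w}^--2;\mathfrak{w}^+-2)$ with ordinary $\mathrm{SLE}_{\gamma^2}$ in suitable compact subsets of $\h$ away from the boundary (where the force point weights only contribute a bounded Radon--Nikodym derivative), combined with the fact that $\mathrm{SLE}_\kappa$ for $\kappa<8$ has Hausdorff dimension strictly less than $2$, giving a power-law upper bound on the hitting probability of a small ball by an SLE curve (see e.g.~\cite{rohde} or the Beurling-type arguments already used in Section~\ref{section:bh}). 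Applying this with $r=C\delta^{\beta(1+v)}$ to each $\widetilde B_j$ and summing yields
\begin{equation*}
\mathbb{E}\bigl[|K_{z_1,z_2}^\delta|\cdot\mathbbm{1}_{G_C}\bigm|h,\gamma_{z_1,z_2}\bigr]\le \delta^{-v}\cdot N\cdot C'(C\delta^{\beta(1+v)})^{\alpha_{\mathrm{SLE}}}\le M\delta^{-1-2v+\alpha(1+v)}\fd_h(z_1,z_2),
\end{equation*}
with $\alpha:=\beta\alpha_{\mathrm{SLE}}$ depending only on $\mathfrak{w}^-$, $\mathfrak{w}^+$ and $\beta$, and $M=M(C,u,v,\rho)$.

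The main obstacle is producing the uniform SLE one-point estimate for $\mathrm{SLE}_{\gamma^2}(\mathfrak{w}^--2;\mathfrak{w}^+-2)$ with force points at $0^\pm$: the force point weights may be negative and close to $-2$, so some care is needed to absorb the singularities of the driving SDE at the force points when restricting to the compact region $V_{\rho/2}$. I expect this to be handled by an absolute continuity argument comparing the law of $\eta$ in a neighbourhood of $V_{\rho/2}$ to that of plain $\mathrm{SLE}_{\gamma^2}$ (for which hitting estimates are classical), exploiting that $V_{\rho/2}$ stays at positive distance from the force points. A secondary subtlety is measurability: $\gamma_{z_1,z_2}$ is $h$-measurable only on the almost sure event of its uniqueness, so the conditioning on $(h,\gamma_{z_1,z_2})$ is really conditioning on $h$, which is what we need for the independence of $\eta$ to be used.
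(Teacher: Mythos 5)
Your argument follows essentially the same route as the paper's: cover the geodesic $\gamma_{z_1,z_2}$ by $\fd_h$-balls of radius $\delta^{1+v}$, apply Lemma~\ref{lemma:47} to bound by $\delta^{-v}$ the number of $\eta$-segments hitting each ball on $G_C$, use condition (v) of Lemma~\ref{lemma:45} to pass to Euclidean balls $\widetilde B_j$ of radius comparable to $\delta^{\beta(1+v)}$, condition on $(h,\gamma_{z_1,z_2})$ using independence of $\eta$, and sum an SLE one-point estimate over the $N\lesssim\delta^{-(1+v)}\fd_h(z_1,z_2)$ balls. The exponent bookkeeping matches, and your measurability remark is exactly right. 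The one place where the paper is more economical is the one-point estimate $\p[\eta\cap B_\ve(w)\ne\varnothing]\le c(\rho,C,\mathfrak{w}^-,\mathfrak{w}^+)\ve^{\alpha_0}$, which it simply cites as \cite[Lemma~B.1]{metglu}; your sketched derivation is in the right spirit (absolute continuity against plain $\SLE_{\gamma^2}$ away from the force points, as in \cite[Lemma~2.8]{intersections}), but the clause ``Hausdorff dimension $<2$ gives a power-law upper bound on hitting probability'' should not be read as an implication --- one actually needs the SLE one-point/Green's function estimate for plain $\SLE_\kappa$, which is a theorem in its own right, not a corollary of the dimension. With that replaced by a citation (or a proper proof via the Loewner evolution) your argument is complete.
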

	\begin{proof}
		It suffices to prove (\ref{eq:48}) for $\delta \le \delta_0$ where $\delta_0 = \delta_0(C,u,v,\rho)$ is as in Lemma~\ref{lemma:47}, since it can then be extended to $\delta \in (0,1)$ by (deterministically) increasing $M$. Fixing $z_1$, $z_2$ as in the statement, let $N := \lfloor \delta^{-(1+v)}\fd_h(z_1,z_2)\rfloor + 1$. For $j \in \{1, \ldots, N-1\}$ let $t_j = j\delta^{1+v}$ and let $t_N = \fd_h(z_1,z_2)$. Now define $\mathcal{V}_j = B_{\delta^{1+v}} (\gamma_{z_1,z_2}(t_j);\fd_h)$, where we parametrize the path $\gamma_{z_1,z_2}\colon [0,t_N] \to V_{\rho/2}$ by $\fd_h$-distance, so that the $\mathcal{V}_j$ cover $\gamma_{z_1,z_2}$. Let $J^{\delta}_{z_1,z_2}$ be the number of $j$ in $\{1, \ldots, N\}$ for which $\mathcal{V}_j$ intersects $\eta$. Lemma~\ref{lemma:47} gives that, on $G_C$, if $\delta \in (0, \delta_0]$ then for each $j$ there are at most $\delta^{-v}$ elements of~$K_{z_1,z_2}^\delta$ for which $\eta([(k-1)\delta^{d_\gamma/2},k\delta^{d_\gamma/2}])$ intersects $\mathcal{V}_j$. This shows that
		\begin{equation*}
			|K_{z_1,z_2}^\delta| \le \delta^{-v}|J_{z_1,z_2}^\delta|.
		\end{equation*}
		So it suffices to show that
		\begin{equation*}
			\mathbb{E} \left[ |J_{z_1,z_2}^\delta| \cdot \mathbbm{1}_{G_C} \, \big| \, h , \gamma_{z_1,z_2} \right] \le M \fd_h(z_1,z_2) \delta^{-1-v+\alpha(1+v)},
		\end{equation*}
		for appropriately chosen $\alpha$ and $M$. On $G_C$, condition (v) in Lemma~\ref{lemma:45} ensures that $\mathcal{V}_j$ is contained in the Euclidean ball $\widetilde{\mathcal{V}}_j := B (\gamma_{z_1,z_2}(t_j),C\delta^{\beta(1+v)})$. There exists $\alpha_0$ depending only on $\gamma$, $\mathfrak{w}^-$ and~$\mathfrak{w}^+$ such that for each $w \in V_{\rho/2}$ and $\ve>0$ we have
		\begin{equation*}
			\p [\eta \cap B(w,\ve) \neq \varnothing] \le c(\rho,C,\gamma,\mathfrak{w}^-,\mathfrak{w}^+) \ve^{\alpha_0}
		\end{equation*}
		(this is~\cite[Lemma~B.1]{metglu}), and since $(h,\gamma_{z_1,z_2})$ is independent of (the trace of) $\eta$ but determines~$\widetilde{\mathcal{V}}_j$, this probability bound applies here to give
		\begin{equation*}
			\p [\eta \cap \widetilde{\mathcal{V}}_j \neq \varnothing] \le c(\rho,C,\gamma,\mathfrak{w}^-,\mathfrak{w}^+) \delta^{\alpha_0\beta(1+v)}.
		\end{equation*} 
		Summing over $1 \le j \le N$ we get the result with $\alpha = \alpha_0 \beta$ and $M = c(\rho,C,\gamma,\mathfrak{w}^-,\mathfrak{w}^+)$.
	\end{proof}
	We now adapt~\cite[Lemma~4.9]{metglu}, which states that $\fd_h$ is equal to the metric gluing at quantum typical points.
	\begin{lemma}\label{lemma:49}
		In the setting of Theorem~\ref{thm:main}, let $\widetilde{d}_h$ be the quotient metric on $\h$ obtained by the metric gluing of $(U,\fd_{h|_U})$. Fix $R > 1$ and sample $z_1,z_2$ independently from the probability measure obtained by normalizing $\mu_h|_{V_R}$. Then almost surely we have $\fd_h(z_1,z_2) = \widetilde{d}_h(z_1,z_2)$.
	\end{lemma}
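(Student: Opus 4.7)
The inequality $\widetilde{d}_h(z_1,z_2)\ge \fd_h(z_1,z_2)$ is the easy direction: if $P$ is any admissible concatenation in the quotient, its pieces $P_i$ lie in closures of components $U_i\in\mathcal{U}^-\cup\mathcal{U}^+$, and each $P_i$ is also an $\fd_h$-path in $\h$ whose $\fd_h$-length is bounded above by its $\fd_{h|_{U_i}}$-length (since $\fd_h\le \fd_{h|_{U_i}}$ pointwise). Concatenating these yields a path in $\h$ from $z_1$ to $z_2$ of $\fd_h$-length at most the gluing length of $P$. The task is therefore to prove the reverse inequality, which I plan to do by approximating the $\fd_h$-geodesic $\gamma:=\gamma_{z_1,z_2}$ (almost surely unique by~\cite[Thm~1.2]{mq} plus absolute continuity, as this is a $\mu_h$-typical pair of points) by admissible paths in the quotient whose lengths differ from $\fd_h(z_1,z_2)$ by a vanishing amount.

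Fix small $u,v\in(0,1)$ and $\rho\ge R$, and $C$ large so that $G_C=G_C(u,\rho)$ has high probability and (on this event) $\gamma\subseteq V_{\rho/2}$. For each $\delta\in(0,1)$ partition $\eta$ by quantum length into segments $\eta_k=\eta([(k-1)\delta^{d_\gamma/2},k\delta^{d_\gamma/2}])$, and let $K^\delta=K^\delta_{z_1,z_2}$ be the set of indices whose segment $\gamma$ hits. I will construct a modified path $\widetilde\gamma_\delta$ as follows: for each $k\in K^\delta$ let $[\sigma_k,\tau_k]$ be the smallest interval containing all times at which $\gamma$ lies on $\eta_k$, merge overlapping intervals to obtain a finite disjoint family of \emph{excursion intervals}, and on each excursion interval replace the corresponding piece of $\gamma$ by a boundary detour travelling along $\partial U$ for a chosen adjacent component $U\in\{U^-,U^+\}$ of $\eta_k$ between its endpoints. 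By Lemma~\ref{lemma:45}(iii) each detour has $\fd_{h|_U}$-length at most $C\nu_h(\eta_k)^{2/d_\gamma-u}=C\delta^{1-ud_\gamma/2}$. Outside the excursion intervals, $\widetilde\gamma_\delta$ coincides with $\gamma$ and lies in the interior of a single component at a time, where its $\fd_{h|_U}$-length equals its $\fd_h$-length by interior locality. Since $\widetilde\gamma_\delta$ crosses $\eta$ only at the finitely many endpoints of excursion intervals, it is admissible for the gluing metric, and summing contributions yields, on $G_C$,
\begin{equation*}
\widetilde{d}_h(z_1,z_2)\le \fd_h(z_1,z_2)+C|K^\delta|\,\delta^{1-ud_\gamma/2}.
\end{equation*}

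Taking expectations and applying Lemma~\ref{lemma:48} gives
\begin{equation*}
\mathbb{E}\bigl[(\widetilde{d}_h(z_1,z_2)-\fd_h(z_1,z_2))\mathbbm{1}_{G_C}\bigr]\le CM\,\delta^{\alpha(1+v)-2v-ud_\gamma/2}\,\mathbb{E}[\fd_h(z_1,z_2)\mathbbm{1}_{G_C}],
\end{equation*}
and provided $u,v$ are chosen so that $\alpha(1+v)-2v-ud_\gamma/2>0$ (feasible since $\alpha>0$) the right-hand side tends to $0$ as $\delta\downarrow 0$. The left-hand side is nonnegative and independent of $\delta$, so it must vanish; combined with the trivial inequality this gives $\widetilde{d}_h(z_1,z_2)=\fd_h(z_1,z_2)$ almost surely on $G_C$, and letting $C\to\infty$ removes the conditioning. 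The step I expect to require the most care is the construction of $\widetilde\gamma_\delta$: one must check that the modification genuinely produces a path with the right topological structure on the quotient (so that, outside the merged excursion intervals, $\gamma$ truly sits in a single component of $\h\setminus\eta$, ruling out infinite oscillation at quantum-length scales below $\delta^{d_\gamma/2}$), that the boundary detours match up across the gluing identification, and that each segment $\eta_k$ is counted at most once in the length bookkeeping. A secondary subtlety, shared with~\cite{metglu}, is the consistent choice of parameters $(u,v,\rho,C)$ across Lemmas~\ref{lemma:45}--\ref{lemma:48}; one can fix $v$ small, then $u$ small depending on $v$, and finally $C,\rho$ large.
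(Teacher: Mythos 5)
Your strategy --- approximate the $\fd_h$-geodesic $\gamma:=\gamma_{z_1,z_2}$ by replacing the portions that touch $\eta$ with $\fd_{h|_U}$-detours, bound each detour via Lemma~\ref{lemma:45}(iii), and control the number of detours via Lemma~\ref{lemma:48} --- is the same as the paper's. The one place where your route genuinely diverges is the final step: you take expectations of the bound $\widetilde{d}_h-\fd_h\le C|K^\delta|\delta^{1-ud_\gamma/2}$ and argue that a nonnegative, $\delta$-independent quantity has expectation tending to $0$, hence vanishes; the paper instead applies Markov's inequality to Lemma~\ref{lemma:48} to get a high-probability pointwise bound $|K^\delta|\le \delta^{-1+\alpha/2}$, then argues deterministically on that event. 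Your version is a valid (arguably tidier) alternative, but it needs two repairs, and there is one real gap in the path construction.

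Repairs. First, $G_C$ alone does not give $\gamma\subseteq V_{\rho/2}$; you must, as the paper does, introduce the separate event $E_\rho=\{\gamma_{z_1,z_2}\subseteq V_{\rho/2},\ \fd_h(z_1,z_2)\le\rho\}$, choose $\rho$ so $\p[E_\rho]$ is close to $1$, and work on $G_C\cap E_\rho$. Second, your right-hand side features $\mathbb{E}[\fd_h(z_1,z_2)\mathbbm{1}_{G_C}]$, whose finiteness is not free; intersecting with $E_\rho$ replaces this by the deterministic bound $\rho$. These are easy fixes but they must be made.

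Gap. Merging overlapping excursion intervals breaks the detour-length accounting. After merging, a single excursion interval $[a,b]$ will generically have $\gamma(a)\in\eta_k$ and $\gamma(b)\in\eta_{k'}$ with $k\neq k'$. Lemma~\ref{lemma:45}(iii) then bounds the detour only by $C\nu_h([\gamma(a),\gamma(b)]_{\partial U})^{2/d_\gamma-u}$, and the boundary arc $[\gamma(a),\gamma(b)]_{\partial U}$ need not be contained in the segments $\eta_k$ that triggered the merger --- its $\nu_h$-mass can be much larger than $\delta^{d_\gamma/2}$. Worse, it is not automatic that $\gamma(a)$ and $\gamma(b)$ lie on the boundary of a single component $U$: the geodesic may enter the neighbourhood of $\eta$ from $\mathcal{U}^-$ and exit into $\mathcal{U}^+$. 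The paper sidesteps all of this by selecting rather than merging: it extracts a subfamily $k_1<k_2<\cdots$ with $\tau_{k_{j+1}}\ge\sigma_{k_j}$, so that the selected time-windows are pairwise disjoint and each detour $\widetilde\gamma_{k_j}$ runs from $\gamma(\tau_{k_j})$ to $\gamma(\sigma_{k_j})$, both of which lie on the \emph{single} segment $\eta_{k_j}\subseteq\partial U_{k_j}$. Then $\mathrm{len}(\widetilde\gamma_{k_j};\fd_{h|_{U_{k_j}}})\le 4C\delta^{1-ud_\gamma/2}$ follows directly from (iii), with no matching-up or choice-of-$U$ problems, and the number of detours is at most $|K^\delta|$. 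You correctly flagged this as the delicate step; the fix is to select a disjoint subfamily rather than to merge intervals.
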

	\begin{proof}
		Let $v \in (0,\alpha/100)$ where $\alpha$ is as in Lemma~\ref{lemma:48} and let $u \in (0,u_0]$ where $u_0$ is as in Lemma~\ref{lemma:47}. Also fix $p \in (0,1)$ and $\ve > 0$. Choose $\rho > 2$ such that the event
		\begin{equation*}
			E_\rho := \{ \fd_h(z_1,z_2;V_{\rho/2}) - \ve < \fd_h(z_1,z_2) \le \rho\}
		\end{equation*}
		has probability at least $1-p/5$. We can do this because, by definition, $\fd_h(z_1,z_2)$ is the infimum of the $\fd_h$-lengths of paths between them that only intersect $\R$ finitely often, and by Remark~\ref{rem:avoid-r} we can replace a small segment of such a path near each intersection point with a path that stays in $\h$ with arbitrarily close $\fd_h$-length. Since we also know by Lemma~\ref{lem:41} that near-minimal paths from $z_1$ to $z_2$ cannot hit $\infty$, it follows that $\p[E_\rho] \to 1$ as $\rho \to \infty$.  
		
		Having chosen $\rho$ and $u$, choose $C = C(\rho,u)$ so that $G_C = G_C(u,\rho)$ has probability at least $1-p/5$. Work now on the event $E_\rho\cap G_C$. By~\cite[Thm~1.7]{gwynne_networks} there can only be finitely many geodesics from $z_1$ to $z_2$ w.r.t.\ the internal metric $\fd_h(\cdot,\cdot;V_{\rho/2})$ (which must also be geodesics from~$z_1$ to~$z_2$ w.r.t.~$\fd_h$); let $\gamma_{z_1,z_2}$ be the leftmost of these (i.e., when started from $z_1$, $\gamma_{z_1,z_2}$ stays to the left of all other $\fd_h(\cdot,\cdot;V_{\rho/2})$-geodesics from $z_1$ to $z_2$). By Lemma~\ref{lemma:48} we have
		\begin{equation*}
			\mathbb{E}\left[|K_{z_1,z_2}^\delta| \cdot \mathbbm{1}_{G_C \cap E_\rho} \, \big| \, h, \gamma_{z_1,z_2} \right] \le M \delta^{-1-2v+\alpha(1+v)} \rho,
		\end{equation*}
		and by taking a further expectation this bound also holds for $\mathbb{E}\left[|K_{z_1,z_2}^\delta| \cdot \mathbbm{1}_{G_C \cap E_\rho}\right]$. So by Markov's inequality, there exists $\delta_0 = \delta_0(u,v,C,\rho) > 0$ such that when $\delta \le \delta_0$, it holds with probability at least $1-p/2$ that $E_\rho \cap G_C$ occurs and 
		\begin{equation}\label{eq:kbound}
			|K_{z_1,z_2}^\delta| \le \delta^{-1-3v+\alpha(1+v)} \le \delta^{-1+\alpha/2}
		\end{equation}
		(the second inequality holds because $v < \alpha/100$). Now fix $\delta \in (0,\delta_0]$ and assume that $E_\rho \cap G_C$ occurs and (\ref{eq:kbound}) holds. We need to show that $\widetilde{d}_h(z_1,z_2) \le \fd_h(z_1,z_2)$ (note that the reverse inequality is clear by locality of the LQG metric, as pointed out in the discussion after the statement of Theorem~\ref{thm:main}). To this end we construct a path from $z_1$ to $z_2$ by concatenating finitely many paths each of which is contained in some $\overline{U}$, for $U \in \mathcal{U}^- \cup \mathcal{U}^+$.
		
		By condition (vi) in Lemma~\ref{lemma:45}, as long as $\delta \le C^{-2/d_\gamma}$ (which we can guarantee by possibly shrinking $\delta_0$), we have $\eta([(k-1)\delta^{d_\gamma/2},k\delta^{d_\gamma/2}]) \subseteq V_\rho$ for each $k \in K_{z_1,z_2}^\delta$, and thus these segments are disjoint from $\R$, so that we may choose $U_k \in \mathcal{U}^-$ such that $U_k$ intersects $V_\rho$ and such that $\eta([(k-1)\delta^{d_\gamma/2},k\delta^{d_\gamma/2}]) \subseteq \partial U_k$. Let $\tau_k$ and $\sigma_k$ be respectively the first and last times $\gamma_{z_1,z_2}$ hits $\eta([(k-1)\delta^{d_\gamma/2},k\delta^{d_\gamma/2}])$. Let $\widetilde{\gamma}_k$ be a $\fd_{h|_{U_k}}$-geodesic from $\gamma_{z_1,z_2}(\tau_k)$ to $\gamma_{z_1,z_2}(\sigma_k)$. By condition~(iii) in Lemma~\ref{lemma:45}, almost surely on $G_C$ we have $\mathrm{diam}(\eta([(k-1)\delta^{d_\gamma/2},k\delta^{d_\gamma/2}]);\fd_{h|_{U_k}}) \le C\delta^{1-ud_\gamma/2}$, and thus
		\begin{equation}\label{eq:lenbound}
			\mathrm{length}(\widetilde{\gamma}_k;\fd_{h|_{U_k}}) \le C\delta^{1-ud_\gamma/2} \quad \forall k \in K_{z_1,z_2}^\delta.
		\end{equation}
		Pick $k_1 \in K_{z_1,z_2}^\delta$ with $\tau_{k_1}$ minimal, and inductively define $k_2, \ldots, k_{|K_{z_1,z_2}^\delta|}$ by picking $k_j \in K_{z_1,z_2}^\delta$ such that $\tau_{k_j}$ is the smallest $\tau_k$ for $k \in K_{z_1,z_2}^\delta$ for which $\tau_k \ge \sigma_{k_{j-1}}$, if this exists; if there is no such $\tau_k$ let $k_j = \infty$. Let $J$ be the smallest $j \in \mathbb{N}$ for which $k_j =\infty$. Let $\mathring{\gamma}_1 = \gamma_{z_1,z_2}|_{[0,\tau_{k_1}]}$, let $\mathring{\gamma}_J = \gamma_{z_1,z_2}|_{[\sigma_J,\fd_h(z_1,z_2)]}$ and let $\mathring{\gamma}_j = \gamma_{z_1,z_2}|_{[\sigma_{j-1},\tau_j]}$ for $2 \le j \le J-1$. Then for $1 \le j \le J$, the curve~$\mathring{\gamma}_j$ does not hit $\eta$ except at its endpoints, so that we can find $\mathring{U}_j \in \mathcal{U}^- \cup \mathcal{U}^+$ such that $\mathring{\gamma}_j \subseteq \overline{\mathring{U}_j}$, and by locality we have $\mathrm{length}(\mathring{\gamma}_j;\fd_{h|_{\mathring{U}_j}}) = \mathrm{length}(\mathring{\gamma}_j;\fd_h)$ for each $j$.
		We now concatenate the curves $\mathring{\gamma}_1, \widetilde{\gamma}_{k_1}, \mathring{\gamma}_2, \widetilde{\gamma}_{k_2}, \ldots, \mathring{\gamma}_{J-1}, \widetilde{\gamma}_{k_{J-1}}, \mathring{\gamma}_J$, to get a path $\widetilde{\gamma}$ from $z_1$ to $z_2$ such that
		\begin{align*}
			\nonumber \widetilde{d}_h(z_1;z_2) &\le \sum_{j=1}^{J-1} \mathrm{length} (\widetilde{\gamma}_{k_j};\fd_{h|_{U_{k_j}}}) + \sum_{j=1}^J \mathrm{length}(\mathring{\gamma}_j;\fd_{h|_{\mathring{U}_j}}) \\
			\nonumber &\le \sum_{j=1}^{J-1} \mathrm{length} (\widetilde{\gamma}_{k_j};\fd_{h|_{U_{k_j}}}) + \fd_h(z_1,z_2)+\ve \\
			&\le C\delta^{\alpha/2-ud_\gamma/2} + \fd_h(z_1,z_2)+\ve,
		\end{align*}
		where the last inequality comes from (\ref{eq:kbound}) to bound $J$ by $\delta^{-1+\alpha/2}$ and (\ref{eq:lenbound}) to bound the length of each $\widetilde{\gamma}_{k_j}$. By possibly shrinking $u_0$ we can ensure that $ud_\gamma < \alpha$, so that sending $\delta \to 0$ gives $\widetilde{d}_h(z_1,z_2) \le \fd_h(z_1,z_2)+\ve$ as required. Since $p$ and $\ve$ can be made arbitrarily small, we are done.
	\end{proof}
	The last step to prove Theorem~\ref{thm:main} is the same as in the proof of~\cite[Thm~1.5]{metglu}. (Essentially, we now have that $\widetilde{d}_h$ and $\fd_h$ agree on a set of $\mu_h$-full measure, which is dense since open sets have positive $\mu_h$-measure, so we can conclude quickly by an approximation argument.)
	\subsection{Proofs of Theorems~\ref{thm:16},~\ref{thm:17} and~\ref{thm:18}}
	We now turn to the proofs of Theorems~\ref{thm:16},~\ref{thm:17} and~\ref{thm:18}. In fact, as with~\cite[Thm~1.6]{metglu}, in the case that $\mathfrak{w} \ge \gamma^2/2$ (so that $(U,h|_U)$ is a thick wedge), the proof of Theorem~\ref{thm:16} is essentially the same as that of the previous theorem, so we just need to treat the case where cutting along $\eta$ gives a thin wedge. The reason this case is more difficult is that we have to approximate geodesics with paths that avoid the points at which $\eta$ intersects itself. However, we can still deduce this case from the previous results. 
	\begin{proof}[Proof of Theorem \ref{thm:16} in the case $\mathfrak{w} \in (0,\gamma^2/2)$]
		Fix $z \in \mathbb{C}\setminus\{0\}$ and $0<r<s<s'<|z|$. Let~$\tau_1$ be the first time that~$\eta$ hits $\partial B(z,r)$ and let $\sigma_1$ be the first time after $\tau_1$ that $\eta$ hits $\partial B(z,s)$. Having defined $\tau_j$, $\sigma_j$, let $\tau_{j+1}$ be the first time after $\sigma_j$ that $\eta$ hits $\partial B(z,r)$ and let $\sigma_{j+1}$ be the first time after $\tau_{j+1}$ that $\eta$ hits $\partial B(z,s)$. We will show that for each~$j$ it is almost surely the case that the internal metric $\fd_h(\cdot,\cdot;B(z,r))$ agrees with the metric gluing of the components of $B(z,r)\setminus\eta|_{[0,\sigma_j]}$ along $\eta|_{[0,\sigma_j]}$. This suffices to prove the theorem, since then the result almost surely holds for all~$j$, all $z\in\mathbb{Q}^2\setminus\{0\}$ and all $0<r<s<|z|$ rational, so that we can split any path not hitting 0 into finitely many pieces each contained in a ball $B(z,r)$ for which the result holds. Then the length of each such piece is the same according to $\fd_h(\cdot,\cdot;B(z,r))$ and the metric gluing across $\eta$ (which, since~$\eta$ is transient by~\cite[Thm~1.12]{ig4}, is the same as the metric gluing along $\eta|_{[0,\sigma_j]}$ for $j$ sufficiently large).
		
		\begin{figure}\label{fig:s6}
			\includegraphics[width=0.9\textwidth]{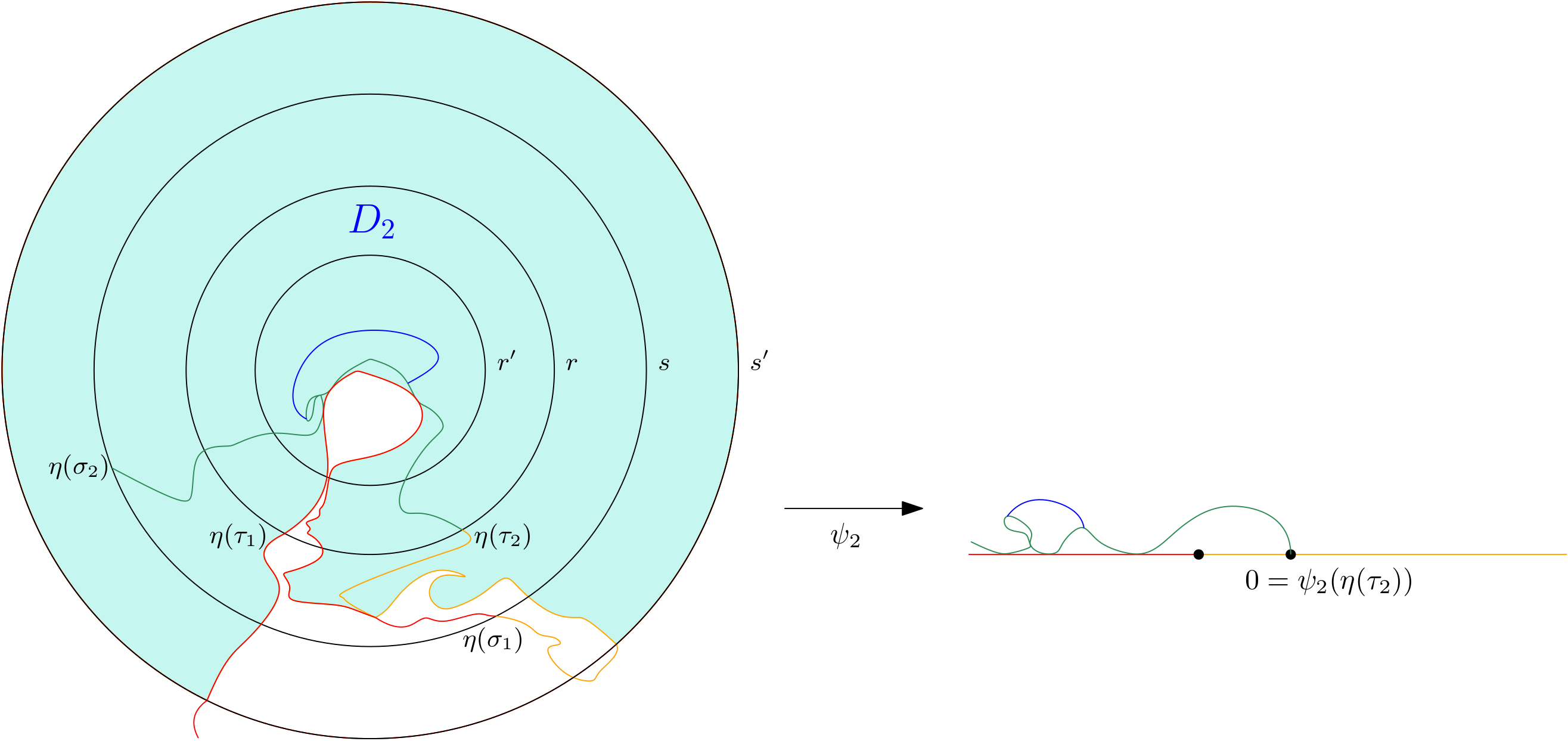}
			\caption{In order to deduce Theorem~\ref{thm:16} for thin wedges from Theorem~\ref{thm:main}, we draw the whole-plane $\SLE_{\gamma^2}(\mathfrak{w}-2)$ curve $\eta$ up to a stopping time then map a domain bounded by $\eta$ and a circular arc to $\h$. The law of the image of the remaining part of $\eta$ up to a later stopping time is absolutely continuous w.r.t.\ that of a chordal $\SLE_\kappa(\mathfrak{w}-2)$, so this puts us in the setting of Theorem~\ref{thm:main}.}
		\end{figure}
		
		We proceed by induction on $j$; first we consider the case $j=1$. The conditional law of $\eta|_{[\tau_1,\sigma_1]}$ given $\eta|_{[0,\tau_1]}$ is that of a radial $\mathrm{SLE}_{\gamma^2}(\mathfrak{w}-2)$ in the unbounded component $\widetilde{D}_1$ of $\mathbb{C}\setminus\eta|_{[0,\tau_1]}$, started from $\eta(\tau_1)$, targeted at $\infty$ and stopped at time $\sigma_1$, and thus has the same law (up to time change) as a chordal $\mathrm{SLE}_{\gamma^2}(\mathfrak{w}-2)$ in $\widetilde{D}_1$ from $\eta(\tau_1)$ targeted at $\infty$ and stopped upon hitting $\partial B(z,s)$~\cite[Thm~3]{sw}. Moreover, if we define the domain $D_1$ to be the component of $B(z,s')\setminus\eta|_{[0,\tau_1]}$ containing $\eta([\tau_1,\sigma_1])$ (note that this component is determined by $\eta|_{[0,\tau_1]}$), this latter law is mutually absolutely continuous with that of a chordal $\mathrm{SLE}_{\gamma^2}(\mathfrak{w}-2)$ in $\widetilde{D}_1$ from $\eta(\tau_1)$ targeted at~$\infty$ and stopped upon hitting $\partial B(z,s)$, and indeed the Radon--Nikodym derivatives between the two laws are bounded by~\cite[Lemma~2.8]{intersections}. Therefore, if we now fix (in some way which is measurable w.r.t.~$\eta|_{[0,\tau_1]}$) a conformal map $\psi_1\colon D_1\to\h$ such that $\psi_1(\eta(\tau_1))=0$, then the law of $\psi_1\circ\eta([\tau_1,\sigma_1])$ is absolutely continuous up to time change w.r.t.\ the law of a chordal $\mathrm{SLE}_{\gamma^2}(\mathfrak{w}-2)$ from~0 to~$\infty$ in $\h$ stopped upon exiting $\psi_1(B(z,s))$, and the Radon--Nikodym derivatives between the two laws are bounded independently of the choice of $\psi_1$.
		
		Letting $h^{\psi_1} = h\circ\psi_1^{-1} + Q\log{|(\psi_1^{-1})'|}$ be the covariantly transformed field on $\h$, we argue that if $0<r'<r$, then the law of the pair $(\psi_1(B(z,r')),h^{\psi_1}|_{(\psi_1(B(z,r'))})$ is absolutely continuous w.r.t.\ that of $(\psi_1(B(z,r')),h^F|_{(\psi_1(B(z,r'))})$ where $h^F$ is a free-boundary GFF on $\h$ (say, normalized so that $h^F_1(0)=0$). This follows because $\psi_1(B(z,r'))$ is at positive Euclidean distance from $\partial\h$ and the laws of the two GFF variants are mutually absolutely continuous away from the boundary (which can be seen by coupling them so that their difference is a random harmonic function and using the Girsanov theorem to express the Radon--Nikodym derivative in terms of this harmonic function). We can thus apply the proof of Theorem~\ref{thm:main} to $(\h,h^F,0,\infty)$ (nothing changes, since the required GFF estimates in Lemma~\ref{lemma:45} are proved for $h^F$ anyway) and, by absolute continuity, deduce the conclusion of that theorem for $h^{\psi_1}$. That is to say, almost surely, for each rational $r'\in(0,r)$, the length of any path in $B(z,r')$ is the same w.r.t.\ $\fd_h(\cdot,\cdot;B(z,r))$ and the metric gluing along~$\eta|_{[0,\sigma_1]}$. This completes the base case.
		
		Suppose that the result holds for $j\ge 1$; we prove that it holds also for $j+1$. By the induction hypothesis, it holds almost surely that if $w_1$, $w_2$ are any two distinct points in $B(z,r)$, then for each $\ve>0$ there is a path $P$ in $B(z,r)$ which crosses $\eta|_{[0,\sigma_j]}$ only finitely many times whose $\fd_h(\cdot,\cdot;B(z,r))$-length is at most $\fd_h(w_1,w_2;B(z,r))+\ve$. We thus aim to show that it is almost surely the case that each path $\widetilde{P}$ in $B(z,r)$ which does not intersect $\eta|_{[0,\sigma_j]}$ except possibly at the endpoints of $\widetilde{P}$ has the same length w.r.t.~$\fd_h(\cdot,\cdot;B(z,r))$ as w.r.t.\ the metric gluing along $\eta|_{[0,\sigma_{j+1}]}$. This implies that if $w_1$ and $w_2$ are quantum typical points (i.e., sampled independently from $h|_{B(z,r)}$ normalized to be a probability measure), then $w_1$, $w_2$ have the same distance w.r.t.~$\fd_h(\cdot,\cdot;B(z,r))$ and the gluing along $\eta|_{[0,\sigma_{j+1}]}$ (since we can choose an almost-minimal path $P$ as above between~$w_1$ and $w_2$ and split into subpaths $\widetilde{P}$ with the same length according to each of the two metrics). We can then conclude that these two metrics on $B(z,r)$ are equal using the same argument as at the end of the proof of~\cite[Thm~1.5]{gm}.
		
		Analogously to the base case, let $D_{j+1}$ be the component of $B(z,s')\setminus\eta|_{[0,\tau_{j+1}]}$ containing $\eta([\tau_{j+1},\sigma_{j+1}])$ and, in some way which is measurable w.r.t.~$\eta|_{[0,\tau_{j+1}]}$, fix a conformal map $\psi_{j+1}:D_{j+1}\to\h$ such that $\psi_{j+1}(\eta(\tau_{j+1}))=0$ and let $h^{\psi_{j+1}} = h\circ\psi_{j+1}^{-1} + Q\log{|(\psi_{j+1}^{-1})'|}$ be the covariantly transformed field on $\h$.
		
		As before, the conditional law of $\eta|_{[\tau_{j+1},\sigma_{j+1}]}$ given $\eta|_{[0,\tau_{j+1}]}$ is (up to time change) that of a chordal $\mathrm{SLE}_{\gamma^2}(\mathfrak{w}-2)$ in the unbounded component $\widetilde{D}_{j+1}$  of $\mathbb{C}\setminus\eta|_{[0,\tau_{j+1}]}$, started from $\eta(\tau_{j+1})$ and stopped upon hitting $\partial B(z,s)$, and thus the law of $\psi_{j+1}\circ\eta([\tau_{j+1},\sigma_{j+1}])$ is absolutely continuous up to time change w.r.t.\ that of a chordal $\SLE_{\gamma^2}(\mathfrak{w}-2)$ from~0 to~$\infty$ in $\h$ stopped upon exiting $\psi_{j+1}(B(z,s))$. 
		
		Moreover, for each $\delta > 0$, $r' \in (0,r)$, the law of the pair$$(\psi_{j+1}(B(z,r')\setminus B(\eta([0,\sigma_j]),\delta)),h^{\psi_{j+1}}|_{\psi_{j+1}(B(z,r'))})$$is absolutely continuous w.r.t.\ that of $(\psi_{j+1}(B(z,r')\setminus B(\eta([0,\sigma_j]),\delta)),h^F|_{(\psi_{j+1}(B(z,r'))})$, since the set $\psi_{j+1}(B(z,r')\setminus B(\eta([0,\sigma_j]),\delta))$ will have positive Euclidean distance from $\partial\h$. We can thus argue as in the case $j=1$ that, almost surely, for any $\delta > 0$ and $r' \in (0,r)$, any path in $B(z,r')\setminus B(\eta([0,\sigma_j]),\delta)$ has the same length w.r.t.~$\fd_h(\cdot,\cdot;B(z,r))$ and the gluing along $\eta|_{[0,\sigma_{j+1}]}$. This suffices to complete the inductive step, since w.r.t.\ either metric we can find the length of any path in $B(z,r')$ intersecting $\eta|_{[0,\sigma_{j+1}]}$ only at its endpoints by considering the amount of length it accumulates in $B(z,r')\setminus B(\eta([0,\sigma_j]),\delta)$ and sending $\delta$ to 0.
	\end{proof}
	Theorem~\ref{thm:17} follows by the same method as in~\cite{metglu}. The left boundary $\eta_L$ of $\eta'((-\infty,0])$ is an $\mathrm{SLE}_{\gamma^2}(2-\gamma^2)$ by~\cite[Footnote 4]{dms} and~\cite[Thm~1.1]{ig4}, and then~\cite[Thm~1.5]{dms} gives that $(\mathbb{C}\setminus \eta_L,h|_{\mathbb{C}\setminus \eta_L},0,\infty)$ is a wedge of weight $4-\gamma^2$. We now apply Theorem~\ref{thm:16}. 
	By~\cite[Thm~1.11]{ig4}, the conditional law of the right boundary $\eta_R$ of $\eta'((-\infty,0])$ given $\eta_L$ is a $\mathrm{SLE}_{\gamma^2} (-\gamma^2/2;-\gamma^2/2)$. Thus $\eta_R$ cuts the wedge into two wedges of weight $2-\gamma^2/2$ and now we deduce Theorem~\ref{thm:17} by applying Theorem~\ref{thm:main}. 
	Finally, Theorem~\ref{thm:18} follows by the same absolute continuity argument (between quantum spheres and $\gamma$-quantum cones) as in~\cite{metglu}.
	\bibliographystyle{alpha}
	\bibliography{newbib}
\end{document}